\newcommand{\mL}{\mathcal L}
\newcommand{\mM}{\mathcal M}
\newcommand{\Lw}{\mathcal{L}_{\varphi}}
\newcommand{\nw}[1]{\| {#1} \|_{\Lw}}
\newcommand{\N}{{\mathbb N}}
\newcommand{\Z}{{\mathbb Z}}
\newcommand{\R}{{\mathbb R}}
\newcommand{\C}{{\mathbb C}}
\newcommand{\e}{\mathrm{e}}
\newtheorem{thm}{Theorem}[section]
\newtheorem{lem}[thm]{Lemma}
\newtheorem{cor}[thm]{Corollary}
\newtheorem{prop}[thm]{Proposition}
\theoremstyle{definition}
\newtheorem{rem}[thm]{Remark}
\newtheorem{dfn}[thm]{Definition}
\newtheorem{example}[thm]{Example}
\title[Dixmier traces on weak operator ideals]{Dixmier traces and residues on weak operator ideals}
\author{Magnus Goffeng, Alexandr Usachev}
\address{
Email: goffeng@chalmers.se, usachev@chalmers.se\newline
\indent Department of Mathematical Sciences\newline 
\indent Chalmers University of Technology and \newline 
\indent University of Gothenburg\newline 
\indent SE-412 96 Gothenburg\newline 
\indent Sweden\newline}
\date{\today}
\begin{document}

\begin{abstract}
We develop the theory of modulated operators in general principal ideals of compact operators. For Laplacian modulated operators we establish Connes' trace formula in its local Euclidean model and a global version thereof. It expresses Dixmier traces in terms of the vector-valued Wodzicki residue. We demonstrate the applicability of our main results in the context of log-classical pseudo-differential operators, studied by Lesch, and a class of operators naturally appearing in noncommutative geometry.

\end{abstract}

\maketitle

\section{Introduction}

Computing the spectral asymptotics of physically and geometrically significant operators is a classical problem in spectral theory. In situations where this is not possible, because of technical limitations or substantial reasons such as a lack of classicality (in the sense of asymptotic expansions), it is nevertheless of interest to obtain quantitative information about the spectrum by the computation of an ``averaged" spectral asymptotics. One of the tools to proceed is via the employment of Dixmier traces.\\

\subsection*{Background} Throughout this section $A$ is an operator acting on a separable Hilbert space.
Let $\Phi:(0,\infty)\to (0,\infty)$ be a concave function such that $\lim_{t\to\infty} \Phi(t)=+\infty$ and let  
\begin{equation*}
\mM_\Phi := \left\{A \ \text{is compact and } \|A\|_{\mM_\Phi} := \sup_{n\ge0} \frac1{\Phi(n+1)} \sum_{k=0}^n \mu(k,A) < \infty \right\}
\end{equation*}
be the corresponding \emph{Lorentz ideal}. Here $\mu(k,A)$ denotes the singular values of the operator $A$. If
\begin{equation*}
\lim_{t\to \infty} \frac{\Phi(2t)}{\Phi(t)}=1,
\end{equation*} then it was shown by J.~Dixmier~\cite{D} that for a suitable choice of extended limiting procedure $\omega$, the functional
\begin{equation*}
{\rm Tr}_\omega(A) = \omega\left(\frac1{\Phi(n+1)} \sum_{k=0}^n \mu(k,A)\right), \ 0\le A \in \mM_\Phi,
\end{equation*}
extends by linearity to a trace on $\mM_\Phi$. Such traces are called \emph{Dixmier traces}. Dixmier traces provide an ``averaged" spectral asymptotics in the sense that ${\rm Tr}_\omega(A)=c$ whenever $\lim_{k\to \infty}\frac{\mu(k,A)}{\varphi(k)}=c$ for $\varphi=\Phi'$. There is no general method to compute the sequence of singular values of an operator, and the computation of Dixmier traces is in general an equally difficult problem. There are several approaches to this problem in concrete situations that produce manageable results describing the ``averaged" spectral asymptotics explicitly. One of them is to find formulas relating Dixmier traces to the $\zeta$-function of an operator or to its heat expansion. Such formulas first appeared in~\cite{CM95} for $\mM_\Phi$ with $\Phi(t)=\log(1+t)$ and then were further investigated in~\cite{CRSS, SZ, CS12, SUZ3, SUZ4}. Recently, such formulae were proved in the case of general ideals $\mM_\Phi$~\cite{GS} (with some mild and natural assumptions on $\Phi$).
 
Another approach, which is more relevant to the present paper, originated in Connes' paper~\cite{C_AF}.
To state this result we recall the notion of weak-ideals. For  a decreasing function $\varphi : [0,\infty) \to (0,\infty)$ such that $\sup_{t>0} \frac{\varphi(t)}{\varphi(2t)} <\infty$ we define the ideal of compact operators
\begin{equation}
\Lw := \left\{ A \ \text{compact and  }   \nw{A} := \sup_{n\ge0} \frac{\mu(n,A)}{\varphi(n)} < \infty \right\}.
\end{equation}

When $\varphi(t)=\frac1{1+t}$, the set $\Lw$ is called the weak trace-class ideal and denoted by $\mathcal L_{1,\infty}$.
It is easy to see that $\Lw \subset \mM_\Phi$ with $\varphi = \Phi'$. The Dixmier traces on $\Lw$ are the restrictions of Dixmier traces from the corresponding $\mM_\Phi$. 

\begin{thm}[Connes' trace formula, Theorem 1 of \cite{C_AF}]
Every compactly supported classical pseudo-differential operator $A : C_c^\infty(\mathbb R^d) \to C_c^\infty(\mathbb R^d)$ of order $-d$ extends to a compact linear operator belonging to $\mathcal L_{1,\infty}(L_2(\mathbb R^d))$  and 
$${\rm Tr}_\omega(A) = \frac{1}{d (2\pi)^d} {\rm Res}_W(A),$$
where ${\rm Res}_W(A)$ is the Wodzicki residue of $A$ and ${\rm Tr}_\omega$ is any Dixmier trace.
\end{thm}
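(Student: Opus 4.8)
The plan is to reduce, in four steps, to one application of Weyl's law on a closed manifold. Write $A={\rm Op}(a)$ with $a=a(x,\xi)$ a classical symbol of order $-d$ and compact support in $x$, so $a\sim\sum_{j\ge0}a_{-d-j}$ with $a_{-d-j}$ positively homogeneous of degree $-d-j$ in $\xi$ for $|\xi|\ge1$ and $a_{-d}$ the principal symbol. First I would strip off the lower-order part: fixing a smooth $\chi$ vanishing near $\xi=0$ and equal to $1$ for $|\xi|\ge1$, we have $A={\rm Op}(\chi a_{-d})+R$ with $R$ a compactly supported classical $\Psi$DO of order $\le-d-1$. A standard bound (transplant the diagonal part to a torus and use Weyl's law there, or invoke a Cwikel-type estimate) shows that a compactly supported $\Psi$DO of order $m<0$ has singular values $O(n^{m/d})$; this already gives $A\in\mathcal L_{1,\infty}$, and it yields $R\in\mathcal L_p$ for some $p<1$, so $R$ is trace class, ${\rm Tr}_\omega(R)=0$, while ${\rm Res}_W(R)=0$ since $R$ has no homogeneous component of degree $-d$. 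Thus it suffices to treat $B:={\rm Op}(\chi a_{-d})$, and both sides of the asserted identity depend on $A$ only through $a_{-d}$.

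Second, I would localise to a torus. Choose $\phi\in C_c^\infty(\mathbb R^d)$ equal to $1$ near the $x$-support of $a$ and $\psi\in C_c^\infty(\mathbb R^d)$ equal to $1$ near ${\rm supp}\,\phi$. Then $B=M_\phi B M_\psi+T$, where $M_\phi,M_\psi$ are multiplication operators and $T=M_\phi B(I-M_\psi)$ has Schwartz kernel supported in $\{|x-y|\ge\delta\}$ for some $\delta>0$, hence is smoothing and trace class. Identifying a large cube containing ${\rm supp}\,\psi$ with the torus $\mathbb T^d$ transplants $M_\phi B M_\psi$ to a classical $\Psi$DO $\widetilde B$ of order $-d$ on $\mathbb T^d$ with the same nonzero singular values and, since $\phi\psi a_{-d}=a_{-d}$, the same Wodzicki residue. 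It remains to prove ${\rm Tr}_\omega(\widetilde B)=\frac1{d(2\pi)^d}{\rm Res}_W(\widetilde B)$ for classical $\Psi$DOs of order $-d$ on $\mathbb T^d$. Third, since ${\rm Tr}_\omega$ and ${\rm Res}_W$ are linear on the complexified ideal and the formal adjoint of a classical $\Psi$DO is one with complex-conjugated principal symbol, the splitting $\widetilde B=\widetilde B_1+i\widetilde B_2$ with $\widetilde B_1=\tfrac12(\widetilde B+\widetilde B^{*})$ and $\widetilde B_2=\tfrac1{2i}(\widetilde B-\widetilde B^{*})$ self-adjoint reduces the problem to a self-adjoint classical $\Psi$DO $P$ of order $-d$ with real principal symbol $p_{-d}$.

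Fourth, I would invoke Weyl's law. For such a $P$ the counting functions of the positive and negative eigenvalues satisfy, as $\lambda\downarrow0$, $\#\{n:\pm\lambda_n(P)\ge\lambda\}\sim(2\pi)^{-d}\,{\rm vol}\{(x,\xi):\pm p_{-d}(x,\xi)\ge\lambda\}$ (spectral asymptotics for $\Psi$DOs of negative order; the lower-order symbol and the cutoff $\chi$ affect this only at order $o(1/\lambda)$). In polar coordinates $\xi=r\omega$, homogeneity of $p_{-d}$ gives ${\rm vol}\{\pm p_{-d}\ge\lambda\}=\frac1{d\lambda}\int_{\mathbb T^d}\int_{S^{d-1}}(\pm p_{-d}(x,\omega))_+\,d\omega\,dx+o(1/\lambda)$, so the $k$-th largest positive eigenvalue is asymptotic to $\frac1{dk(2\pi)^d}\int_{\mathbb T^d}\int_{S^{d-1}}(p_{-d})_+\,d\omega\,dx$, and likewise for the negative ones. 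Hence $\frac1{\log n}\sum_{k\le n}\mu(k,P_\pm)\to\frac1{d(2\pi)^d}\int_{\mathbb T^d}\int_{S^{d-1}}(\pm p_{-d})_+\,d\omega\,dx$ for every $\omega$, and subtracting the two limits gives ${\rm Tr}_\omega(P)={\rm Tr}_\omega(P_+)-{\rm Tr}_\omega(P_-)=\frac1{d(2\pi)^d}\int_{\mathbb T^d}\int_{S^{d-1}}p_{-d}(x,\omega)\,d\omega\,dx=\frac1{d(2\pi)^d}{\rm Res}_W(P)$. Together with Steps~1--3, this is the theorem.

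The crux, and the natural place for the argument to break, is the spectral asymptotics of Step~4 for a possibly non-elliptic self-adjoint $\Psi$DO of order $-d$ --- equivalently, the ``measurability'' of such operators, with Dixmier trace equal to the normalised phase-space integral (the singular-value bound of Step~1 belongs to the same circle of ideas). A route avoiding Weyl's law for general $P$, namely Connes' original argument, proceeds instead by showing that ${\rm Tr}_\omega$ and $\frac1{d(2\pi)^d}{\rm Res}_W$ are traces on the order-$(-d)$ $\Psi$DOs vanishing on the order $\le-d-1$ ideal, noting that the quotient by commutators is one-dimensional (a degree-$(-d)$ symbol with vanishing spherical average is a $\xi$-divergence, hence a finite sum of commutators $[M_{x_j},\,\cdot\,]$ modulo smoothing operators), so the two functionals are proportional, and finally fixing the constant on the example $P=(1-\Delta)^{-d/2}$ on $\mathbb T^d$ by lattice-point counting; the delicate point there is the vanishing of ${\rm Tr}_\omega$ on these commutators, whose factors lie in weak ideals other than $\mathcal L_{1,\infty}$ even though the commutator itself lands in $\mathcal L_{1,\infty}$. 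One can also use the theory of Laplacian-modulated operators to obtain ${\rm Tr}_\omega(A)=\frac1{d(2\pi)^d}{\rm Res}_W(A)$ directly from the symbol of $A$.
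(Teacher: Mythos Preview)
The paper does not prove this theorem: it is quoted in the Introduction as Connes' original result from \cite{C_AF}. The paper's own contribution is a generalisation (Theorems~\ref{CTT_R} and~\ref{CTT}), whose specialisation to $\varphi(t)=\frac{1}{\mathrm e+t}$ recovers the statement. So there is no ``paper's proof'' to match, but there is a paper's \emph{method}, and yours is genuinely different from it.

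Your route---strip lower order, transplant to a torus, reduce to self-adjoint, then quote spectral asymptotics for a self-adjoint $\Psi$DO of order $-d$---is a legitimate proof, and you correctly flag Step~4 as the load-bearing step: the two-sided Weyl law $\#\{\pm\lambda_n(P)\ge\lambda\}\sim(2\pi)^{-d}\mathrm{vol}\{\pm p_{-d}\ge\lambda\}$ for non-elliptic $P$ is a Birman--Solomyak-type result that must be cited, not improvised. The paper's approach, by contrast, bypasses Weyl's law for the operator $A$ entirely: it shows (in the prototype $\varphi=\frac{1}{\mathrm e+t}$ this is \cite{KLPS,LSZ}) that for a Laplacian-modulated operator the partial eigenvalue sums track the partial symbol integrals $\int_{|\xi|\le n^{1/d}}\int p_A\,\mathrm{d}x\,\mathrm{d}\xi$ up to $o(\log n)$, via an expectation-value computation against the Fourier basis on the torus (the analogues here are Theorem~\ref{weaklymodform} and Lemma~\ref{comparingcondval}). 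What your approach buys is directness for classical symbols and a clean conceptual story (measurability from spectral asymptotics); what the paper's approach buys is far wider applicability---it needs no classicality, no smoothness, and no ellipticity, only an $L_2$-symbol with the right decay---which is exactly why the paper can push the formula to general weak ideals $\mathcal L_\varphi$ and to operators well outside the pseudodifferential calculus. You already note this third route at the end of your proposal; that is precisely the one the paper takes.
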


This theorem reduces the computation of Dixmier traces to that of the Wodzicki residue, which in turn is defined from the principal symbol of an operator and is therefore more accessible to computations. The spectral asymptotics of positive pseudo-differential operators was well known prior to Connes' trace formula by means of a Weyl law, see for instance \cite[Chapter 29.1]{hormaIV}, but Connes' trace formula conceptualized Dixmier traces as an ``integral" in noncommutative geometry. There were several attempts to extend and generalise Connes' trace formula (see e.g.~\cite{BN}, ~\cite{Fathizadeh} and ~\cite{grubshd} for the treatment of anisotropic pseudo-differential operators, the perturbed Laplacian on the noncommutative two tori and manifolds with boundaries, respectively).

\subsection*{Recent developments}
A recent extension of the above theorem appeared in~\cite{KLPS} (see also~\cite{LSZ}). It expresses the Dixmier traces of so-called Laplacian modulated operators (for rigorous definitions, see Section~\ref{subsec:stronglymod} below) in terms of a vector-valued Wodzicki residue, which again depends on the symbol of an operator. We state the main result of~\cite{KLPS} here. Let $\ell^\infty(\N)$ denote the Banach space of bounded sequences with supremum norm.

\begin{thm}[Theorem 1.2, \cite{KLPS}]
\label{CTTv1}
Let $A$ be a compactly supported Laplacian modulated operator with Hilbert-Schmidt symbol $p_A\in L^2(\R^d\times \R^d)$. We have $A \in \mathcal L_{1,\infty}(L_2(\mathbb R^d))$.  Moreover,
for a Dixmier trace $\mathrm{Tr}_\omega$,
$$
\mathrm{Tr}_\omega(A) = \frac1{d(2\pi)^d} \omega({\rm Res}(A))
$$
where $\omega\in \ell^\infty(\N)^*$ is a state and
$${\rm Res}(A) := \left\{\frac1{\log(2+n)}\int_{\mathbb R^d} \int_{|\xi|\le n^{1/d}} p_A(x,\xi) d\xi\,dx\right\}_{n\in \N}\in \ell^\infty(\N),$$ 
is the vector-valued residue of $A$.
\end{thm}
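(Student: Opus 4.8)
The plan is to place the statement inside the general theory of $V$-modulated operators, taking as reference operator $V$ a compactly localised power of the Laplacian, and then to convert the resulting spectral cutoff of $V$ into a Fourier-ball cutoff. \emph{Membership in $\mathcal L_{1,\infty}$.} As $A$ is compactly supported, fix $\phi\in C_c^\infty(\R^d)$ equal to $1$ on a neighbourhood of the (compact) support of the integral kernel of $A$, so that $A=M_\phi A=AM_\phi=M_\phi A M_\phi$, and set $V:=M_\phi(1+\Delta)^{-d/2}M_\phi\ge 0$. By Cwikel's estimate $V\in\mathcal L_{1,\infty}(L_2(\R^d))$, with $\mu(k,V)\asymp k^{-1}$; in fact Weyl's law gives $\mu(k,V)\sim C_\phi\,k^{-1}$ for an explicit $C_\phi>0$. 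Unwinding the definition of a Laplacian modulated operator (Section~\ref{subsec:stronglymod}), the defining estimate $\sqrt t\,\|A(1+t(1+\Delta)^{-d/2})^{-1}\|_{\mathcal L_2}=O(1)$ together with $A=M_\phi A M_\phi$ shows that $A$ is $V$-modulated in the sense of the general theory; the general principle that a $V$-modulated operator with $0\le V\in\mathcal L_{1,\infty}$ lies in $\mathcal L_{1,\infty}$ then yields $A\in\mathcal L_{1,\infty}$.

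\emph{Reduction to a spectral cutoff.} Let $\{e_k\}_{k\ge0}$ be an orthonormal eigenbasis of $V$ with eigenvalues in non-increasing order and $E_n:=\sum_{k=0}^n\langle\,\cdot\,,e_k\rangle\,e_k$. The Dixmier-trace computation for modulated operators (a Lidskii-type identity: for a $V$-modulated operator the averaged partial sums of eigenvalues and of diagonal matrix entries in the eigenbasis of $V$ have the same $\omega$-limit) gives, for every dilation-invariant extended limit $\omega$,
$$
\mathrm{Tr}_\omega(A)=\omega\!\left(\left\{\frac{1}{\log(2+n)}\,\mathrm{Tr}(AE_n)\right\}_{n\in\N}\right),
$$
so it remains to evaluate the averaged traces $\mathrm{Tr}(AE_n)$.

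\emph{From spectral to geometric cutoffs.} Let $F_R:=\chi_{\{|\xi|\le R\}}(D)$ be Fourier multiplication by the indicator of the ball of radius $R$, and pick $R_n$ so that $M_\phi F_{R_n}M_\phi$ has, to leading order, the same rank as $E_n$; Weyl's law for $V$ forces $R_n^d\asymp n$. Using the modulation estimate for $A$ and the spectral asymptotics of $V$, one shows
$$
\frac{1}{\log(2+n)}\bigl(\mathrm{Tr}(AE_n)-\mathrm{Tr}(AF_{R_n})\bigr)\longrightarrow 0,
$$
while a direct kernel computation from the symbol gives
$$
\mathrm{Tr}(AF_R)=\frac{1}{(2\pi)^d}\int_{\R^d}\int_{|\xi|\le R}p_A(x,\xi)\,d\xi\,dx.
$$
Since $R_n^d\asymp n$ we have $\log(2+R_n^d)=\log(2+n)+O(1)$; rewriting the cutoff radius as $R_n=m^{1/d}$ with $m\asymp n$ and using the dilation-invariance of $\omega$ identifies $\omega(\{\tfrac{1}{\log(2+n)}\mathrm{Tr}(AF_{R_n})\})$ with a fixed multiple of $\omega(\mathrm{Res}(A))$, the numerical constant being produced by the symbol normalisation, the volume of the Fourier ball, and the change of variables $n\mapsto R_n^d$ relating the number of eigenvalues to the cutoff radius. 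Combined with the displayed formula for $\mathrm{Tr}_\omega(A)$, this is the asserted identity.

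The crux is the cutoff replacement: $E_n$ and $M_\phi F_{R_n}M_\phi$ are genuinely distinct finite-rank operators, so one must estimate $\mathrm{Tr}\bigl(A(E_n-M_\phi F_{R_n}M_\phi)\bigr)$. The natural route is to express the difference of cutoffs through the resolvent-type smoothings $(1+t(1+\Delta)^{-d/2})^{-1}$ occurring in the modulation condition and to use the Hilbert--Schmidt Cauchy--Schwarz bound $|\mathrm{Tr}(AB)|\le\|A(1+tV)^{-1}\|_{\mathcal L_2}\,\|(1+tV)B\|_{\mathcal L_2}$ with $t=t_n\to\infty$ chosen appropriately: the $V$-modulation of $A$ supplies the decay $\|A(1+tV)^{-1}\|_{\mathcal L_2}=O(t^{-1/2})$, while the Weyl asymptotics of $V$ bound $\|(1+tV)(E_n-M_\phi F_{R_n}M_\phi)\|_{\mathcal L_2}$ once the two cutoffs are seen to coincide in phase space away from a region of lower order. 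This is exactly the type of estimate furnished by the general theory of modulated operators developed in the body of the paper; everything else is the symbol--kernel bookkeeping indicated above.
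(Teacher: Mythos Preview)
Your overall architecture is right and matches the paper's: use the theory of $V$-modulated operators to reduce $\mathrm{Tr}_\omega(A)$ to an $\omega$-limit of partial traces in an eigenbasis of $V$, then identify those partial traces with the symbol integral over Fourier balls. The divergence is in the choice of $V$ and how the second step is carried out.

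The paper does \emph{not} work with $V=M_\phi(1-\Delta)^{-d/2}M_\phi$ on $\R^d$. Instead it localises the compactly supported operator to the cube $(0,1)^d$, transfers it to the flat torus $\mathbb{T}^d$ (Lemma~\ref{dirwemod}, via the Sobolev comparison of Lemma~\ref{incluofhs}), and takes $V=\varphi((1-\Delta_{\mathbb{T}^d})^{d/2})$. The point of this manoeuvre is that the eigenbasis of $V$ is then \emph{exactly} the Fourier basis $(e_{\mathbbm{k}})_{\mathbbm{k}\in\Z^d}$, so the spectral cutoff $E_n$ and the lattice-ball cutoff $\{\langle\mathbbm{k}\rangle^d\le n\}$ coincide. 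The remaining work is to compare the lattice sum $\sum_{\langle\mathbbm{k}\rangle^d\le t}\langle Ge_{\mathbbm{k}},e_{\mathbbm{k}}\rangle$ with the continuous integral $\int_{\langle\xi\rangle^d\le t}\int p_G$; this is Lemma~\ref{comparingcondval}, handled by a Poisson-summation type estimate (Equation~\eqref{comparingfouriertocut}) whose error term is precisely what motivates the ``$\varphi$-reasonable decay'' condition.

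Your route, by contrast, keeps $V$ on $\R^d$ with an implicit, non-explicit eigenbasis, and then needs the estimate
\[
\frac{1}{\log(2+n)}\bigl(\mathrm{Tr}(AE_n)-\mathrm{Tr}(AF_{R_n})\bigr)\to 0.
\]
This is the gap. The Cauchy--Schwarz bound you propose, $|\mathrm{Tr}(AB)|\le\|A(1+tV)^{-1}\|_2\|(1+tV)B\|_2$ with $B=E_n-M_\phi F_{R_n}M_\phi$, does not close: $(1+tV)E_n$ already has $\|\cdot\|_2^2\sim\sum_{k\le n}(1+t\mu(k,V))^2$, which blows up with $t$, and there is no visible cancellation mechanism between $E_n$ and $M_\phi F_{R_n}M_\phi$ since you have no handle on the eigenfunctions of $V$. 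The paper's theory does not supply such an estimate; it circumvents it entirely via the torus. (There is also a smaller issue: Lemma~\ref{VAmod} gives that $A=AM_\phi$ is modulated with respect to $|(1-\Delta)^{-d/2}M_\phi|=(M_\phi(1-\Delta)^{-d}M_\phi)^{1/2}$, which is not the same operator as your $V=M_\phi(1-\Delta)^{-d/2}M_\phi$.)

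In short: the reduction to eigenbasis partial sums is correct, but the comparison of those sums to symbol integrals needs the torus trick (or an equivalent device that makes the eigenbasis explicit), not a direct spectral-versus-geometric cutoff estimate on $\R^d$.
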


We note that the above result compared to the original Connes theorem does not assume the classicality of the symbol nor any smoothness of it. Indeed, the class of Laplacian modulated operators is larger than the class of classical pseudo-differential operators (of suitable order).
More recently, this result was extended to allow for the computations of \textit{all} positive traces~\cite{SSUZ}. The common value of all positive traces is in~\cite{SSUZ} described in terms of the classical notion of almost convergence (introduced by G.~G.~Lorentz in 1948) of the vector-valued Wodzicki residue ${\rm Res}(A)\in \ell^\infty(\N)$.

\subsection*{Motivation} 
Although Theorem~\ref{CTTv1} treats a wide class of operators, it only considers the case when the operator $A$ falls into the weak trace-class ideal $\mathcal L_{1,\infty}$. There is a large number of examples where a physically or geometrically important operator does not belong to $\mathcal L_{1,\infty}$, but rather to $\Lw$ for some more general function $\varphi$. Here we list two of such examples:

(i) The class of $\log$-classical pseudo-differential operators was considered in~\cite{Lesch}. Whereas the symbol of a classical pseudo-differential operator has an asymptotic expansion in terms of homogeneous functions, that of a $\log$-classical pseudo-differential operator has an expansion in terms of homogeneous functions multiplied by the power of the logarithm. Such operators belong to more general ideals $\Lw$. We discuss this example in details in Section~\ref{sec:CTTonmanifolds};

(ii) Recently, several $\theta$-summable spectral triples have been constructed on purely infinite $C^*$-algebras arising from crossed product constructions~\cite{DGMW16,GM16,MS16}. These constructions take their starting point on a closed manifold $M$, or in some cases more exotic geometries, and a (semi)group action forces the spectral growth to be sub-polynomial, even logarithmic. This is dictated by Connes' tracial obstruction to finite summability. An interesting feature of these spectral triples $(\mathcal{A},\mathcal{H},D)$ is that for the $a$ in the subalgebra of smooth functions $C^\infty(M)\subseteq \mathcal{A}$, the commutators $[D,a]$ are not just bounded but belong to a general weak ideal. This phenomena potentially opens up for possibly new and exciting noncommutative geometric situations. The results of this paper develops some of the tools needed to deal with these situations.

We have collected some more examples in Subsection~\ref{subsec:ex} below.

\subsection*{Main results} 
The purpose of this paper is to develop a theory allowing the computation of Dixmier traces for a wide class of operators from $\Lw$ in term of their Hilbert-Schmidt symbol.

One of the most important technical notions studied in this paper is that of modulated operators in general weak ideals. Modulated operators in the weak trace ideal $\mathcal{L}_{1,\infty}$ were extensively studied by Lord-Sukochev-Zanin (see \cite{LSZ}). The proof of Connes' trace formula in \cite{LSZ} (see Theorem~\ref{CTTv1} above) relies heavily on properties of modulated operators in $\mathcal{L}_{1,\infty}$. We generalize this notion to $\varphi$-modulated operators in general weak ideals $\Lw$. Much of the theory needs to be carefully redone, and relies heavily on the theory of functions of regular variation \cite{RegVar}. The theory of $\varphi$-modulated operators is more delicate than the corresponding theory in $\mathcal{L}_{1,\infty}$, often due to the function $\varphi(t)=1/t$ having coincidental properties such as being both its own inverse and its own reciprocal. In general weak ideals, we separate the notion of being modulated into two separate cases: weakly and strongly, see Definition \ref{weaklymod} and Definition \ref{mod}, respectively. These two notions are compared in Lemma \ref{mod_w-mod}. Being weakly or strongly modulated is a regularity condition, as seen from the work \cite{GG}, and to compute Dixmier traces we require an additional spectral condition in general weak ideals; we call this condition having $\varphi$-reasonable decay (see Definition \ref{phidec}).

The main result of the paper is the following generalisation of Connes' trace formula and its version for closed manifolds. For Euclidean spaces, it is stated as Theorem \ref{CTT_R} in the bulk of the text and  for closed manifolds as Theorem~\ref{CTT}.

\begin{thm}
\label{main}
Let $W$ be a $d$-dimensional inner product space and let $\varphi$ be a decreasing function smoothly varying of index $-1$. Assume that $G\in \Lw(L_2(W))$ is a compactly supported weakly $\varphi$-Laplacian modulated operator with $L_2$-symbol $p_G$ of $\varphi$-reasonable decay. Then, as $n\to \infty$,
$$\int_{W} \int_{\langle \xi\rangle \le n^{1/d}} p_G(x,\xi)\,\mathrm{d}\xi \mathrm{d}x=O(\Phi(n)),$$
and for every extended limit $\omega$ on $\ell_\infty$ we have
$${\rm Tr}_\omega(G) = \omega \left(\frac1{\Phi (n+1)} \int_{W} \int_{\langle \xi\rangle\le n^{1/d}} p_G(x,\xi)\,\mathrm{d}\xi \mathrm{d}x\right).$$
The same formula holds also when $M$ is a closed Riemannian manifold and $G\in \Lw(L^2(M))$ satisfies the conditions above.
\end{thm}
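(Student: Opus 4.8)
The plan is to reduce the general weak-ideal statement to the already-understood weak-trace-class case via a functional-calculus substitution, and then to run the ``modulated implies computable Dixmier trace'' machinery in this twisted setting. Concretely, if $\varphi$ is smoothly varying of index $-1$ with primitive $\Phi$ (so $\Phi$ is slowly varying and $\Phi\to\infty$), the natural device is the monotone rearrangement $\psi$ of the inverse of $\Phi$: one expects that $G\in\Lw$ with $\varphi$-reasonable decay corresponds, after composing with a suitable operator built from $\Phi$, to an operator in $\mathcal L_{1,\infty}$, with singular-value asymptotics controlled by Karamata's theorem. The first step is therefore to record the regular-variation facts we need: $\Phi(n)=\int_0^n\varphi$, the asymptotic $n\varphi(n)/\Phi(n)\to\ ?$ (here slow variation of $\Phi$ forces $n\varphi(n)/\Phi(n)\to 0$, which is exactly why the normalization is $\Phi(n+1)$ rather than $n\varphi(n)$), and the de Haan-type estimates that let us pass between $\sum_{k\le n}\mu(k,G)$ and $\Phi(n)$.

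Second, I would establish the ``big-$O$'' claim: $\int_W\int_{\langle\xi\rangle\le n^{1/d}}p_G(x,\xi)\,\mathrm d\xi\,\mathrm dx=O(\Phi(n))$. This should follow from the definition of weakly $\varphi$-Laplacian modulated together with the Cwikel-type / Birman--Schwinger estimates adapted to $\Lw$ — morally, the truncated integral of the symbol is a ``partial trace'' of $G$ against the spectral projection of $1-\Delta$ onto $\langle\xi\rangle\le n^{1/d}$, and $\varphi$-modulation bounds the error between $G$ and its symbol-average by something in a strictly smaller ideal, hence summing $\mu(k,G)$ up to $k\sim n$ (the number of eigenvalues of $1-\Delta$ below the cutoff on a set of finite measure times $n$) gives $O(\sum_{k\le Cn}\mu(k,G))=O(\Phi(n))$ by the $\Lw\subset\mathcal M_\Phi$ inclusion noted in the introduction. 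The key point is that weak $\varphi$-modulation is precisely the hypothesis making the difference between $G$ and the ``diagonal'' symbol-operator negligible at the level of $\mathcal M_\Phi$-norms.

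Third, and this is the heart, I would prove the trace identity. The strategy mirrors Theorem~\ref{CTTv1}: (a) show that two operators with the same Hilbert--Schmidt symbol, both weakly $\varphi$-Laplacian modulated of $\varphi$-reasonable decay, have the same Dixmier trace — this is where $\varphi$-reasonable decay enters, ensuring the difference lies in a sub-ideal where all Dixmier traces vanish (the analogue of $(\mathcal L_{1,\infty})_0$); (b) reduce to a canonical model operator, namely the one whose symbol-average reproduces the prescribed truncated integrals, e.g.\ a function of $1-\Delta$ on a box, whose singular values can be computed by counting lattice points; (c) for that model operator verify directly, using the Karamata/de Haan asymptotics from step one, that $\frac1{\Phi(n+1)}\sum_{k=0}^n\mu(k,G)$ and $\frac1{\Phi(n+1)}\int_W\int_{\langle\xi\rangle\le n^{1/d}}p_G\,\mathrm d\xi\,\mathrm dx$ are asymptotic to each other, so applying any extended limit $\omega$ gives equal values, and ${\rm Tr}_\omega$ is by definition $\omega$ applied to the former sequence. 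For the closed-manifold case one localizes with a partition of unity, uses that the residue and the Dixmier trace are both ``local'' (additive over a cover, insensitive to lower-order ideal perturbations), and invokes the Euclidean result in each chart; the curved Laplacian differs from the flat one by lower-order terms that are absorbed into the sub-ideal by the $\varphi$-modulation estimates.

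The main obstacle I anticipate is step (c) together with the bookkeeping of error ideals: in $\mathcal L_{1,\infty}$ one exploits that $\varphi(t)=1/t$ is its own reciprocal and its own inverse, which collapses several distinct comparisons into one; here those coincidences fail, so one must carefully track which regular-variation index each error term carries and confirm it is strictly subordinate to $\Phi$ in the Cesàro sense, uniformly enough that $\omega$ kills it. Establishing the precise asymptotic equivalence $\sum_{k\le n}\mu(k,G)\sim \frac1{d(2\pi)^d}\int_W\int_{\langle\xi\rangle\le n^{1/d}}p_G\,\mathrm d\xi\,\mathrm dx+o(\Phi(n))$ — rather than merely an $O$-bound — for the model operator, and checking that ``$\varphi$-reasonable decay'' is exactly the minimal spectral hypothesis making this $o(\Phi(n))$ rather than $O(\Phi(n))$, is where the real work lies; everything else is a careful but essentially routine transcription of the $\mathcal L_{1,\infty}$ arguments of \cite{LSZ, KLPS} into the language of functions of regular variation.
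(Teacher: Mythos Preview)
Your proposal has a genuine structural gap and a mischaracterization of where the hypotheses enter.

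First, the ``functional-calculus substitution'' reducing to $\mathcal L_{1,\infty}$ is announced in the opening paragraph but never actually used in steps 2--3, and it would not work as a reduction device anyway: the $L_2$-symbol of $f(G)$ or of $G$ composed with $\Phi^{-1}((1-\Delta)^{d/2})$ bears no tractable relation to $p_G$, so you cannot recover the desired formula in terms of $p_G$ after transforming. Second, your step 3(a) as written is vacuous --- two Hilbert--Schmidt operators with the same $L_2$-symbol are the same operator --- and the intended statement (same truncated symbol integrals $\Rightarrow$ same Dixmier trace) is essentially the theorem itself, so the ``model operator'' reduction in 3(b) risks circularity unless you supply an independent mechanism. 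Third, you locate $\varphi$-reasonable decay in 3(a) as an ideal-membership condition on a difference of operators; in fact it is a pointwise symbol condition whose role is purely analytic: it controls the error when replacing a lattice sum by a continuous integral.

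The paper's route avoids all of this by never reducing to $\mathcal L_{1,\infty}$ and never introducing a model operator. Instead it (i) transplants the compactly supported $G$ from $W$ to the torus $\mathbb T^d$, where $\varphi((1-\Delta_{\mathbb T^d})^{d/2})$ is genuinely in $\Lw$ (Lemma~\ref{dirwemod}, relying on the $\varphi$-Sobolev space comparison of Lemma~\ref{incluofhs}); (ii) uses weak $\varphi$-modulation to obtain the expectation-value formula ${\rm Tr}_\omega(G)=\omega\bigl(\Phi(n+1)^{-1}\sum_{k\le n}\langle Ge_k,e_k\rangle\bigr)$ against the Fourier basis (Theorem~\ref{weaklymodform}, via Lemma~\ref{l_exp}); and (iii) compares $\sum_{\langle\mathbbm k\rangle^d\le t}\langle Ge_{\mathbbm k},e_{\mathbbm k}\rangle$ with $\int_{\langle\xi\rangle^d\le t}\int_W p_G$ directly via a Poisson-summation-type estimate (Lemma~\ref{comparingcondval}), and it is precisely the $O(\langle t-\langle\xi\rangle^d\rangle^{-1})$ error term there that $\varphi$-reasonable decay is designed to make $o(\Phi(t))$. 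The $O(\Phi(n))$ bound then falls out of (ii)+(iii) since $\sum_{k\le n}\lambda(k,\Re G)=O(\Phi(n))$ for $G\in\Lw$. The manifold case is, as you say, localization plus the Euclidean result; that part of your sketch is correct.
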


Note that Theorem~\ref{main} compared to Theorem~\ref{CTTv1} requires additionally that the symbol has $\varphi$-reasonable decay. For $\varphi(t)=O(t^{-1})$, $\varphi$-reasonable decay is automatic for strongly modulated operators. In general, this is unclear. Seeing that this is the case, we have chosen to assume the weakest possible modulation property in parallel to having $\varphi$-reasonable decay. 

As a demonstration of our main results we prove in Theorem~\ref{cor_logcl} that Dixmier traces of log-classical pseudo-differential operators considered in~\cite{Lesch} are proportional the higher Wodzicki residues. It is of general interest to find operators with exotic $\zeta$-functions. Lesch~\cite{Lesch} proved that if $A$ is a log-classical pseudo-differential operator and $P$ is an elliptic, invertible and positive classical pseudo-differential operator, then the $\zeta$-function ${\rm Tr}(AP^{-s})$ admits a meromorphic extension to the complex plane and has a finite order pole at $s=0$. The residue at this pole computes the higher Wodzicki residue. Another way to look at the $\zeta$-function is to consider the expression ${\rm Tr}(A^{1+s})$ for a positive operator $A$~\cite{CRSS, SZ, GS}. We show that for log-classical pseudo-differential operators the higher Wodzicki residue can be computed using this function, too. However, in general, this $\zeta$-function does not admit a meromorphic extension to a neighbourhood of $s=1$ (see Remark \ref{remfromDima}).

\subsection*{Organisation of the paper} $\,$

1. In Section~\ref{sec:prelsection} we collect some well known facts about weak ideals and provide examples of operators therein. We also give some preliminary results on weak ideals and recall the theory of functions of regular variation.

2. In Section~\ref{subsec:mod} we introduce the notion of operators in $\mathbb{B}(\mathcal{H})$ being weakly $\varphi$-modulated with respect to some auxiliary operator $V$ (see Definition \ref{weaklymod}). This notion extends the concept of weakly modulated operators from~\cite{GG} that implicitly appeared in~\cite[Lemma 11.2.9]{LSZ}. The main result of this section (Theorem~\ref{weaklymodform}) provides the formula which computes Dixmier traces of weakly $\varphi$-modulated operators through the expectation values.

3. In Section~\ref{subsec:stronglymod} we develop the theory of modulated operators in the general context. We introduce the notion of operators in $\mathbb{B}(\mathcal{H})$ that are \textit{strongly} $\varphi$-modulated with respect to some auxiliary operator $V$ (see Definition \ref{mod}). This notion extends the concept of modulated operators from~\cite{LSZ}. In this section we study properties of such operators and prove that under additional assumption on $\varphi$ any strongly $\varphi$-modulated operator is, in fact, weakly $\varphi$-modulated (see Definition \ref{someavass} and Lemma \ref{mod_w-mod}).

4. In Section~\ref{subsec:symbprop} we restrict our considerations to the particular case of operators in $\mathbb{B}(L_2(W))$, where $W$ is a $d$-dimensional real inner product space and study Laplacian modulated operators, that is those which are $\varphi$-modulated with respect to $\varphi[(1-\Delta)^{d/2}]$. We characterise such operators in terms of their Hilbert-Schmidt symbol. These results lay the ground for our first main result - our generalization of Connes' trace formula proved in Section~\ref{cttsection} for Euclidean spaces.

5. In Section~\ref{psipseudos} we develop a pseudo-differential calculus that widely generalises Lesch's log-classical pseudo-differential operators. With the help of these techniques we study more general operators -- locally $\varphi$-Laplacian modulated operators on manifolds -- in Section~\ref{sec:localizing}. Finally, in Section~\ref{sec:CTTonmanifolds} we prove our second main result --  our generalization of Connes' trace formula on closed manifolds (see Theorem~\ref{CTT}). As a consequence of this result we show that Dixmier traces of Lesch's log-classical pseudo-differential operators can be computed by means of the higher Wodzicki's residues introduced in~\cite{Lesch}.

6. In Section~\ref{sec:examples} we discuss the applications our results to noncommutative geometry. In particular, we express Dixmier traces of the product of commutators in terms of the integral of their symbols.

\subsection*{Acknowledgements} 

Both authors were supported by the Swedish Research Council Grant 2015-00137 and Marie Sklodowska Curie Actions, Cofund, Project INCA 600398. We thank Bogdan Nica for discussions leading up to Example \ref{fromBogdan}, Grigori Rozenblum for pointing us to the Example \ref{fromSimon}, Dima Zanin for providing us with the example in Remark \ref{remfromDima}, and Bram Mesland for discussions concerning logarithmic dampening in Section~\ref{sec:examples}. The authors also thank Robin Deeley and Heiko Gimperlein for useful feedback.

\section{Preliminaries}
\label{sec:prelsection}

\subsection{Weak operator ideals and Lorentz ideals}
Throughout the paper we denote by $\mathbb B(\mathcal H)$ (resp., $\mathbb{K}(\mathcal{H})$) the algebra of all bounded (resp., compact) linear operators on a separable Hilbert space $\mathcal{H}$.

The ideals of operators of relevance in this paper are the weak ideals associated with a decreasing function $\varphi : [0,\infty) \to (0,\infty)$. For a compact operator $A$ on a separable Hilbert space $\mathcal{H}$, we let $(\mu(n,A))_{n\in \N}$ denote its sequence of singular values and let $n_A(s):= {\rm Tr} (E_{|A|}(s,\infty)),$ where $E_{|A|}$ is a spectral projection of $|A|$. It can be shown that
$$\mu(n,A) = \inf\{ s>0 \ : \ n_A(s)\le n\} \ \text{and} \ \mu(n,A)\le s \Leftrightarrow n_A(s)\le n.$$

We define the set of compact operators
\begin{equation}
\label{weak}
\Lw := \left\{ A \in \mathbb{K}(\mathcal{H}) :  \nw{A} := \sup_{n\ge0} \frac{\mu(n,A)}{\varphi(n)} < \infty \right\}.
\end{equation}
The dependence on the Hilbert space $\mathcal{H}$ will be implicit; we write it out only when we wish to emphasize which Hilbert space we use. Occasionally, we consider the space 
$$\mathcal{L}_\varphi(\mathcal{H},\mathcal{H}')=\{A\in \mathbb{K}(\mathcal{H},\mathcal{H}'): (A^*A)^{1/2}\in \Lw(\mathcal{H}')\}=\{A\in \mathbb{K}(\mathcal{H},\mathcal{H}'): (AA^*)^{1/2}\in \Lw(\mathcal{H})\},$$
for two possibly different separable Hilbert spaces $\mathcal{H}$ and $\mathcal{H}'$.

We will throughout the paper implicitly assume that $\varphi : [0,\infty) \to (0,\infty)$ is a positive decreasing function such that 
\begin{equation}
\label{quasi}
\sup_{t>0} \frac{\varphi(t)}{\varphi(2t)} <\infty.
\end{equation}
The assumption that $\varphi$ is decreasing and satisfies Equation \eqref{quasi} guarantees that $\Lw$ is an ideal in the $C^*$-algebra of bounded operators $\mathbb{B}(\mathcal{H})$. We call $\Lw$ the \emph{weak ideal} associated with $\varphi$. The condition~\eqref{quasi} guarantees that the ideal $\Lw$ is a quasi-Banach ideal in the quasi-norm $\nw{\cdot}$. When referring to a weak ideal, we mean one of the form $\Lw$ with $\varphi$ satisfying \eqref{quasi}.

There is a larger ideal of operators associated with the function $\varphi$. We set 
$$\Phi(t):= \int_0^t \varphi(s) \mathrm{d}s +c_{\varphi},$$ 
for some constant $c_\varphi\geq 0$. Typically the constant $c_{\varphi}$ is chosen to be zero. Since the von Neumann algebra $\mathbb{B}(\mathcal{H})$ is atomic, adding this constant does not change the setting, but it slightly simplifies the notations.

The \emph{Lorentz ideal} defined from $\Phi$ is the set 
\begin{equation}
\label{Lor}
\mM_\Phi := \left\{ A \in \mathbb{K}(\mathcal{H}) :  \|A\|_{\mM_\Phi} := \sup_{n\ge0} \frac1{\Phi(n+1)} \sum_{k=0}^n \mu(k,A) < \infty \right\}.
\end{equation}
The set $\mM_\Phi$ forms an ideal in the $C^*$-algebra of bounded operators $\mathbb{B}(\mathcal{H})$. It forms a Banach space in the norm $\|\cdot\|_{\mM_\Phi}$. The ideal $\mM_\Phi$ can be defined as in Equation \eqref{Lor} for an arbitrary increasing concave function $\Phi$. 

Below we list some typical examples of functions $\varphi$ and $\Phi$.

\begin{example}
\label{standardex}
When $\varphi(t)=\frac1{\e+t}$ the ideal $\Lw$ is the well studied weak-trace class ideal $\mL_{1,\infty}$. In this case, $\Phi(t)=\log(\e+t)$. The corresponding Lorentz ideal $\mM_\Phi$ is denoted by $\mM_{1,\infty}$.
\end{example}

\begin{example}
\label{logkont}
Another interesting example comes from the function $\varphi(t)=\frac{\log^k(\e+t)}{\e+t}$, $k\in \Z$, $k\neq-1$. We use the notation $\log^k(\e+t)=(\log(\e+t))^k$. In this case, 
$$\Phi(t)=\frac{\log^{k+1}(\e+t)}{k+1}.$$
\end{example}

\begin{example}
\label{oneontlog}
The function $\varphi(t)=\frac{1}{(\e+t)\log(\e+t)}$ will also be of interest (corresponding to the excluded case $k=-1$ in Example \ref{logkont}). In this case, $\Phi(t)=\log(\log(\e+t))$.
\end{example}

\begin{example}
\label{exp}
The function $\varphi=\Phi'$, where $\Phi(t) = \e^{\log^\beta (\e+t)}$ with $0<\beta<1$. This function appeared in~\cite{SUZ2}.
\end{example}

\begin{example}
\label{oneonlog}
The function $\varphi(t)=\frac{1}{\log(\e+t)}$ is of interest in noncommutative geometry. In this case, $\Phi(t)$ is a logarithmic integral, behaving like $\frac{t}{\log(t)} (1+o(1))$ as $t\to \infty$ (in fact, as the prime counting function).
\end{example}

For $q>0$ we denote by $\Lw^{(q)}$ the \emph{$q$-convexification} of $\Lw$, that is
$\Lw^{(q)}$ consists of all operators $A$ such that $|A|^q \in \Lw$. In particular, 
$$\Lw^{(q)}=\mathcal{L}_{\sqrt[q]{\varphi}}=\left\{ A \in \mathbb{K}(\mathcal{H}) :  \mu(n,A)^q=O(\varphi(n))\right\}.$$
We note that the quasi-norms $\|A\|_{\Lw^{(q)}}:=\sqrt[q]{\||A|^q\|_{\Lw}}$ admit a \emph{quasi-H\"older inequality} 
\begin{equation}
\label{holdineq}
\|A_1A_2\|_{\Lw^{(q)}}\leq C\|A_1\|_{\Lw^{(q_0)}}\|A_2\|_{\Lw^{(q_1)}},\quad\mbox{for}\quad q^{-1}=q_0^{-1}+q_1^{-1}.
\end{equation}
As such, we have $\Lw^{(q_0)}\Lw^{(q_1)}\subseteq \Lw^{(q)}$ if $q^{-1}=q_0^{-1}+q_1^{-1}$. We abuse the notation by setting $\Lw^{(\infty)}$ to be the bounded operators.

\begin{rem}
We will not use the convexification $\mM_\Phi^{(q)}$ in this paper but we nevertheless warn the reader of a substantial amount of confusion in the literature regarding the distinction between  $\mM_\Phi^{(q)}$ and $\Lw^{(q)}$. This mainly concerns the special case $\varphi(t)=\frac{1}{\e+t}$, in this case, one writes $\mM_\Phi^{(q)}=\mM_{q,\infty}$ and $\Lw^{(q)}=\mathcal{L}_{q,\infty}$. It is not uncommon that $\mM_{1,\infty}$ is denoted by $\mathcal{L}_{1,\infty}$. Sometimes, untrue claims such as  $\mM_{q,\infty}=\mathcal{L}_{q,\infty}$ can even be found.
\end{rem}

\subsection{Examples of operators in weak ideals and Lorentz ideals}
\label{subsec:ex}

Before proceeding, we give some naturally occurring examples of operators that fall into $\Lw$ and $\mathcal M_{\Phi}$.

\begin{example}
\label{fromSimon}
It was shown in~\cite[Theorem 1.5]{Simon_NonCl} that if $A$ is the Dirichlet Laplacian for the region $D:=\{(x,y)\in \R^2: |xy|\le1\}$, then
$$\lim_{t\to\infty} \frac{N_A(t)}{t \log t}=\frac1\pi,$$
where $N_A(t)$ is the number of eigenvalues of $A$ less than  $t$. Since $n_{(1+A)^{-1}}(s)\sim N_A(1/s),$ it follows that
$$\lim_{s\to 0} \frac{n_{(1+A)^{-1}}(s)}{\frac1s \log \frac1s}=\frac1\pi.$$
Since the asymptotic inverse of the function $s\mapsto \frac1s \log \frac1s$ is $\varphi(t) = \frac{\log (\e+t)}{\e+t}$, it follows that
$$\lim_{n\to\infty} \frac{\mu(n,(1+A)^{-1})}{\varphi(n)}=\frac1\pi.$$ 
Hence, the operator $(1+A)^{-1} \in \Lw(L_2(D))$.

A related example in~\cite{Simon_NonCl} is the self-adjoint operator $\Delta_a=-\Delta+|xy|^a$ on $L_2(\R^2)$. Here $a>0$ is a parameter. It was shown in~\cite[Theorem 1.4]{Simon_NonCl} that 
$$\lim_{t\to\infty} \frac{N_{\Delta_a}(t)}{t^{1+a^{-1}} \log t}=\frac1\pi.$$
By a similar argument as above, and using that 
\begin{equation}
\label{varphiaex}
\varphi_a(t)=\left(\frac{\log(\e+t)}{\e+t}\right)^{\frac{a}{a+1}}, 
\end{equation} 
is an asymptotic inverse to $s\mapsto s^{-1-a^{-1}} \log \frac1s$, we have that $(1+ \Delta_a)^{-1} \in \mathcal{L}_{\varphi_a}(L_2(\R^2))$ and 
$$\lim_{n\to\infty} \frac{\mu(n,(1+\Delta_a)^{-1})}{\varphi_a(n)}=\frac1\pi.$$ 
\end{example}

\begin{example}
Let $\varphi$ be a function satisfying \eqref{quasi} and $(M,g)$ a closed Riemannian manifold. The Weyl law guarantees the eigenvalue asymptotics $\lambda_k((1+\Delta_g)^{d/2})=c_gk+o(k)$ for a suitable constant $c_g>0$. Condition \eqref{quasi} implies that $\varphi((1-\Delta_g)^{d/2})\in \Lw$.
\end{example}

\begin{example}
\label{fromBogdan}
If $(M,g)$ is a Riemannian manifold of dimension $d$, and $\rho_g$ denotes the geodesic distance, we can define an operator $T:C^\infty_c(M)\to C^\infty(M)$ by
\begin{equation}
\label{loglap}
Tf(x):=\int_M \frac{f(x)-f(y)}{\rho_g(x,y)^d}\ \mathrm{d}V_g,
\end{equation}
where $\mathrm{d}V_g$ denotes the volume form associated with $g$. It can be verified that $T$ is a pseudo-differential operator of H\"ormander type $(1,0)$ and any order $>0$. A short computation with the Fourier transform shows that the symbol of $T$ is proportional to $\log|\xi|_g$ modulo symbols of order $0$. As such, whenever $M$ is closed there is a non-zero constant $c$ such that $T-c\log((1-\Delta_g)^{d/2})$ is bounded in the $L_2$-norm, where $\Delta_g$ denotes the (negative) scalar Laplacian on $M$. We conclude that $T$ is essentially self-adjoint on $L_2(M)$ and the Weyl law on the closed manifold $M$ guarantees that $(i\pm \overline{T})^{-1}\in \Lw(L_2(M))$ for $\varphi(t)=\frac{1}{\log(\e+t)}$ being as in Example \ref{oneonlog}. 

Operators of the type in Equation \eqref{loglap} have as of lately made its appearances in noncommutative geometry. They can be defined on more general metric measure spaces (see \cite{GM16}). Such operators are geometric yet display better compatibility properties with respect to (semi-) group actions in examples (see \cite{DGMW16,GM16,MS16}). 
\end{example}

\begin{example}
\label{fromGS}
In~\cite[Example 4.9]{GS} the following example was considered. For a certain positive operator $P$ on the Podl\`es sphere $S^2_q$ -- a $q$-analogue of the Laplacian -- and the flat (positive) Laplacian $\Delta_{\mathbb T^2}$ on the 2-torus, the operator $A:=(1+P\otimes 1 + 1\otimes\Delta_{\mathbb T^2})^{-1}\in \mathcal M_{\Phi}(L_2(S^2_q\times \mathbb{T}^2))$, with 
\begin{equation}
\label{phifrompodles}
\Phi(t)=\log^3(\e+t^{1/3}).
\end{equation}
We show that, in fact, $A$ belongs to the corresponding weak ideal. It was shown in~\cite[Example 4.9]{GS} that
$${\rm Tr} (e^{-tA^{-1}}) \sim c\cdot \frac{\log^2t}{t}, \ t\to0.$$
Writing the heat trace as the Laplace transform
$$ {\rm Tr} (e^{-tA^{-1}}) = \int_0^\infty e^{-tz} d N_{A^{-1}}(z)$$
(where $N_{A^{-1}}(t)$ is the number of eigenvalues of $A^{-1}$ less than  $t$)
and using the classical Karamata Theorem (see e.g.~\cite[Chapter IV, Theorem 8.1]{Korevaar}), we obtain 
$$N_{A^{-1}}(t) \sim c\cdot t \log^2t, \ t\to\infty.$$
Since $n_{A}(s)\sim N_{A^{-1}}(1/s),$ and the asymptotic inverse of the function $s\mapsto \frac1s \log^2 \frac1s$ is $\varphi(t) = \frac{\log^2 (\e+t)}{\e+t}$, it follows that $\mu(n, A) \sim c\cdot \frac{\log^2 (\e+t)}{\e+t}$ as $t\to\infty.$ Thus, $A\in \Lw(L_2(S^2_q\times \mathbb{T}^2)).$
\end{example}

\begin{example}
Consider the strictly positive operator 
$$G:= (1+|x|^2)^{-d/4}(1-\Delta)^{-d/2}(1+|x|^2)^{-d/4},$$
on $L_2(\R^d)$. Let $\|\cdot\|_{\mathcal{L}_p}$ denote the $p$:th Schatten class norm. By using the Hausdorff-Young inequalities (see \cite[Theorem 4.1]{Simon}) one shows that for a constant $C>0$, 
$$\|G\|_{\mathcal{L}_p(L_2(\R^d))}\leq C(p-1)^{-2} \le \Phi(e^{(p-1)^{-1}}), \quad p>1,$$
where $\Phi(t)=\log^2(\e+t)$.
It follows from~\cite[Proposition 2.13 and formula (28)]{GS}, that $G\in \mathcal{M}_\Phi$ . Alternatively, one applies \cite[Proposition 2.15]{GS} directly to see that $G\in \mathcal{M}_\Phi$. This result can be made more precise using the results of \cite{bacorsg}. It follows from \cite[theorem 4.5]{bacorsg} that $\frac{N_{G^{-1}}(t)}{t\log(t)}$ has a limit $c$ as $t\to \infty$. Using the method from Example \ref{fromSimon}, we see that with $\varphi(t) = \frac{\log (\e+t)}{\e+t}$, it holds that
$$\lim_{n\to\infty} \frac{\mu(n,G)}{\varphi(n)}=c,\quad\mbox{and}\quad G\in \Lw(L_2(\R^d)).$$

The operator $G$ is an elliptic operator of order $(-d,-d)$ in the $SG$-calculus $SG^{*,*}(\R^d)$. This pseudo-differential calculus is also known as the scattering calculus \cite[Chapter 6]{MelScatt}, see also \cite{bacorsg}. Similar results holds also for more general SG-manifolds. Using \cite[Theorem 2.9]{bacorsg}, we see that $G^{p/d}\in SG^{-p,-p}(\R^d)$ for any $p>0$. Combining this fact with that $SG^{0,0}(\R^d)$ acts as bounded operators on $L_2(\R^d)$ and $G\in \Lw$, we conclude the following inclusion:
$$SG^{-p,-p}(\R^d)\subseteq \Lw^{(d/p)}(L_2(\R^d)), \quad p>0.$$
\end{example} 

\begin{example}
Other examples come from singular manifolds. The following example is based on the spectral properties of operators studied \cite{HartLeschVert}. The Laplacian on a manifold with cuspidal singularities in the metric decomposes into an operator with discrete spectrum, and one with continuous spectrum. The discrete part of the operator in turn decomposes as a countable direct sum (over different $\mu$ and $V$) of operators of the following form
$$H=-\frac{\mathrm{d}}{\mathrm{d} x}(x^2 \frac{\mathrm{d}}{\mathrm{d} x})+x^2\mu^2-\frac{1}{4}+V(x), \quad x>a.$$
Here $\mu$ is a spectral parameter and $V\in x^\gamma L^1[a,\infty)$ (where $\gamma<2$) is a potential. We equipp the operator $H$ with self-adjoint Robin type boundary conditions at $x=a$. See more in \cite[Introduction]{HartLeschVert}. It follows from \cite[Section 1.1.3]{HartLeschVert} that $\lim_{t\to\infty} \frac{N_{H}(t)}{t^{1/2} \log t}=\frac{1}{2\pi}$. By an argument similar to that in Example~\ref{fromSimon} we obtain $(i\pm H)^{-1}\in \Lw(L_2([a,\infty)))$, where $\varphi(t)=\left(\frac{\log(\e+t)}{\e+t}\right)^2$. 
\end{example}

\subsection{Dixmier traces}

A positive operator $A\in \Lw$ will have eigenvalues $\lambda_n(A)=O(\varphi(n))$. A standard problem in spectral theory is to determine the spectral asymptotics of $A$, i.e. if there is a $c$ such that $\lambda_n(A)=c\varphi(n)+o(\varphi(n))$ and to determine $c$. The computation of $c$ can be done using Dixmier traces -- a gadget detecting averaged spectral asymptotics. Their natural domain of definition is the larger Lorentz ideal. We refer the reader to the general definition of Dixmier traces due to J.~Dixmier \cite{D} (see also~\cite{C_book, LSZ}); we only consider the construction under assumptions relevant later in the paper. 

A functional $\omega\in \ell_\infty(\N)^*$ is called an \emph{extended limit} if it is a state vanishing on $c_0(\N)$. Equivalently, $\omega$ is a positive unital extension of the limit functional from the closed subspace of convergent sequence to $\ell_\infty(\N)$. We say that an extended limit $\omega$ is \emph{dilation invariant} if 
$$\omega(c_1,c_2,c_3,\ldots)=\omega((c_n)_{n\in \N})=\omega((c_{\lfloor n/2 \rfloor})_{n\in \N})=\omega(c_1,c_1,c_2,c_2,c_3,c_3,\ldots),$$ 
for any $(c_n)_{n\in \N}\in \ell_\infty(\N)$.

\begin{dfn}
\label{DT}
Let $\Phi$ be a concave function on $(0,\infty)$ such that $\lim_{t\to\infty} \Phi(t)=+\infty$ and
\begin{equation}
\label{c1}
\lim_{t\to \infty} \frac{\Phi(2t)}{\Phi(t)}=1.
\end{equation}
For every dilation invariant extended limit $\omega$ on the sequence space $\ell_\infty(\N)$, the extension by linearity of the functional
\begin{equation}
\label{defofdix}
{\rm Tr}_\omega(A) = \omega\left(\frac1{\Phi(n+1)} \sum_{k=0}^n \mu(k,A)\right), \ 0\le A \in \mM_\Phi,
\end{equation}
to $\mM_\Phi$ is called a \emph{Dixmier trace}.

A Dixmier trace on $\Lw$ is a restriction of a Dixmier trace (on the corresponding $\mM_\Phi$) to $\Lw$. In other words, a Dixmier trace on $\Lw$ is a functional given by the same formula as in Equation \eqref{defofdix} for positive operators.
\end{dfn}

\subsection{Functions of regular variation}
Now we introduce several classes of functions and describe their properties.

\begin{dfn}[see e.g.~\cite{RegVar}]\label{SV}
A positive measurable function $f$ defined on some interval $(a,\infty)$ is said to be 
\begin{itemize}
\item[(i)] \emph{regularly varying of index $\rho$} if 
\begin{equation*}
\lim_{t\to \infty} \frac{f(\lambda t)}{f(t)}=\lambda^\rho, \ \forall \ \lambda>0;
\end{equation*}
\item[(ii)] \emph{smoothly regularly varying of index $\rho$} if $f\in C^\infty$ and for any $n\in \N$,
\begin{equation}
\label{liminitsrrho}
\lim_{t\to \infty} \frac{t^n f^{(n)}(t)}{f(t)}=\rho(\rho-1)...(\rho-n+1).
\end{equation}
\end{itemize}
The classes of \emph{regularly varying} and \emph{smoothly regularly varying} functions are denoted by $R_\rho$ and $SR_\rho$, respectively. The class $R_0$ is called the class of slowly varying functions.
\end{dfn}

\begin{prop}\label{P1}
Let $f$ be a positive measurable monotone function defined on some interval $(a,\infty)$. One has $f\in R_\rho$ if and only if 
\begin{equation}
\label{eq20}
\lim_{t\to \infty} \frac{f(\lambda_0 t)}{f(t)}=\lambda_0^\rho
\end{equation}
for some $\lambda_0>0.$
\end{prop}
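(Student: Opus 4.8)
The ``only if'' implication is immediate from the definition of $R_\rho$, which demands $f(\lambda t)/f(t)\to\lambda^\rho$ for \emph{every} $\lambda>0$, in particular for a single $\lambda_0$. For the converse I would first perform a few harmless reductions. Replacing $f$ by $1/f$ when $f$ is non-increasing (this swaps $R_\rho$ with $R_{-\rho}$ and sends the hypothesis at $\lambda_0$ to the corresponding one with $\rho$ replaced by $-\rho$), we may assume $f$ is non-decreasing; replacing $\lambda_0$ by $1/\lambda_0$ when $\lambda_0<1$ (which, after the substitution $t\mapsto\lambda_0 t$, turns the hypothesis into the analogous statement at $1/\lambda_0$) we may assume $\lambda_0>1$, the case $\lambda_0=1$ being vacuous. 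Passing to logarithmic variables $g(x):=\log f(\e^x)$ and writing $u_0:=\log\lambda_0>0$, $\ell:=\rho u_0$, the function $g$ is non-decreasing on a half-line, the hypothesis reads $g(x+u_0)-g(x)\to\ell$ as $x\to\infty$, and what must be shown is that
\[
g(x+u)-g(x)\ \longrightarrow\ \rho u\qquad(x\to\infty),\quad\text{for every }u\in\R .
\]

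Next I would introduce the upper and lower asymptotic increments $\overline\psi(u):=\limsup_{x\to\infty}\bigl(g(x+u)-g(x)\bigr)$ and $\underline\psi(u):=\liminf_{x\to\infty}\bigl(g(x+u)-g(x)\bigr)$. Since $g$ is non-decreasing, both are non-decreasing in $u$, and bracketing $u$ between consecutive integer multiples of $u_0$ shows they are finite. Decomposing $g(x+u+v)-g(x)=[g(x+u+v)-g(x+v)]+[g(x+v)-g(x)]$ and using that $\limsup$ is subadditive and $\liminf$ superadditive gives $\overline\psi(u+v)\le\overline\psi(u)+\overline\psi(v)$ and $\underline\psi(u+v)\ge\underline\psi(u)+\underline\psi(v)$; telescoping over steps of length $u_0$ then yields $\overline\psi(ku_0)=\underline\psi(ku_0)=k\ell$ for all $k\in\Z$, since the single step has a genuine limit. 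Fixing $u$ and putting $m_n:=\lfloor nu/u_0\rfloor$ (so $m_nu_0\le nu<(m_n+1)u_0$), monotonicity combined with (sub/super)additivity gives $m_n\ell=\overline\psi(m_nu_0)\le\overline\psi(nu)\le n\overline\psi(u)$ and $n\underline\psi(u)\le\underline\psi(nu)\le\overline\psi\bigl((m_n+1)u_0\bigr)=(m_n+1)\ell$. Dividing by $n$ and letting $n\to\infty$ (so that $m_n/n\to u/u_0$) produces the sandwich
\[
\underline\psi(u)\ \le\ \rho u\ \le\ \overline\psi(u)\qquad\text{for every }u\in\R .
\]

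The decisive step, and the one I expect to be the main obstacle, is to collapse this sandwich to $\overline\psi(u)=\underline\psi(u)=\rho u$, i.e.\ to propagate convergence from the grid $u_0\Z$ to all scales; this is exactly where the monotonicity of $f$ is needed and where the machinery of \cite{RegVar} enters. The route I would take is to observe that the set $E:=\{u\in\R:\ \overline\psi(u)=\underline\psi(u)\}$, on which $g(x+u)-g(x)$ converges, is a subgroup of $(\R,+)$ on which the limit is additive, and that for monotone $g$ this set is Lebesgue measurable (the relevant $\limsup$ in $u$ reducing to a countable one); a Steinhaus‑type argument then forces a non‑negligible measurable subgroup of $\R$ to be all of $\R$, and together with the sandwich above this gives $g(x+u)-g(x)\to\rho u$ for every $u$, that is, $f\in R_\rho$. (If convergence is only assumed on a prescribed set of scales, the argument applies verbatim as soon as that set is known to be non‑negligible.) An alternative, measure‑theory‑free path would be to refine the grid by first proving convergence of $g(x+u_0/n)-g(x)$ for each $n$, but making this rigorous for merely monotone $g$ appears to require essentially the same input, so I would regard appealing to the characterization theorem in \cite{RegVar} as the cleanest completion of the proof.
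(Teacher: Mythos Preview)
Your sandwich $\underline\psi(u)\le\rho u\le\overline\psi(u)$ is derived correctly, and you are right that collapsing it is the crux. The Steinhaus route, however, cannot close the gap: the set $E$ on which $g(x+u)-g(x)$ converges is indeed an additive subgroup, but a priori you only know $u_0\Z\subset E$, a set of measure zero, so neither Steinhaus's theorem nor the characterization theorem in \cite{RegVar} (both of which require a set of positive measure) applies. In fact the obstacle is genuine, because the proposition is \emph{false} for $\rho\neq 0$. With $u_0=1$ and $\rho>0$, set $g(x):=\rho\bigl(\lfloor x\rfloor+\{x\}^2\bigr)$, where $\{x\}$ denotes the fractional part. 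Then $g$ is continuous and non-decreasing, and $g(x+1)-g(x)\equiv\rho$ identically; but $g(x+\tfrac12)-g(x)$ equals $\rho\bigl(\{x\}+\tfrac14\bigr)$ when $\{x\}<\tfrac12$ and $\rho\bigl(\tfrac54-\{x\}\bigr)$ when $\{x\}\ge\tfrac12$, hence oscillates between $\rho/4$ and $3\rho/4$ and has no limit. Consequently $f(t):=\e^{g(\log t)}$ is positive, increasing, and satisfies $f(\e\, t)/f(t)\equiv\e^\rho$, yet $f(\sqrt{\e}\,t)/f(t)$ fails to converge, so $f\notin R_\rho$.

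For comparison, the paper handles the case $\rho=0$ by the direct sandwich $f(\lambda_0^n t)/f(t)\le f(\lambda t)/f(t)\le f(\lambda_0^{n+1}t)/f(t)$ for $\lambda_0^n\le\lambda<\lambda_0^{n+1}$ (using monotonicity of $f$), with both outer terms tending to $1$; this is correct and far simpler than your route. For general $\rho$ the paper reduces to the slowly varying case via $t\mapsto t^{-\rho}f(t)$, which does satisfy the hypothesis with index $0$ --- but this auxiliary function need not be monotone (in the counterexample above it is periodic in $\log t$), so the $\rho=0$ argument cannot be invoked. Thus the paper's proof shares exactly the gap you ran into, and the statement stands only for $\rho=0$.
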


\begin{proof}
It is clear that~\eqref{eq20} holds for $f\in R_\rho$, and it only remains to prove the converse. Without loss of generality suppose that~\eqref{eq20} holds for $\lambda_0=2$. 
We first consider the case $\rho=0$. For every $n\in \Z$, $n\neq0$ we have
$$\lim_{t\to \infty} \frac{f(2^n t)}{f(t)}=1 .$$

Since $f$ is monotone, it follows that for every $\lambda>0$ and some $n,m\in\Z$ we have
$$\frac{f(2^n t)}{f(t)} \le \frac{f(\lambda t)}{f(t)} \le \frac{f(2^m t)}{f(t)} $$
which implies the statement.

For a general $\rho$ and $f$ satisfying~\eqref{eq20}, then $g(t)=t^{-\rho}f(t)$ satisfies~\eqref{eq20} with $\rho=0$. Hence,
$$\lim_{t\to \infty} \frac{f(\lambda t)}{f(t)}= \lambda^{\rho}\lim_{t\to \infty} \frac{g(\lambda t)}{g(t)}=\lambda^{\rho},$$
by the first part of the proof.
\end{proof}

\begin{thm}\cite[Theorem 1.8.2]{RegVar}
\label{SRV}
If $f\in R_\rho$, then there exists $g\in SR_\rho$ such that $f\sim g$ as $t\to\infty.$
\end{thm}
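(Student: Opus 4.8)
The statement to prove is Theorem 1.8.2 of \cite{RegVar}: if $f\in R_\rho$, then there exists $g\in SR_\rho$ with $f\sim g$ as $t\to\infty$. The plan is to reduce to the slowly varying case and then produce a smooth representative via a suitable integral smoothing (Cesàro-type averaging / convolution on the logarithmic scale).

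First I would invoke the Karamata representation theorem for regularly varying functions: any $f\in R_\rho$ can be written, for $t$ large, as $f(t)=t^\rho \ell(t)$ where $\ell\in R_0$, and $\ell(t)=c(t)\exp\left(\int_a^t \frac{\varepsilon(s)}{s}\,\mathrm{d}s\right)$ with $c(t)\to c\in(0,\infty)$ and $\varepsilon(s)\to 0$ as $s\to\infty$. Since multiplying by $t^\rho$ and by a constant preserves membership in $SR_\rho$ versus $SR_0$ (one checks directly from Definition \ref{SV}(ii) that $t^\rho h(t)\in SR_\rho$ iff $h\in SR_0$, using the Leibniz rule), it suffices to treat $\rho=0$ and in fact to find a smooth $\tilde\ell\sim \exp\left(\int_a^t \frac{\varepsilon(s)}{s}\,\mathrm{d}s\right)$ lying in $SR_0$.

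Next, pass to the logarithmic variable: set $u=\log t$ and $E(u)=\int_a^{\e^u}\frac{\varepsilon(s)}{s}\,\mathrm{d}s=\int_{\log a}^{u}\varepsilon(\e^v)\,\mathrm{d}v$, so that $E$ is locally absolutely continuous with $E'(u)=\varepsilon(\e^u)\to 0$. The key step is to smooth $E$: define $\tilde E(u)=\int_0^1 E(u+r)\,\mathrm{d}r$ (or convolve with a fixed smooth bump supported near $0$). Then $\tilde E\in C^\infty$ because differentiating under the integral sign turns derivatives of $\tilde E$ into differences/integrals of $E'$; one gets $\tilde E'(u)=E(u+1)-E(u)\to 0$ and, inductively, $\tilde E^{(n)}(u)\to 0$ for all $n\ge 1$, since each derivative is expressed through integrals of $E'(u+r)=\varepsilon(\e^{u+r})$ which tends to $0$ locally uniformly. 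Moreover $\tilde E(u)-E(u)=\int_0^1(E(u+r)-E(u))\,\mathrm{d}r\to 0$ because $E(u+r)-E(u)=\int_0^r\varepsilon(\e^{u+v})\,\mathrm{d}v\to 0$ uniformly in $r\in[0,1]$. Hence $\e^{\tilde E(u)}\sim \e^{E(u)}$. Translating back, $g(t):=c\cdot t^\rho\cdot \e^{\tilde E(\log t)}$ is a smooth function, asymptotic to $f$, and I must verify $g\in SR_\rho$: writing $\phi(t)=\tilde E(\log t)$ we have $t\phi'(t)=\tilde E'(\log t)\to 0$, and more generally $t^n$ times the $n$-th derivative of $\phi$ is a polynomial in the $\tilde E^{(k)}(\log t)$, $k\ge 1$, with no constant term, hence $\to 0$. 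A short computation with the Leibniz rule / Faà di Bruno formula then gives $\lim_{t\to\infty}\frac{t^n g^{(n)}(t)}{g(t)}=\rho(\rho-1)\cdots(\rho-n+1)$ as required by \eqref{liminitsrrho}.

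The main obstacle is the bookkeeping in the last step: showing that all the normalized higher derivatives $t^n g^{(n)}(t)/g(t)$ converge to the correct constants $\rho(\rho-1)\cdots(\rho-n+1)$. This amounts to checking that differentiating $g(t)=c\,t^\rho\e^{\phi(t)}$ repeatedly, the only surviving terms in the limit are those coming from differentiating the $t^\rho$ factor, because every factor of $t\phi'(t)$ (and of $t^k\phi^{(k)}(t)$ for $k\ge 1$) contributes a vanishing factor. Organizing this cleanly — e.g. by an induction on $n$ showing $t^n g^{(n)}(t)/g(t)=\rho(\rho-1)\cdots(\rho-n+1)+o(1)$, using that $t\frac{\mathrm{d}}{\mathrm{d}t}$ maps the relevant class of error terms into itself — is the only genuinely technical point; the smoothing construction and the asymptotic equivalence are straightforward once one works on the logarithmic scale. (Alternatively, one can cite the known fact that the operator $t\frac{\mathrm{d}}{\mathrm{d}t}$ sends $SR_\rho$-type remainders to $o(1)$ and assemble the estimate that way.)
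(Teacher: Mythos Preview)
The paper does not give its own proof of this statement; it is simply quoted from \cite[Theorem 1.8.2]{RegVar}. Your proposal therefore supplies an argument where the paper provides none, and the approach you outline --- reduce to $\rho=0$ via the Karamata representation, smooth on the logarithmic scale, then verify the $SR_\rho$ condition via Leibniz/Fa\`a di Bruno --- is essentially the standard one found in \cite{RegVar}.

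One technical slip: the simple average $\tilde E(u)=\int_0^1 E(u+r)\,\mathrm{d}r$ does \emph{not} yield $\tilde E\in C^\infty$ in general. You get $\tilde E'(u)=E(u+1)-E(u)=\int_0^1\varepsilon(\mathrm{e}^{u+r})\,\mathrm{d}r$, which is continuous, but a second derivative would require differentiating $\varepsilon$, and the Karamata representation only guarantees that $\varepsilon$ is measurable with $\varepsilon(s)\to0$. Your parenthetical alternative --- convolve $E$ with a fixed compactly supported $\psi\in C^\infty$ of integral one --- is the correct move: then for $n\ge1$ one has $\tilde E^{(n)}(u)=\int\varepsilon(\mathrm{e}^{u-r})\psi^{(n-1)}(r)\,\mathrm{d}r\to0$, since $\varepsilon(\mathrm{e}^{u-r})\to0$ uniformly for $r$ in the support of $\psi$. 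With that fix the remainder of your argument (including the inductive verification of \eqref{liminitsrrho} by tracking that every occurrence of $t^k\phi^{(k)}(t)$ contributes an $o(1)$ factor) goes through.
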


\begin{rem} 
For the existence of Dixmier traces on $\mM_\Phi$ (and $\Lw$, too) we only need that the decreasing function $\varphi : [0,\infty) \to (0,\infty)$ satisfies that
\begin{equation}
\label{limitinpf}
\lim_{t\to \infty} \frac{\varphi(2t)}{\varphi(t)}=\frac12.
\end{equation}
 Indeed, this implies~\eqref{c1}. By Proposition~\ref{P1}, Equation \eqref{limitinpf} is equivalent to $\varphi\in R_{-1}.$ Whereas Theorem~\ref{SRV} tells that assuming $\varphi\in SR_{-1}$ alter neither the class of ideals $\Lw$ nor Dixmier traces under consideration.
 
Throughout most of the paper we assume that $\varphi\in R_{-1}$. Assuming that $\varphi$ has regular variation is crucial in several proofs. That we assume the regular variation to be of index $-1$ is only to ensure the existence of Dixmier trace.
\end{rem}

\begin{rem}
In Section~\ref{psipseudos}, we use the smooth regular variation of $\varphi$ to construct a well defined pseudo-differential calculus. In fact, the minimal assumption that can be used in Section~\ref{psipseudos} is that we in Equation \eqref{liminitsrrho} have uniform bounds rather than a limit. 
\end{rem}

\begin{example}
The functions $\varphi(t)=\frac{\log^k(\e+t)}{\e+t}$, $k\in \Z$, from Examples \ref{standardex}, \ref{logkont} and  \ref{oneontlog} have smooth regular variation of index $-1$, and so does the function from Example \ref{exp}. The function $\varphi(t)=\frac{1}{\log(\e+t)}$ (from Example \ref{oneonlog}) is slowly varying, i.e. it has regular variation of index $0$. The function $\varphi_a$ from Equation \eqref{varphiaex} or $\Phi'$ with $\Phi$ as in Example \ref{fromGS} have smooth regular variation of index $\neq -1$. However, suitable powers of these functions will however asymptotically behave like $\log^a(t)/t$ and have smooth regular variation of index $-1$.
\end{example}

The following result shows that on weak ideals $\Lw$ \textit{all} Dixmier traces can be constructed without the additional assumption that the extended limit $\omega$ is dilation invariant. It is a corollary of~\cite[Theorem 17]{Sed_Suk}.

\begin{thm}
If $\varphi\in R_{-1}$ is decreasing, then
for every extended limit $\omega$ on $\ell_\infty(\N)$, the functional
\begin{equation*}
{\rm Tr}_\omega(A) = \omega\left(\frac1{\Phi(n+1)} \sum_{k=0}^n \mu(k,A)\right), \ 0\le A \in \Lw,
\end{equation*}
extends by linearity to a Dixmier trace $\Lw$.
\end{thm}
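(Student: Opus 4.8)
The plan is to reduce the claim to the known result \cite[Theorem 17]{Sed_Suk} (or equivalently to Definition~\ref{DT}), which produces Dixmier traces from \emph{dilation invariant} extended limits, by showing that on the \emph{smaller} ideal $\Lw$ the dilation invariance hypothesis is superfluous. The key structural fact making this work is that for $0\le A\in \Lw$ the singular value function is controlled by $\varphi$, and since $\varphi\in R_{-1}$ the averaged sequence
\[
a_n(A):=\frac1{\Phi(n+1)}\sum_{k=0}^n \mu(k,A)
\]
is not merely bounded but enjoys strong regularity along dyadic (and more generally geometric) scales. First I would record that $\Phi(t)=\int_0^t\varphi(s)\,\mathrm{d}s$ is, by Karamata's theorem (see \cite{RegVar}), regularly varying of index $0$, i.e.\ slowly varying, and that $\Phi(2t)/\Phi(t)\to 1$; this is exactly \eqref{c1} and it is what makes the functional in \eqref{defofdix} well defined.

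The heart of the argument is to establish that for every $0\le A\in \Lw$ the sequence $(a_n(A))_n$ satisfies
\[
a_{\lfloor n/2\rfloor}(A)-a_n(A)\longrightarrow 0,\qquad n\to\infty,
\]
equivalently that the difference between $a$ and its dilate lies in $c_0(\N)$. Indeed, $\sum_{k=0}^{\lfloor n/2\rfloor}\mu(k,A)$ and $\sum_{k=0}^{n}\mu(k,A)$ differ by at most $(n/2+1)\mu(\lfloor n/2\rfloor,A)=O(n\,\varphi(n/2))=O(\Phi(n))\cdot o(1)$, using $\varphi\in R_{-1}$ so that $n\varphi(n)/\Phi(n)\to$ constant while the "tail" contribution is a vanishing fraction; combined with $\Phi(\lfloor n/2\rfloor)/\Phi(n)\to 1$ one gets $a_{\lfloor n/2\rfloor}(A)-a_n(A)\to 0$. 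Since any extended limit $\omega$ vanishes on $c_0(\N)$, it follows that
\[
\omega\big(a_{\lfloor n/2\rfloor}(A)\big)=\omega\big(a_n(A)\big)
\]
for all $0\le A\in\Lw$, and more generally $\omega$ agrees on $\Lw$ with the dilation-invariant extended limit $\omega\circ P$ obtained by averaging $\omega$ over the dilation semigroup (a standard Markov--Kakutani / Banach limit argument produces a dilation invariant state $\widetilde\omega$ with $\widetilde\omega|_{\{a(A):A\in\Lw\}}=\omega|_{\{a(A):A\in\Lw\}}$). Then ${\rm Tr}_\omega(A)={\rm Tr}_{\widetilde\omega}(A)$ for positive $A\in\Lw$, and ${\rm Tr}_{\widetilde\omega}$ is a Dixmier trace in the sense of Definition~\ref{DT}, hence extends by linearity to a trace on $\mM_\Phi$; restricting to $\Lw$ gives the assertion. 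Alternatively, one invokes \cite[Theorem 17]{Sed_Suk} directly, which characterizes the functionals of the form \eqref{defofdix} on Lorentz-type spaces and already contains the statement that dilation invariance may be dropped once one passes to the weak ideal; our job is then only to verify that the hypotheses of that theorem (namely $\varphi\in R_{-1}$ decreasing) match.

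The main obstacle I anticipate is the passage $a_{\lfloor n/2\rfloor}(A)-a_n(A)\to 0$ uniformly enough — one must be careful that $\nw{A}<\infty$ only gives $\mu(k,A)\le \nw{A}\varphi(k)$, so the estimate of the "block" $\sum_{k=\lfloor n/2\rfloor+1}^{n}\mu(k,A)$ against $\Phi(n)-\Phi(\lfloor n/2\rfloor)=o(\Phi(n))$ genuinely uses that $\varphi$ is regularly varying of index $-1$ (for a slowly varying $\varphi$ this block need not be $o(\Phi(n))$, which is why the index $-1$, not just regular variation, enters). The linearity/trace property itself is not an obstacle: it is inherited verbatim from the Dixmier trace on $\mM_\Phi$ associated with $\widetilde\omega$, since $\Lw\subseteq\mM_\Phi$ is an ideal and ${\rm Tr}_{\widetilde\omega}$ restricts to it. I would therefore spend the bulk of the write-up on the $c_0$-difference estimate and cite \cite{Sed_Suk} and Definition~\ref{DT} for the rest.
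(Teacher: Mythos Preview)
Your proposal is correct and, in its closing alternative, exactly matches the paper: the paper gives no proof beyond the sentence ``It is a corollary of \cite[Theorem~17]{Sed_Suk}.'' Your direct argument---showing that for $0\le A\in\Lw$ the sequence $a_n(A)=\Phi(n+1)^{-1}\sum_{k\le n}\mu(k,A)$ satisfies $a_{\lfloor n/2\rfloor}(A)-a_n(A)\in c_0$, whence additivity on $\Lw$ follows for \emph{any} extended limit---is more than the paper provides and is a clean self-contained route; note that the Markov--Kakutani averaging step is unnecessary, since once you have the $c_0$ estimate you get $a_n(A+B)-a_n(A)-a_n(B)\in c_0$ directly from the two standard inequalities for $\sum\mu(k,A+B)$, and linearity follows without constructing any auxiliary $\widetilde\omega$.
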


\begin{prop}
\label{schbelong}
If $\varphi\in R_{-1}$ is decreasing, then for any $p>1$, 
$$\Lw\subseteq \mathcal{L}_p.$$
Here $\mathcal{L}_p$ denotes the ideal of $p$-th Schatten class operators.
\end{prop}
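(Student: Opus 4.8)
The plan is to show that $\varphi\in R_{-1}$ decreasing forces $\mu(n,A)$ to decay like $n^{-1}$ up to a slowly varying correction, and then exploit the fact that slowly varying functions grow slower than any positive power of $n$. Concretely, by the Karamata representation (or directly from Definition~\ref{SV}), write $\varphi(t)=t^{-1}L(t)$ where $L\in R_0$ is slowly varying. The key analytic input is the standard fact about slowly varying functions: for every $\varepsilon>0$ one has $L(t)=o(t^\varepsilon)$ as $t\to\infty$ (this is \cite[Proposition 1.3.6]{RegVar}). Hence for $A\in\Lw$ with $\nw{A}=C$ we get $\mu(n,A)\le C\varphi(n)=Cn^{-1}L(n)$, and for any fixed $p>1$, choosing $\varepsilon<1-1/p$, i.e. $\varepsilon$ with $p(1-\varepsilon)>1$, we obtain $\mu(n,A)^p=O\big(n^{-p(1-\varepsilon)}\big)$.

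First I would fix $p>1$ and pick $\varepsilon>0$ small enough that $p(1-\varepsilon)>1$; this is possible since $p>1$. Then I would invoke the growth bound on slowly varying functions to get a constant $C'$ with $L(n)\le C'n^{\varepsilon}$ for all $n\ge 1$ (absorbing finitely many initial terms into the constant). Combining, $\mu(n,A)\le CC' n^{-1+\varepsilon}$, so
$$
\sum_{n\ge 0}\mu(n,A)^p \le \mu(0,A)^p+(CC')^p\sum_{n\ge 1} n^{-p(1-\varepsilon)}<\infty,
$$
because $p(1-\varepsilon)>1$. This is exactly the statement that $A\in\mathcal{L}_p$, and the norm estimate $\|A\|_{\mathcal{L}_p}^p\le \mu(0,A)^p+(CC')^p\zeta(p(1-\varepsilon))$ shows the inclusion is continuous, though only the set-theoretic inclusion is asserted.

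I do not anticipate a genuine obstacle here; the proof is essentially a one-line consequence of the elementary theory of slowly varying functions. The only point requiring a small amount of care is making sure the chosen $\varepsilon$ simultaneously satisfies $p(1-\varepsilon)>1$ and lies in the range where the uniform bound $L(t)\le C' t^\varepsilon$ from \cite[Proposition 1.3.6]{RegVar} applies on all of $[1,\infty)$ (this uniformity is part of the cited statement, so no extra work is needed). An alternative, slightly more self-contained route avoiding any citation would be to iterate the defining relation $\varphi(2t)/\varphi(t)\to 1/2$: for large $t$, $\varphi(2t)\le (2-\delta)^{-1}\varphi(t)$ for a small $\delta>0$, which after $k$ steps gives $\varphi(2^k t_0)\le (2-\delta)^{-k}\varphi(t_0)$; taking $\delta$ small so that $\log_2(2-\delta)>1/p$ yields the same summability of $\mu(n,A)^p$. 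Either way the estimate is routine, so I would present the slowly-varying-function version as the cleanest.
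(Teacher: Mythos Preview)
Your proof is correct. The paper's argument takes a slightly different route: it first invokes an external lemma (\cite[Lemma 2.2]{GS}) to reduce the inclusion $\Lw\subseteq\mathcal{L}_p$ to showing $\varphi\in L_p(\R_+)$, and then establishes this integrability via the doubling estimate you describe as your alternative---iterating $\varphi(2t)\le(2-\epsilon)^{-1}\varphi(t)$ to get $\varphi(t)\le C_\epsilon(2-\epsilon)^{-k}$ on $[2^k,2^{k+1})$. Your primary approach, writing $\varphi(t)=t^{-1}L(t)$ with $L$ slowly varying and invoking the growth bound $L(t)=o(t^\varepsilon)$ from \cite[Proposition 1.3.6]{RegVar}, works directly with the singular-value sum and avoids the external reduction lemma. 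Both routes amount to the same pointwise estimate $\varphi(t)=O(t^{-1+\varepsilon})$, so the difference is mainly one of packaging; your version is arguably cleaner and more self-contained within the theory of regular variation already used throughout the paper.
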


\begin{proof}
Using \cite[Lemma 2.2]{GS} the proposition follows if $\varphi\in L_p(\R_+)$ for any $p>1$. It follows from $\varphi\in R_{-1}$ that for any $\epsilon>0$, there is a $t_0$ such that $\varphi(2t)\leq \varphi(t)(2-\epsilon)^{-1}$ for $t>t_0$. We conclude that for any $\epsilon$, there is a $C_\epsilon>0$ such that $\varphi(t)\leq C_\epsilon(2-\epsilon)^{-k}$ for $t\in [2^k,2^{k-1})$. For $p>1$, take $\epsilon<2-2^{1/p}$ and estimate 
\begin{align*}
\int_0^\infty |\varphi|^p(t)\mathrm{d}t=\int_0^1 |\varphi|^p(t)\mathrm{d}t&+\sum_{k=0}^\infty \int_{2^k}^{2^{k+1}} |\varphi|^p(t)\mathrm{d}t\leq\\
&\leq  \int_0^1 |\varphi|^p(t)\mathrm{d}t+ C_\epsilon\sum_{k=0}^\infty\left( \frac{2}{(2-\epsilon)^p}\right)^k<\infty.
\end{align*}
\end{proof}

Throughout the paper, we will make use of various averaging properties on the function $\varphi$.

\begin{prop}
\label{regvarresult} 
If $\varphi\in R_{-1}$, then

(i) for every $\alpha\ge \beta-1$ we have
$$\lim_{t\to \infty} \frac{t^{\alpha+1} \varphi^\beta(t)}{\int_0^t s^\alpha \varphi^\beta(s) \mathrm{d}s}=\alpha-\beta+1 .$$

(ii) for every $\alpha < \beta-1$ we have
$$\lim_{t\to \infty} \frac{t^{\alpha+1} \varphi^\beta(t)}{\int_t^\infty s^\alpha \varphi^\beta(s) \mathrm{d}s}=-\alpha+\beta-1 .$$
\end{prop}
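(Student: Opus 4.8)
The plan is to reduce both parts to a single application of Karamata's theorem for regularly varying functions. First I would observe that if $\varphi\in R_{-1}$, then for any fixed $\beta>0$ the function $s\mapsto s^\alpha\varphi^\beta(s)$ is regularly varying of index $\alpha-\beta$; this follows directly from Definition \ref{SV}(i) since products and powers of regularly varying functions are regularly varying with the indices adding, respectively multiplying. Write $h(s):=s^\alpha\varphi^\beta(s)\in R_{\alpha-\beta}$.

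For part (i), the hypothesis $\alpha\ge\beta-1$ means $\alpha-\beta\ge-1$, so the index of $h$ is $\ge -1$ and the integral $\int_0^t h(s)\,\mathrm{d}s$ is the relevant (possibly divergent) quantity. Karamata's theorem (the direct half, see \cite[Proposition 1.5.8]{RegVar} or \cite[Chapter IV, Theorem 8.1]{Korevaar}) states that for $h\in R_\gamma$ with $\gamma>-1$,
$$\lim_{t\to\infty}\frac{t\,h(t)}{\int_0^t h(s)\,\mathrm{d}s}=\gamma+1,$$
and in the boundary case $\gamma=-1$ one still has $\int_0^t h(s)\,\mathrm{d}s\in R_0$ with $t\,h(t)/\int_0^t h(s)\,\mathrm{d}s\to 0$, which matches $\gamma+1=0$. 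Applying this with $\gamma=\alpha-\beta$ gives
$$\lim_{t\to\infty}\frac{t\cdot t^\alpha\varphi^\beta(t)}{\int_0^t s^\alpha\varphi^\beta(s)\,\mathrm{d}s}=\alpha-\beta+1,$$
which is exactly the claim; one should double-check that the integral $\int_0^t$ is finite for each fixed $t$, which holds because $\varphi$ is decreasing and positive, hence locally bounded on $(0,\infty)$, and any integrability issue at $0$ can be absorbed by choosing the lower limit to be some $a>0$ without affecting the limit.

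For part (ii), the hypothesis $\alpha<\beta-1$ means $h\in R_\gamma$ with $\gamma=\alpha-\beta<-1$, so $h$ is integrable at infinity and the relevant tail is $\int_t^\infty h(s)\,\mathrm{d}s$. The complementary half of Karamata's theorem (\cite[Proposition 1.5.10]{RegVar}) gives, for $h\in R_\gamma$ with $\gamma<-1$,
$$\lim_{t\to\infty}\frac{t\,h(t)}{\int_t^\infty h(s)\,\mathrm{d}s}=-\gamma-1,$$
and substituting $\gamma=\alpha-\beta$ yields $-\alpha+\beta-1$, as required. The main obstacle, such as it is, is bookkeeping: one must verify the index arithmetic for $s^\alpha\varphi^\beta(s)$, handle the two boundary/divergence regimes correctly so that the hypotheses of the two halves of Karamata's theorem are met (in particular, the borderline case $\alpha=\beta-1$ in part (i), where the integral diverges slowly and the limit is $0$), and make sure the behaviour near $s=0$ is irrelevant. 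None of these steps is deep; the content is entirely carried by the regular variation of $\varphi$ and the classical Karamata theorem already invoked elsewhere in the paper.
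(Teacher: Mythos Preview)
Your proposal is correct and takes essentially the same approach as the paper: both recognize that $s\mapsto s^\alpha\varphi^\beta(s)$ is regularly varying of index $\alpha-\beta$ (the paper phrases this as $\varphi^\beta\in R_{-\beta}$) and then invoke Karamata's theorem from \cite{RegVar}. The paper cites \cite[Theorem~1.5.11]{RegVar} in a single line, whereas you spell out the two halves via Propositions~1.5.8 and~1.5.10 and explicitly address the boundary case $\alpha=\beta-1$ and the irrelevance of the behaviour near $s=0$; your unnecessary restriction to $\beta>0$ should be dropped, but otherwise the arguments coincide.
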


\begin{proof}
It follows from $\varphi\in R_{-1}$ that 
$$\lim_{t\to \infty} \frac{\varphi^\beta(2t)}{\varphi^\beta(t)}=2^{-\beta}.$$
This means that the function $\varphi^\beta$ varies regularly with index $-\beta$ (see~\cite[first definition in Section 1.4.1]{RegVar}).
The assertions follow from~\cite[Theorem 1.5.11]{RegVar}. 
\end{proof}

We shall prove the discrete counterpart of the preceding result. First, we recall the general result from~\cite{Vuilleumier}.

\begin{thm}\label{Vuilleumier}
Let a matrix $A=\{a_{nk}\}$ be such that for some $\eta>0$ the following hold:
$$\sum_{k=n}^\infty |a_{nk}| k^\eta = O(n^\eta), \ n\to \infty; \ \ \sum_{k=1}^n |a_{nk}| k^{-\eta} = O(n^{-\eta}), \ n\to \infty.$$ Let $L$ be a slowly varying sequence, that is 
$\lim_{n\to\infty} \frac{L_{\lfloor \lambda n\rfloor}}{L_{n}}=1.$
If $$\sum_{k=1}^\infty a_{nk} \to A, \ n\to \infty,$$
then 
$$\frac{\sum_{k=1}^\infty a_{nk} L_k}{L_n} \to A, \ n\to \infty.$$
\end{thm}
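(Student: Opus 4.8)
The plan is to reduce the conclusion to the vanishing of an error term, then to split that error term into a ``bulk'' piece controlled by the uniform convergence theorem for slowly varying sequences and two ``tail'' pieces controlled by Potter-type bounds together with the two moment hypotheses on $\{a_{nk}\}$.

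Since $\sum_{k\ge 1}a_{nk}\to A$, we have $\frac1{L_n}\sum_k a_{nk}L_k=\sum_k a_{nk}+T_n$ with $T_n:=\sum_{k\ge1}a_{nk}\bigl(\frac{L_k}{L_n}-1\bigr)$, so it suffices to prove $T_n\to0$. The first elementary point is that the hypotheses force $\sum_k|a_{nk}|=O(1)$: since $k^{-\eta}\le n^{-\eta}$ for $k\ge n$ we get $\sum_{k\ge n}|a_{nk}|\le n^{-\eta}\sum_{k\ge n}|a_{nk}|k^\eta=O(1)$, and since $k^{-\eta}\ge n^{-\eta}$ for $k\le n$ we get $\sum_{k\le n}|a_{nk}|\le n^{\eta}\sum_{k\le n}|a_{nk}|k^{-\eta}=O(1)$. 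Running the same two computations over $k<\epsilon n$ and over $k>n/\epsilon$ respectively yields the sharper bounds $\sum_{k<\epsilon n}|a_{nk}|=O(\epsilon^\eta)$ and $\sum_{k>n/\epsilon}|a_{nk}|=O(\epsilon^\eta)$, uniformly in $n$; these express that essentially all the mass of the matrix sits in the diagonal band $\epsilon n\le k\le n/\epsilon$.

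Fix $\epsilon\in(0,1)$ and split $T_n=\Sigma^{\mathrm{low}}_n+\Sigma^{\mathrm{mid}}_n+\Sigma^{\mathrm{high}}_n$ according to $k<\epsilon n$, $\epsilon n\le k\le n/\epsilon$, and $k>n/\epsilon$. For the middle block, the uniform convergence theorem for slowly varying sequences (\cite[Theorem 1.2.1]{RegVar}, adapted to sequences) gives $\sup_{\epsilon n\le k\le n/\epsilon}\bigl|\frac{L_k}{L_n}-1\bigr|\to 0$ as $n\to\infty$; combined with $\sum_k|a_{nk}|=O(1)$ this gives $\Sigma^{\mathrm{mid}}_n\to0$ for each fixed $\epsilon$. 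For the tails one cannot use $\frac{L_k}{L_n}\to1$, so one invokes Potter's bounds (\cite[Theorem 1.5.6]{RegVar}): for each $\delta>0$ there are $C_\delta,N_\delta$ with $\frac{L_k}{L_n}\le C_\delta\max\{(k/n)^\delta,(n/k)^\delta\}$ for $k,n\ge N_\delta$. Fix $\delta\in(0,\eta)$. In the low tail, for $N_\delta\le k<\epsilon n$ one estimates $|a_{nk}|\frac{L_k}{L_n}\le C_\delta n^\delta k^{-\delta}|a_{nk}|=C_\delta n^\delta k^{\eta-\delta}\,k^{-\eta}|a_{nk}|\le C_\delta\epsilon^{\eta-\delta}\,n^{\eta}k^{-\eta}|a_{nk}|$, and summing over $k\le n$ with the second moment hypothesis gives $O(\epsilon^{\eta-\delta})$ uniformly in $n$; the finitely many terms with $k<N_\delta$ contribute $o(1)$, since for fixed $k$ one has $|a_{nk}|=O(n^{-\eta})$ from the second hypothesis while $n^{\eta}L_n\to\infty$ by slow variation, so $\sum_{k<N_\delta}|a_{nk}|L_k/L_n=O(n^{-\eta}/L_n)\to0$. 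With $\sum_{k<\epsilon n}|a_{nk}|=O(\epsilon^\eta)$ handling the ``$-1$'' part, this gives $\limsup_n|\Sigma^{\mathrm{low}}_n|=O(\epsilon^{\eta-\delta})$. The high block is treated symmetrically, the roles of the two moment hypotheses interchanged: for $k>n/\epsilon$, $|a_{nk}|\frac{L_k}{L_n}\le C_\delta n^{-\delta}k^{\delta}|a_{nk}|=C_\delta n^{-\delta}k^{\delta-\eta}\,k^{\eta}|a_{nk}|\le C_\delta\epsilon^{\eta-\delta}\,n^{-\eta}k^{\eta}|a_{nk}|$, and summing with the first moment hypothesis (and $\sum_{k>n/\epsilon}|a_{nk}|=O(\epsilon^\eta)$) gives $\limsup_n|\Sigma^{\mathrm{high}}_n|=O(\epsilon^{\eta-\delta})$.

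Combining the three estimates, $\limsup_{n\to\infty}|T_n|\le C\epsilon^{\eta-\delta}$ for every $\epsilon\in(0,1)$, with $C$ independent of $\epsilon$; letting $\epsilon\downarrow0$ forces $T_n\to0$, hence $\frac1{L_n}\sum_k a_{nk}L_k\to A$. I expect the main obstacle to be not the overall structure but the need to upgrade the qualitative slow-variation hypothesis to its quantitative consequences --- the uniform convergence theorem and Potter's bounds --- in the setting of \emph{sequences} rather than functions, together with the bookkeeping near $k\asymp n$ and at small fixed $k$; in particular one must use $n^{-\eta}/L_n\to0$, which is needed precisely because a slowly varying sequence need not tend to infinity, so the finitely many small-$k$ terms are not automatically negligible.
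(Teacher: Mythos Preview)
The paper does not give its own proof of this theorem: it is stated as a result ``recall[ed] \ldots\ from~\cite{Vuilleumier}'' and then used to deduce Lemma~\ref{regvarresultdiscrete}. So there is no in-paper argument to compare against.

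Your proof is correct and is essentially the standard route one finds in the regular variation literature (see, e.g., the treatment of summability methods in \cite[Chapter~1]{RegVar}): reduce to showing $T_n\to0$, cut the sum into a diagonal band $\epsilon n\le k\le n/\epsilon$ handled by the uniform convergence theorem, and two tails handled by Potter bounds combined with the two moment hypotheses. The bookkeeping is right: your derivation of $\sum_k|a_{nk}|=O(1)$ and the sharper $O(\epsilon^\eta)$ tail-mass estimates is clean, and the estimate $|a_{nk}|\,L_k/L_n\le C_\delta\,\epsilon^{\eta-\delta}n^{\pm\eta}k^{\mp\eta}|a_{nk}|$ in each tail is exactly what the moment conditions are designed for. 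The treatment of the finitely many small-$k$ terms via $|a_{nk}|=O(n^{-\eta})$ and $n^\eta L_n\to\infty$ is the right patch; the latter is a standard consequence of slow variation (e.g.\ \cite[Proposition~1.3.6(v)]{RegVar} in the function setting, transferred to sequences).

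The only point you flag yourself and should be prepared to justify carefully is the passage from functions to sequences for the uniform convergence theorem and Potter's bounds. For the uniform convergence theorem this is the content of \cite{Bojanic_Seneta} (already in the paper's bibliography); Potter-type bounds for sequences follow from the representation of a slowly varying sequence, or by interpolating to a slowly varying function. None of this is a gap, just a matter of citation.
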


We shall need the following particular forms of Theorem~\ref{Vuilleumier}.

\begin{lem}\label{regvarresultdiscrete}
If $\varphi\in R_{-1}$, then

(i) for every $\alpha> \beta$ 
we have
$$\lim_{n\to \infty} \frac{\sum_{k=1}^n 2^{k\alpha} \varphi^\beta(2^k)}{2^{n\alpha}\varphi^\beta(2^n)} =\frac{2^{\alpha-\beta}}{2^{\alpha-\beta}-1} .$$

(ii) for every $\alpha< \beta$ 
we have
$$\lim_{n\to \infty} \frac{\sum_{k=n}^\infty 2^{k\alpha} \varphi^\beta(2^k)}{2^{n\alpha}\varphi^\beta(2^n)} =\frac1{1-2^{\alpha-\beta}} .$$
\end{lem}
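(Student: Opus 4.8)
The plan is to deduce Lemma~\ref{regvarresultdiscrete} from the general summability result Theorem~\ref{Vuilleumier} by choosing the matrix $A=\{a_{nk}\}$ appropriately, exactly in parallel to how Proposition~\ref{regvarresult} was obtained from \cite[Theorem 1.5.11]{RegVar}. First I would record the key reduction: since $\varphi\in R_{-1}$, the sequence $L_n:=2^n\varphi^\beta(2^n)\cdot 2^{-n(1-\beta)}=\varphi^\beta(2^n)2^{n\beta}\cdot\ldots$ — more precisely, one should isolate the slowly varying part. Writing $\varphi(t)=t^{-1}\ell(t)$ with $\ell\in R_0$ slowly varying, we have $\varphi^\beta(2^k)=2^{-k\beta}\ell(2^k)^\beta$, and $\ell(2^k)^\beta=:L_k$ is a slowly varying sequence in the sense of Theorem~\ref{Vuilleumier} (because $\ell\in R_0$ forces $\ell(2^{\lfloor\lambda k\rfloor})/\ell(2^k)\to 1$; note $2^{\lfloor\lambda k\rfloor}$ and $(2^k)^\lambda$ differ by a bounded factor, and slow variation passes through bounded factors). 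Thus the quantity to analyze in part (i) becomes
\begin{equation*}
\frac{\sum_{k=1}^n 2^{k\alpha}\varphi^\beta(2^k)}{2^{n\alpha}\varphi^\beta(2^n)}=\frac{\sum_{k=1}^n 2^{k(\alpha-\beta)}L_k}{2^{n(\alpha-\beta)}L_n},
\end{equation*}
and similarly for part (ii) with the tail sum. So everything reduces to the purely geometric sums $\sum_{k=1}^n 2^{k(\alpha-\beta)}$ and $\sum_{k=n}^\infty 2^{k(\alpha-\beta)}$, modulated by a slowly varying weight $L_k$.

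Next I would set up the matrix. For part (i), with $\gamma:=\alpha-\beta>0$, define $a_{nk}:=2^{(k-n)\gamma}$ for $1\le k\le n$ and $a_{nk}:=0$ for $k>n$. Then $\sum_{k=1}^\infty a_{nk}=\sum_{k=1}^n 2^{(k-n)\gamma}=\sum_{j=0}^{n-1}2^{-j\gamma}\to \frac{1}{1-2^{-\gamma}}=\frac{2^\gamma}{2^\gamma-1}=:A$ as $n\to\infty$, which is exactly the claimed limit. I then need to verify the two regularity hypotheses of Theorem~\ref{Vuilleumier}: for a suitable $\eta>0$, $\sum_{k=n}^\infty|a_{nk}|k^\eta=O(n^\eta)$ (this is trivial here since $a_{nk}=0$ for $k>n$, and the single term $k=n$ gives $n^\eta$), and $\sum_{k=1}^n|a_{nk}|k^{-\eta}=\sum_{k=1}^n 2^{(k-n)\gamma}k^{-\eta}=O(n^{-\eta})$. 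The last estimate holds for \emph{any} $\eta>0$: the geometric factor $2^{(k-n)\gamma}$ concentrates the sum near $k=n$, where $k^{-\eta}\asymp n^{-\eta}$, and the tail contribution from small $k$ is exponentially negligible — a one-line splitting into $k\le n/2$ and $k>n/2$ makes this rigorous. Theorem~\ref{Vuilleumier} then gives $\frac{\sum_k a_{nk}L_k}{L_n}\to A$, which is precisely statement (i) after substituting back $L_k=\ell(2^k)^\beta$ and $a_{nk}=2^{(k-n)\gamma}$. For part (ii), with $\gamma:=\alpha-\beta<0$, I set $a_{nk}:=2^{(k-n)\gamma}$ for $k\ge n$ and $a_{nk}:=0$ for $k<n$; then $\sum_{k\ge n}2^{(k-n)\gamma}=\sum_{j\ge 0}2^{j\gamma}=\frac{1}{1-2^{\gamma}}=\frac{1}{1-2^{\alpha-\beta}}=:A$ (convergent since $\gamma<0$), and again the two hypotheses of Theorem~\ref{Vuilleumier} must be checked — now the first condition $\sum_{k=n}^\infty|a_{nk}|k^\eta=O(n^\eta)$ is the substantive one (geometric decay in $k-n$ against polynomial growth $k^\eta$, handled for any $\eta>0$ by the same concentration argument), while the second is trivial since $a_{nk}=0$ for $k<n$ except the term $k=n$.

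I expect the only mild obstacle to be the bookkeeping around the slowly varying sequence $L_k=\ell(2^k)^\beta$: one must confirm that it genuinely satisfies $\lim_{n\to\infty}L_{\lfloor\lambda n\rfloor}/L_n=1$ for all $\lambda>0$ as required by Theorem~\ref{Vuilleumier}. This follows because $\ell\in R_0$ means $\ell(\lambda t)/\ell(t)\to 1$; applying this with $t=2^n$ and noting $2^{\lfloor\lambda n\rfloor}=2^{\lambda n}\cdot 2^{\{{-\lambda n}\}}$ where the fractional-part correction is a factor in $[1/2,1]$ — and slow variation is insensitive to bounded multiplicative perturbations — gives $\ell(2^{\lfloor\lambda n\rfloor})/\ell(2^n)\to 1$, hence the same for the $\beta$-th powers. (Strictly, one invokes the uniform convergence theorem for slowly varying functions, \cite[Theorem 1.2.1]{RegVar}, to control $\ell(2^{\lambda n}\cdot c)/\ell(2^n)$ uniformly in $c\in[1/2,1]$.) Beyond this, the proof is a direct application of Theorem~\ref{Vuilleumier} with an explicit geometric matrix, and the verification of its matrix hypotheses is a routine splitting-of-the-sum estimate; no genuinely hard step arises. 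One can even streamline by treating (i) and (ii) together: in both cases $a_{nk}=2^{(k-n)(\alpha-\beta)}\mathbf{1}_{\{\pm(k-n)\ge 0\}}$ and the row sums are convergent geometric series whose limits are the asserted constants.
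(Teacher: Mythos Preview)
Your approach contains a genuine error precisely at the point you flagged as ``the only mild obstacle'': the sequence $L_k=\ell(2^k)^\beta$ is \emph{not} slowly varying in the sense required by Theorem~\ref{Vuilleumier}. Take the concrete case $\varphi(t)=\frac{\log(\e+t)}{\e+t}\in R_{-1}$ from Example~\ref{logkont}, so that $\ell(t)=t\varphi(t)\sim\log t$. Then $L_k\sim(k\log 2)^\beta$ and $L_{2n}/L_n\to 2^\beta\ne 1$ whenever $\beta\ne 0$. The flaw in your justification is the implicit step $\ell(2^{\lambda n})/\ell(2^n)\to 1$: here the ratio of arguments is $2^{(\lambda-1)n}$, which is \emph{not} bounded, so slow variation of $\ell$ says nothing. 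Your remark that ``slow variation is insensitive to bounded multiplicative perturbations'' handles only the fractional-part correction $2^{-\{\lambda n\}}\in[1/2,1]$, not the main factor $2^{(\lambda-1)n}$.

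The paper repairs this by a different choice of slowly varying sequence and matrix. It takes $L_k:=[k\varphi(k)]^\beta$ --- evaluated at $k$, not at $2^k$ --- which \emph{is} slowly varying since $t\mapsto t\varphi(t)\in R_0$. The matrix $a_{nk}$ is then supported only on dyadic indices $k=2^i$ (with $a_{nk}=(k/n)^{\alpha-\beta}$ there), so that $\sum_k a_{nk}$ is still the desired geometric sum. Theorem~\ref{Vuilleumier} gives convergence of $\frac{\sum_k a_{nk}L_k}{L_n}$ for all $n$, and one then reads off the statement along the subsequence $n=2^m$. This sidesteps the need for $\ell(2^k)$ to be slowly varying in $k$. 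Alternatively, you could abandon Theorem~\ref{Vuilleumier} altogether: since $\varphi\in R_{-1}$ gives $\varphi(2^{k+1})/\varphi(2^k)\to 1/2$, the terms $b_k:=2^{k\alpha}\varphi^\beta(2^k)$ satisfy $b_{k+1}/b_k\to 2^{\alpha-\beta}$, and both parts then follow from the elementary fact that if $b_{k+1}/b_k\to r$ with $r>1$ (resp.\ $r<1$), then $\sum_{k\le n}b_k/b_n\to r/(r-1)$ (resp.\ $\sum_{k\ge n}b_k/b_n\to 1/(1-r)$).
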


\begin{proof}

Set $L_k := [k\varphi(k)]^\beta$. The fact that $\varphi\in R_{-1}$ implies that the sequence $L$ is slowly varying.

(i) Set $$a_{nk} = \begin{cases}
\frac{k^{\alpha-\beta}}{n^{\alpha-\beta}}, \text{for} \ k=2^i, \ 1\le k\le n,\\
0, \text{otherwise}.
\end{cases}$$

We have
$$\sum_{k=1}^\infty a_{nk} = n^{-\alpha+\beta}\sum_{i=1}^{\lfloor \log_2 n \rfloor} 2^{i(\alpha-\beta)} \to \frac{2^{\alpha-\beta}}{2^{\alpha-\beta}-1}, n \to \infty.$$
The other conditions of Theorem~\ref{Vuilleumier} are verified in a similar way.

It follows from Theorem~\ref{Vuilleumier} that, in particular,
$$\frac{\sum_{k=1}^\infty a_{2^n,k} L_k}{L_{2^n}} =\frac{\sum_{k=1}^n 2^{k\alpha} \varphi^\beta(2^k)}{2^{n\alpha}\varphi^\beta(2^n)}\to \frac{2^{\alpha-\beta}}{2^{\alpha-\beta}-1}, \ n\to \infty.$$

(ii) The proof is similar to that of (ii) upon setting
$$a_{nk} = \begin{cases}
\frac{k^{\alpha-\beta}}{n^{\alpha-\beta}}, \text{for} \ k=2^i, \ k\ge n,\\
0, \text{otherwise}.
\end{cases}$$
\end{proof}

\section{Weakly modulated operators and formulas for their Dixmier traces}
\label{subsec:mod}

In this section we give a quite general formula for Dixmier traces of an operator with additional regularity properties relative to an auxiliary operator $V$, see Theorem \ref{weaklymodform}. This formula is in general unwieldely, but can, under some additional assumptions, for operators on manifolds produce closed expressions in terms of the operator's $L_2$-symbol.

To compute Dixmier traces of an operator $G\in \Lw$, we make use of an auxiliary operator $V$. We say that $V$ is \emph{strictly positive} if $V$ is positive and $V\mathcal{H}\subseteq \mathcal{H}$ is dense. For a strictly positive operator $V$ and $p>0$, $V^{-1/p}$ can be defined as a densely defined unbounded operator with domain $V^{1/p}\mathcal{H}$. The operator $V^{-1/p}$ is self-adjoint because it is symmetric and $(i\pm V^{-1/p})^{-1}=V^{1/p}(iV^{1/p}\pm 1)^{-1}$ exists. Recall our convention that $\Lw^{(\infty)}$ denotes the bounded operators. The following definition extends the notion of weakly modulated operators~\cite[Definition 2.12]{GG} from $\mL_{1,\infty}$ to more general ideals $\Lw$.

\begin{dfn}
\label{weaklymod}
Let $V \in \mathbb{B}(\mathcal{H})$ be strictly positive. We say that $G\in \mathbb{B}(\mathcal{H})$ is \emph{weakly $\varphi$-modulated} with respect to $V$ if there is a $p\geq 1$ such that the densely defined operator $GV^{-1/p}$ extends to a bounded operator with $GV^{-1/p}\in \Lw^{(\frac{p}{p-1})}(\mathcal{H})$.
\end{dfn}

We will later introduce further variations on being $\varphi$-modulated.

\begin{rem}
It follows from the definition that the set of operators being weakly $\varphi$-modulated with respect to $V$ is closed under left multiplication in $\mathbb{B}(\mathcal{H})$. It is unclear to the authors if the set of weakly $\varphi$-modulated operators (with respect to a fixed $V$) forms a vector space, except for when fixing $p$.
\end{rem}

For a strictly positive $V\in \mathbb{B}(\mathcal{H})$ and $s\in \R$, we define the Hilbert space 
\begin{align*}
\mathcal{H}^s_V:=&V^s\mathcal{H}, \quad\mbox{with the inner product}\quad \langle f_1,f_2\rangle_{\mathcal{H}^s_V}:=\langle V^{-s}f_1,V^{-s}f_2\rangle_{\mathcal{H}}.
\end{align*}

\begin{prop}
\label{weakregandsob}
Let $V \in \mathbb{B}(\mathcal{H})$ be strictly positive. The operator $G\in \mathbb{B}(\mathcal{H})$ is weakly $\varphi$-modulated with respect to $V$ if and only if there is an $s\in (0,1]$ such that the densely defined operator $G:\mathcal{H}^{-s}_V\dashrightarrow \mathcal{H}$ extends to a bounded operator with $G\in \Lw^{(\frac{1}{1-s})}(\mathcal{H}^{-s}_V,\mathcal{H})$.
\end{prop}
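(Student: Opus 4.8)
The plan is to unwind both conditions to the single statement that some power $V^{-1/p}$, restricted appropriately, lies in a convexification of $\Lw$, and then translate between the ``$GV^{-1/p}$ bounded on $\mathcal{H}$'' picture and the ``$G$ bounded on $\mathcal{H}^{-s}_V$'' picture. The crux is the identification $s = 1/p$, so that $\frac{p}{p-1} = \frac{1}{1-s}$. First I would observe that by the definition of $\mathcal{H}^{-s}_V = V^{-s}\mathcal{H}$ with inner product $\langle f_1,f_2\rangle_{\mathcal{H}^{-s}_V} = \langle V^s f_1, V^s f_2\rangle_{\mathcal{H}}$, the map $V^{-s}: \mathcal{H}\to \mathcal{H}^{-s}_V$ is a unitary isomorphism (it is isometric by definition of the inner product, and surjective since $V$ is strictly positive so $V^{-s}$ is densely defined with dense range). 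Consequently, for any operator $T$, the operator $G: \mathcal{H}^{-s}_V \dashrightarrow \mathcal{H}$ extends boundedly and lies in an ideal $\mathcal{J}(\mathcal{H}^{-s}_V,\mathcal{H})$ if and only if $G \circ V^{-s}: \mathcal{H}\to\mathcal{H}$ extends boundedly and lies in $\mathcal{J}(\mathcal{H})$, because precomposition with a unitary preserves singular values. This is the key structural observation, and it is essentially a restatement: by the definition of $\mathcal{L}_\varphi(\mathcal{H},\mathcal{H}')$ via $(A^*A)^{1/2}\in\Lw$ recalled in the excerpt, membership in $\Lw^{(q)}(\mathcal{H}^{-s}_V,\mathcal{H})$ is governed by the singular values, which are unchanged under composition with the unitary $V^{-s}:\mathcal{H}\to\mathcal{H}^{-s}_V$.

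Next I would spell out the two directions. Suppose $G$ is weakly $\varphi$-modulated with respect to $V$, so there is $p\geq 1$ with $GV^{-1/p}\in \Lw^{(\frac{p}{p-1})}(\mathcal{H})$ (bounded, in particular, so a genuine operator). Set $s := 1/p \in (0,1]$. Then $\frac{p}{p-1} = \frac{1/s}{1/s - 1} = \frac{1}{1-s}$ (and in the boundary case $p=1$, i.e. $s=1$, both exponents are $\infty$, matching our convention $\Lw^{(\infty)} = \mathbb{B}(\mathcal{H})$). Via the unitary $V^{-s}:\mathcal{H}\to\mathcal{H}^{-s}_V$, boundedness and the ideal membership of $GV^{-s}$ on $\mathcal{H}$ transfer to the same statement for $G:\mathcal{H}^{-s}_V\to\mathcal{H}$; thus the Sobolev-space condition holds with this $s$. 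Conversely, if there is $s\in(0,1]$ with $G\in \Lw^{(\frac{1}{1-s})}(\mathcal{H}^{-s}_V,\mathcal{H})$, set $p := 1/s\geq 1$, so $\frac{1}{1-s} = \frac{p}{p-1}$, and running the unitary $V^{-s}:\mathcal{H}\to \mathcal{H}^{-s}_V$ backwards gives that $GV^{-1/p} = GV^{-s}$ extends boundedly with $GV^{-1/p}\in \Lw^{(\frac{p}{p-1})}(\mathcal{H})$, which is precisely Definition \ref{weaklymod}.

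The only genuinely nonroutine point is the boundary case $s=1$ (equivalently $p=1$): here the ``ideal'' is all of $\mathbb{B}(\mathcal{H})$, and one must check the argument degrades gracefully -- the definition with $p=1$ asks merely that $GV^{-1}$ extend to a bounded operator, which under the unitary identification is exactly the statement that $G:\mathcal{H}^{-1}_V\to\mathcal{H}$ extends boundedly, with no ideal constraint. A second minor point to handle carefully is that $V^{-s}$ is unbounded, so the phrase ``the densely defined operator $G:\mathcal{H}^{-s}_V\dashrightarrow\mathcal{H}$ extends'' must be read as: $GV^{-s}$, a priori defined on the dense domain $V^s\mathcal{H}\subseteq\mathcal{H}$, extends to a bounded operator on $\mathcal{H}$ -- and this is exactly how Definition \ref{weaklymod} is phrased for $GV^{-1/p}$, so there is no discrepancy. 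I do not expect any real obstacle; the proposition is a bookkeeping translation whose content is the unitary $V^{-s}:\mathcal{H}\xrightarrow{\sim}\mathcal{H}^{-s}_V$ and the arithmetic identity $\frac{p}{p-1}=\frac{1}{1-s}$ under $s=1/p$.
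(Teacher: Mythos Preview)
Your proposal is correct and follows essentially the same approach as the paper's proof: the paper observes in one line that, by construction, $G\in \Lw^{(\frac{1}{1-s})}(\mathcal{H}^{-s}_V,\mathcal{H})$ if and only if $GV^{-s}\in \Lw^{(\frac{1}{1-s})}(\mathcal{H})$, and then sets $p=1/s$. Your argument is the same, with the underlying reason (that $V^{-s}:\mathcal{H}\to\mathcal{H}^{-s}_V$ is unitary, hence preserves singular values) and the boundary case $s=1$ made explicit.
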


Proposition \ref{weakregandsob} should be compared to \cite[Lemma 2.21]{GG}.

\begin{proof}
By construction, $G\in \Lw^{(\frac{1}{1-s})}(\mathcal{H}^{-s}_V,\mathcal{H})$ if and only if $GV^{-s}\in \Lw^{(\frac{1}{1-s})}(\mathcal{H})$. The proposition follows immediately from setting $p=1/s$.
\end{proof}

The following result relates the eigenvalues and expectation values of a weakly modulated operator. The proof follows that of~\cite[Lemma 11.2.10]{LSZ} which concerns the case $\varphi(t)=\frac{1}{\e+t}$, when $\Lw=\mathcal{L}_{1,\infty}$. We sketch the proof for the convenience of the reader, focusing mainly on the differences to $\frac{1}{\e+t}$.

\begin{lem}
\label{l_exp}
Assume that $\varphi\in R_{-1}$ is decreasing. Let $V \in \Lw$ be strictly positive and let $\{e_n\}_{n\in \N}$ be an eigenbasis for $V$ ordered so that $Ve_n = \mu(n,V)e_n$, $n\ge0$. If $G\in  \mathbb{B}(\mathcal{H})$ is weakly $\varphi$-modulated with respect to $V$, then
$G \in \Lw$ and
\begin{equation}
\label{lemma11210}
\sum_{k=0}^n \lambda(k,\Re G) - \sum_{k=0}^n \langle(\Re G)e_k, e_k\rangle = o(\Phi(n)), \ n \to \infty.
\end{equation}
Here $\Re G= \frac{G^*+G}2$ denotes the real part of $G$.
\end{lem}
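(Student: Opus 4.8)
\textbf{Proof strategy for Lemma \ref{l_exp}.}

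The plan is to follow the structure of \cite[Lemma 11.2.10]{LSZ}, replacing the special function $t\mapsto 1/(\e+t)$ by a general $\varphi\in R_{-1}$ and tracking where regular variation is genuinely needed. First I would reduce to showing $G\in\Lw$: by Proposition \ref{weakregandsob}, weak $\varphi$-modulation with respect to $V$ means that for some $s\in(0,1]$ the operator $G$ extends to an element of $\Lw^{(1/(1-s))}(\mathcal H^{-s}_V,\mathcal H)$, i.e. $GV^{-s}\in\Lw^{(1/(1-s))}(\mathcal H)$. Since $V\in\Lw$, we have $V^{s}\in\Lw^{(1/s)}$, and the quasi-H\"older inequality \eqref{holdineq} with $1=s\cdot\tfrac1s=(1-s)\tfrac1{1-s}$, applied to $G=(GV^{-s})V^{s}$, gives $G\in\Lw^{(1)}=\Lw$. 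This already yields that both $\mu(n,G)$ and hence $\lambda(n,\Re G)$ are $O(\varphi(n))$, so that $\sum_{k=0}^n\lambda(k,\Re G)=O(\Phi(n))$; the same bound holds for the diagonal sum since $|\langle(\Re G)e_k,e_k\rangle|\le\mu(k,G)$. The content of \eqref{lemma11210} is that the \emph{difference} of these two $O(\Phi(n))$ sequences is $o(\Phi(n))$.

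Next I would set up the comparison of the two sums. Fix the eigenbasis $\{e_n\}$ of $V$ with $Ve_n=\mu(n,V)e_n$. Writing $B=\Re G$, the key point is that $B$ inherits a modulation-type estimate: $\|Bf\|\le C\|V^{s}f\|+\text{(lower order)}$ allows one to compare, for each threshold $N$, the quantity $\sum_{k\le N}\langle Be_k,e_k\rangle$ with $\mathrm{Tr}(P_N B P_N)$ where $P_N$ is the spectral projection of $V$ onto the top $N+1$ eigenvalues (which is exactly the projection onto $\mathrm{span}\{e_0,\dots,e_N\}$). Since $\sum_{k\le N}\langle Be_k,e_k\rangle=\mathrm{Tr}(P_NBP_N)$ by construction, the real work is to estimate $\sum_{k\le N}\lambda(k,B)-\mathrm{Tr}(P_NBP_N)$. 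By the Ky Fan / Lidskii-type inequalities used in \cite{LSZ}, $0\le \sum_{k\le N}\lambda(k,B)-\mathrm{Tr}(P_NBP_N)\le 2\sum_{k\le N}\mu(k,(1-P_N)B)$ (up to constants and the usual splitting into $P_NBP_N$, $P_NB(1-P_N)$, $(1-P_N)B$ pieces), so it suffices to show $\sum_{k=0}^N\mu(k,(1-P_N)B)=o(\Phi(N))$.

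For this last estimate I would exploit that $(1-P_N)B=(1-P_N)V^{s}\cdot V^{-s}B^{*}{}^{*}$; more precisely, write $(1-P_N)B=\big((1-P_N)V^{s}\big)(V^{-s}B)$ — here $V^{-s}B=(B^*V^{-s})^*$ is bounded with image in $\Lw^{(1/(1-s))}$ by the modulation hypothesis applied to $B^*=\Re G$ (note the class of weakly $\varphi$-modulated operators is closed under adjoint up to taking real parts since $\Re G$ is self-adjoint). The factor $(1-P_N)V^{s}$ has singular values $\mu(k,(1-P_N)V^{s})=\mu(k+N+1,V^{s})=O(\varphi(k+N+1)^{s})=O(\varphi(N)^{s})$ for $k\le N$ by regular variation, while $V^{-s}B\in\Lw^{(1/(1-s))}$ gives $\mu(k,V^{-s}B)=O(\varphi(k)^{1-s})$. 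Combining via the quasi-H\"older inequality, $\mu(k,(1-P_N)B)=O(\varphi(N)^{s}\varphi(k)^{1-s})$ for $k\le N$, whence
\[
\sum_{k=0}^N\mu(k,(1-P_N)B)=O\Big(\varphi(N)^{s}\sum_{k=0}^N\varphi(k)^{1-s}\Big).
\]
Now Proposition \ref{regvarresult}(i) with $\alpha=0$, $\beta=1-s<1$ gives $\sum_{k=0}^N\varphi(k)^{1-s}=O(N\varphi(N)^{1-s}/s)$, so the right-hand side is $O\big(N\varphi(N)/s\big)=O(\Phi(N))$ — which gives the trivial bound, not yet $o(\Phi(N))$. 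To squeeze out the little-$o$ I would split the sum at a slowly growing cutoff $N^{\theta}$ ($0<\theta<1$): the tail $N^{\theta}\le k\le N$ contributes $O(\varphi(N)^{s}\cdot N\varphi(N^{\theta})^{1-s})$, and since $\varphi\in R_{-1}$ makes $\varphi(N^{\theta})^{1-s}/\varphi(N)^{1-s}\to 0$ whenever $\theta<1$ (because $\varphi(N^\theta)\sim c(\theta)\varphi(N) N^{(1-\theta)(1-s)}\cdot$ slowly varying — more carefully, $\varphi(N^\theta)/\varphi(N)\to\infty$ like $N^{1-\theta}$ up to slow variation, so one must instead let the \emph{head} carry the gain): the head $k<N^\theta$ contributes $O(\varphi(N)^s\cdot N^\theta\varphi(1)^{1-s})=o(N\varphi(N))=o(\Phi(N))$ once one uses $\varphi(N)^{s}N^{\theta}=o(\varphi(N)N)$, which holds since $\varphi(N)^{1-s}N^{1-\theta}\to\infty$.

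\textbf{Main obstacle.} The delicate point — and where the paper's remark that ``the theory of $\varphi$-modulated operators is more delicate than in $\mathcal L_{1,\infty}$'' bites — is precisely this last $o(\Phi(N))$ upgrade: for $\varphi(t)=1/(\e+t)$ one has the coincidence $\varphi=1/\mathrm{id}$ which makes the cutoff estimates trivial, whereas for general $\varphi\in R_{-1}$ one must invoke the uniform convergence theorem for regularly varying functions (\cite[Theorem 1.5.11]{RegVar}, as in Proposition \ref{regvarresult}) together with Lemma \ref{regvarresultdiscrete} to control $\sum_{k<N^\theta}$ and $\sum_{N^\theta\le k\le N}$ separately and show each is $o(\Phi(N))$. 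I expect the cleanest route is a dyadic decomposition: estimate $\sum_{2^j\le k<2^{j+1}}\mu(k,(1-P_N)B)$ for each block and sum, using Lemma \ref{regvarresultdiscrete}(i) on the blocks with $2^j\le N$, which automatically produces the $o$ because the geometric series is dominated by its last term while the prefactor $\varphi(N)^s$ is strictly subcritical in $s$. Everything else — the ideal inclusion, the Ky Fan inequalities, closure under adjoints — is routine adaptation of \cite[Section 11.2]{LSZ}.
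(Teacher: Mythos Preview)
Your approach diverges from the paper's and contains a genuine gap together with a misdiagnosed obstacle.

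\textbf{The gap: one-sidedness of weak modulation.} Your key factorization $(1-P_N)B = \big((1-P_N)V^s\big)\big(V^{-s}B\big)$ with $B=\Re G$ requires $V^{-s}(\Re G)$ to extend to a bounded operator in $\Lw^{(1/(1-s))}$. But weak $\varphi$-modulation is a \emph{one-sided} condition: Definition~\ref{weaklymod} only gives $GV^{-s}\in\Lw^{(1/(1-s))}$, and there is no reason for $V^{-s}G$, $G^*V^{-s}$, or $V^{-s}(\Re G)$ to be bounded at all. Your parenthetical ``closed under adjoint up to taking real parts since $\Re G$ is self-adjoint'' does not repair this: self-adjointness of $\Re G$ says nothing about $(\Re G)V^{-s}$ or $V^{-s}(\Re G)$ separately. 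The paper circumvents this by never forming $V^{-s}(\Re G)$. It introduces the eigenprojection $q_n$ of $\Re G$ and $r_n=p_n\vee q_n$, writes the difference as $\mathrm{Tr}\big((\Re G)(p_n-q_n)\big)$, and bounds $|\mathrm{Tr}(G^*(r_n-p_n))|$ using the factorization $G^*=V^{1/p}(GV^{-1/p})^*$ together with $\|(r_n-p_n)V^{1/p}\|\le\mu(n,V)^{1/p}$ (valid because $r_n-p_n\le 1-p_n$ and $p_n$ is a spectral projection of $V$). Since $\mathrm{Tr}(G(r_n-p_n))=\overline{\mathrm{Tr}(G^*(r_n-p_n))}$, only the one-sided hypothesis $GV^{-1/p}\in\Lw^{(p/(p-1))}$ is ever used.

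\textbf{The misdiagnosed obstacle.} You arrive at $\sum_{k=0}^N\mu(k,(1-P_N)B)=O(N\varphi(N))$ and then spend the rest of the argument trying to upgrade this to $o(\Phi(N))$ via cutoffs and dyadic blocks. But $N\varphi(N)=o(\Phi(N))$ \emph{immediately} from Proposition~\ref{regvarresult}(i) with $\alpha=0$, $\beta=1$: the limit $\lim_{t\to\infty}\tfrac{t\varphi(t)}{\Phi(t)}=\alpha-\beta+1=0$. The paper uses exactly this, twice: once for $|\mathrm{Tr}((\Re G)(r_n-q_n))|\le 2(n+1)\mu(n,\Re G)=O(n\varphi(n))=o(\Phi(n))$, and once for $|\mathrm{Tr}(G^*(r_n-p_n))|\le\mu(n,V)^{1/p}\sum_{k\le 2n+1}\mu(k,GV^{-1/p})=O\big(\varphi(n)^{1/p}\cdot n\varphi(n)^{(p-1)/p}\big)=O(n\varphi(n))=o(\Phi(n))$. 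No splitting or dyadic decomposition is needed.

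Two smaller points: the inequality $|\langle(\Re G)e_k,e_k\rangle|\le\mu(k,G)$ is false in general (the $e_k$ are eigenvectors of $V$, not of $G$), and your Ky~Fan-type reduction ``$0\le\sum\lambda(k,B)-\mathrm{Tr}(P_NBP_N)\le 2\sum\mu(k,(1-P_N)B)$'' is stated too loosely to be checked --- the lower bound in particular depends on the eigenvalue ordering convention, and the upper bound is not a standard inequality in that form.
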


\begin{proof}
Using standard properties of singular values, we obtain
$$\mu(2n, G)= \mu(2n, GV^{-1/p}V^{1/p})\le \mu(n, GV^{-1/p}) \mu(n, V^{1/p}).$$
Since $GV^{-1/p}\in \Lw^{(\frac{p}{p-1})}$ and $V \in \Lw$, it follows that
$$\mu(2n, G)= O(\varphi^\frac{p-1}p (n)) O(\varphi^\frac1p (n))=O(\varphi(n)), \ n\ge0.$$
The property $\mu(2n, G)=O(\varphi(n))$ and $\varphi\in R_{-1}$ imply that $\mu(2n, G)= O(\varphi(2n)), \ n\ge0.$ We conclude that $G \in \Lw$.

The remainder of the proof concerns the property in Equation \eqref{lemma11210}. Let $f_n$ be a basis such that $(\Re G)f_n=  \lambda(n, \Re G)f_n$. Let $p_n$ (resp., $q_n$) be the projection on the linear span of $e_k$ (resp., $f_k$), $0\le k\le n$. Let $r_n= p_n \vee q_n$.

Following the proof of~\cite[Lemma 11.2.10]{LSZ} we obtain
$$|{\rm Tr}((\Re G)(r_n-q_n))| \le 2(n+1) \mu(n, \Re G)$$
and
$$|{\rm Tr}(G^*(r_n-p_n))| \le  \mu(n, V)^{1/p} \sum_{k=0}^{2n+1} \mu(k,GV^{-1/p}). $$

Since $G \in \Lw$ (and so $\Re G \in \Lw$), it follows from $\varphi\in R_{-1}$ that
$$|{\rm Tr}((\Re G)(r_n-q_n))| = O((n+1) \varphi(n)) = o(\Phi(n)),  \ n \to \infty,$$
by Proposition~\ref{regvarresult}(i).

Also, since $V \in \Lw$ and $GV^{-1/p}\in \Lw^{(\frac{p}{p-1})}$, it follows that
$$|{\rm Tr}(G^*(r_n-p_n))| =  O(\varphi^\frac1p (n)) O\left(\sum_{k=0}^{2n+1} \varphi^\frac{p-1}p (k)\right).$$

While $\varphi\in R_{-1}$, the sequence $\{\varphi^\frac{p-1}p(k)\}_{k=0}^\infty$ varies regularly with index $\frac{1-p}p$. It follows from~\cite[Theorem 6]{Bojanic_Seneta} that
$$\lim_{n\to \infty} \frac{\sum_{k=0}^n \varphi^\frac{p-1}p(k)}{n\varphi^\frac{p-1}p(n)} =\frac{1}{1-\frac{p-1}p}=p .$$

Hence,
$$|{\rm Tr}(G^*(r_n-p_n))| =  O\left(\varphi^\frac1p (n)(2n+1) \varphi^\frac{p-1}p (2n+1)\right) = O(n \varphi(n))= o(\Phi(n)),  \ n \to \infty, $$
due to $\varphi\in R_{-1}$ and Proposition~\ref{regvarresult}(i).

Combining the above estimates with the following estimate ensures the lemma.
\begin{align*}
&\left|\sum_{k=0}^n \lambda(k,\Re G) - \sum_{k=0}^n \langle(\Re G)e_k, e_k\rangle\right|=|{\rm Tr}(\mathrm{Re}(G)(p_n-q_n))|\\
&\qquad\qquad\leq |{\rm Tr}(\mathrm{Re}(G)(r_n-q_n))|+\frac{1}{2}|{\rm Tr}(G(r_n-p_n))|+\frac{1}{2}|{\rm Tr}(\mathrm{Re}(G)(r_n-p_n))|.
\end{align*}
\end{proof}

The following result provides a formula to compute Dixmier traces in terms of expectation values.

\begin{thm}
\label{weaklymodform}
Assume that $\varphi\in R_{-1}$ is decreasing. Let $V \in \Lw$ be strictly positive and let $(e_n)_{n\in \N}$ be an eigenbasis for $V$ ordered so that $Ve_n = \mu(n,V)e_n$, $n\ge0$. If $G\in \mathbb{B}(\mathcal{H})$ is weakly $\varphi$-modulated with respect to $V$, then for every extended limit $\omega$ on $\ell_\infty$ we have
$${\rm Tr}_\omega(G) = \omega \left(\frac1{\Phi(n+1)} \sum_{k=0}^n \langle Ge_k, e_k\rangle\right).$$
\end{thm}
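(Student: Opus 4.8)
The plan is to reduce the statement to Lemma~\ref{l_exp} together with the definition of the Dixmier trace. First I would observe that since $\varphi\in R_{-1}$ is decreasing, condition~\eqref{limitinpf} holds, hence~\eqref{c1} holds for $\Phi$, so the Dixmier trace ${\rm Tr}_\omega$ is well-defined on $\mM_\Phi\supseteq\Lw$ for every extended limit $\omega$ (by the theorem following the remarks on smooth regular variation, the dilation-invariance hypothesis is not needed on weak ideals). By Lemma~\ref{l_exp} we already know $G\in\Lw$, so all the quantities in the statement make sense.

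Next I would dispose of the imaginary part. Writing $G=\Re G+i\,\Im G$ with $\Re G,\Im G$ self-adjoint, note that $\Im G=\Re(-iG)$ and that $-iG$ is also weakly $\varphi$-modulated with respect to $V$ (the class is closed under left multiplication by bounded operators, in particular by $-i$). Both sides of the claimed identity are linear in $G$: the left side because ${\rm Tr}_\omega$ is linear on $\mM_\Phi$, the right side because $\omega$ is linear and $\langle Ge_k,e_k\rangle$ is linear in $G$. Hence it suffices to prove the formula for self-adjoint $G$, i.e. to show
\begin{equation*}
{\rm Tr}_\omega(G)=\omega\left(\frac{1}{\Phi(n+1)}\sum_{k=0}^n\langle Ge_k,e_k\rangle\right),\qquad G=G^*\ \text{weakly $\varphi$-modulated}.
\end{equation*}
For self-adjoint $G$, decompose $G=G_+-G_-$ into positive and negative parts; again by linearity of both sides it is enough to treat $0\le G\in\Lw$, in which case $\lambda(k,G)=\mu(k,G)$ and ${\rm Tr}_\omega(G)=\omega\big(\frac{1}{\Phi(n+1)}\sum_{k=0}^n\mu(k,G)\big)$ by Definition~\ref{DT}.

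The key step is then Lemma~\ref{l_exp}: with $\Re G=G$ (as $G$ is self-adjoint) it gives
\begin{equation*}
\sum_{k=0}^n\lambda(k,G)-\sum_{k=0}^n\langle Ge_k,e_k\rangle=o(\Phi(n)),\qquad n\to\infty.
\end{equation*}
Dividing by $\Phi(n+1)$ and using that $\Phi$ is increasing with $\lim_{t\to\infty}\Phi(t)=+\infty$ and $\Phi(n)/\Phi(n+1)\to1$ (a consequence of $\varphi\in R_{-1}$, hence~\eqref{c1}), the sequence $\big(\frac{1}{\Phi(n+1)}\sum_{k=0}^n\lambda(k,G)\big)_n$ and $\big(\frac{1}{\Phi(n+1)}\sum_{k=0}^n\langle Ge_k,e_k\rangle\big)_n$ differ by a null sequence. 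Since any extended limit $\omega$ vanishes on $c_0(\N)$, applying $\omega$ to both yields the same value, namely ${\rm Tr}_\omega(G)$. Reassembling the general $G$ from its positive/negative and real/imaginary parts by linearity completes the proof.

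I do not expect a serious obstacle here: the heavy lifting — the singular-value/expectation-value comparison and the membership $G\in\Lw$ — is entirely contained in Lemma~\ref{l_exp}, whose proof in turn rests on Propositions~\ref{regvarresult} and the Bojanic--Seneta averaging theorem. The only mildly delicate point worth stating carefully is the passage from $\Phi(n)$ to $\Phi(n+1)$ in the denominator and the reduction to the self-adjoint (then positive) case so that $\lambda(k,\cdot)=\mu(k,\cdot)$ matches the definition of ${\rm Tr}_\omega$; both are routine given $\varphi\in R_{-1}$ and linearity of the Dixmier trace.
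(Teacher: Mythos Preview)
Your reduction to self-adjoint $G$ is fine, and the overall strategy---linearity plus Lemma~\ref{l_exp}---is the paper's. But the further reduction to positive $G$ has a gap: to apply Lemma~\ref{l_exp} to $G_+$ and $G_-$ separately you would need each of them to be weakly $\varphi$-modulated with respect to $V$, and nothing guarantees this. The class of weakly $\varphi$-modulated operators is a left ideal, not closed under taking absolute values or positive parts; there is no evident reason why $G_+V^{-1/p}\in\Lw^{(p/(p-1))}$ should follow from $GV^{-1/p}\in\Lw^{(p/(p-1))}$.

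The repair is to reverse the order of your two steps. Apply Lemma~\ref{l_exp} once, to the self-adjoint $G$ itself (this is legitimate), obtaining
\[
\sum_{k=0}^n \lambda(k,G) - \sum_{k=0}^n \langle Ge_k,e_k\rangle = o(\Phi(n)).
\]
What remains is the Lidskii-type identity
\[
{\rm Tr}_\omega(G) = \omega\!\left(\frac{1}{\Phi(n+1)}\sum_{k=0}^n \lambda(k,G)\right)
\]
for \emph{every} self-adjoint $G\in\Lw$, with no modulation hypothesis. Here the decomposition $G=G_+-G_-$ is the right tool: among $\lambda(0,G),\dots,\lambda(n,G)$ there are $n_+$ positive and $n_-$ negative eigenvalues with $n_++n_-=n+1$, so
\[
\sum_{k=0}^n\lambda(k,G)=\sum_{k=0}^{n_+-1}\mu(k,G_+)-\sum_{k=0}^{n_--1}\mu(k,G_-).
\]
The discrepancies $\sum_{k=n_\pm}^{n}\mu(k,G_\pm)$ have at most $n+1$ terms, each bounded by $\mu(n+1,G)=O(\varphi(n))$, hence are $O((n+1)\varphi(n))=o(\Phi(n))$ by Proposition~\ref{regvarresult}(i). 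Thus $\sum_{k=0}^n\lambda(k,G)=\sum_{k=0}^n\mu(k,G_+)-\sum_{k=0}^n\mu(k,G_-)+o(\Phi(n))$, and applying $\omega$ yields ${\rm Tr}_\omega(G_+)-{\rm Tr}_\omega(G_-)={\rm Tr}_\omega(G)$ as required.
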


The proof of Theorem \ref{weaklymodform} follows from linearity of Dixmier traces and Lemma \ref{l_exp}; it is the same as that of~\cite[Theorem 2.18]{GG} and is therefore omitted. 

\begin{rem}
The reader should beware of the fact that for any $G\in \Lw$ there is a $V\in \Lw$ with ordered eigenbasis $(e_n)_{n\in \N}$ such that for all extended limits $\omega$,
$$\omega \left(\frac1{\Phi(n+1)} \sum_{k=0}^n \langle Ge_k, e_k\rangle\right)=0.$$
This statement depends only on the existence of the ON-basis, and the existence proof can be found in \cite[Corollary 7.5.3]{LSZ}. As such, some compatibility between $G$ and $V$ is in general required to have a formula as in Theorem \ref{weaklymodform}. In particular, this compatibility is guaranteed when $G$ is weakly $\varphi$-modulated with respect to $V$.
\end{rem}

The formula in Theorem \ref{weaklymodform} works in quite large generality, but is difficult to compute. We now turn to a stronger condition which guarantees improved formulas. 

\section{Strongly modulated operators}
\label{subsec:stronglymod}

The following definition extends the notion of modulated operators~\cite[Definition 11.2.1]{LSZ} from $\mL_{1,\infty}$ to more general ideals $\Lw$. The reader will note that we have added a number of new adjectives to the terminology of \cite{LSZ}, all in the purpose of clearer terminology in the more convoluted world of general weak ideals. To shorten notation, we let $\|\cdot\|_p$ denote the $p$:th Schatten class norm.

\begin{dfn}
\label{mod}
Let $V \in \mathbb{B}(\mathcal{H})$ be a positive operator and $\varphi\in R_{-1}$ be a decreasing function. An operator $G\in \mathbb{B}(\mathcal{H})$ is said to be \emph{strongly $\varphi$-modulated} with respect to $V$ if
$$\|G\|_{\varphi,V}:=\sup_{t>0} \frac{\| G(1+tV)^{-1}\|_{2}}{\sqrt{\varphi(t)}}  < \infty.$$
\end{dfn}

When $\varphi(t)=\frac1{t+1}$, and $\Lw=\mathcal{L}_{1,\infty}$, a strongly $\varphi$-modulated operator with respect to $V$ is called a $V$-modulated operator in the terminology of \cite{LSZ}, see~\cite[Definition 11.2.1]{LSZ}.

The $\varphi$-modulated operators with respect to $V$ can be characterized in terms of the spectral measure of $V$. We define the \emph{spectrally $\varphi$-modulated} norm with respect to $V$ as:
$$\|G\|_{\varphi,V, spec}:=\sup_{t>0} \frac{\|GE_V[0,t^{-1}]\|_2}{\sqrt{\varphi(t)}}.$$
If $G$ satisfies $\|G\|_{\varphi,V, spec}<\infty$, we say that $G$ is spectrally $\varphi$-modulated with respect to $V$. The proof of the following equivalence between $\|\cdot\|_{\varphi,V, spec}$ and $\|\cdot\|_{\varphi,V}$ is a modified version of~\cite[Lemma 11.2.5]{LSZ}.

\begin{lem}
\label{spec_char}
Let $V$ be a positive operator and $\varphi\in R_{-1}$ be a decreasing function. An operator $G \in \mathcal L_2$ is spectrally $\varphi$-modulated with respect to $V$ if and only if it is strongly $\varphi$-modulated with respect to $V$. In fact, there is a constant $C=C(V,\varphi)>0$ such that 
$$\frac{1}{C}\|G\|_{\varphi,V, spec}\leq \|G\|_{\varphi,V}\leq C\|G\|_{\varphi,V, spec}.$$
\end{lem}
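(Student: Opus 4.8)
The statement is an equivalence of two modulation norms defined via $G(1+tV)^{-1}$ on one hand and $GE_V[0,t^{-1}]$ on the other. The plan is to model the proof on \cite[Lemma 11.2.5]{LSZ}, adapting it from $\varphi(t)=1/(1+t)$ to a general $\varphi\in R_{-1}$. The two inequalities are proved separately by comparing the bounded functions $t\mapsto (1+tV)^{-1}$ and $t\mapsto E_V[0,t^{-1}]$ on the spectrum of $V$ using functional calculus, and by using the quasi-regularity of $\varphi$ (condition \eqref{quasi}) together with $\varphi\in R_{-1}$ to absorb constants coming from dyadic manipulations.

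\textbf{Step 1: $\|G\|_{\varphi,V,spec}\lesssim \|G\|_{\varphi,V}$.} On the range of $E_V[0,t^{-1}]$ one has $(1+tV)^{-1}\geq (1+t\cdot t^{-1})^{-1}=\tfrac12$ by the spectral theorem, hence the operator inequality $E_V[0,t^{-1}]\leq 2(1+tV)^{-1}E_V[0,t^{-1}]$, which upon multiplying by $G$ on the left and $G^*$ on the right (or rather via $\|GE_V[0,t^{-1}]\|_2 = \|\,|G|E_V[0,t^{-1}]\,\|_2$ and comparing positive operators) gives $\|GE_V[0,t^{-1}]\|_2\leq 2\|G(1+tV)^{-1}\|_2$. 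Dividing by $\sqrt{\varphi(t)}$ and taking the supremum over $t>0$ yields this direction with $C$ independent of everything but a universal constant; in particular this half does not even use $\varphi\in R_{-1}$.

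\textbf{Step 2: $\|G\|_{\varphi,V}\lesssim \|G\|_{\varphi,V,spec}$.} Here one decomposes $(1+tV)^{-1}$ over the dyadic spectral pieces $E_V(2^{-k-1}t^{-1},2^{-k}t^{-1}]$, $k\geq 0$, plus $E_V[0,2^{-N}t^{-1}]$ in the limit. On the $k$-th piece $(1+tV)^{-1}\leq (1+2^{-k-1})^{-1}\leq 2^{k+1}$ as an operator bound... but more efficiently one writes, with $t_k:=2^k t$, that $(1+tV)^{-1}E_V(2^{-k-1}t^{-1},2^{-k}t^{-1}] \leq 2\cdot 2^{-k} E_V[0,t_{k}^{-1}]\cdot(\text{something bounded})$, so that $\|G(1+tV)^{-1}\|_2\leq \sum_{k\geq 0} c\,2^{-k}\|GE_V[0,2^kt^{-1}]\|_2 \leq c\,\|G\|_{\varphi,V,spec}\sum_{k\geq 0}2^{-k}\sqrt{\varphi(2^kt)}$ (here one must be a little careful: $(1+tV)^{-1}$ restricted to the dyadic shell is comparable to $2^{-k}$ times a projection, and summing the square-summable pieces requires an orthogonality/triangle-inequality argument in $\mathcal{L}_2$, exactly as in \cite{LSZ}). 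Finally, since $\varphi\in R_{-1}$, $\sqrt{\varphi}\in R_{-1/2}$, so $\sqrt{\varphi(2^kt)}\leq C' 2^{-k/2}\sqrt{\varphi(t)}$ for all $t$ (uniformly, using monotonicity of $\varphi$ together with \eqref{quasi} to get a global, not merely asymptotic, bound), whence $\sum_k 2^{-k}\sqrt{\varphi(2^kt)}\leq C'\sqrt{\varphi(t)}\sum_k 2^{-3k/2}<\infty$. Dividing by $\sqrt{\varphi(t)}$ and taking $\sup_t$ finishes this direction.

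\textbf{Main obstacle.} The delicate point is Step 2: in \cite{LSZ} the series that appears is $\sum_k 2^{-k}\sqrt{1/(1+2^kt)}$, which converges for the specific reason that $1/(1+t)$ decays; for general $\varphi$ one must replace this with the uniform bound $\sqrt{\varphi(2^kt)}\leq C'2^{-k/2}\sqrt{\varphi(t)}$, and the subtlety is that $\varphi\in R_{-1}$ only gives $\varphi(2t)/\varphi(t)\to 1/2$ asymptotically, whereas we need the estimate for \emph{all} $t>0$ and all $k\geq 0$. This is where one invokes \eqref{quasi}: it gives $\varphi(t)/\varphi(2t)\leq D$ for all $t$, hence $\varphi(2^kt)\geq D^{-k}\varphi(t)$ is not what we want — we want an \emph{upper} bound $\varphi(2^kt)\leq (\text{decaying})\varphi(t)$. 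Since $\varphi$ is decreasing, $\varphi(2^kt)\leq \varphi(t)$ trivially, which already makes $\sum_k 2^{-k}\sqrt{\varphi(2^kt)}\leq \sqrt{\varphi(t)}\sum_k 2^{-k} = 2\sqrt{\varphi(t)}$; so in fact the crude bound suffices and one does not need the sharp $R_{-1}$ asymptotics at all for this lemma. The care required is thus mostly bookkeeping: setting up the dyadic decomposition of the resolvent against the spectral projections and justifying the $\mathcal{L}_2$-triangle inequality for the (infinitely many, mutually orthogonal) shells, exactly as in the model lemma.
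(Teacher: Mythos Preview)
Your Step 1 is correct and is exactly the easy direction the paper omits as a straightforward repetition of \cite[Lemma~11.2.5]{LSZ}.

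Your Step 2 has a genuine gap: the dyadic decomposition is set up in the wrong direction. The shells $E_V(2^{-k-1}t^{-1},2^{-k}t^{-1}]$ for $k\ge0$ cover only the region $\{V\le t^{-1}\}$; on each of them $tV\in(2^{-k-1},2^{-k}]$, so $(1+tV)^{-1}\in[1/2,1)$ --- it is \emph{not} comparable to $2^{-k}$, and the region $\{V>t^{-1}\}$ is not covered at all. The paper instead fixes the dyadic grid on the spectrum of $V$ itself (assuming $V\le1$ WLOG): for $t\in[2^k,2^{k+1})$ one writes
\[
\|G(1+tV)^{-1}\|_2\ \le\ \|GE_V[0,2^{-k}]\|_2+\sum_{j=1}^{k-1}\|GE_V(2^{-j-1},2^{-j}](1+tV)^{-1}\|_2,
\]
and on the $j$-th shell $(1+tV)^{-1}\le 2^{-(k-j-1)}$ while $\|GE_V(2^{-j-1},2^{-j}]\|_2\le\|GE_V[0,2^{-j}]\|_2\le\|G\|_{\varphi,V,spec}\sqrt{\varphi(2^j)}$. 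This produces the sum $2^{-k}\sum_{j=1}^{k-1}2^{j}\sqrt{\varphi(2^j)}$, which must be shown to be $O(\sqrt{\varphi(2^k)})$.

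This is also where your final conclusion goes wrong. In the correct sum the arguments $2^j$ of $\varphi$ are \emph{smaller} than $2^k$, so monotonicity gives $\varphi(2^j)\ge\varphi(2^k)$ --- a lower bound, useless here. One genuinely needs $\varphi\in R_{-1}$ (the paper invokes Lemma~\ref{regvarresultdiscrete}(i) with $\alpha=1$, $\beta=1/2$) to conclude $\sum_{j=1}^{k-1}2^j\sqrt{\varphi(2^j)}=O(2^k\sqrt{\varphi(2^k)})$. Your claim that the crude monotone bound suffices is an artifact of having the shells reversed.
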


\begin{proof} Let $G$ be spectrally $\varphi$-modulated with respect to $V$.
Without loss of generality we assume that $V\le 1$. Let $t\in [2^k, 2^{k+1})$ for some $k\ge 0$.
We estimate
\begin{align*}
\| G(1+tV)^{-1}\|_{2} &\le \|GE_V[0,2^{-k}]\|_2+\sum_{j=1}^{k-1} \|GE_V(2^{-j-1},2^{-j}](1+tV)^{-1}\|_{2}\\
&\le \|G\|_{\varphi,V, spec}\sqrt{\varphi(2^k)}+\sum_{j=1}^{k-1} (1+t2^{-j-1})^{-1}\|GE_V(2^{-j-1},2^{-j}]\|_{2}\\
&\le  \|G\|_{\varphi,V, spec}\left(\sqrt{\varphi(2^k)}+\sum_{j=1}^{k-1} 2^{-k+j+1} \sqrt{\varphi(2^j)}\right).
\end{align*}
Since the function $\varphi$ is bounded and belongs to $R_{-1}$, Lemma \ref{regvarresultdiscrete} implies that
\begin{align*}
\sum_{j=1}^{k-1} 2^{-k+j+1} \sqrt{\varphi(2^j)}&= 2^{-k+1} \sum_{j=1}^{k-1} 2^{j} \sqrt{\varphi(2^j)}\\
&= O(1) 2^{-k+1} 2^{k-1} \sqrt{\varphi(2^{k-1})} = O(\sqrt{\varphi(t)}), \ t\to \infty,
\end{align*}
since $\varphi\in R_{-1}$.

Summing up, for a suitable constant $C$, we can estimate
$$\| G(1+tV)^{-1}\|_{2} \leq C\|G\|_{\varphi,V, spec}\sqrt{\varphi(t)}, \ t>0.$$
That is, $G$ is strongly $\varphi$-modulated with respect to $V$ if $G$ is spectrally $\varphi$-modulated with respect to $V$.

The converse implication is a straightforward repetition of ~\cite[Lemma 11.2.5]{LSZ} and therefore omitted.
\end{proof}

The following lemma is an analogue of~\cite[Lemma 11.2.6]{LSZ}.

\begin{lem}
\label{VAmod}
Let $V\in \mathbb{B}(\mathcal{H})$ be a positive operator, $A\in\mathbb{B}(\mathcal{H})$ and let $\varphi\in R_{-1}$ be decreasing. If an operator $G \in \mathcal L_2$ is strongly $\varphi$-modulated with respect to $V$, then $GA$ is strongly $\varphi$-modulated with respect to $|VA|$.
\end{lem}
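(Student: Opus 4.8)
The plan is to follow the strategy of \cite[Lemma 11.2.6]{LSZ}, adapting it to the general weak ideal setting. The goal is to control $\|GA(1+t|VA|)^{-1}\|_2$ by $\sqrt{\varphi(t)}$, knowing that $\|G(1+sV)^{-1}\|_2 \le \|G\|_{\varphi,V}\sqrt{\varphi(s)}$ for all $s>0$. First I would reduce to estimating $\|G\cdot A(1+t|VA|)^{-1}\|_2$; the natural idea is to insert a resolvent of $V$ by writing $A(1+t|VA|)^{-1}$ in a form that factors through $(1+sV)^{-1}$ for a suitable $s$ comparable to $t$, since then the strong $\varphi$-modulation hypothesis on $G$ with respect to $V$ applies and $\varphi(s)\asymp \varphi(t)$ by \eqref{quasi} (or by $\varphi\in R_{-1}$).

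The key algebraic step is the operator identity relating $|VA|$ and $V$. Write $A = U|A|$ (polar decomposition is not quite what is needed); rather, the useful observation is that $|VA|^2 = A^*V^2A$, so $A(1+t|VA|)^{-1}$ should be compared with $(1+tV)^{-1}A$ up to bounded factors. Concretely, I expect to use that there is a partial isometry (or contraction) $W$ with $VA = W|VA|$, hence $VA(1+t|VA|)^{-1} = W|VA|(1+t|VA|)^{-1}$, and $\||VA|(1+t|VA|)^{-1}\|_\infty \le t^{-1}$. Then one writes
\begin{align*}
GA(1+t|VA|)^{-1} &= G(1+tV)^{-1}\bigl[(1+tV)A(1+t|VA|)^{-1}\bigr]\\
&= G(1+tV)^{-1}\bigl[A(1+t|VA|)^{-1} + tVA(1+t|VA|)^{-1}\bigr]\\
&= G(1+tV)^{-1}\bigl[A(1+t|VA|)^{-1} + tW|VA|(1+t|VA|)^{-1}\bigr].
\end{align*}
The first bracketed term has operator norm at most $\|A\|_\infty$ and the second at most $\|A\|_\infty$ as well (using $\|tW|VA|(1+t|VA|)^{-1}\|_\infty \le \|W\|_\infty \le 1$ after noting $t|VA|(1+t|VA|)^{-1}\le 1$, together with $VA=W|VA|$ giving the extra $A$-bound absorbed into $W$—more carefully $\|tVA(1+t|VA|)^{-1}\|_\infty\le \|A\|_\infty$ is not immediate, so one instead bounds $\|tVA(1+t|VA|)^{-1}\|_\infty$ using $VA=W|VA|$ directly). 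Taking the Hilbert--Schmidt norm and using $\|XY\|_2\le \|X\|_2\|Y\|_\infty$ gives $\|GA(1+t|VA|)^{-1}\|_2 \le \|G(1+tV)^{-1}\|_2\cdot C\|A\|_\infty \le C\|A\|_\infty\|G\|_{\varphi,V}\sqrt{\varphi(t)}$, which is exactly the claim with $\|GA\|_{\varphi,|VA|}\le C\|A\|_\infty\|G\|_{\varphi,V}$.

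The main obstacle I anticipate is the bound on the term $tVA(1+t|VA|)^{-1}$ in operator norm: unlike the commutative-looking manipulation, $V$ and $|VA|$ do not commute, so one cannot naively say $tVA(1+t|VA|)^{-1}$ is bounded by writing it as a function of $|VA|$. The resolution is the polar-type identity $VA = W|VA|$ with $W$ a contraction (this is the standard fact that $\|VAx\| = \||VA|x\|$), which turns the awkward product into $tW|VA|(1+t|VA|)^{-1}$, and now $t|VA|(1+t|VA|)^{-1} = 1 - (1+t|VA|)^{-1}$ is a bounded function of $|VA|$ with norm $\le 1$. Once this identity is in hand the rest is routine: the only remaining point is to make sure the Hilbert--Schmidt membership $GA\in\mathcal{L}_2$ holds, which follows since $G\in\mathcal{L}_2$ and $A$ is bounded, so $GA\in\mathcal{L}_2$ and the finiteness of the supremum defining $\|GA\|_{\varphi,|VA|}$ is what the above estimate provides.
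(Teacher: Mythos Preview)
Your argument is correct and is genuinely different from the paper's proof. The paper does not work directly with the resolvent $(1+t|VA|)^{-1}$; instead it first invokes Lemma~\ref{spec_char} to pass to the spectral characterization, sets $p_k=E_V[0,2^{-k}]$ and $q_k=E_{|VA|}[0,2^{-k}]$, and uses the telescoping decomposition $GAq_k=Gp_kAq_k+\sum_{j=1}^k G(p_{j-1}-p_j)Aq_k$ together with the estimate $\|(1-p_j)Aq_k\|_\infty\le 2^{j-k}$. The resulting sum $\sum_{j=1}^k 2^{j-k}\sqrt{\varphi(2^{j-1})}$ is then controlled by $\sqrt{\varphi(2^k)}$ via Lemma~\ref{regvarresultdiscrete}, which is where the hypothesis $\varphi\in R_{-1}$ is actually used.

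Your route via the identity $GA(1+t|VA|)^{-1}=G(1+tV)^{-1}\bigl[A(1+t|VA|)^{-1}+tW|VA|(1+t|VA|)^{-1}\bigr]$ with $VA=W|VA|$ the polar decomposition is more elementary: it avoids the dyadic summation entirely and, notably, never uses that $\varphi\in R_{-1}$. (Your aside about needing $\varphi(s)\asymp\varphi(t)$ for comparable $s,t$ turns out to be unnecessary since you take $s=t$.) A small cleanup: the second bracketed term has operator norm at most $1$, not $\|A\|_\infty$, since $\|W\|_\infty\le 1$ and $\|t|VA|(1+t|VA|)^{-1}\|_\infty\le 1$; so the final constant is $1+\|A\|_\infty$ rather than $C\|A\|_\infty$. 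What the paper's approach buys is consistency with the spectral-projection machinery used throughout Section~\ref{subsec:stronglymod}; what your approach buys is a shorter proof that in fact establishes the lemma under the weaker standing assumption~\eqref{quasi} alone.
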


\begin{proof}
Without loss of generality we assume that $\|V\|_\infty\le 1$ and $\|A\|_\infty\le 1$. Set $p_k=E_V[0,2^{-k}]$ and $q_k=E_{|VA|}[0,2^{-k}]$, $k\ge 0$.
We have
$$\|(1-p_j)Aq_k\|_\infty \le 2^j \|VAq_k\|_\infty \le 2^{j-k}, \quad 
\|Gp_j\|_2 \le c\cdot \sqrt{\varphi(2^{j})},$$
by Lemma~\ref{spec_char}.

Thus,
\begin{align*}
\|GAq_k\|_2 &\le \|Gp_kAq_k\|_2+\sum_{j=1}^k \|G(p_{j-1}-p_j)Aq_k\|_2\\
&\le\|Gp_k\|_2+\sum_{j=1}^k \|Gp_{j-1}\|_2\cdot \|(1-p_j)Aq_k\|_\infty\\
&\le C \cdot \left( \sqrt{\varphi(2^{k})}+ \sum_{j=1}^k \sqrt{\varphi(2^{j-1})}\cdot 2^{j-k}\right)
\end{align*}

Using~\eqref{c1} and Lemma~\ref{regvarresultdiscrete} we obtain
$$\sum_{j=1}^k \sqrt{\varphi(2^{j-1})}\cdot 2^{j-k} \le C 2^{-k}\sum_{j=1}^k \sqrt{\varphi(2^{j})}\cdot 2^{j} = O(\sqrt{\varphi(2^{k})}), \ k\to \infty.$$

Thus,
$$\|GAq_k\|_2 \le C \sqrt{\varphi(2^{k})}, \ k\ge0.$$
By Lemma~\ref{spec_char} the operator $GA$ is strongly $\varphi$-modulated with respect to $|VA|$.
\end{proof}

\begin{rem}
It follows from the definition that the set of operators being strongly/spectrally $\varphi$-modulated with respect to $V$ forms a left ideal in $\mathbb{B}(\mathcal{H})$.
\end{rem}

Next we will use Lemma~\ref{spec_char} to show that every strongly $\varphi$-modulated operators is, in fact, weakly $\varphi$-modulated. To do so, we need an additional assumption.

\begin{dfn}
\label{someavass}
We say that the function $\varphi:[0,\infty)\to (0,\infty)$ satisfies \emph{property (W)} if there exist positive constants $C_1$ and $C_2$ such that
\begin{enumerate}
\item[(W1)]$\displaystyle C_1 t^2 \varphi(t)\le \varphi^{-1}\left(\frac{1}{t}\right), \ t\to \infty;$
\item[(W2)]$\displaystyle \varphi^{-1}\left(\frac{1}{t}\right) \le C_2 t^2 \varphi(t), \ t\to \infty.$
\end{enumerate}
\end{dfn}

We remark that property (W) is solely used for relating different notions of being modulated and to prove symbol properties of certain modulated operators.

\begin{rem}
Property (W) relates heat trace asymptotics and the Weyl law through the Karamata Theorem (see e.g. Example~\ref{fromGS} above and ~\cite[Chapter IV, Theorem 8.1]{Korevaar}).
\end{rem}

\begin{example}
The reader can easily verify that $\varphi(t)=\frac{1}{\e+t}$, satisfies property (W). With some more work, one shows the same for $\varphi(t)=\frac{\log^k(\e+t)}{\e+t}$, $k\in \Z$. The situation is more interesting for the function $\varphi=\Phi'$, $\Phi(t) = e^{\log^\beta t}$: for $0<\beta \le 1/2$ it satisfies (W1) but does not satisfy (W2); for $1/2<\beta<1$ it satisfies (W2) but does not satisfy (W1).
\end{example}

Before proving that strongly modulated operators are weakly modulated, we first need a lemma relating singular values to the spectral modulation norm.

\begin{lem}
\label{singvalueest}
Let $\varphi$ satisfy Condition \eqref{quasi}. For any $V\in \mathcal{L}_\varphi$, there is a constant $C=C_V$ such that for any $S\in \mathcal L_2$,
$$\mu(n,S)\leq n^{-1/2}\|SE_V[0,C\varphi(n)]\|_{\mathcal{L}_2},\quad\forall n\geq 1.$$
\end{lem}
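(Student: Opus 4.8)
The strategy is to bound $\mu(n,S)$ by first controlling the "low-spectral" part $SE_V[0,C\varphi(n)]$ of $S$ and then controlling the complementary "high-spectral" part $SE_V(C\varphi(n),\infty)$ by showing it has rank at most $n$, so that it does not affect the $n$-th singular value. To this end, I would start from the quasi-triangle (in fact, exact) inequality for singular values
$$\mu(n,S)\le \mu(n,SE_V[0,C\varphi(n)])+\mu(0,SE_V(C\varphi(n),\infty)),$$
which is valid because $S=SE_V[0,C\varphi(n)]+SE_V(C\varphi(n),\infty)$ and $\mu(n,\cdot)$ is subadditive in the form $\mu(m+k,X+Y)\le \mu(m,X)+\mu(k,Y)$ applied with $k=0$. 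Actually it is cleaner to use the Fan-type estimate: if $P$ is a projection of rank $\le n$, then $\mu(n,S)\le \mu(0,S(1-P))=\|S(1-P)\|_\infty$. Hence the whole point will be to choose $C=C_V$ so that $E_V(C\varphi(n),\infty)$ has rank $\le n$; then $1-P:=E_V[0,C\varphi(n)]$ and $\mu(n,S)\le \|SE_V[0,C\varphi(n)]\|_\infty$.

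To get the constant: since $V\in\mathcal{L}_\varphi$, we have $\mu(k,V)\le \|V\|_{\mathcal{L}_\varphi}\varphi(k)$ for all $k$. Using condition \eqref{quasi}, $\varphi(k)\le \varphi(n)\cdot\sup_{t>0}\frac{\varphi(t)}{\varphi(2t)}=:c_0\varphi(n)$ whenever $k\le 2n$ (more precisely $\mu(\lfloor k/2\rfloor\cdot\text{something})$; one iterates \eqref{quasi} a bounded number of times). Thus setting $C:=c_0\|V\|_{\mathcal{L}_\varphi}$ we obtain $\mu(n,V)\le C\varphi(n)$, which means the spectral projection $E_V(C\varphi(n),\infty)$ onto eigenvalues strictly above $C\varphi(n)$ has rank at most $n$. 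Consequently $\mu(n,S)\le \|SE_V[0,C\varphi(n)]\|_\infty\le \|SE_V[0,C\varphi(n)]\|_{\mathcal{L}_2}$, and this already gives the claimed bound without the $n^{-1/2}$ factor — so the $n^{-1/2}$ must be squeezed out by a sharper argument.

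To recover the extra $n^{-1/2}$, I would not estimate $\mu(n,S)$ by $\mu(0,\cdot)$ of the truncation but rather exploit that $SE_V[0,C\varphi(n)]$ has many comparably small singular values. Fix the rank-$n$ projection $P_n:=E_V(C\varphi(n),\infty)$ and write $T:=SE_V[0,C\varphi(n)]=S(1-P_n)$. For any $m\ge 0$ one has $\mu(n+m,S)\le \mu(m,T)$ since $S-T=SP_n$ has rank $\le n$. Summing the obvious bound $m\,\mu(2m,T)^2\le \sum_{k=m}^{2m-1}\mu(k,T)^2\le \|T\|_{\mathcal{L}_2}^2$ gives $\mu(2m,T)\le (2m)^{-1/2}\sqrt{2}\,\|T\|_{\mathcal{L}_2}$; combined with $\mu(n+m,S)\le\mu(m,T)$ and choosing $m\asymp n$ (say $m=n$, using $\mu(2n,S)\le\mu(n,T)$ and then relating $\mu(2n,S)$ to $\mu(n,S)$ via \eqref{quasi}-type control on $S$ — but $S$ need not lie in $\mathcal{L}_\varphi$, so instead apply the summation bound directly to get $\mu(2n,S)\le\mu(n,T)\le n^{-1/2}\|T\|_{\mathcal{L}_2}$ and then simply re-index). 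Cleanest: from $S-T$ rank $\le n$, $\mu(2n,S)\le \mu(n,T)$; and $n\,\mu(n,T)^2\le \sum_{k=0}^{n-1}\mu(k,T)^2\le\|T\|_{\mathcal{L}_2}^2$, so $\mu(2n,S)\le n^{-1/2}\|T\|_{\mathcal{L}_2}$. Reindexing $2n\rightsquigarrow n$ (and absorbing the harmless factor from $\lfloor n/2\rfloor$ plus adjusting $C$ via \eqref{quasi}) yields the statement. The main obstacle is bookkeeping: ensuring that the single constant $C=C_V$ can be chosen uniformly in $n$ so that simultaneously $\mathrm{rank}\,E_V(C\varphi(n),\infty)\le n$ (or $\le \lfloor n/2\rfloor$, to make the reindexing exact) — this is exactly where hypothesis \eqref{quasi} on $\varphi$ is used, to compare $\varphi(n)$ with $\varphi(\lfloor n/2\rfloor)$ up to a fixed multiplicative constant.
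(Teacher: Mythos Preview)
Your proposal is correct and follows essentially the same route as the paper: both use that $E_V(C\varphi(n),\infty)$ has rank at most $n$ (from $V\in\mathcal{L}_\varphi$), combine this with the elementary averaging bound $n\,\mu(2n,S)^2\le\sum_{k>n}\mu(k,S)^2\le\|SE_V[0,C\varphi(n)]\|_{\mathcal{L}_2}^2$, and then reindex $2n\rightsquigarrow n$ using condition \eqref{quasi}. The paper's version is marginally more direct---it bounds the tail $\sum_{k>n}\mu(k,S)^2$ via the variational characterisation $\inf_{\mathrm{rk}(A)\le n}\|S-A\|_{\mathcal{L}_2}^2$ rather than first passing through $\mu(2n,S)\le\mu(n,T)$---but the substance is identical.
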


\begin{proof}
We can assume that $\mu(k,V)\leq \varphi(k)$ for all $k$. In this case, $\mathrm{rk}(1-E_V[0,\varphi(n)])\leq n$. We have that 
\begin{align*} 
n\mu(2n,S)^2&\leq \sum_{k=n+1}^{2n}\mu(k,S)^2\leq \sum_{k=n+1}^{\infty}\mu(k,S)^2=\\
&=\inf\{\|S-A\|_{\mathcal{L}_2}^2: \mathrm{rk}(A)\leq n\} \leq \|SE_V[0,\varphi(n)]\|_{\mathcal{L}_2}^2.
\end{align*}
The lemma follows from this inequality and Equation \eqref{quasi}.
\end{proof}

The following result is an extension of~\cite[Lemma 11.2.9]{LSZ}.

\begin{lem}
\label{mod_w-mod}
Assume that $\varphi\in R_{-1}$ has property (W1) (see Definition \ref{someavass}). Let $V \in \Lw$ be strictly positive. Whenever $G$ is spectrally $\varphi$-modulated with respect to $V$, $G$ is weakly $\varphi$-modulated with respect to $V$. 
\end{lem}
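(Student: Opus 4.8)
The goal is to show that $G$ being spectrally $\varphi$-modulated with respect to $V$ implies $G$ is weakly $\varphi$-modulated with respect to $V$, i.e.\ that there is a $p\geq 1$ with $GV^{-1/p}\in \Lw^{(p/(p-1))}$. The natural candidate, in parallel with \cite[Lemma 11.2.9]{LSZ}, is to take $p=2$, so that we must show $GV^{-1/2}\in \Lw^{(2)}=\mathcal{L}_{\sqrt{\varphi}}$, which by definition of the convexification amounts to the singular value estimate $\mu(n,GV^{-1/2})^2=O(\varphi(n))$ as $n\to\infty$. First I would record that $GV^{-1/2}$ really is bounded (indeed Hilbert-Schmidt on each spectral band) so that the singular value language makes sense; this follows from spectral modulation since $GE_V[0,t^{-1}]\in \mathcal{L}_2$ for each $t$.

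The heart of the argument is to feed $S:=GV^{-1/2}$ into Lemma~\ref{singvalueest}. That lemma gives $\mu(n,S)\leq n^{-1/2}\|SE_V[0,C\varphi(n)]\|_{\mathcal{L}_2}$, so I must estimate $\|GV^{-1/2}E_V[0,C\varphi(n)]\|_{\mathcal{L}_2}$. Here the plan is a dyadic decomposition of the spectral interval: write $E_V[0,C\varphi(n)]$ as the sum of $E_V(2^{-j-1}C\varphi(n),2^{-j}C\varphi(n)]$ over $j\geq 0$ (plus the piece below), and on the $j$-th band bound $V^{-1/2}$ by roughly $(2^{-j}C\varphi(n))^{-1/2}$ while bounding $\|GE_V[0,2^{-j}C\varphi(n)]\|_{\mathcal{L}_2}$ by $\|G\|_{\varphi,V,spec}\sqrt{\varphi(t_j)}$ where $t_j$ is chosen so that $t_j^{-1}=2^{-j}C\varphi(n)$, i.e.\ $t_j=2^j/(C\varphi(n))$. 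The product of these two factors is, up to constants, $\sqrt{\varphi(t_j)/(2^{-j}\varphi(n))}$. This is exactly where property (W1) enters: (W1) says $\varphi^{-1}(1/t)\gtrsim t^2\varphi(t)$, equivalently $\varphi(\lambda)\lesssim \lambda^{-2}\varphi^{-1}(1/\lambda)$ type control, which converts $\varphi(t_j)$ — where $t_j$ is large of order $2^j/\varphi(n)$ — into something comparable to $\varphi(n)\cdot 2^{-2j}\cdot(\text{correction})$, so that each band contributes a geometric factor $2^{-j/2}$ (or similar summable power), making the $j$-sum converge and yielding $\|GV^{-1/2}E_V[0,C\varphi(n)]\|_{\mathcal{L}_2}^2=O(\varphi(n))$. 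Combining with the $n^{-1/2}$ prefactor gives $\mu(n,GV^{-1/2})^2=O(n^{-1}\cdot n\varphi(n))$... let me instead track it honestly: Lemma~\ref{singvalueest} already has the $n^{-1/2}$, so $\mu(n,S)\leq n^{-1/2}O(\sqrt{\varphi(n)}\cdot\sqrt{?})$ — the precise bookkeeping is that $\|SE_V[0,C\varphi(n)]\|_{\mathcal{L}_2}^2$ should come out as $O(n\varphi(n))$, not $O(\varphi(n))$, because $V^{-1/2}$ on the band $[0,C\varphi(n)]$ is as large as $(C\varphi(n))^{-1/2}$ and there are about $n$ eigenvalues there; then $n^{-1/2}\cdot\sqrt{n\varphi(n)}=\sqrt{\varphi(n)}$, which is exactly $\mu(n,S)=O(\sqrt{\varphi(n)})$ as wanted. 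The dyadic estimate above with (W1) is what controls the sum over bands so that the total is $O(n\varphi(n))$ rather than worse.

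The main obstacle I anticipate is the band-by-band $\mathcal{L}_2$-estimate and the accounting with regular variation. One must be careful that $t_j\to\infty$ uniformly enough in $n$ to apply (W1) and $\varphi\in R_{-1}$ on each band, and that the geometric series in $j$ (after applying (W1)) really sums — the exponent produced by $(2^j)^{-2}$ from (W1) against the $(2^{-j})^{-1/2}$ from $V^{-1/2}$ and the $\sqrt{\varphi}$ monotonicity must land on the summable side; this is where property (W1), as opposed to just $\varphi\in R_{-1}$, is essential, since for $\varphi(t)=1/t$ these balance exactly (the ``coincidental property'' the authors warn about). A secondary technical point is passing from $\|SE_V[0,C\varphi(n)]\|_{\mathcal{L}_2}$ with the particular constant $C=C_V$ from Lemma~\ref{singvalueest} to the spectral modulation norm, which is harmless since $\|G\|_{\varphi,V,spec}$ controls $\|GE_V[0,s]\|_2$ for \emph{all} $s$ after rewriting $s=t^{-1}$ and using that $\varphi\in R_{-1}$ makes $\varphi(t^{-1})$-type quantities comparable up to constants under the scaling by $C_V$. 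Once $\mu(n,GV^{-1/2})^2=O(\varphi(n))$ is established, $GV^{-1/2}\in\mathcal{L}_{\sqrt{\varphi}}=\Lw^{(2)}=\Lw^{(p/(p-1))}$ with $p=2$, so $G$ is weakly $\varphi$-modulated with respect to $V$ by Definition~\ref{weaklymod}, completing the proof.
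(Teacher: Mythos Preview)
Your overall architecture --- feed $S=GV^{-1/p}$ into Lemma~\ref{singvalueest}, then estimate $\|GV^{-1/p}E_V[0,\cdot]\|_2$ by a dyadic decomposition over spectral bands of $V$ --- matches the paper. The gap is your choice $p=2$. With $p=2$ the band-by-band contribution is, up to constants, $2^{j/2}\sqrt{\varphi(2^j)}$, and for $\varphi\in R_{-1}$ this is a \emph{critical} series: for $\varphi(t)=1/(1+t)$ each term equals $1$, and more generally Potter bounds only give $\varphi(2^j s)\le C_\epsilon 2^{-j(1-\epsilon)}\varphi(s)$, so after the square root and the $2^{j/2}$ from $V^{-1/2}$ you are left with $2^{j\epsilon/2}$, which diverges. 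Property (W1) does not rescue this: (W1) is a relation between $t^2\varphi(t)$ and $\varphi^{-1}(1/t)$, and pushing it through your $t_j=2^j/(C\varphi(n))$ still only yields $\varphi(t_j)\lesssim n\varphi(n)^2\cdot 2^{-j(1-\epsilon)}$ via regular variation of $\varphi^{-1}$, reproducing the same non-summable $2^{j\epsilon/2}$. So the ``geometric factor $2^{-j/2}$'' you hope for is not actually there.

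The paper's fix is minimal: take any $p>2$. Then the dyadic sum is $\sum_{j\ge k}2^{j/p}\sqrt{\varphi(2^j)}$, and Lemma~\ref{regvarresultdiscrete}(ii) applies (its hypothesis $\alpha<\beta$ becomes $1/p<1/2$) to give $\|GV^{-1/p}E_V[0,1/n]\|_2=O(n^{1/p}\sqrt{\varphi(n)})$. Property (W1) enters only \emph{after} this, to rewrite $n^{1/p}\sqrt{\varphi(n)}=n^{1/p-1}\cdot n\sqrt{\varphi(n)}\le C\,n^{1/p-1}\sqrt{\varphi^{-1}(1/n)}$; substituting $1/n=C\varphi(k)$ and using $\varphi^{-1}(C\varphi(k))\sim k/C$ (regular variation of $\varphi^{-1}$) then gives $\mu(k,GV^{-1/p})=O(\varphi(k)^{1-1/p})$, i.e.\ $GV^{-1/p}\in\Lw^{(p/(p-1))}$. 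In short: the role you assigned to (W1) --- making the $j$-sum converge --- is wrong; convergence comes from $p>2$, and (W1) is the bridge from the $E_V[0,1/n]$-estimate to the $E_V[0,C\varphi(k)]$-estimate required by Lemma~\ref{singvalueest}.
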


Recall that when $\varphi\in R_{-1}$, it is equivalent for an operator to be strongly and spectrally $\varphi$-modulated (see Lemma \ref{spec_char}).

\begin{proof}
The proof follows that of~\cite[Lemma 11.2.9]{LSZ}. We only indicate the differences.

Take $p>2$ and assume that $G$ is spectrally $\varphi$-modulated with respect to $V$. For $2^k \le n < 2^{k+1}$, we estimate
\begin{align}
\nonumber
\|GV^{-1/p}E_V[0, 1/n]\|_2 &\le \sum_{j=k}^\infty 2^\frac{j+1}p\|GE_V(2^{-j-1}, 2^{-j}]\|_2\\
\nonumber
&\le O(1) \sum_{j=k}^\infty 2^\frac{j+1}p \sqrt{\varphi(2^j)} = O(1)\int_{2^k}^\infty z^{\frac1p-1} \sqrt{\varphi(z)} \, dz.
\end{align}

Using Lemma~\ref{regvarresultdiscrete} (ii) with $\alpha=1/p$ and $\beta=1/2$, we obtain
\begin{align}
\label{estimateinggvonep}
\|GV^{-1/p}E_V[0, 1/n]\|_2 = O\left((2^k)^{\frac1p}\sqrt{\varphi(2^{k})}\right)=O\left(n^{\frac1p}\sqrt{\varphi(n)}\right)
=O\left(n^{\frac1p-1}\sqrt{\varphi^{-1}(n^{-1})}\right),
\end{align}
where we in the last identity used property (W1). Combining Lemma \ref{singvalueest} with the estimate \eqref{estimateinggvonep} we obtain that there is a constant $C$ such that
$$ \mu(k, GV^{-1/p}) \le k^{-1/2}\|GV^{-1/p}E_V[0, C \varphi(k)]\|_2 = k^{-1/2}O\left((C\varphi(k))^{1-\frac1p}\sqrt{\varphi^{-1}(C \varphi(k))}\right).$$

Since $\varphi$ is varying regularly with index $-1$, \cite[Theorem 1.5.12]{RegVar} implies that $\varphi^{-1}$ varies regularly with index $-1$, too. Hence,
$$\lim_{s\to 0+} \frac{\varphi^{-1}(s)}{\varphi^{-1}(Cs)}=C.$$

Thus,
$$ \mu(k, GV^{-1/p}) = k^{-1/2}O\left(\varphi(k)^{1-\frac1p}\sqrt{\varphi^{-1}(\varphi(k))}\right)=O(\varphi(k)^{1-\frac1p}).$$

Therefore, $GV^{-1/p} \in \Lw^{\left(\frac{p}{p-1}\right)}$ and so $G$ is weakly $\varphi$-modulated with respect to $V$.
\end{proof}

\begin{prop}
Assume that $\varphi\in R_{-1}$ is decreasing and satisfies property (W2) (see Definition \ref{someavass}). Then any $V\in \Lw$ is spectrally $\varphi$-modulated with respect to itself. In particular, any $G\in \mathbb{B}(\mathcal{H})$ is spectrally $\varphi$-modulated with respect to $V$ whenever it is weakly $\varphi$-modulated with respect to $V$ for $p=1$ (i.e $GV^{-1}$ has a bounded extension).
\end{prop}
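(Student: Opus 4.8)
The statement has two parts: first, that any $V\in\Lw$ is spectrally $\varphi$-modulated with respect to itself under property (W2); second, the consequence for operators $G$ weakly $\varphi$-modulated with respect to $V$ with $p=1$. The plan is to tackle the first part by a direct singular-value estimate, and then deduce the second part from the left-ideal property of spectrally $\varphi$-modulated operators.

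\textbf{Step 1: $V$ is spectrally $\varphi$-modulated with respect to itself.} Recall $\|V\|_{\varphi,V,spec}=\sup_{t>0}\frac{\|VE_V[0,t^{-1}]\|_2}{\sqrt{\varphi(t)}}$. Without loss of generality assume $\mu(k,V)\le\varphi(k)$ for all $k$, so that $\mathrm{rk}\big(1-E_V[0,\varphi(n)]\big)\le n$, i.e. $E_V[0,\varphi(n)]$ kills at most the top $n$ singular vectors. Hence for $t$ with $t^{-1}\in[\varphi(n+1),\varphi(n))$ (using that $\varphi$ is decreasing), $VE_V[0,t^{-1}]$ has singular values bounded by $\mu(n,V),\mu(n+1,V),\dots\le\varphi(n),\varphi(n+1),\dots$, so
\[
\|VE_V[0,t^{-1}]\|_2^2\le\sum_{k\ge n}\mu(k,V)^2\le\sum_{k\ge n}\varphi(k)^2.
\]
Since $\varphi\in R_{-1}$, the sequence $\varphi^2$ varies regularly with index $-2$, and by the discrete tail estimate (Lemma \ref{regvarresultdiscrete}(ii), applied after a dyadic comparison, or directly via Proposition \ref{regvarresult}(ii) with $\alpha=0$, $\beta=2$) one gets $\sum_{k\ge n}\varphi(k)^2=O(n\varphi(n)^2)$. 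Thus $\|VE_V[0,t^{-1}]\|_2^2=O(n\varphi(n)^2)$. Now I need to compare $n\varphi(n)^2$ with $\varphi(t)\asymp\varphi(n)$. Writing $t\asymp 1/\varphi(n)$, property (W2) gives $\varphi^{-1}(1/t)\le C_2 t^2\varphi(t)$, i.e. (reading at the point $s=1/t\asymp\varphi(n)$, so $\varphi^{-1}(s)\asymp n$) we obtain $n = \varphi^{-1}(\varphi(n))\lesssim\varphi(n)^{-2}\varphi(t)\asymp\varphi(n)^{-1}$, hence $n\varphi(n)^2\lesssim\varphi(n)\asymp\varphi(t)$. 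Therefore $\|VE_V[0,t^{-1}]\|_2^2=O(\varphi(t))$, which is exactly spectral $\varphi$-modulation of $V$ with respect to itself. (The regular variation of $\varphi$ and of $\varphi^{-1}$, plus \eqref{quasi}, are used to pass between the discrete points $n$ and the continuous parameter $t$ without loss.)

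\textbf{Step 2: the consequence.} Suppose $G\in\mathbb B(\mathcal H)$ is weakly $\varphi$-modulated with respect to $V$ for $p=1$, which by Definition \ref{weaklymod} means $GV^{-1}$ extends to a bounded operator (since $\Lw^{(\infty)}=\mathbb B(\mathcal H)$). Write $G=(GV^{-1})\cdot V$ with $GV^{-1}\in\mathbb B(\mathcal H)$. By Step 1, $V$ is spectrally (equivalently, since $\varphi\in R_{-1}$, strongly, by Lemma \ref{spec_char}) $\varphi$-modulated with respect to itself, and $V\in\mathcal L_2$ because $\varphi\in R_{-1}$ implies $\varphi\in L_2(\R_+)$ (Proposition \ref{schbelong}, so $V\in\mathcal L_p$ for all $p>1$, in particular $V\in\mathcal L_2$). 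Since the set of spectrally/strongly $\varphi$-modulated operators with respect to a fixed $V$ is a left ideal in $\mathbb B(\mathcal H)$ (the remark following Lemma \ref{VAmod}), left-multiplying $V$ by the bounded operator $GV^{-1}$ shows $G$ is spectrally $\varphi$-modulated with respect to $V$.

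\textbf{Main obstacle.} The only real work is Step 1, specifically the interplay between the $\ell^2$-tail sum $\sum_{k\ge n}\varphi(k)^2$ and the normalization $\sqrt{\varphi(t)}$: matching these requires exactly the one-sided bound (W2), and care is needed to pass between the discrete index $n$ (governing ranks of spectral projections) and the continuous modulation parameter $t$, using regular variation of both $\varphi$ and $\varphi^{-1}$ together with \eqref{quasi}. The rest is bookkeeping with the left-ideal property and the Schatten membership of $V$.
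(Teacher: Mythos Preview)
Your approach is different from the paper's and the core idea is sound, but there are two inaccuracies worth flagging.

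First, the claim that the singular values of $VE_V[0,t^{-1}]$ are ``bounded by $\mu(n,V),\mu(n+1,V),\ldots$'' termwise is not correct: those singular values are $\mu(m,V),\mu(m+1,V),\ldots$ with $m=n_V(t^{-1})$, and nothing forces $m\ge n$ (the first $n$ singular values of $V$ need not exceed $t^{-1}$). The fix is easy: for $m\le k<n$ use $\mu(k,V)\le t^{-1}<\varphi(n)$, and for $k\ge n$ use $\mu(k,V)\le\varphi(k)$; this yields $\|VE_V[0,t^{-1}]\|_2^2\le n\varphi(n)^2+\sum_{k\ge n}\varphi(k)^2=O(n\varphi(n)^2)$, which is what you need.

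Second, and more important, the assertion $\varphi(t)\asymp\varphi(n)$ (with $t\asymp 1/\varphi(n)$) is \emph{false} in general: for $\varphi(u)=\log u/u$ one has $t\asymp n/\log n$ and $\varphi(t)\asymp(\log^2 n)/n$, whereas $\varphi(n)=(\log n)/n$. Fortunately this claim is superfluous: from (W2) you already obtain $n\le\varphi^{-1}(1/t)\le C_2 t^2\varphi(t)\asymp\varphi(n)^{-2}\varphi(t)$, and multiplying by $\varphi(n)^2$ gives $n\varphi(n)^2\lesssim\varphi(t)$ directly. Delete the two occurrences of ``$\asymp\varphi(n)$'' and the argument stands.

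For comparison, the paper proceeds via a dyadic decomposition of $[0,1/t]$ into shells $(2^{-j-1},2^{-j}]$, bounds $\|E_V(2^{-j-1},2^{-j}]\|_2$ by $\sqrt{n_V(2^{-j-1})}\le\sqrt{\varphi^{-1}(2^{-j-1})}$, sums the series $\sum_{j\ge k}2^{-j}\sqrt{\varphi^{-1}(2^{-j})}$ using regular variation of $s\mapsto 1/\varphi^{-1}(1/s)$, and invokes (W2) only at the end on the closed form $t^{-1}\sqrt{\varphi^{-1}(1/t)}$. Your route via the $\ell^2$-tail $\sum_{k\ge n}\varphi(k)^2$ is more direct and avoids the dyadic bookkeeping; the paper's route avoids the discrete-to-continuous translation between $n$ and $t$ that tripped you up. Both apply (W2) at the same point in spirit. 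Step~2 is fine.
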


\begin{proof}
For $2^k \le t < 2^{k+1}$, we estimate
\begin{align*}
\|VE_V[0, 1/t]\|_2 &\le \sum_{j=k}^\infty 2^{-j}\|E_V(2^{-j-1}, 2^{-j}]\|_2
= \sum_{j=k}^\infty 2^{-j}\sqrt{{\rm Tr}(E_V(2^{-j-1}, 2^{-j}])}\\
&\le \sum_{j=k}^\infty 2^{-j}\sqrt{{\rm Tr}(E_V(2^{-j-1}, \infty])}= \sum_{j=k}^\infty 2^{-j} (n_V(2^{-j-1}))^{1/2}.
\end{align*}

Since $n_V(s) \le t$ if and only if $\mu(t,V)\le s$, it follows that $\mu(n,V)\le \varphi(n)$ implies $n_V(n) \le \varphi^{-1}(n)$. Hence,
$$\|VE_V[0, 1/t]\|_2 \le 2 \sum_{j=k+1}^\infty 2^{-j} \sqrt{\varphi^{-1}(2^{-j})}.$$

Since $\varphi$ varies regularly with index $-1$, \cite[Theorem 1.5.12]{RegVar} implies that $\varphi^{-1}$ varies regularly with index $-1$ at zero, that is
$$\lim_{t\to 0} \frac{\varphi^{-1}(t)}{\varphi^{-1}(2t)}=2.$$
Direct verification shows that the function $f(t)=\frac1{\varphi^{-1}(1/t)}$ belongs to $R_{-1}$.
Thus, Proposition~\ref{regvarresultdiscrete} yields
\begin{align*}
\sum_{j=k+1}^\infty 2^{-j} \sqrt{\varphi^{-1}(2^{-j})}= \sum_{j=k+1}^\infty \frac{2^{-j}}{ \sqrt{f(2^{j})}} = O\left(\frac{2^{-k-1}}{ \sqrt{f(2^{k+1})}}\right) = O\left(\frac{1}{t}\sqrt{\varphi^{-1}(1/t)}\right), \ t\to\infty.
\end{align*}

Using property (W2), we obtain $\|VE_V[0, 1/t]\|_2 = O(\sqrt{\varphi(t)}).$ Thus, $V$ is spectrally $\varphi$-modulated with respect to itself.
\end{proof}

\section{Symbol properties of modulated operators}
\label{subsec:symbprop}

To make the discussion on modulated operators more concrete, we focus on the case $\mathcal{H}=L_2(W)$ for a $d$-dimensional real inner product space $W$. The case of interest to us later, closed manifolds, will be localized to this situation. Associated with the inner product on $W$, there is a Laplace operator $\Delta$. We follow the analyst's sign convention making the Laplacian negative (in form sense), i.e. $\Delta=\sum_{j=1}^d\frac{\partial^2}{\partial x_j^2}$ for an orthonormal choice of linear coordinates $x_1,\ldots, x_d$.

\subsection{Laplacian modulated operators}

\begin{dfn}
An operator $G\in \mathbb{B}(L_2(W))$ is said to be weakly/strongly/spectrally \emph{$\varphi$-Laplacian modulated} if it is weakly/strongly/spectrally $\varphi$-modulated with respect to the operator $\varphi[(1-\Delta)^{d/2}]$, where $d$ is the dimension of $W$.
\end{dfn}

We remark that the operator $\varphi[(1-\Delta)^{d/2}]$ is not in $\Lw$, it is not even a compact operator on $L_2(W)$. The operator is however locally in $\Lw$ by the next proposition. Here we say that $G$ is \emph{locally in $\Lw$} if $\chi G, G\chi\in \Lw$ for all compactly supported $\chi$. The coming three results are strictly speaking not needed to develop the general theory, and Lemma \ref{psiideallem} is proved using machinery for $\varphi$-pseudo-differential operators developed below in Section \ref{psipseudos}, but we nevertheless include them for explanatory purposes.

Although we are interested in functions $\varphi$ which have smooth regular variation of index $-1$ (since this guarantees the existence of Dixmier traces on the corresponding ideals), results in this subsection hold for functions which have smooth regular variation of any index $\rho>0$.

\begin{lem}
\label{psiideallem}
Let $\varphi$ be a decreasing function with smooth regular variation. Then for any $\chi\in C^\infty_c(W)$, we have $\chi \varphi((1-\Delta)^{d/2})\in \Lw(L_2(W))$.
\end{lem}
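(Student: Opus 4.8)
The claim is that $\chi\varphi((1-\Delta)^{d/2})$ is in $\Lw(L_2(W))$ for any compactly supported cutoff $\chi$, where $\varphi$ has smooth regular variation of index $-\rho$ (here $\rho=1$ in the main case, but the argument is dimension-agnostic as noted). The natural approach is to treat $\varphi((1-\Delta)^{d/2})$ as a $\varphi$-pseudo-differential operator with symbol $\varphi(\langle\xi\rangle^d)$ and exploit that multiplying by a compactly supported $\chi$ produces an operator whose singular values are controlled by the decay of the symbol. First I would invoke the $\varphi$-pseudo-differential calculus of Section~\ref{psipseudos}: the smooth regular variation hypothesis is exactly what guarantees that $\xi\mapsto\varphi(\langle\xi\rangle^d)$ is a well-behaved symbol, with $\xi$-derivatives gaining powers of $\langle\xi\rangle^{-1}$ in the appropriate uniform sense (this is where the condition~\eqref{liminitsrrho}, or its uniform version, is used). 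Thus $\chi\varphi((1-\Delta)^{d/2})$ is a compactly supported $\varphi$-pseudo-differential operator, with Schwartz kernel rapidly decaying off the diagonal.

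\textbf{Key steps.} (1) Write $A:=\chi\varphi((1-\Delta)^{d/2})$ and observe that $A$ has an integral kernel $k_A(x,y)=\chi(x)\int e^{i(x-y)\cdot\xi}\varphi(\langle\xi\rangle^d)\,\mathchar'26\mkern-12mu d\xi$, supported in $x$ in a compact set. (2) Estimate the singular values: since $\varphi\in R_{-\rho}$ with $\rho>0$, Proposition~\ref{schbelong}-type reasoning shows $\varphi(\langle\xi\rangle^d)\in L_2_{\mathrm{loc}}$ in $\xi$ only if $\rho$ is large enough, so instead I would use the standard counting estimate: for a compactly supported $\varphi$-pseudo-differential operator of order $\varphi$, one has $\mu(n,A)=O(\varphi(n))$. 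Concretely, compare $A$ with $\chi(1-\Delta)^{-dm/2}$ for a large integer $m$ via $A=\bigl(\chi\varphi((1-\Delta)^{d/2})(1-\Delta)^{dm/2}\bigr)(1-\Delta)^{-dm/2}$; the first factor is a compactly supported pseudo-differential operator of negative order (for $m$ large, since $\varphi$ decays), hence in $\mathcal L_2$ — in fact in $\mathcal L_p$ for suitable $p$ — while $\chi(1-\Delta)^{-dm/2}$ has singular value asymptotics governed by the Weyl law, $\mu(n,\chi(1-\Delta)^{-dm/2})=O(n^{-m})$. (3) Better: directly verify that $A$ is strongly (equivalently spectrally) $\varphi$-modulated with respect to $V:=\varphi((1-\Delta)^{d/2})$ (or with respect to $(1-\Delta)^{d/2}$ after reparametrizing), by estimating $\|A\,E_V[0,t^{-1}]\|_2$; then apply Lemma~\ref{mod_w-mod} together with the local version of $V\in\Lw$ to conclude $A\in\Lw$. (4) Assemble: the kernel estimates give $\|A E_V[0,t^{-1}]\|_2 = O(\sqrt{\varphi(t)})$ because the spectral truncation restricts to $\langle\xi\rangle^d\lesssim t$, over which $\int_{\langle\xi\rangle^d\le t}\varphi(\langle\xi\rangle^d)^2\,d\xi$ is controlled by Proposition~\ref{regvarresult} applied to $\varphi^2\in R_{-2\rho}$; the compact support of $\chi$ supplies a finite volume factor in $x$.

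\textbf{Main obstacle.} The genuinely technical point is step (2)–(3): making precise the symbol estimates for $\varphi(\langle\xi\rangle^d)$ and showing they fit the $\varphi$-pseudo-differential calculus of Section~\ref{psipseudos}, and then converting kernel/Hilbert–Schmidt bounds into the singular value bound $\mu(n,A)=O(\varphi(n))$ with the correct constant-free asymptotics. The smooth regular variation is doing real work here — one needs that $\varphi(\langle\xi\rangle^d)$ and all its derivatives behave like the naive scaling predicts, uniformly, so that the parametrix/composition arguments of the calculus apply. Once the calculus is in place and one knows $\chi\varphi((1-\Delta)^{d/2})$ is a compactly supported $\varphi$-order operator, membership in $\Lw$ is a routine consequence of the Weyl law on compact sets together with Proposition~\ref{regvarresult}. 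The cleanest write-up is probably to defer the symbol estimates to Section~\ref{psipseudos} (as the paper signals) and here only extract the singular value bound from "compactly supported operator of $\varphi$-order $\Rightarrow$ locally in $\Lw$."
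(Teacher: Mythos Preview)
Your concluding plan---defer the symbol estimates to the $\varphi$-pseudo-differential calculus of Section~\ref{psipseudos} and then read off $\mu(n,A)=O(\varphi(n))$ from the Weyl law on a compact set---is exactly what the paper does in Corollary~\ref{corofphidifflem}. The paper makes the ``compact set'' step precise by embedding the support of $\chi$ isometrically into a \emph{closed} Riemannian manifold $(M',g')$, using Theorem~\ref{factpsps} to compare $\chi\varphi((1-\Delta)^{d/2})^2\chi$ with $T_N\varphi((1-\Delta_{g'})^{d/2})$ up to arbitrarily low order, and then invoking the genuine Weyl law on $M'$ to obtain $\mu(n,\chi\varphi((1-\Delta)^{d/2})^2\chi)=c\varphi(n)^2+O(n^{-1/d}\varphi(n)^2)$.

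However, your intermediate route~(3)---show $A$ is spectrally $\varphi$-modulated with respect to $V=\varphi((1-\Delta)^{d/2})$ and apply Lemma~\ref{mod_w-mod}---is circular. Lemma~\ref{mod_w-mod} requires $V\in\Lw$, but on $L_2(W)$ the operator $V$ is not even compact; the ``local version of $V\in\Lw$'' you invoke is precisely Lemma~\ref{psiideallem} itself. (Also a minor slip: the spectral projection $E_V[0,t^{-1}]$ restricts to $\varphi(\langle\xi\rangle^d)<t^{-1}$, which for decreasing $\varphi$ means \emph{large} $\langle\xi\rangle^d$, not $\langle\xi\rangle^d\lesssim t$; this is the tail integral as in Proposition~\ref{psi_mod}, and it needs property~(W2) to close.) So drop~(3)--(4) and keep only your last paragraph: the transplant-to-a-closed-manifold argument is the actual proof, and it avoids any appeal to modulation lemmas.
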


We shall return to the proof of Lemma \ref{psiideallem} below in Corollary \ref{corofphidifflem}. It seems feasible that much weaker assumptions than $\varphi$ having smooth regular variation are needed from $\varphi$ for Lemma \ref{psiideallem} to hold.

\begin{prop}
\label{locallylwpsi}
Assume that $\varphi$ is decreasing and has smooth regular variation. Then, for any compactly supported $\chi\in L^\infty(W)$ and $p\geq 1$, the operator $\chi\varphi[(1-\Delta)^{d/2}]^{\frac1p}$ belongs to $\Lw^{(p)}(L_2(W))$. 
\end{prop}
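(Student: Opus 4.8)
The plan is to reduce the claim about $\chi\varphi[(1-\Delta)^{d/2}]^{1/p}$ to the $p=1$ case already covered by Lemma~\ref{psiideallem}, using the $q$-convexification machinery and the quasi-H\"older inequality \eqref{holdineq}. First I would observe that it suffices to treat $\chi\in C^\infty_c(W)$: for general compactly supported $\chi\in L^\infty(W)$ pick $\tilde\chi\in C^\infty_c(W)$ equal to $1$ on a neighbourhood of $\mathrm{supp}\,\chi$, write $\chi\varphi[(1-\Delta)^{d/2}]^{1/p}=\chi\cdot(\tilde\chi\varphi[(1-\Delta)^{d/2}]^{1/p})$ (using $\chi\tilde\chi=\chi$), and use that $\chi$ is bounded together with the fact that $\Lw^{(p)}$ is a left ideal in $\mathbb{B}(\mathcal{H})$; strictly, one has to be slightly careful that $\tilde\chi\varphi[(1-\Delta)^{d/2}]^{1/p}$ is the operator whose adjoint issues from a smooth-cutoff version of the claim, so it may be cleaner to prove the statement first for $C^\infty_c$ cutoffs on both sides and then bootstrap.

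The core of the argument is as follows. Since $\varphi$ has smooth regular variation of index $-\rho$ for some $\rho>0$ (here $\rho=1$ in the cases of interest, but the proof is index-agnostic), the function $\psi:=\varphi^{1/p}$ has smooth regular variation of index $-\rho/p$, hence is again a decreasing function with smooth regular variation, and $\varphi[(1-\Delta)^{d/2}]^{1/p}=\psi[(1-\Delta)^{d/2}]$. Now I would like to invoke Lemma~\ref{psiideallem} with $\psi$ in place of $\varphi$ to conclude $\chi\psi[(1-\Delta)^{d/2}]\in\mathcal{L}_\psi(L_2(W))$, and then note that $\mathcal{L}_\psi=\mathcal{L}_{\sqrt[p]{\varphi}}=\Lw^{(p)}$ by the identity $\Lw^{(q)}=\mathcal{L}_{\sqrt[q]{\varphi}}$ recorded just after Example~\ref{oneonlog}. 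This gives exactly $\chi\varphi[(1-\Delta)^{d/2}]^{1/p}\in\Lw^{(p)}(L_2(W))$. A subtlety: in Lemma~\ref{psiideallem} the function is called $\varphi$ and is tied to the ambient ideal, so to keep the logic clean one states Lemma~\ref{psiideallem} (or its proof via Corollary~\ref{corofphidifflem}) for an arbitrary decreasing $\psi$ of smooth regular variation, producing membership in $\mathcal{L}_\psi$; this is legitimate because the stated range of validity of Lemma~\ref{psiideallem} is ``smooth regular variation of any index $\rho>0$''.

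The main obstacle I anticipate is purely bookkeeping rather than analytic: making sure the reduction from $L^\infty$ cutoffs to $C^\infty_c$ cutoffs is airtight (in particular that one can factor a bounded compactly supported function through a smooth compactly supported one on the correct side, and that $\varphi[(1-\Delta)^{d/2}]^{1/p}$ genuinely coincides with $\psi[(1-\Delta)^{d/2}]$ as unbounded self-adjoint operators via the Borel functional calculus), and that the index arithmetic $\varphi\in SR_{-\rho}\Rightarrow\varphi^{1/p}\in SR_{-\rho/p}$ is carried out—this follows directly from Definition~\ref{SV}(ii) since $\big(\varphi^{1/p}\big)^{(n)}$ is a universal polynomial combination of $\varphi^{(1)},\dots,\varphi^{(n)}$ divided by powers of $\varphi$, and each $t^k\varphi^{(k)}(t)/\varphi(t)$ converges, so $t^n(\varphi^{1/p})^{(n)}(t)/\varphi^{1/p}(t)$ converges to the value $(-\rho/p)(-\rho/p-1)\cdots(-\rho/p-n+1)$. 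No genuinely new estimate beyond Lemma~\ref{psiideallem} and \eqref{holdineq} should be required.
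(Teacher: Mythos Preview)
Your proposal is correct and in fact takes a cleaner route than the paper. The paper's own proof sets $V:=\varphi[(1-\Delta)^{d/2}]$, introduces the closed operator $T(A):=AV^{-1}$ on $\mathfrak{X}:=\mathbb{B}(L_2(W))$, invokes Seeley's theorem on domains of complex powers to identify $\mathfrak{X}V^{\alpha}=[\mathfrak{X},\mathfrak{X}V]_{\alpha}$, and then complex-interpolates the inclusion $L^\infty(K)V\hookrightarrow\Lw$ against $L^\infty(K)\hookrightarrow\mathbb{B}(L_2(W))$ to get $L^\infty(K)V^{1/p}\hookrightarrow[\mathbb{B}(L_2(W)),\Lw]_{1/p}=\Lw^{(p)}$. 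Your argument bypasses all of this by the single observation that $\psi:=\varphi^{1/p}$ is again decreasing of smooth regular variation, so Lemma~\ref{psiideallem} applied to $\psi$ gives $\chi\,\psi[(1-\Delta)^{d/2}]\in\mathcal{L}_\psi=\Lw^{(p)}$ directly. The reduction from $L^\infty_c$ to $C^\infty_c$ cutoffs is the same in both proofs. Your approach is more elementary and arguably more in the spirit of the $\varphi$-calculus being developed; the paper's interpolation argument would survive in settings where one cannot simply swap $\varphi$ for a power of itself, but here that extra robustness buys nothing. The only cosmetic point: you mention the quasi-H\"older inequality~\eqref{holdineq} in your plan but never actually use it---it is not needed.
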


\begin{proof}
The proof that $\chi\varphi[(1-\Delta)^{d/2}]\in \Lw(L_2(W))$, for $\chi\in C^\infty_c(W)$ and $\varphi$ having smooth regular variation, follows from Lemma \ref{psiideallem}. Since any compactly supported $\chi\in  L^\infty(W)$ admits a $\chi'\in C^\infty_c(W)$ with $\chi'\chi=\chi$, the statement that $\chi\varphi[(1-\Delta)^{d/2}]^{\frac1p}$ belongs to $\Lw^{(p)}(L_2(W))$ for all $\chi\in C^\infty_c(W)$ is equivalent to the same statement for compactly supported $L^\infty$-functions. This statement is in turn equivalent to $L^\infty(K)V^{1/p}\subseteq \Lw^{(p)}(L_2(W))$ for any compact $K$.

We set $V:=\varphi[(1-\Delta)^{d/2}]$. Define the Banach space $\mathfrak{X}:=\mathbb{B}(L_2(W))$. We consider the partially defined operator $T:\mathfrak{X}\dashrightarrow \mathfrak{X}$ given by $T(A):=AV^{-1}$ with $\mathrm{Dom}(T)=\mathfrak{X}V\subseteq \mathbb{B}(L_2(W))$. The operator $T$ is closed, and admits complex powers $T^\alpha(A):=AV^{-\alpha}$ for $\mathrm{Re}(\alpha)\in [0,1]$. By \cite[Theorem 3]{seeley71}, $\mathrm{Dom}(T^\alpha)=\mathfrak{X}V^\alpha=[\mathfrak{X},\mathrm{Dom}(T)]_\alpha$ when equipping $\mathrm{Dom}(T)$ with its graph norm. 

Fix a compact $K\subseteq W$. From the discussion in the second paragraph, we conclude that $[L^\infty(K),L^\infty(K)V]_\alpha=L^\infty(K)V^\alpha$, where we consider $L^\infty(K)V$ as a closed subset of $\mathrm{Dom}(T)$ in its graph norm. By the discussion in the first paragraph of the proof, the mapping $L^\infty(K)V\to \Lw(L_2(W))$ is a continuous inclusion. By complex interpolation, for $\alpha=\frac1p$,
$$L^\infty(K)V^\alpha\to \Lw^{(p)}(L_2(W))=[\mathbb{B}(L_2(W)),\Lw(L_2(W))]_\alpha,$$
is continuous.
\end{proof}

\begin{dfn}
Let $X$ be a locally compact space and $\mathcal{H}$ a Hilbert space with an action of $C_0(X)$. Let $G\in \mathbb{B}(\mathcal{H})$ be an operator.
\begin{itemize}
\item $G$ is said to be \emph{compactly based} if there is a $\chi\in C_c(X)$ such that $\chi G=G$. 
\item $G$ is said to be \emph{compactly supported} if $G$ and $G^*$ are compactly based. 
\end{itemize}
\end{dfn}

\begin{lem}
Assume that $\varphi$  is decreasing and has smooth regular variation. Whenever $G\in \mathbb{B}(L_2(W))$ is weakly $\varphi$-Laplacian modulated and $G^*$ is compactly based, then $G\in \Lw(L_2(W))$.
\end{lem}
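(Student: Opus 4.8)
The plan is to reduce to a statement about the operator $G$ rather than $G^*$, exploit the fact that $G$ being weakly $\varphi$-modulated with respect to $V:=\varphi[(1-\Delta)^{d/2}]$ already gives a factorization $G=(GV^{-1/p})V^{1/p}$, and then use the compact support of $G^*$ to replace the non-compact operator $V^{1/p}$ by a compactly truncated — hence genuinely compact, and in $\Lw^{(p)}$ — version. The main point is that $G^*$ being compactly based means there is a $\chi\in C_c(W)$ with $\chi G^*=G^*$, equivalently $G\chi=G$. Therefore $G=G\chi$, and we may write $G=(GV^{-1/p})(V^{1/p}\chi)$.

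\textbf{Key steps.} First I would record the factorization $G = (GV^{-1/p})\,(V^{1/p}\chi)$, valid because $G\chi = G$ (taking adjoints in $\chi G^*=G^*$). Here I use the definition of weakly $\varphi$-modulated (Definition \ref{weaklymod}): there is $p\geq 1$ such that $GV^{-1/p}$ extends to a bounded operator lying in $\Lw^{(p/(p-1))}(L_2(W))$. Second, I would invoke Proposition \ref{locallylwpsi}: since $\chi\in C_c(W)$ (in particular compactly supported and in $L^\infty(W)$), the operator $\chi\,V^{1/p} = \chi\,\varphi[(1-\Delta)^{d/2}]^{1/p}$ belongs to $\Lw^{(p)}(L_2(W))$; taking adjoints (and using that $\varphi[(1-\Delta)^{d/2}]^{1/p}$ is self-adjoint, and that $\Lw^{(p)}$ is a symmetric ideal so closed under adjoints), $V^{1/p}\chi\in \Lw^{(p)}(L_2(W))$. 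Third, I would apply the quasi-Hölder inequality \eqref{holdineq}: with $q_0 = p/(p-1)$ and $q_1 = p$ we have $q_0^{-1}+q_1^{-1} = (p-1)/p + 1/p = 1$, so
$$\|G\|_{\Lw} = \|(GV^{-1/p})(V^{1/p}\chi)\|_{\Lw^{(1)}} \leq C\,\|GV^{-1/p}\|_{\Lw^{(p/(p-1))}}\,\|V^{1/p}\chi\|_{\Lw^{(p)}} < \infty,$$
which gives $G\in\Lw(L_2(W))$. (The edge case $p=1$ is handled separately but is easier: then $GV^{-1}$ is merely bounded, $V\chi\in\Lw$ by Proposition \ref{locallylwpsi}, and $G=GV^{-1}\cdot V\chi\in\Lw$ since $\Lw$ is an ideal.)

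\textbf{Main obstacle.} The only genuinely delicate point is the legitimacy of the factorization $G=(GV^{-1/p})(V^{1/p}\chi)$ at the level of the densely defined operators before passing to bounded extensions: $V^{-1/p}$ is unbounded, so one must check that the composition makes sense on a common core (e.g.\ the range of $V^{1/p}$, or Schwartz functions, on which $\chi$ acts and $V^{\pm 1/p}$ are well-behaved) and that the bounded extension of the product agrees with the product of the bounded extensions. This is routine once one notes $\chi$ maps a suitable dense domain into $\mathrm{Dom}(V^{-1/p}\text{-type manipulations})$, but it is the step requiring care; everything else is a direct application of Definition \ref{weaklymod}, Proposition \ref{locallylwpsi}, and the quasi-Hölder inequality \eqref{holdineq}.
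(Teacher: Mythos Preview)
Your proof is correct and uses the same ingredients as the paper (Proposition~\ref{locallylwpsi}, the quasi-H\"older inequality, and the identity $G\chi=G$), but your organization is slightly more direct: the paper writes $G=GV^{-\alpha}V^{\alpha}$ and then inserts $\chi$ to the \emph{left} of $V^{\alpha}$, producing a two-term decomposition $G=GV^{-\alpha}\chi V^{\alpha}-GV^{-\alpha}[\chi,V^{\alpha}]$ and handling each piece with H\"older, whereas you place $\chi$ to the \emph{right} of $V^{\alpha}$ from the start, obtaining $G=(GV^{-1/p})(V^{1/p}\chi)$ in a single stroke. Both routes rely on $V^{1/p}\chi=(\chi V^{1/p})^*\in\Lw^{(p)}$, so the difference is purely cosmetic; your version simply avoids the commutator detour.
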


\begin{proof}
Assume that $p\geq 1$ is the number for which $GV^{-1/p}\in \Lw^{(p(p-1)^{-1})}$ and set $\alpha=1/p$. The fact that $G^*$ is compactly based ensures that there is a $\chi\in C^\infty_c(W)$ such that $G\chi=G$. We write
$$G=GV^{-\alpha}V^{\alpha}=GV^{-\alpha}\chi V^{\alpha}+GV^{-\alpha}(1-\chi) V^{\alpha}=GV^{-\alpha}\chi V^{\alpha}-GV^{-\alpha}[\chi, V^{\alpha}]$$
By the assumption on $G$, $GV^{-\alpha}\in  \Lw^{(p(p-1)^{-1})}$ and by the assumption on $\varphi$, and Proposition \ref{locallylwpsi}, $\chi V^{\alpha}\in \Lw^{(p)}$ so $GV^{-\alpha}\chi V^{\alpha}\in \Lw$ by the H\"older inequalities \eqref{holdineq}. 

It remains to show that $GV^{-\alpha}[\chi, V^{\alpha}]=GV^{-\alpha}(V^\alpha\chi-\chi V^\alpha)\in \Lw$. However, by the assumption on $\varphi$, and Proposition \ref{locallylwpsi}, both $\chi V^{\alpha}$ and $V^\alpha\chi=(\chi V^\alpha)^*$ belong to $\Lw^{(p)}$. By assumption on $G$, $GV^{-\alpha}\in  \Lw^{(p(p-1)^{-1})}$, so $GV^{-\alpha}[\chi, V^{\alpha}]\in \Lw$ by the H\"older inequalities \eqref{holdineq}. 
\end{proof}

\subsection{Symbols of Laplacian modulated operators}

We now return to the main line of inquiry in this section. It follows from Proposition \ref{schbelong} that if $G\in \Lw(L_2(W))$, and $\varphi\in R_{-1}$ is decreasing, then there is a function $p_G\in L_2(W\times W^*)$ such that for $f\in L_2(W)$, 
$$Gf(x)=\int_{W^*} p_G(x,\xi)\hat{f}(\xi)\mathrm{e}^{i\langle x,\xi\rangle }\mathrm{d}\xi,$$
where $\hat{f}\in L_2(W^*)$ denotes the Fourier transform of $f$. The function $p_G$ is uniquely determined by $G$ and is called the $L_2$-symbol of $G$. It follows from the definition that $G$ is compactly based if and only if for some compact $K\subseteq W$, $p_G$ is supported in $K\times W^*$.

The following lemma describes strongly and spectrally $\varphi$-Laplacian modulated operators in terms of their Hilbert-Schmidt symbol. We use the standard notation $\langle \xi\rangle :=(1+|\xi|^2)^{1/2}$ for $\xi\in W^*$.

\begin{lem}
\label{p_mod} Let $\varphi\in R_{-1}$ be a decreasing function and 
let $G : L_2 (W) \to L_2 (W)$ be a Hilbert-Schmidt operator with $L_2$-symbol $p_G$. The operator $G$ is spectrally $\varphi$-Laplacian modulated if and only if
$$\sup_{t>0} \frac{1}{\varphi(t)} \int_{\varphi(\langle \xi\rangle^{d})< 1/t} \int_{W} |p_G(x,\xi)|^2 \, \mathrm{d}x \mathrm{d}\xi <\infty.$$
The operator $G$ is strongly $\varphi$-Laplacian modulated if and only if
$$\sup_{t>0} \frac{1}{\varphi(t)} \int_{W\times W^*} \frac{|p_G(x,\xi)|^2}{(1+t\varphi(\langle \xi\rangle^{d}))^2} \, \mathrm{d}x \mathrm{d}\xi <\infty.$$
\end{lem}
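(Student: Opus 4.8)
The plan is to translate the operator-theoretic modulation conditions into statements about the $L_2$-symbol $p_G$ by computing the relevant Hilbert-Schmidt norms explicitly. Recall that $V=\varphi[(1-\Delta)^{d/2}]$, so that $V$ acts on the Fourier side as multiplication by $\varphi(\langle\xi\rangle^d)$. The key observation is that for a Hilbert-Schmidt operator $G$ with $L_2$-symbol $p_G$, composing $G$ on the right with a Fourier multiplier $m((1-\Delta)^{d/2})$ simply replaces $p_G(x,\xi)$ by $p_G(x,\xi)m(\langle\xi\rangle^d)$, and hence
$$\|Gm((1-\Delta)^{d/2})\|_2^2=\int_{W\times W^*}|p_G(x,\xi)|^2|m(\langle\xi\rangle^d)|^2\,\mathrm{d}x\,\mathrm{d}\xi.$$
This is the standard identification of the Hilbert-Schmidt norm of a pseudo-differential-type operator with the $L_2$-norm of its symbol, and I would first record this as the main computational tool (it follows from the Plancherel theorem applied to the integral kernel $p_G(x,\xi)\mathrm{e}^{i\langle x,\xi\rangle}$ in the $\xi$-variable).

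For the spectral characterization, I would apply the above with $m=\chi_{[0,t^{-1}]}$, the indicator of $[0,t^{-1}]$, so that $m((1-\Delta)^{d/2})=E_V[0,t^{-1}]$ precisely because $V$ has symbol $\varphi(\langle\xi\rangle^d)$ and $E_V[0,t^{-1}]$ is multiplication by the indicator of $\{\varphi(\langle\xi\rangle^d)\le t^{-1}\}$ on the Fourier side. Wait — one has to be a little careful: $E_V[0,t^{-1}]$ is the spectral projection of $V$, and since $V$ is the Fourier multiplier $\varphi(\langle\xi\rangle^d)$, this projection is multiplication by $\mathbbm{1}\{\varphi(\langle\xi\rangle^d)\le t^{-1}\}$; up to the boundary set of measure zero (and up to replacing $\le$ by $<$, which is harmless since it only changes the integral on a null set when $\varphi$ is, say, continuous, and in general one argues via monotonicity in $t$), this yields
$$\|GE_V[0,t^{-1}]\|_2^2=\int_{\varphi(\langle\xi\rangle^d)<1/t}\int_W|p_G(x,\xi)|^2\,\mathrm{d}x\,\mathrm{d}\xi.$$
Dividing by $\varphi(t)$ and taking the supremum over $t>0$ gives exactly the stated condition, and by definition of $\|G\|_{\varphi,V,spec}$ this is equivalent to $G$ being spectrally $\varphi$-Laplacian modulated. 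By Lemma \ref{spec_char}, since $\varphi\in R_{-1}$, this is also equivalent to being strongly $\varphi$-Laplacian modulated — but the lemma additionally asks for the direct symbol characterization of strong modulation, so I proceed to that.

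For the strong characterization I would instead take $m(z)=(1+t\varphi(z))^{-1}$, so that $m((1-\Delta)^{d/2})=(1+tV)^{-1}$, again by the functional calculus for the Fourier multiplier $V$. Then
$$\|G(1+tV)^{-1}\|_2^2=\int_{W\times W^*}\frac{|p_G(x,\xi)|^2}{(1+t\varphi(\langle\xi\rangle^d))^2}\,\mathrm{d}x\,\mathrm{d}\xi,$$
and dividing by $\varphi(t)$ and taking the supremum reproduces the second displayed condition, which by Definition \ref{mod} is precisely $\|G\|_{\varphi,V}<\infty$. The main obstacle, such as it is, is purely bookkeeping: verifying that the Fourier-multiplier calculus is legitimate for the unbounded-looking operations involved and, more subtly, checking that the sets $\{\varphi(\langle\xi\rangle^d)\le t^{-1}\}$ indeed describe $E_V[0,t^{-1}]$ correctly — this requires knowing that $\varphi$ is (eventually) strictly decreasing or at least handling level sets of $\varphi$ of positive measure, but since $\varphi\in R_{-1}$ one can reduce to the strictly monotone case or simply note that the supremum over $t$ is unaffected. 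No genuinely hard analysis is needed beyond Plancherel; the content of the lemma is the clean dictionary between the three modulation norms and weighted $L_2$-norms of the symbol.
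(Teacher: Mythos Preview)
Your proposal is correct and is precisely the intended argument: the paper omits the proof, stating only that it is similar to that of \cite[Lemma 11.3.13]{LSZ}, and that reference carries out exactly the Plancherel/Fourier-multiplier computation you describe. Your handling of the minor $\le$ versus $<$ issue for the spectral projection is adequate, and no further ingredients are needed.
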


The proof is similar to that of~\cite[Lemma 11.3.13]{LSZ} and is omitted. To give an example of a spectrally modulated operator, we consider a localization of $\varphi((1-\Delta)^{d/2})$.

\begin{prop}
\label{psi_mod} 
Let $\varphi\in R_{-1}$ be decreasing and satisfy property (W2) (see Definition~\ref{someavass}). For every function $\chi\in C^\infty_c(W)$ the operator $G:=\chi \varphi[(1-\Delta)^{d/2}]$ is spectrally $\varphi$-Laplacian modulated.
\end{prop}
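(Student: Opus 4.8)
The plan is to verify the criterion in Lemma \ref{p_mod} for the operator $G:=\chi \varphi[(1-\Delta)^{d/2}]$ by computing its $L_2$-symbol and then estimating the relevant integral. The operator $\varphi[(1-\Delta)^{d/2}]$ is a Fourier multiplier with symbol $\varphi(\langle\xi\rangle^d)$, so for $f\in L_2(W)$ we have $\varphi[(1-\Delta)^{d/2}]f(x)=\int_{W^*}\varphi(\langle\xi\rangle^d)\hat f(\xi)\mathrm{e}^{i\langle x,\xi\rangle}\,\mathrm{d}\xi$. Multiplying by $\chi$ on the left gives $Gf(x)=\int_{W^*}\chi(x)\varphi(\langle\xi\rangle^d)\hat f(\xi)\mathrm{e}^{i\langle x,\xi\rangle}\,\mathrm{d}\xi$, so the $L_2$-symbol is $p_G(x,\xi)=\chi(x)\varphi(\langle\xi\rangle^d)$. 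The first thing I would do is check that $G$ is genuinely Hilbert-Schmidt, so that the symbol calculus of the preceding subsection applies; indeed
$$\|p_G\|_{L_2(W\times W^*)}^2=\left(\int_W|\chi(x)|^2\,\mathrm{d}x\right)\left(\int_{W^*}\varphi(\langle\xi\rangle^d)^2\,\mathrm{d}\xi\right),$$
and the $\xi$-integral is finite because $\varphi\in R_{-1}$ implies $\varphi^2\in R_{-2}$, which by Proposition \ref{regvarresult}(ii) (with $\alpha=d-1$, $\beta=2$, after passing to radial coordinates where the measure contributes $r^{d-1}\,\mathrm{d}r$ and $\langle r\rangle^d\sim r^d$) gives integrability at infinity. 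Alternatively this also follows from Proposition \ref{schbelong} applied to the locally-$\Lw$ statement, but the direct computation is cleaner here.

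Next I would plug $p_G$ into the spectral criterion of Lemma \ref{p_mod}. Since $\int_W|\chi(x)|^2\,\mathrm{d}x=:c_\chi<\infty$ is a constant, the quantity to bound becomes
$$\sup_{t>0}\ \frac{c_\chi}{\varphi(t)}\int_{\varphi(\langle\xi\rangle^d)<1/t}\varphi(\langle\xi\rangle^d)^2\,\mathrm{d}\xi.$$
Passing to polar coordinates and substituting $u=\langle r\rangle^d$ (or more simply $u=r^d$ up to controlled errors near $r=0$, which contribute a bounded amount), the inner integral is comparable to $\int_{\{\varphi(u)<1/t\}}\varphi(u)^2\,\mathrm{d}u$. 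Because $\varphi$ is decreasing, the region $\varphi(u)<1/t$ is a half-line $u>\varphi^{-1}(1/t)$ (up to the usual care with generalized inverses of monotone functions), so the inner integral is $\int_{\varphi^{-1}(1/t)}^\infty\varphi(u)^2\,\mathrm{d}u$. Applying Proposition \ref{regvarresult}(ii) with $\alpha=0$, $\beta=2$ gives $\int_T^\infty\varphi(u)^2\,\mathrm{d}u=O(T\varphi(T)^2)$ as $T\to\infty$; taking $T=\varphi^{-1}(1/t)$ and using $\varphi(\varphi^{-1}(1/t))\asymp 1/t$, this is $O(\varphi^{-1}(1/t)\cdot t^{-2})$. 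Dividing by $\varphi(t)$, the supremand is $O\big(\varphi^{-1}(1/t)/(t^2\varphi(t))\big)$, which is bounded precisely by property (W2). For small $t$ the integral is over the whole space (or empty) and is controlled by $\|p_G\|_2^2/\varphi(t)$, which is bounded on any bounded interval of $t$ since $\varphi$ is positive and continuous there; this handles the boundary of the supremum.

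The main obstacle I anticipate is the bookkeeping around the generalized inverse $\varphi^{-1}$ and the passage from $\langle r\rangle^d$ to a clean variable: $\varphi$ is only assumed decreasing (not strictly, not continuous), so "$\varphi(\langle\xi\rangle^d)<1/t$" must be handled via the standard device $\varphi(u)\le s\Leftrightarrow u\ge \varphi^{-1}(s)$ where $\varphi^{-1}$ is the right-continuous generalized inverse, exactly as was done for $n_A(s)$ and $\mu(n,A)$ in the preliminaries, and one must check that the regular-variation asymptotics of Proposition \ref{regvarresult} and the identity $\lim_{s\to 0+}\varphi(\varphi^{-1}(s))/s=1$ (a consequence of $\varphi\in R_{-1}$ and \cite[Theorem 1.5.12]{RegVar}) survive these manipulations. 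None of this is deep, but it is the place where one can slip. An alternative, slightly slicker route that sidesteps some of this would be to invoke Lemma \ref{singvalueest} or the interpolation argument of Proposition \ref{locallylwpsi} to first establish $G\in\Lw$ and then combine with the proof of Lemma \ref{mod_w-mod} read in reverse; but since Lemma \ref{p_mod} already packages the symbol criterion, the direct computation above is the most transparent.
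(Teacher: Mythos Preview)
Your proposal is correct and follows essentially the same approach as the paper's proof: identify the symbol $p_G(x,\xi)=\chi(x)\varphi(\langle\xi\rangle^d)$, apply the spectral criterion of Lemma~\ref{p_mod}, pass to radial coordinates to reduce to $\int_{\varphi^{-1}(1/t)}^\infty\varphi^2(s)\,\mathrm{d}s$, invoke Proposition~\ref{regvarresult}(ii) with $\alpha=0,\beta=2$, and finish with property~(W2). Your additional care in verifying the Hilbert--Schmidt property, handling small $t$, and discussing the generalized inverse is more than the paper provides but does not change the argument.
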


\begin{proof}
Note that the symbol of $G$ is $p_G(x,\xi) = \chi(x)\varphi(\langle \xi\rangle^d) .$ Let $K$ be a compact set containing the support of the function $\chi$. We obtain 
\begin{align*}
\int_{\varphi(\langle \xi\rangle^{d})< 1/t} &\int_{W} |p_G(x,\xi)|^2\, \mathrm{d}x \mathrm{d}\xi \le c \cdot \mathrm{vol}(K) \int_{\varphi(\langle \xi\rangle^{d})< 1/t} \varphi^2(\langle \xi\rangle^d)\,\mathrm{d}\xi\\ 
&\le  c_1 \int_{\varphi(|\xi|^{d})< 1/t} \varphi^2(| \xi|^d)\,\mathrm{d}\xi = \frac{c_1}d \int_{ \varphi^{-1}(1/t)}^\infty \varphi^2(s)\,\mathrm{d}s.
\end{align*}

It follows from Proposition~\ref{regvarresult}, that  
$$\int_{ \varphi^{-1}(1/t)}^\infty \varphi^2(s)\,\mathrm{d}s =O(\left. s \varphi^2(s)\right|_{s=\varphi^{-1}(1/t)})=O(\frac{\varphi^{-1}(1/t)}{t^2})=O(\varphi(t)), \ t\to \infty,$$
by using property (W2).

Hence,
\begin{align*}
\int_{\varphi(\langle \xi\rangle^{d})< 1/t} \int_{W} |p_G(x,\xi)|^2\, \mathrm{d}x \mathrm{d}\xi\le C \varphi(t), \ t>0.
\end{align*}
Thus, $G$ is spectrally $\varphi$-Laplacian modulated by Lemma~\ref{p_mod}.
\end{proof}

The following definition introduces two characteristics of the symbol of a Hilbert-Schmidt operator.

\begin{dfn}
\label{phidec}
Let $\varphi\in R_{-1}$ be a decreasing function and 
let  $G\in \mathcal{L}_2(L_2(W))$ be an operator with $L_2$-symbol $p_G$. We say that $G$ has \emph{$\varphi$-moderate growth} if 
$$\sup_k \int_{k<\Phi(\langle \xi\rangle^{d})<k+1}\int_W |p_G(x,\xi)| \, \mathrm{d}x \mathrm{d}\xi <\infty.$$
We say that the operator $G$ has \emph{$\varphi$-reasonable decay} if 
$$\int_{W\times W^*} \frac{|p_G(x,\xi)|}{\langle t-\langle\xi\rangle^d\rangle}\, \mathrm{d}x \mathrm{d}\xi =o(\Phi(t)), \quad\mbox{as $t\to \infty$}.$$

\end{dfn}

If $\varphi(t)=\frac1{\e+t}$, then the symbol of a spectrally $\varphi$-Laplacian modulated operators has the above properties~\cite[Proposition 11.3.18]{LSZ}. For the general case it is true only under some additional conditions.

\begin{lem}
\label{summinupp}
Let $G$ be a Hilbert-Schmidt operator on $L_2(W)$ with $\varphi$-moderate growth. Then it holds that 
$$\int_{\langle \xi\rangle^{d}<t}\int_W|p_G(x,\xi)|\mathrm{d}x\mathrm{d}\xi=O(\Phi(t)), \quad\mbox{as $t\to \infty$}.$$
\end{lem}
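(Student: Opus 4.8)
The plan is to reduce the claimed bound to a sum over dyadic-type level sets defined through $\Phi(\langle\xi\rangle^d)$, exactly the sets appearing in the definition of $\varphi$-moderate growth. First I would note that $\Phi$ is increasing and tends to $\infty$, so the region $\{\langle\xi\rangle^d<t\}$ is contained in $\{\Phi(\langle\xi\rangle^d)<\Phi(t)\}$, and hence
\begin{align*}
\int_{\langle\xi\rangle^d<t}\int_W|p_G(x,\xi)|\,\mathrm{d}x\mathrm{d}\xi
&\le \int_{\Phi(\langle\xi\rangle^d)<\Phi(t)}\int_W|p_G(x,\xi)|\,\mathrm{d}x\mathrm{d}\xi\\
&\le \sum_{k=0}^{\lfloor\Phi(t)\rfloor}\int_{k<\Phi(\langle\xi\rangle^d)<k+1}\int_W|p_G(x,\xi)|\,\mathrm{d}x\mathrm{d}\xi .
\end{align*}

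Next I would invoke the $\varphi$-moderate growth hypothesis: by Definition \ref{phidec} there is a constant $C$ with $\int_{k<\Phi(\langle\xi\rangle^d)<k+1}\int_W|p_G(x,\xi)|\,\mathrm{d}x\mathrm{d}\xi\le C$ for every $k$. Summing over $0\le k\le\lfloor\Phi(t)\rfloor$ gives the bound $C(\lfloor\Phi(t)\rfloor+1)=O(\Phi(t))$ as $t\to\infty$, since $\Phi(t)\to\infty$. A small point to be careful with is that the sets $\{k<\Phi(\langle\xi\rangle^d)<k+1\}$ omit the measure-zero spheres $\{\Phi(\langle\xi\rangle^d)=k\}$, which is harmless for an integral, and that one should use $\lceil\Phi(t)\rceil$ or $\lfloor\Phi(t)\rfloor+1$ terms to genuinely cover $\{\Phi(\langle\xi\rangle^d)<\Phi(t)\}$; either way the number of terms is $\Phi(t)+O(1)$.

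There is no serious obstacle here — the statement is essentially a repackaging of the definition of $\varphi$-moderate growth, with the only input being that $\Phi$ is monotone increasing with $\Phi(t)\to\infty$, which holds since $\Phi(t)=\int_0^t\varphi+c_\varphi$ with $\varphi>0$ decreasing and (because $\varphi\in R_{-1}$) not integrable at infinity. If one wanted to be slightly more careful about whether $\Phi(t)\to\infty$, note that for a decreasing function in $R_{-1}$ one has $\varphi(s)\gtrsim \varphi(2^k)$ for $s\le 2^k$ and $\sum_k 2^k\varphi(2^k)=\infty$, so $\int_0^t\varphi\to\infty$. Thus the whole argument is a two-line estimate once the covering by level sets is written down, and the "hard part," such as it is, is merely bookkeeping the number of dyadic level sets meeting the region.
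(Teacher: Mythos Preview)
Your proof is correct and follows essentially the same approach as the paper: cover $\{\langle\xi\rangle^d<t\}$ by the level sets $\{k<\Phi(\langle\xi\rangle^d)<k+1\}$ for $0\le k\le\lfloor\Phi(t)\rfloor$, bound each by the $\varphi$-moderate growth constant, and sum to get $O(\Phi(t))$. Your extra remarks on measure-zero spheres and $\Phi(t)\to\infty$ are more careful than needed but harmless.
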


\begin{proof}
Take $n=\lfloor \Phi(t)\rfloor$. We have that 
\begin{align*}
\int_{\langle \xi\rangle^{d}<t}\int_W|p_G(x,\xi)|\mathrm{d}x\mathrm{d}\xi&\leq \sum_{k=0}^n \int_{k<\Phi(\langle \xi\rangle^{d})<k+1}\int_W |p_G(x,\xi)|\, \mathrm{d}x \mathrm{d}\xi=\\
&=O(1)(n+1)=O(\Phi(t)).
\end{align*}
\end{proof}

\begin{prop}
\label{weirdassonpsi} 
If $\varphi\in R_{-1}$ is decreasing and $\Phi(t):=\int_0^t \varphi(s) \mathrm{d}s$, then there is an $\epsilon>0$ and a constant $C>0$ such that
 $$t-\Phi^{-1}(\Phi(t)-1)\geq Ct^{\frac12+\epsilon} \ \text{for $t$ large}.$$
In particular, $t-\Phi^{-1}(\Phi(t)-1)\to \infty$ as $t\to \infty$.
\end{prop}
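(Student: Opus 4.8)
The plan is to reduce the claim to a statement about the function $\Phi$ that follows from its regular variation. First I would record what is known about $\Phi$: since $\varphi \in R_{-1}$ is decreasing, $\Phi(t) = \int_0^t \varphi(s)\,\mathrm{d}s$ is increasing, concave, and by Proposition~\ref{regvarresult}(i) (applied with $\alpha = 0$, $\beta = 1$) we have $t\varphi(t)/\Phi(t) \to 0$; equivalently $\Phi \in R_0$, i.e. $\Phi$ is slowly varying, and moreover $\Phi'(t) = \varphi(t) = o(\Phi(t)/t)$. The quantity to control is $\delta(t) := t - \Phi^{-1}(\Phi(t)-1)$. Writing $u = \Phi^{-1}(\Phi(t)-1)$, so that $\Phi(t) - \Phi(u) = 1$ and $u < t$, the mean value theorem gives $1 = \Phi(t) - \Phi(u) = \varphi(\xi)(t-u)$ for some $\xi \in (u,t)$, hence $\delta(t) = (t-u) = 1/\varphi(\xi) \geq 1/\varphi(u)$ since $\varphi$ is decreasing and $\xi > u$.

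The key step is then to show $1/\varphi(u) \geq C t^{1/2+\epsilon}$ for large $t$. Here I would use that $\varphi \in R_{-1}$ forces $\varphi$ to decay essentially like $1/t$ up to slowly varying factors: by the representation theorem for regularly varying functions (or directly from the bound used in the proof of Proposition~\ref{schbelong}), for every $\eta > 0$ there is $C_\eta > 0$ with $\varphi(s) \leq C_\eta s^{-1+\eta}$ for large $s$, hence $1/\varphi(s) \geq C_\eta^{-1} s^{1-\eta}$. Applying this at $s = u$ reduces everything to comparing $u$ and $t$: I need $u = \Phi^{-1}(\Phi(t)-1)$ to not be too much smaller than $t$. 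Since $\Phi$ is slowly varying, $\Phi^{-1}$ is rapidly varying, so one must argue with some care, but the point is quantitative: from $\delta(t) = t - u$ and $\delta(t) \geq 1/\varphi(u) \geq C_\eta^{-1} u^{1-\eta}$, if $u \leq t/2$ then already $\delta(t) = t - u \geq t/2$, which is far stronger than $Ct^{1/2+\epsilon}$; and if $u > t/2$ then $\delta(t) \geq C_\eta^{-1} u^{1-\eta} \geq C_\eta^{-1} (t/2)^{1-\eta} \geq C t^{1/2+\epsilon}$ upon choosing $\eta < 1/2$, say $\eta = 1/4$ and $\epsilon = 1/4$. Combining the two cases gives the bound with $\epsilon = 1/4$ (any $\epsilon < 1/2$ works), and in particular $\delta(t) \to \infty$.

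The main obstacle I anticipate is making the dichotomy on $u$ versus $t/2$ fully rigorous together with the uniformity of the bound $\varphi(s) \leq C_\eta s^{-1+\eta}$; one must be careful that the threshold "$t$ large" in that Potter-type bound is uniform and that $u = \Phi^{-1}(\Phi(t)-1) \to \infty$ as $t \to \infty$ (which holds because $\Phi(t) \to \infty$, so $\Phi(t) - 1 \to \infty$, and $\Phi^{-1}$ is defined and tends to infinity on an unbounded set). A minor technical point is that $\Phi$ need not be strictly increasing a priori if $\varphi$ were to vanish, but $\varphi : [0,\infty) \to (0,\infty)$ is strictly positive by standing assumption, so $\Phi$ is a strictly increasing bijection onto its range and $\Phi^{-1}$ is unambiguous; likewise the mean value theorem applies since $\Phi \in C^1$ with $\Phi' = \varphi$. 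Everything else is routine estimation.
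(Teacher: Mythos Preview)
Your proof is correct and in fact cleaner than the argument in the paper. Your route is: use the mean value theorem on $\Phi(t)-\Phi(u)=1$ to get $\delta(t)=1/\varphi(\xi)\ge 1/\varphi(u)$, invoke the Potter bound $\varphi(s)\le C_\eta s^{-1+\eta}$ coming from $\varphi\in R_{-1}$, and then split according to whether $u\le t/2$ or $u>t/2$; this yields $\delta(t)\ge C\,t^{1-\eta}$ for any $\eta>0$, hence the stated $t^{1/2+\epsilon}$ for any $\epsilon<1/2$. The paper instead observes that $\Phi$ is a \emph{normalised} slowly varying function, appeals to the Zygmund class characterisation \cite[Theorem 1.5.5]{RegVar} to obtain $\Phi(t)\le t^{1/2-\epsilon}$ for large $t$, and then compares the increment $\Phi(t)-\Phi(t-\alpha_t)=1$ to the corresponding increment of $t^{1/2-\epsilon}$ via a concavity argument, computing $\beta_t$ explicitly from a Taylor expansion. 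Your approach avoids the Zygmund class machinery and the slightly delicate increment comparison entirely; it is more elementary and directly quantitative. The paper's approach has the merit of making explicit the slow variation of $\Phi$ and tying the exponent to the $t^{1/2-\epsilon}$ dominance, but your mean value plus Potter bound argument is shorter and just as sharp.
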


\begin{proof}
Since $\varphi\in R_{-1}$, $\Phi$ satisfy~\eqref{c1} and thus $\Phi$ varies slowly (see Definition~\ref{SV} above). A direct verification shows that
$$\Phi(t)= c(t) \cdot \exp\left(\int_a^t f(s) \frac{\mathrm{d}s}s\right),$$
where $c\equiv 1$, $a>0$ is such that $\Phi(a)=1$ and $f(s)=\frac{s \varphi(s)}{\Phi(s)}$. Since $c$ is a constant function, the function $\Phi$ is a normalised slowly varying function (see~\cite[formula (1.3.4)]{RegVar}). In view of~\cite[Theorem 1.5.5]{RegVar} the class of such function coincides with the Zygmund class. In particular,  it means that for every $0< \epsilon<1/2$ the function $t \mapsto t^{-1/2+\epsilon}\Phi(t)$ is decreasing for large values of $t$. Since $\Phi$ is non-negative, it follows that the function $t \mapsto t^{-1/2+\epsilon}\Phi(t)$ is convergent. Moreover, it converges to zero.
Indeed, suppose to the contrary that $t \mapsto t^{-1/2+\epsilon}\Phi(t) \to const \neq0$ as $t\to\infty.$ Then,
$$\frac{(2t)^{-1/2+\epsilon}\Phi(2t)}{t^{-1/2+\epsilon}\Phi(t)}\mathop{\longrightarrow}\limits_{t\to\infty} 1.$$
However, since the function $\Phi$ is slowly varying, direct computation show that the above expression tends to $2^{-1/2+\epsilon}\neq 1$. This contradiction shows that $t^{-1/2+\epsilon}\Phi(t)$ converges to zero.
Thus, $\Phi(t)\le t^{1/2-\epsilon}$ for large values of $t$.

Since both functions $t^{1/2-\epsilon}$ and $\Phi$ are concave, it follows that 
\begin{equation}
\label{convace}
\Phi(t_1)-\Phi(t_2)\le t_1^{1/2-\epsilon}-t_2^{1/2-\epsilon} \quad \mbox{for large $t_1, t_2$.}
\end{equation}

The functions $t^{1/2-\epsilon}$ and $\Phi$ are continuous and increasing to $+\infty$, hence it follows that for every $t>0$ such that $\Phi(t)\ge1$ ($t^{1/2-\epsilon}\ge1$, resp.) there exist $\alpha_t>0$ ($\beta_t>0$, resp.) such that $\Phi(t)-\Phi(t-\alpha_t)=1$ ($t^{1/2-\epsilon}-(t-\beta_t)^{1/2-\epsilon}=1$, resp.). From the latter equality we find that
$$\beta_t= t-\left(t^{\frac12-\epsilon}-1\right)^{\frac{2}{1-2\epsilon}}= t\left(1-\left(1-t^{-\frac12+\epsilon}\right)^{\frac{2}{1-2\epsilon}}\right)=c_1t^{\frac12+\epsilon}+O(t^{2\epsilon}) \ \text{as} \ t\to \infty,$$
where for the latter equality we used the Taylor expansion of the function $s\mapsto (1-s)^a$.
We conclude that for a suitable $C>0$, $\beta_t\geq Ct^{\frac12+\epsilon}$. Moreover, the estimate \eqref{convace} implies that for large $t$ we have
$$\Phi(t)-\Phi(t-\beta_t)\le t^{1/2}- (t-\beta_t)^{1/2}=1 =\Phi(t)-\Phi(t-\alpha_t).$$
Thus, $\Phi(t-\beta_t)\ge \Phi(t-\alpha_t)$ and $\alpha_t\ge\beta_t\geq Ct^{\frac12+\epsilon}$. Hence, $\alpha_t\to \infty$ as $t\to \infty$.

Finally, we obtain
$$t-\Phi^{-1}(\Phi(t)-1) = t-\Phi^{-1}(\Phi(t-\alpha_t))=\alpha_t\geq Ct^{\frac12+\epsilon}\quad\mbox{for $t$ large},$$
as required.
\end{proof}

\begin{lem}
\label{growthtodecay}
Let $\varphi\in R_{-1}$ be decreasing. If the compactly based Hilbert-Schmidt operator $G$ on $L_2(W)$ has $\varphi$-moderate growth and is spectrally $\varphi$-Laplacian modulated, then $G$ has $\varphi$-reasonable decay.
\end{lem}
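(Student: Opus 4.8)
The plan is to bound the integral $\int_{W\times W^*}|p_G(x,\xi)|\,\langle t-\langle\xi\rangle^d\rangle^{-1}\,\mathrm{d}x\,\mathrm{d}\xi$ by splitting the $\xi$-variable into three regions according to the size of $u:=\langle\xi\rangle^d$ relative to $t$, in the spirit of \cite[Proposition 11.3.18]{LSZ} (which treats the case $\varphi(t)=(\e+t)^{-1}$). Write $F(r):=\int_{\langle\xi\rangle^d<r}\int_W|p_G(x,\xi)|\,\mathrm{d}x\,\mathrm{d}\xi$; by Lemma~\ref{summinupp}, $\varphi$-moderate growth gives $F(r)=O(\Phi(r))$, and since $\varphi\in R_{-1}$ the function $\Phi$ is increasing, slowly varying and $\Phi(t)\to\infty$. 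On the \emph{low region} $\{u\le t/2\}$ one has $\langle t-u\rangle\ge t/2$, so its contribution is at most $2t^{-1}F(t/2)=O(\Phi(t)/t)=o(\Phi(t))$. On the \emph{high region} $\{u\ge 3t/2\}$ one has $\langle t-u\rangle\ge u/3$, and a dyadic decomposition in $u$, using $F(2^{j+1})=O(\Phi(2^j))$ and $\sum_{j\ge j_0}2^{-j}\Phi(2^j)=O(2^{-j_0}\Phi(2^{j_0}))$ (a routine consequence of the slow variation of $\Phi$, cf.\ Theorem~\ref{Vuilleumier}), again gives $O(\Phi(t)/t)=o(\Phi(t))$.

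The crux is the \emph{middle region} $\{t/2<u<3t/2\}$, and it is here that $\varphi$-moderate growth is genuinely used. I would further decompose it into the dyadic shells $S_m:=\{2^{m-1}\le|t-u|<2^m\}$ for $1\le m\lesssim\log_2 t$, plus a central piece $\{|t-u|<1\}$; on $S_m$ the weight $\langle t-u\rangle^{-1}$ is $O(2^{-m})$. The key estimate is geometric: because $\varphi$ is decreasing and obeys \eqref{quasi}, $\Phi(t+2^m)-\Phi(t-2^m)=\int_{t-2^m}^{t+2^m}\varphi(s)\,\mathrm{d}s\le 2^{m+1}\varphi(t/2)=O(2^m\varphi(t))$, so the shell $S_m$ is covered by $O(2^m\varphi(t))+O(1)$ of the unit-$\Phi$-width bands $B_k:=\{k\le\Phi(\langle\xi\rangle^d)<k+1\}$. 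Hence $\varphi$-moderate growth bounds $\int_{S_m}\int_W|p_G|$ by $O(2^m\varphi(t))+O(1)$, so the contribution of $S_m$ is $O(\varphi(t))+O(2^{-m})$; for the central piece, Proposition~\ref{weirdassonpsi} shows that $\{|t-u|<1\}$ meets only a bounded number of bands $B_k$ for $t$ large, contributing $O(1)$. Summing over the $O(\log t)$ shells, the middle region contributes $O(\varphi(t)\log t)+O(1)$.

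It remains to add up the three estimates. Since $\varphi(t)\log t\le t\varphi(t)$ for $t\ge1$ and Proposition~\ref{regvarresult}(i) (with $\alpha=0$, $\beta=1$) gives $t\varphi(t)=o(\Phi(t))$, while $O(1)=o(\Phi(t))$ because $\Phi(t)\to\infty$, the middle-region contribution is $o(\Phi(t))$; together with the low- and high-region bounds this yields $\int|p_G|\,\langle t-\langle\xi\rangle^d\rangle^{-1}=o(\Phi(t))$, i.e.\ $G$ has $\varphi$-reasonable decay. The main obstacle is the middle region: one must verify that the symbol mass in the thin shells $S_m$ straddling the ``diagonal'' $\langle\xi\rangle^d=t$ is controlled by $\varphi$-moderate growth even when $2^m\ll\varphi(t)^{-1}$ --- this is precisely where the width of the $\Phi$-bands, quantified by Proposition~\ref{weirdassonpsi}, is needed --- and then that the logarithmic loss from the dyadic sum is still negligible against $\Phi$, which is the content of $t\varphi(t)=o(\Phi(t))$. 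I should add that the spectral $\varphi$-modulation hypothesis provides an alternative treatment of the high region, via the $L_2$-decay estimate for the symbol coming from Lemma~\ref{p_mod} together with the Cauchy--Schwarz inequality in $\xi$, which can replace the use of Lemma~\ref{summinupp} there.
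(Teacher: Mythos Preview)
Your proof is correct, and in fact proves a bit more than the lemma as stated: your argument nowhere uses the spectral $\varphi$-Laplacian modulation hypothesis (nor, in fact, the compact-base hypothesis), so you have shown that for $\varphi\in R_{-1}$ decreasing, $\varphi$-moderate growth alone already implies $\varphi$-reasonable decay.

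The paper's proof proceeds differently. It splits according to whether $\Phi(\langle\xi\rangle^d)$ lies within~$1$ of $\Phi(t)$, rather than whether $\langle\xi\rangle^d$ lies within a fixed factor of $t$. This makes the middle region trivial (it is the union of two $\Phi$-bands $B_k$, so moderate growth gives $O(1)$ immediately), at the cost of making the outer regions harder: the low region then requires Proposition~\ref{weirdassonpsi} to bound $\langle t-\Phi^{-1}(\Phi(t)-1)\rangle^{-1}$, and the high region is handled not by Lemma~\ref{summinupp} but by Cauchy--Schwarz together with the spectral modulation bound from Lemma~\ref{p_mod}. Your decomposition reverses this trade-off: the low and high regions become elementary (weight $\lesssim t^{-1}$ and $\lesssim u^{-1}$ respectively, so Lemma~\ref{summinupp} and a dyadic sum suffice), while the work moves to the middle region, where you count $\Phi$-bands in each dyadic shell $S_m$ and sum the resulting $O(\varphi(t))+O(2^{-m})$ over $O(\log t)$ shells.

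Two small remarks. First, your appeal to Proposition~\ref{weirdassonpsi} for the central piece $\{|t-u|<1\}$ is unnecessary: since $\varphi\in R_{-1}$ forces $\varphi(t)\to0$, one has $\Phi(t+1)-\Phi(t-1)\le 2\varphi(t-1)\to0$, so this piece meets at most two bands $B_k$ for $t$ large. Second, the bound $\varphi(t)\log t\le t\varphi(t)=o(\Phi(t))$ is of course valid but very crude; one could equally note that $\varphi(t)\log t\to 0$ while $\Phi(t)\to\infty$.
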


\begin{proof}
We divide up the integral $\int_{W\times W^*} \frac{|p_G(x,\xi)|}{\langle t-\langle\xi\rangle^d\rangle}\, \mathrm{d}x \mathrm{d}\xi$ 
into three terms corresponding to the three regions $\Phi(\langle\xi\rangle^d)\leq \Phi(t)-1$, $\Phi(t)-1< \Phi(\langle\xi\rangle^d)\leq \Phi(t)+1$ and $\Phi(t)+1< \Phi(\langle\xi\rangle^d)$. 

If $\Phi(\langle\xi\rangle^d)\leq \Phi(t)-1$, we have $\langle\xi\rangle^d\leq \Phi^{-1}(\Phi(t)-1)$. As such, $\langle t-\langle\xi\rangle^d\rangle^{-1}\leq \langle t-\Phi^{-1}(\Phi(t)-1)\rangle$ in the region $\Phi(\langle\xi\rangle^d)\leq \Phi(t)-1$. Combining this with Proposition \ref{summinupp}, we arrive at the estimate 
\begin{align*}
\int_{\Phi(\langle\xi\rangle^d)\leq \Phi(t)-1}\int_W \frac{|p_G(x,\xi)|}{\langle t-\langle\xi\rangle^d\rangle}\, \mathrm{d}x \mathrm{d}\xi = O\left(\frac{\Phi(t)}{\langle t-\Phi^{-1}(\Phi(t)-1)\rangle}\right)=o(\Phi(t)).
\end{align*}
In the last equality, we used Lemma~\ref{weirdassonpsi}.

In the region $\Phi(t)-1< \Phi(\langle\xi\rangle^d)\leq \Phi(t)+1$, we use that $\langle t-\langle\xi\rangle^d\rangle\geq1$ and the fact that $G$ has $\varphi$-moderate growth to estimate that 
\begin{align*}
\int_{\Phi(t)-1< \Phi(\langle\xi\rangle^d)\leq \Phi(t)+1}\int_W &\frac{|p_G(x,\xi)|}{\langle t-\langle\xi\rangle^d\rangle}\, \mathrm{d}x \mathrm{d}\xi \leq\\
&\leq  \int_{\Phi(t)-1< \Phi(\langle\xi\rangle^d)\leq \Phi(t)+1}\int_W |p_G(x,\xi)|\, \mathrm{d}x \mathrm{d}\xi=O(1)=o(\Phi(t)).
\end{align*}

Let $K$ be a compact set for which $p_G$ is supported in $K\times W^*$. By Proposition \ref{weirdassonpsi}, the estimate $\langle t-\langle\xi\rangle^d\rangle\geq \langle \xi\rangle^{d/2+\epsilon}$ holds in the region $ \Phi(t)+1< \Phi(\langle\xi\rangle^d)$ and we use the Cauchy-Schwarz inequality to estimate
\begin{align*}
\int_{\Phi(t)+1< \Phi(\langle\xi\rangle^d)}\int_W &\frac{|p_G(x,\xi)|}{\langle t-\langle\xi\rangle^d\rangle}\, \mathrm{d}x \mathrm{d}\xi \leq\\
&\leq  \int_{\Phi(t)+1< \Phi(\langle\xi\rangle^d)} \int_K \frac{|p_G(x,\xi)|}{\langle\xi\rangle^{d/2+\epsilon}}\, \mathrm{d}x \mathrm{d}\xi\leq\\
&\leq  \left(\mathrm{vol}(K)\int_{\langle \xi\rangle^d\geq t}\langle \xi\rangle^{-d-2\epsilon}\mathrm{d}\xi \int_{\langle \xi\rangle^d\geq t}|p_G(x,\xi)|^2\, \mathrm{d}x \mathrm{d}\xi\right)^{1/2}\leq \\
&\le O(1) \left(t^{-2\epsilon}\int_{\varphi(\langle \xi\rangle^d)\leq \frac{1}{\varphi(t)^{-1}}}|p_G(x,\xi)|^2\, \mathrm{d}x \mathrm{d}\xi\right)^{1/2}\leq\\
&\leq  O\left(\sqrt{t^{-2\epsilon}\varphi\left(\frac{1}{\varphi(t)}\right)}\right)\|G\|_{\varphi, \varphi((1-\Delta)^{d/2}),spec}=O(t^{-\epsilon})=o(\Phi(t)).
\end{align*}
In the second last equality, we used that $\varphi$ is bounded.
\end{proof}

\begin{prop}
\label{psi_growth} 
Let $\varphi\in R_{-1}$ be decreasing and satisfying property (W2) (see Definition~\ref{someavass}). For every function $\chi\in C^\infty_c(W)$ the operator $G:=\chi \varphi[(1-\Delta)^{d/2}]$ has $\varphi$-moderate growth and $\varphi$-reasonable decay.
\end{prop}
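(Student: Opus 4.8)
The plan is to establish the two properties separately, obtaining $\varphi$-reasonable decay from $\varphi$-moderate growth by invoking the results already in place. As computed in the proof of Proposition~\ref{psi_mod}, the $L_2$-symbol of $G$ is $p_G(x,\xi)=\chi(x)\varphi(\langle \xi\rangle^d)$. Since $\varphi\in R_{-1}$, Proposition~\ref{regvarresult}(ii) (with $\alpha=0$, $\beta=2$) gives $\int_1^\infty \varphi(s)^2\,\mathrm{d}s<\infty$, so $\varphi(\langle\cdot\rangle^d)\in L_2(W^*)$ and hence $p_G\in L_2(W\times W^*)$, i.e. $G\in\mathcal{L}_2(L_2(W))$; moreover $G$ is compactly based, since $\chi'G=G$ for any $\chi'\in C^\infty_c(W)$ equal to $1$ on $\mathrm{supp}\,\chi$. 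Granting $\varphi$-moderate growth, Proposition~\ref{psi_mod} (which is where property (W2) enters) shows $G$ is spectrally $\varphi$-Laplacian modulated, and Lemma~\ref{growthtodecay} then yields $\varphi$-reasonable decay. Thus the real content of the proof is the verification of $\varphi$-moderate growth, for which only $\varphi\in R_{-1}$ (decreasing, with $\Phi(t)\to\infty$) is needed.

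Since $\int_W|p_G(x,\xi)|\,\mathrm{d}x=\|\chi\|_{L^1(W)}\,\varphi(\langle \xi\rangle^d)$, it suffices to bound $\int_{k<\Phi(\langle \xi\rangle^{d})<k+1}\varphi(\langle \xi\rangle^d)\,\mathrm{d}\xi$ uniformly in $k$. Passing to polar coordinates on $W^*\cong\R^d$ and substituting $u=\langle \xi\rangle^d=(1+r^2)^{d/2}$, the radial density $\omega_{d-1}r^{d-1}\,\mathrm{d}r$ transforms into $\omega_{d-1}J(u)\,\mathrm{d}u$ with $J(u)=\tfrac1d(u^{2/d}-1)^{(d-2)/2}u^{2/d-1}$, which is locally integrable on $[1,\infty)$ and satisfies $J(u)\to\tfrac1d$ as $u\to\infty$; in particular $J\le C_d$ on $[u_0,\infty)$ for suitable $u_0>1$ and $C_d>0$. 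Fix $k_0$ with $\Phi(u_0)\le k_0$. For every $k\ge k_0$ the set $\{u:k<\Phi(u)<k+1\}$ is contained in $[u_0,\infty)$, so, using $\varphi=\Phi'$ and that $\Phi$ maps this set into $(k,k+1)$,
$$\int_{k<\Phi(\langle \xi\rangle^{d})<k+1}\varphi(\langle \xi\rangle^d)\,\mathrm{d}\xi\le C_d\,\omega_{d-1}\int_{\{u\,:\,k<\Phi(u)<k+1\}}\Phi'(u)\,\mathrm{d}u\le C_d\,\omega_{d-1}.$$
For the finitely many $k<k_0$, the region $\{\Phi(\langle \xi\rangle^d)<k_0+1\}$ is a bounded ball on which $|p_G|$ is bounded, so each of those integrals is finite. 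Hence $\sup_k\int_{k<\Phi(\langle \xi\rangle^{d})<k+1}\int_W|p_G(x,\xi)|\,\mathrm{d}x\,\mathrm{d}\xi<\infty$, i.e. $G$ has $\varphi$-moderate growth, and combining this with Proposition~\ref{psi_mod} and Lemma~\ref{growthtodecay} as above gives $\varphi$-reasonable decay.

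The argument is essentially routine; the one point calling for a little care is the transformed radial density $J$, which for $d=1$ has an integrable singularity at $u=1$ and for $d\ge2$ is merely bounded near $u=1$ rather than constant. This affects only the bounded range of indices $k$, where the crude estimate already suffices, so it poses no genuine obstacle.
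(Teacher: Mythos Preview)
Your proof is correct and follows the same strategy as the paper's: establish $\varphi$-moderate growth by reducing the shell integral $\int_{k<\Phi(\langle\xi\rangle^d)<k+1}\varphi(\langle\xi\rangle^d)\,\mathrm{d}\xi$ to $\int_{k<\Phi(s)<k+1}\Phi'(s)\,\mathrm{d}s\le 1$ after a radial change of variables, then invoke Proposition~\ref{psi_mod} and Lemma~\ref{growthtodecay} to obtain $\varphi$-reasonable decay. The only difference is cosmetic: the paper absorbs the Jacobian of $\xi\mapsto\langle\xi\rangle^d$ into an unnamed constant, whereas you compute $J(u)$ explicitly and note its boundedness for large $u$ and integrability near $u=1$.
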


\begin{proof}
Note that the symbol of $G$ is $p_G(x,\xi) = \chi(x)\varphi(\langle \xi\rangle^d)$. Let $K$ denote the support of the function $\chi$. For every $k\in \N$ we obtain 
\begin{align*}
\int_{k<\Phi(\langle \xi\rangle^{d})<k+1}\int_W |p_G(x,\xi)| \, \mathrm{d}x \mathrm{d}\xi&\le c \cdot \mathrm{vol}(K) \int_{k<\Phi(s)<k+1} \varphi(s)\,\mathrm{d}s\\ 
&\le  c_1 \left.\Phi(s)\right\vert_{\Phi(s)=k}^{\Phi(s)=k+1} =c_1.
\end{align*}

Thus, $G$ has $\varphi$-moderate growth. Since $G$ is spectrally $\varphi$-Laplacian modulated by Proposition~\ref{psi_mod}, it follows from Lemma~\ref{growthtodecay} that $G$ has $\varphi$-reasonable decay.
\end{proof}

The following result shows that under additional assumptions on $\varphi$ the symbol of any spectrally $\varphi$-Laplacian modulated operator have nice properties.

\begin{lem}
\label{phimodphireas}
Let $\varphi\in R_{-1}$ be decreasing and satisfy that $\varphi(t)=O(t^{-1})$ as $t\to \infty$. Then compactly based spectrally $\varphi$-Laplacian modulated Hilbert-Schmidt operators on $L_2(W)$ have $\varphi$-moderate growth, and in particular $\varphi$-reasonable decay.
\end{lem}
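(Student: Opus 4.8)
\emph{Plan.} The statement has two parts: that a compactly based, spectrally $\varphi$-Laplacian modulated Hilbert--Schmidt operator $G$ has $\varphi$-moderate growth, and that it then also has $\varphi$-reasonable decay. The second part is immediate: once $\varphi$-moderate growth is established, Lemma~\ref{growthtodecay} applies verbatim, its remaining hypotheses (compactly based, Hilbert--Schmidt, spectrally $\varphi$-Laplacian modulated) being exactly what we are assuming. So the whole proof reduces to proving $\varphi$-moderate growth, i.e.\ $\sup_k\int_{k<\Phi(\langle\xi\rangle^d)<k+1}\int_W|p_G|<\infty$.

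I would fix a large $k$, put $a_k:=\Phi^{-1}(k)$, $b_k:=\Phi^{-1}(k+1)$, note that the $\xi$-domain is the shell $\{a_k\le\langle\xi\rangle^d<b_k\}$ and that the $x$-integration is over a fixed compact $K$ (since $G$ is compactly based, $p_G$ is supported in $K\times W^*$), and cover the shell by the dyadic annuli $T_j:=\{2^ja_k\le\langle\xi\rangle^d<2^{j+1}a_k\}$, $0\le j\le J_k:=\lceil\log_2(b_k/a_k)\rceil$. On each annulus, Cauchy--Schwarz (first in $x$ over $K$, then in $\xi$ over $T_j$) gives
\[
\int_{T_j}\int_K|p_G|\;\le\;\bigl(\mathrm{vol}(K)\,\mathrm{vol}(T_j)\bigr)^{1/2}\Bigl(\int_{T_j}\int_W|p_G|^2\Bigr)^{1/2},
\]
and an elementary computation of the radial Jacobian in the variable $\langle\xi\rangle^d$ gives $\mathrm{vol}(T_j)\le C_d\,2^ja_k$ for $k$ large (all $j\ge0$). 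For the $L^2$-factor I would use that $G$ is spectrally $\varphi$-Laplacian modulated: by Lemma~\ref{p_mod}, $\int_{\varphi(\langle\xi\rangle^d)<1/t}\int_W|p_G|^2\le C\varphi(t)$ for all $t>0$; since $\{\langle\xi\rangle^d\ge s\}\subseteq\{\varphi(\langle\xi\rangle^d)<2\varphi(s)\}$, feeding in $t=1/(2\varphi(s))$ and using \eqref{quasi} yields $\int_{\langle\xi\rangle^d\ge s}\int_W|p_G|^2\le C'\varphi\bigl(1/\varphi(s)\bigr)$ for $s$ large.

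The key point, and the main obstacle, is the sharp tail estimate $\varphi\bigl(1/\varphi(s)\bigr)=O\bigl(s\varphi(s)^2\bigr)$ as $s\to\infty$, which is where the hypothesis $\varphi(t)=O(t^{-1})$ enters. Writing $\ell(t):=t\varphi(t)$ --- a \emph{bounded} slowly varying function by that hypothesis --- one has $\varphi(1/\varphi(s))=\ell\bigl(s/\ell(s)\bigr)\varphi(s)$, so the estimate amounts to $\ell(s/\ell(s))=O(\ell(s))$; this I would deduce from the uniform convergence theorem for slowly varying functions \cite{RegVar}, using that $\log(1/\ell(s))=o(\log s)$, so that the argument $s/\ell(s)$ grows only moderately faster than $s$. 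Granting this, $\int_{\langle\xi\rangle^d\ge s}\int_W|p_G|^2\le C''s\varphi(s)^2$ for $s$ large, and combining with $\mathrm{vol}(T_j)\le C_d2^ja_k$ shows that the $j$-th annulus contributes at most a constant times $2^ja_k\varphi(2^ja_k)=\ell(2^ja_k)$.

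Finally I would bound $\sum_{j=0}^{J_k}\ell(2^ja_k)$ uniformly in $k$ by comparison with an integral: by \eqref{quasi}, $\ell(2^ja_k)=2^ja_k\varphi(2^ja_k)\le D\,2^ja_k\varphi(2^{j+1}a_k)\le D\int_{2^ja_k}^{2^{j+1}a_k}\varphi$, hence $\sum_{j=0}^{J_k}\ell(2^ja_k)\le D\int_{a_k}^{2^{J_k+1}a_k}\varphi\le D\bigl(\Phi(4b_k)-\Phi(a_k)\bigr)$; and $\Phi(b_k)-\Phi(a_k)=(k+1)-k=1$ while $\Phi(4b_k)-\Phi(b_k)=\int_{b_k}^{4b_k}\varphi\le 3b_k\varphi(b_k)=3\ell(b_k)=O(1)$ (using $\varphi=O(1/t)$ once more), so the sum is $O(1)$ independently of $k$. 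Thus $\int_{k<\Phi(\langle\xi\rangle^d)<k+1}\int_W|p_G|=O(1)$ for $k$ large, and the finitely many remaining $k$ cause no trouble because there the $\xi$-shell is bounded and $p_G\in L^2$. This gives $\varphi$-moderate growth, and hence, via Lemma~\ref{growthtodecay}, $\varphi$-reasonable decay.
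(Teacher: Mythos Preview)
Your reduction to $\varphi$-moderate growth and the invocation of Lemma~\ref{growthtodecay} at the end are exactly right, and the dyadic bookkeeping together with the final summation $\sum_j \ell(2^j a_k) \le D(\Phi(4b_k)-\Phi(a_k)) = O(1)$ is clean. The genuine gap is your ``key point'': the tail estimate $\varphi(1/\varphi(s)) = O(s\varphi(s)^2)$, equivalently $\ell(s/\ell(s)) = O(\ell(s))$. Substituting $t=1/\varphi(s)$ shows this is exactly property~(W1) of Definition~\ref{someavass}, and (W1) is \emph{not} a consequence of the hypotheses $\varphi\in R_{-1}$, $\varphi$ decreasing, $\varphi(t)=O(t^{-1})$. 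Your appeal to the uniform convergence theorem does not establish it: UCT gives $\ell(\lambda s)/\ell(s)\to 1$ uniformly only for $\lambda$ in \emph{compact} subsets of $(0,\infty)$, whereas here $\lambda=1/\ell(s)$ is unbounded whenever $\ell(s)\to 0$. In fact the estimate can fail: set $\log\ell(e^u)=I(u)$ with $I'=\epsilon$, where $\epsilon\equiv +k^{-1/2}$ on $[u_k,u_k+k]$ and $\epsilon\equiv -k^{-1/2}$ on $[u_k+k,u_{k+1}]$, the $u_k$ being chosen so that $I(u_k)=-k$ (this forces $u_{k+1}-u_k=2k+\sqrt{k}$, hence $u_k\sim k^2$ and $|\epsilon(v)|\sim v^{-1/4}\to 0$, so $\ell$ is slowly varying and bounded by~$1$). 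At $s=e^{u_k}$ one has $\ell(s)=e^{-k}$, $s/\ell(s)=e^{u_k+k}$, and $\ell(s/\ell(s))=e^{I(u_k+k)}=e^{-k+\sqrt{k}}$, so $\ell(s/\ell(s))/\ell(s)=e^{\sqrt{k}}\to\infty$. Without this sharp tail bound your per-annulus contribution is only $O(\sqrt{\ell(2^ja_k)})$, and that sum is \emph{not} uniformly bounded (already for $\varphi(t)=1/(t\log t)$ it diverges like $\sqrt{\log a_k}$).

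The paper's route is much shorter and sidesteps the dyadic decomposition entirely. One applies Cauchy--Schwarz a single time on the whole shell $\{k<\Phi(\langle\xi\rangle^d)<k+1\}$. The shell volume is handled by the substitution $u=\Phi(\langle\xi\rangle^d)$, giving $\int_k^{k+1} \mathrm{d}u/\varphi(\Phi^{-1}(u))$, which the paper bounds by $C\,t$ for $t:=1/\varphi(\Phi^{-1}(k))$. Since the shell is contained in $\{\varphi(\langle\xi\rangle^d)<1/t\}$, the $L_2$ factor is at most $\|GE_{\varphi((1-\Delta)^{d/2})}[0,1/t]\|_{\mathcal{L}_2}$, and the hypothesis $\varphi(t)=O(t^{-1})$ enters only once, as $\sqrt{t}\le C/\sqrt{\varphi(t)}$, which converts the product directly into the spectral $\varphi$-modulation norm. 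No appeal to (W1) or to any fine structure of $\ell$ is made.
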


\begin{proof}
Let $G$ be a compactly based spectrally $\varphi$-Laplacian modulated Hilbert-Schmidt operators on $L_2(W)$ with $L_2$-symbol $p_G$. Take a compact set $K$ such that $p_G$ is supported in $K\times W^*$. We start by making the following estimate:
\begin{equation}
\label{phasevolest}
\int_{k<\Phi(\langle\xi\rangle^d)<k+1}\mathrm{d}\xi\leq C\int_{\Phi^{-1}(k)}^{\Phi^{-1}(k+1)} \mathrm{d} s=C\int_{k}^{k+1} \frac{\mathrm{d}u}{\varphi(\Phi^{-1}(u))}\leq C\frac{1}{\varphi(\Phi^{-1}(k))}.
\end{equation}
where we make the change of variables $u=\Phi(s)$ and use that $\varphi=\Phi'$ is decreasing. Here $C$ is a constant depending only on $\Phi$ and the dimension $d$.

Using the Cauchy-Schwarz inequality and the estimate \eqref{phasevolest}, we estimate that 
\begin{align*}
\int_{k<\Phi(\langle\xi\rangle^d)<k+1}&\int_W |p_G(x,\xi)| \, \mathrm{d}x \mathrm{d}\xi \leq \\
&\leq \left(C\frac{\mathrm{vol}(K)}{\varphi(\Phi^{-1}(k))}\int_{k<\Phi(\langle\xi\rangle^d)<k+1}\int_W |p_G(x,\xi)|^2 \, \mathrm{d}x \mathrm{d}\xi\right)^{1/2}\\
&\leq C'\left(t\int_{\varphi(\langle\xi\rangle^d)<1/t}\int_W |p_G(x,\xi)|^2 \, \mathrm{d}x \mathrm{d}\xi\right)^{1/2}
\end{align*}
where $t=1/\varphi(\Phi^{-1}(k))$. Using that $\varphi(t)=O(t^{-1})$, we conclude that 
\begin{align*}
\int_{k<\Phi(\langle\xi\rangle^d)<k+1}\int_W |p_G(x,\xi)| \, \mathrm{d}x \mathrm{d}\xi \leq C''\sup_{t>0}\frac{1}{\sqrt{\varphi(t)}}\|GE_{\varphi((1-\Delta)^{d/2})}[0,t^{-1}]\|_{\mathcal{L}_2}.
\end{align*}
This argument shows that compactly based spectrally $\varphi$-Laplacian modulated Hilbert-Schmidt operators on $L_2(W)$ have $\varphi$-moderate growth, and by Proposition \ref{growthtodecay} they also have $\varphi$-reasonable decay.
\end{proof}

\begin{rem}
Lemma \ref{phimodphireas} is crucial to proving Connes' trace formula for strongly Laplacian modulated operators in $\mathcal{L}_{1,\infty}$, i.e. when $\varphi(t)=\frac{1}{\e+t}$, as is best seen in \cite[Chapter 11]{LSZ}. By Lemma \ref{phimodphireas}, the assumption of being strongly Laplacian modulated makes the property of having $\varphi$-reasonable decay superfluous when $\varphi(t)=O(t^{-1})$. Since we are interested in situations when $\varphi(t)=O(t^{-1})$ fails, we will separate the operator theoretical condition of being modulated (in a suitable way) from the symbol property of having $\varphi$-reasonable decay; both conditions are needed in our approach to proving Connes' trace formula.
\end{rem}

\section{Connes' trace formula in the local model}
\label{cttsection}

As above, we let $W$ denote a $d$-dimensional inner product space with negative Laplacian $\Delta$. Inner product spaces locally model Riemannian manifolds via the tangent space.

The following result is an extension of Connes' trace formula to $\varphi$-Laplacian modulated operators. 

\begin{thm}
\label{CTT_R}
Let $\varphi\in SR_{-1}$ be decreasing. Assume that $G\in \Lw(L_2(W))$ is a compactly supported weakly $\varphi$-Laplacian modulated operator with $\varphi$-reasonable decay and $L_2$-symbol $p_G$. Then, as $n\to \infty$,
$$\int_{W} \int_{\langle \xi\rangle \le n^{1/d}} p_G(x,\xi)\,\mathrm{d}\xi \mathrm{d}x=O(\Phi(n)),$$
and for every extended limit $\omega$ on $\ell_\infty$ we have
$${\rm Tr}_\omega(G) = \omega \left(\frac1{\Phi (n+1)} \int_{W} \int_{\langle \xi\rangle\le n^{1/d}} p_G(x,\xi)\,\mathrm{d}\xi \mathrm{d}x\right).$$
\end{thm}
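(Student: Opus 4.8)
The strategy is to reduce Theorem~\ref{CTT_R} to the expectation-value formula of Theorem~\ref{weaklymodform}, applied with the auxiliary operator $V=\varphi[(1-\Delta)^{d/2}]$, and then identify the expectation values $\langle Ge_k,e_k\rangle$ with the phase-space integral of $p_G$. The subtlety, of course, is that $V=\varphi[(1-\Delta)^{d/2}]$ is \emph{not} compact on $L_2(W)$, so Theorem~\ref{weaklymodform} does not literally apply. The first step is therefore to localize: since $G$ is compactly supported, pick $\chi\in C_c^\infty(W)$ with $\chi G=G=G\chi$, and replace $V$ by the operator $V_\chi$ obtained by compressing $\varphi[(1-\Delta)^{d/2}]$ to a large bounded region, or more precisely work on a torus $\mathbb{T}^d_R$ (of side-length $R$ large enough to contain the support of $\chi$) onto which $G$ descends, with $V$ replaced by $\varphi[(1-\Delta_{\mathbb T^d_R})^{d/2}]$. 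On the torus this $V$ \emph{is} compact, lies in $\Lw$ by the Weyl law, and $G$ is still weakly $\varphi$-modulated with respect to it (the modulation estimates are local, so they survive the passage to the torus). One then applies Theorem~\ref{weaklymodform} on $L_2(\mathbb T^d_R)$.

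\textbf{Identifying the expectation values.} The eigenbasis of $\varphi[(1-\Delta_{\mathbb T^d_R})^{d/2}]$ is (a rescaling of) the Fourier basis $e_m(x)=R^{-d/2}\mathrm{e}^{i\langle x,\xi_m\rangle}$ indexed by the dual lattice $\xi_m\in \frac{2\pi}{R}\Z^d$, ordered by increasing $\langle\xi_m\rangle$; the $n$-th eigenvalue is $\varphi(\langle\xi_{m(n)}\rangle^d)$ where, by counting lattice points, $\langle\xi_{m(n)}\rangle^d\sim \tfrac{(2\pi)^d}{\mathrm{vol}(B_1)R^d}n$. A direct computation gives $\langle Ge_m,e_m\rangle = R^{-d}\int_{\mathbb T^d_R} p_G(x,\xi_m)\,\mathrm dx$ (this is exactly where compact support of $G$ is used, to make the symbol an honest periodic function and the Fourier inversion valid). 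The next step is to pass from the Riemann sum $\sum_{k=0}^n \langle Ge_k,e_k\rangle = R^{-d}\sum_{\langle\xi_m\rangle\le \rho_n}\int p_G(x,\xi_m)\,\mathrm dx$ to the integral $\int_W\int_{\langle\xi\rangle\le \rho_n} p_G(x,\xi)\,\mathrm d\xi\,\mathrm dx$, where $\rho_n^d\sim n^{1/d}$ up to the constant above; the discretization error is controlled precisely by the hypothesis that $G$ has $\varphi$-reasonable decay (together with $\varphi$-moderate growth, which follows from weak modulation plus the decay assumption, or is part of the standing hypotheses), and this is the step where regularity of $\varphi$ --- smooth variation of index $-1$ --- is genuinely used, to compare $\Phi(n+1)$ against $\Phi$ evaluated at the lattice-point count and to absorb the error terms into $o(\Phi(n))$. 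The $O(\Phi(n))$ bound on the phase-space integral is a byproduct of the same estimates, essentially Lemma~\ref{summinupp} combined with $\varphi$-moderate growth.

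\textbf{Assembling the formula.} Putting these together, Theorem~\ref{weaklymodform} gives $\mathrm{Tr}_\omega(G)=\omega\big(\tfrac{1}{\Phi(n+1)}\sum_{k=0}^n\langle Ge_k,e_k\rangle\big)$, and substituting the Riemann-sum computation above, together with the asymptotic relation between $n$ and the cutoff radius $\rho_n$ (and the fact that $\omega$ is dilation/shift-insensitive modulo the regular variation of $\Phi$, so that replacing $n$ by $cn$ inside $\Phi$ does not change the extended limit --- this uses $\Phi$ slowly varying), one obtains $\mathrm{Tr}_\omega(G)=\omega\big(\tfrac{1}{\Phi(n+1)}\int_W\int_{\langle\xi\rangle\le n^{1/d}}p_G(x,\xi)\,\mathrm d\xi\,\mathrm dx\big)$. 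One should double-check that the value is independent of the auxiliary parameter $R$; this follows since the left-hand side manifestly does not depend on $R$, and so must the right-hand side, but it can also be seen directly from the normalization constants cancelling.

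\textbf{Main obstacle.} The delicate point is the passage from the lattice sum $\sum_{\langle\xi_m\rangle\le\rho_n}\int p_G(x,\xi_m)\,\mathrm dx$ to the continuous integral $\int_{\langle\xi\rangle\le\rho_n}\int p_G(x,\xi)\,\mathrm d\xi\,\mathrm dx$ with an error that is $o(\Phi(n))$. Since $p_G$ is merely an $L_2$-symbol with no smoothness, one cannot use naive Riemann-sum estimates; instead the comparison must be done via the spectral-modulation norm, decomposing the shell region as in the proof of Lemma~\ref{growthtodecay} into a bulk part ($\Phi(\langle\xi\rangle^d)\le\Phi(n)-1$), a boundary layer, and a tail, and controlling each using $\varphi$-reasonable decay, $\varphi$-moderate growth, and Proposition~\ref{weirdassonpsi}. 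This is the technical heart of the argument and is where one expects to spend most of the work; everything else is bookkeeping with the regular variation of $\varphi$ and $\Phi$.
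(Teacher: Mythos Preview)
Your overall architecture matches the paper's: reduce to the unit cube, transfer weak $\varphi$-Laplacian modulation from $L_2(W)$ to weak $\varphi$-modulation with respect to $\varphi[(1-\Delta_{\mathbb T^d})^{d/2}]$ on the torus (this is Lemma~\ref{dirwemod}, proved via the Sobolev-space identification of Lemma~\ref{incluofhs}), apply Theorem~\ref{weaklymodform} in the Fourier basis, and then compare the expectation-value sum to the symbol integral.

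There is, however, a genuine gap at the step you call ``identifying the expectation values''. The formula
\[
\langle G e_m, e_m\rangle \;=\; R^{-d}\!\int p_G(x,\xi_m)\,\mathrm dx
\]
is not correct, and indeed is not even meaningful: $p_G$ is only an $L_2$-function of $\xi$ and cannot be evaluated at lattice points $\xi_m$. What is true is that $\langle G e_\mathbbm{k}, e_\mathbbm{k}\rangle = \int\!\!\int p_G(x,\xi)\,\hat\phi(\xi-\mathbbm{k})\,\mathrm e^{2\pi i\langle u,\xi-\mathbbm{k}\rangle}\,\mathrm d\xi\,\mathrm dx$ for a Schwartz cutoff $\phi$ equal to $1$ on the support of $G$; each term is a \emph{smeared} sample of $p_G$. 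Consequently the passage to the integral is \emph{not} a Riemann-sum comparison at all, and the shell decomposition you propose (bulk/boundary/tail as in Lemma~\ref{growthtodecay}) is aimed at the wrong target. The correct mechanism is the Poisson-summation-type identity of Equation~\eqref{comparingfouriertocut}:
\[
\sum_{\langle\mathbbm{k}\rangle^d\le t}\mathrm e^{2\pi i\langle u,\xi-\mathbbm{k}\rangle}\hat\phi(\xi-\mathbbm{k})
=\chi_{[0,t]}(\langle\xi\rangle^d)+O\!\big(\langle t-\langle\xi\rangle^d\rangle^{-1}\big),
\]
uniformly in $u$. Summing the expectation values and substituting this identity gives directly
\[
\sum_{\langle\mathbbm{k}\rangle^d\le t}\langle G e_\mathbbm{k},e_\mathbbm{k}\rangle
-\int_{\langle\xi\rangle^d\le t}\!\!\int_W p_G(x,\xi)\,\mathrm dx\,\mathrm d\xi
=O\!\left(\int\!\!\int\frac{|p_G(x,\xi)|}{\langle t-\langle\xi\rangle^d\rangle}\,\mathrm dx\,\mathrm d\xi\right),
\]
and the right-hand side is $o(\Phi(t))$ \emph{by definition} of $\varphi$-reasonable decay. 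This is Lemma~\ref{comparingcondval}, and it is precisely the reason the notion of $\varphi$-reasonable decay is formulated with the weight $\langle t-\langle\xi\rangle^d\rangle^{-1}$.

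Two smaller points. First, $\varphi$-moderate growth plays no role in the proof of Theorem~\ref{CTT_R}; only $\varphi$-reasonable decay is used, and it does not follow from weak modulation. Second, the $O(\Phi(n))$ bound on the symbol integral is not obtained via Lemma~\ref{summinupp} but rather from the comparison above: since $G\in\Lw$, the eigenvalue sums $\sum_{k\le n}\lambda(k,\Re G)$ are $O(\Phi(n))$, and Lemma~\ref{l_exp} plus Lemma~\ref{comparingcondval} transfer this to the integral.
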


Having Theorem~\ref{weaklymodform}, the key point of the proof is to find a relation between expectation values of an operator and the integral of its symbol. To this end, we make some preparatory remarks and lemmas.

\begin{rem}
The reader can compare Theorem \ref{CTT_R} to the statement \cite[Theorem 11.5.1]{LSZ} which considers the case $\varphi(t)=\frac{1}{\e+t}$ (when $\Lw=\mathcal{L}_{1,\infty}$). The result \cite[Theorem 11.5.1]{LSZ} is stated for compactly supported strongly $\varphi$-modulated operators for $\varphi(t)=\frac{1}{\e+t}$. At this point, we are dealing with a noncompact space  and the results from Section \ref{subsec:stronglymod} and Section \ref{subsec:symbprop} are merely indicative of the implication that strong $\varphi$-Laplacian modulation implies weak $\varphi$-Laplacian modulation and $\varphi$-reasonable decay. For $\varphi$-pseudo-differential operators on a closed manifold (which we introduce below in Section \ref{psipseudos}), this implication is true by Proposition \ref{growthtodecay} and Theorem \ref{factpsps}. 
\end{rem}

\begin{rem}
If $G$ is an operator satisfying the assumptions of Theorem \ref{CTT_R} and additionally has an $L_2$-symbol of $\varphi$-moderate growth, Lemma \ref{summinupp} implies that not only do we have $\int_{W} \int_{\langle \xi\rangle \le n^{1/d}} p_G(x,\xi)\,\mathrm{d}\xi \mathrm{d}x=O(\Phi(n))$ but in fact $\int_{W} \int_{\langle \xi\rangle \le n^{1/d}} |p_G(x,\xi)|\,\mathrm{d}\xi \mathrm{d}x=O(\Phi(n))$
\end{rem}

The following definition introduces an analogue of Sobolev spaces. We will discuss them in more details (and in more general situations) in Section~\ref{sec:localizing}.

\begin{dfn}
\label{sobsparn}
For $\varphi:[0,\infty)\to (0,\infty)$, $W$ a $d$-dimensional inner product space, and $s\in \R$, we define the Hilbert space
\begin{align*}
H^s_\varphi(W):=&\varphi((1-\Delta)^{d/2})^{s/d}L_2(W), \quad\mbox{with the inner product}\\
& \langle f_1,f_2\rangle_{H^s_\varphi(W)}:=\langle \varphi((1-\Delta)^{d/2})^{-s/d}f_1,\varphi((1-\Delta)^{d/2})^{-s/d}f_2\rangle_{L_2(W)}.
\end{align*}
Let $\mathbb{T}^d:=\R^d/\Z^d$ be the $d$-torus equipped with its flat metric and $\Delta_{\mathbb{T}^d}$ the associated Laplacian. For $s\in \R$, we define the Hilbert space
\begin{align*}
\tilde{H}^s_\varphi(\mathbb{T}^d):=&\varphi((1-\Delta_{\mathbb{T}^d})^{d/2})^{s/d}L_2(\mathbb{T}^d), \quad\mbox{with the inner product}\\
& \langle f_1,f_2\rangle_{\tilde{H}^s_\varphi(\mathbb{T}^d)}:=\langle \varphi((1-\Delta_{\mathbb{T}^d})^{d/2})^{-s/d}f_1,\varphi((1-\Delta_{\mathbb{T}^d})^{d/2})^{-s/d}f_2\rangle_{L_2(\mathbb{T}^d)}.
\end{align*}
\end{dfn}

After choosing an ON-basis, we can identify $W=\R^d$, $\mathbb{T}^d$ with a quotient of $W$ by a lattice and the cube $(0,1)^d$ as a Lipschitz fundamental domain in $W$. We identify function spaces on $\mathbb{T}^d$ with $\Z^d$-invariant function spaces on $W$. 

\begin{lem}
\label{incluofhs}
Let $\varphi$ have smooth regular variation. Then for any $\chi\in C^\infty_c((0,1)^d)$ and $s\in \R$,
$$\chi \tilde{H}^s_\varphi(\mathbb{T}^d)=\chi H^s_\varphi(W),$$
with equivalent norms.
\end{lem}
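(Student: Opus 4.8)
The statement asserts that the spaces $\chi\tilde H^s_\varphi(\mathbb T^d)$ and $\chi H^s_\varphi(W)$ coincide with equivalent norms, for $\chi\in C^\infty_c((0,1)^d)$. The plan is to reduce everything to a statement about the two operators
$A_W:=\varphi((1-\Delta)^{d/2})^{s/d}$ on $L_2(W)$ and $A_{\mathbb T}:=\varphi((1-\Delta_{\mathbb T^d})^{d/2})^{s/d}$ on $L_2(\mathbb T^d)$, and to show that, after cutting down by $\chi$, these differ by an operator that is bounded and boundedly invertible between the relevant spaces. Concretely, since $\chi$ is supported in the interior of a fundamental domain, it suffices to prove that for $\chi,\chi'\in C^\infty_c((0,1)^d)$ with $\chi'\chi=\chi$ one has $\chi' A_W^{-1}A_{\mathbb T}\chi$ and $\chi' A_{\mathbb T}^{-1}A_W\chi$ bounded on $L_2$; equivalently, that $\chi'(A_W-A_{\mathbb T})\chi$ maps $\chi$-localized $H^s_\varphi$ (or $\tilde H^s_\varphi$) boundedly into the other, with a lower-order gain.

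First I would invoke the $\varphi$-pseudo-differential calculus developed in Section~\ref{psipseudos}. Since $\varphi$ has smooth regular variation of index $-1$, the function $\langle\xi\rangle\mapsto\varphi(\langle\xi\rangle^d)^{s/d}$ is (by the smoothness estimates \eqref{liminitsrrho}, which give symbol-type bounds on all derivatives) a symbol in the appropriate $\varphi$-symbol class, so both $A_W$ and $A_{\mathbb T}$ are $\varphi$-pseudo-differential operators of the corresponding order, on $W$ and $\mathbb T^d$ respectively. The torus operator $A_{\mathbb T}$ is periodic, and its symbol on the fundamental domain agrees with that of $A_W$ up to the standard periodization-versus-Euclidean comparison. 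The key technical input is then the standard fact that two (classical or $\varphi$-) pseudo-differential operators on $\R^d$ and on $\mathbb T^d$ built from "the same" symbol differ, after compression by $\chi,\chi'$ supported away from the boundary of the fundamental cube, by an operator that is $\varphi$-pseudo-differential of strictly lower order — in fact smoothing, because the difference of the two Schwartz kernels is supported away from the diagonal on $\operatorname{supp}\chi'\times\operatorname{supp}\chi$ and is smooth there (the periodized kernel differs from the Euclidean one only through the contributions of nonzero lattice translates, which are smooth near the diagonal).

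With that in hand, $\chi'(A_W-A_{\mathbb T})\chi$ is smoothing, hence maps $H^{-N}_\varphi(W)\to H^N_\varphi(W)$ for all $N$; in particular it is bounded from $\chi$-localized $\tilde H^s_\varphi$ into $H^{s'}_\varphi(W)$ for $s'\ge s$, and symmetrically. Using the algebra/composition properties of the $\varphi$-calculus together with ellipticity of $A_W$ (it has an inverse $A_W^{-1}=\varphi((1-\Delta)^{d/2})^{-s/d}$, again a $\varphi$-pseudodifferential operator), one writes, for $f$ supported in $\operatorname{supp}\chi$,
\[
A_{\mathbb T}\chi f = A_W\chi f + (A_{\mathbb T}-A_W)\chi f = A_W\big(\chi f + A_W^{-1}(A_{\mathbb T}-A_W)\chi f\big),
\]
and the correction term $A_W^{-1}(A_{\mathbb T}-A_W)\chi$ is bounded $L_2\to L_2$ (indeed smoothing), so $\|A_{\mathbb T}\chi f\|_{L_2}$ and $\|A_W\chi f\|_{L_2}$ are comparable; multiplying $f$ by $\chi'$ first and using $\chi'\chi=\chi$ removes any boundary issues. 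This gives $\chi H^s_\varphi(W)\subseteq \chi\tilde H^s_\varphi(\mathbb T^d)$ with a norm bound, and the reverse inclusion follows by the symmetric argument interchanging the roles of $W$ and $\mathbb T^d$ (using ellipticity of $A_{\mathbb T}$ on $L_2(\mathbb T^d)$). Finally one checks that the identification of $\Z^d$-invariant function spaces on $W$ with function spaces on $\mathbb T^d$, under which $\chi\tilde H^s_\varphi(\mathbb T^d)$ is literally a subspace of $\chi$-supported distributions on $W$, is compatible with all of the above.

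\textbf{Main obstacle.} The delicate point is not the abstract scheme but making precise that $A_W$ and $A_{\mathbb T}$ are genuinely $\varphi$-pseudo-differential operators to which the Section~\ref{psipseudos} calculus (composition, ellipticity, mapping properties on $H^s_\varphi$) applies, and that their localized difference is of lower order. This requires: (i) verifying the $\varphi$-symbol estimates for $\varphi(\langle\xi\rangle^d)^{s/d}$, which is where smooth regular variation of $\varphi$ is essential; (ii) a careful kernel estimate showing the periodized minus Euclidean kernel is smooth near the diagonal on the relevant compact sets, so the compressed difference is smoothing; and (iii) checking that the $\varphi$-Sobolev spaces $H^s_\varphi$ are exactly the ones on which this calculus acts with the expected shifts. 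Points (i) and (iii) should follow cleanly from the regular variation estimates and the construction in Section~\ref{psipseudos}; point (ii) is the standard but slightly tedious comparison of pseudodifferential operators on $\R^d$ and $\mathbb T^d$, and is the real content.
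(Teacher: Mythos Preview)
Your approach is correct and genuinely different from the paper's. You compare $A_W=\varphi((1-\Delta)^{d/2})^{s/d}$ and $A_{\mathbb{T}}=\varphi((1-\Delta_{\mathbb{T}^d})^{d/2})^{s/d}$ directly via Poisson summation: the Schwartz kernel of $A_{\mathbb{T}}$ is the $\Z^d$-periodization of that of $A_W$, and on $\operatorname{supp}\chi'\times\operatorname{supp}\chi\Subset(0,1)^d\times(0,1)^d$ the nonzero lattice translates contribute a smooth remainder (since the kernel of a Fourier multiplier by a H\"ormander symbol is smooth off the diagonal and rapidly decaying, and smooth regular variation of $\varphi$ makes $\varphi(\langle\xi\rangle^d)^{s/d}$ such a symbol). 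From this you deduce the norm equivalence for $\chi$-localized elements. The paper instead postpones the proof and obtains it as Corollary~\ref{compsobspaceoncube} to the general Theorem~\ref{psisobol}: it defines $H^s_\varphi(M)$ on an arbitrary closed manifold via coordinate charts, proves via the $\varphi$-calculus (Lemma~\ref{actingwithlzero}, Lemma~\ref{comparingpsis}, plus interpolation and duality) that this coincides with $\varphi((1-\Delta_g)^{d/2})^{s/d}L_2(M)$, and then specializes to $M=\mathbb{T}^d$ with the identity chart on $(0,1)^d$. Your route is more elementary and self-contained for this one statement; the paper's route is less direct here but yields a result (coordinate independence of $\varphi$-Sobolev spaces on all closed manifolds) that is reused later for the manifold version of Connes' trace formula.

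One point in your sketch is looser than the rest: the passage from ``$\chi'(A_W-A_{\mathbb{T}})\chi$ is smoothing'' to the two-sided norm comparison. As written, your identity $A_{\mathbb{T}}\chi f=A_W(\chi f+A_W^{-1}(A_{\mathbb{T}}-A_W)\chi f)$ mixes operators with different target spaces, and the uncut pieces $(1-\chi')A_W\chi$, $(1-\chi')A_{\mathbb{T}}\chi$ need to be controlled separately (they are not literally covered by the compressed smoothing statement). This is routine once you use pseudolocality plus rapid decay of the kernel away from the diagonal, or simply run the periodization argument with $A_W^{-1}$ and $A_{\mathbb{T}}^{-1}$ in place of $A_W$ and $A_{\mathbb{T}}$ (these are again $\varphi$-type Fourier multipliers) to directly compare the norms $\|A_W^{-1}\chi f\|_{L_2(\R^d)}$ and $\|A_{\mathbb{T}}^{-1}\chi f\|_{L_2(\mathbb{T}^d)}$; but you should make this step explicit.
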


The proof of this Lemma requires some heavier machinery, and we return to its proof below in Corollary \ref{compsobspaceoncube}.

\begin{lem}
\label{dirwemod}
Assume that $\varphi$ is a decreasing function with smooth regular variation. If $G\in \mathcal{L}_2(L_2(W))$ is compactly supported in $(0,1)^d$ and weakly $\varphi$-Laplacian modulated, then $G$ is weakly $\varphi$-modulated with respect to $\varphi((1-\Delta_{\mathbb{T}^d})^{d/2})\in \Lw$.

\end{lem}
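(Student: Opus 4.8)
The plan is to show that the hypothesis ``$G$ is weakly $\varphi$-Laplacian modulated'' — i.e. $G$ is weakly $\varphi$-modulated with respect to $V_W:=\varphi((1-\Delta)^{d/2})$ on $L_2(W)$ — transfers to the statement that $G$ is weakly $\varphi$-modulated with respect to $V_{\mathbb T^d}:=\varphi((1-\Delta_{\mathbb T^d})^{d/2})$ on $L_2(\mathbb{T}^d)=L_2((0,1)^d)$. By Proposition~\ref{weakregandsob}, being weakly $\varphi$-modulated with respect to $V_W$ is equivalent to the existence of $s\in(0,1]$ such that $G$ extends to a bounded operator $H^{-s}_\varphi(W)\to L_2(W)$ lying in $\Lw^{(1/(1-s))}(H^{-s}_\varphi(W),L_2(W))$; similarly for $V_{\mathbb T^d}$ with the space $\tilde H^{-s}_\varphi(\mathbb T^d)$. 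So the whole lemma reduces to comparing these two Sobolev-type scales on the cube, which is exactly the content of Lemma~\ref{incluofhs}.

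First I would use that $G$ is compactly supported in $(0,1)^d$: choose $\chi\in C^\infty_c((0,1)^d)$ with $\chi G=G$ and $G\chi=G$ (possible since $G$ and $G^*$ are compactly based inside the open cube). Then for the auxiliary operator $V_{\mathbb T^d}$, write $G=\chi G$ and note that $G V_{\mathbb T^d}^{-s}=\chi G V_{\mathbb T^d}^{-s}$. The point is to factor this through the Euclidean picture: $G V_{\mathbb T^d}^{-s} = (G V_W^{-s})\,(V_W^{s}\chi V_{\mathbb T^d}^{-s})$, where $G V_W^{-s}\in\Lw^{(1/(1-s))}(L_2(W))$ by hypothesis, and the job is to show the correction operator $V_W^{s}\chi V_{\mathbb T^d}^{-s}$ (suitably interpreted, with another cutoff inserted on the right so that $V_{\mathbb T^d}^{-s}$ only acts after localizing) is bounded. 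By Lemma~\ref{incluofhs}, multiplication by $\chi$ maps $\tilde H^{-s}_\varphi(\mathbb T^d)$ boundedly onto $\chi H^{-s}_\varphi(W)$ with equivalent norms, i.e. $\chi V_{\mathbb T^d}^{-s/d\cdot d}$ and $\chi V_W^{-s}$ have comparable mapping properties after localization; feeding this into the factorization and using that $\Lw^{(q)}$ is an ideal (closed under left and right multiplication by bounded operators, cf. \eqref{holdineq} with $q_1=\infty$) gives $G V_{\mathbb T^d}^{-s}\in\Lw^{(1/(1-s))}(L_2(\mathbb T^d))$. Applying Proposition~\ref{weakregandsob} in reverse then yields that $G$ is weakly $\varphi$-modulated with respect to $V_{\mathbb T^d}$; and $V_{\mathbb T^d}\in\Lw(L_2(\mathbb T^d))$ holds because $\mathbb{T}^d$ is closed and the Weyl law gives $\mu(n,V_{\mathbb T^d})\asymp\varphi(n)$.

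The main obstacle is making the ``correction operator'' argument precise, because $V_W^{-s}$ and $V_{\mathbb T^d}^{-s}$ are genuinely different nonlocal operators and one cannot naively cancel them; the rigorous statement that their localizations agree up to bounded (indeed, up to $\Lw^{(p)}$-smoothing) operators is precisely Lemma~\ref{incluofhs}, which in turn rests on the $\varphi$-pseudo-differential calculus developed in Section~\ref{psipseudos}. So the real work is bookkeeping: inserting cutoffs $\chi,\chi'\in C^\infty_c((0,1)^d)$ with $\chi'\chi=\chi$ in the right places so that every operator that appears is either (a) $GV_W^{-s}$, known to be in the right weak ideal, (b) a compactly supported localization of $V_W^{s}$ or $V_{\mathbb T^d}^{-s}$, handled by Lemma~\ref{incluofhs} and Proposition~\ref{locallylwpsi}, or (c) a commutator $[\chi,V^{\pm s}]$, which is one order smoother and hence lands in a smaller $\Lw^{(q)}$, contributing nothing to the obstruction by the quasi-Hölder inequality \eqref{holdineq}. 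Once the algebra of cutoffs is arranged this way, the conclusion follows by combining the ideal property with Proposition~\ref{weakregandsob}; I would present it as a short computation rather than belabor the estimates, since the analytic heavy lifting is quarantined inside Lemma~\ref{incluofhs}.
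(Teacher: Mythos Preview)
Your proposal is correct and follows essentially the same route as the paper's proof: both arguments reduce the statement, via Proposition~\ref{weakregandsob}, to comparing the localized $\varphi$-Sobolev scales $\chi H^{-s}_\varphi(W)$ and $\chi\tilde H^{-s}_\varphi(\mathbb T^d)$, and then invoke Lemma~\ref{incluofhs} as the key analytic input. The paper phrases the argument as a factorization of $G$ through Sobolev spaces, $\tilde H^{-s}_\varphi(\mathbb T^d)\xrightarrow{\chi}\chi\tilde H^{-s}_\varphi(\mathbb T^d)=\chi H^{-s}_\varphi(W)\xrightarrow{G}L_2(\mathbb T^d)$, whereas you phrase it as a factorization of operators $GV_{\mathbb T^d}^{-s}=(GV_W^{-s})(V_W^s\chi V_{\mathbb T^d}^{-s})$; these are the same computation in different language, and the paper's version has the minor advantage that it avoids your fallback item (c) about commutators $[\chi,V^{\pm s}]$ entirely---no commutator estimates are needed once one works at the level of the Sobolev spaces themselves.
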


\begin{proof}
The idea of the proof is to apply Lemma \ref{incluofhs}. Set $V:=\varphi((1-\Delta)^{d/2})$ and $V_{\mathbb{T}^d}:=\varphi((1-\Delta_{\mathbb{T}^d})^{d/2})$. If $G$ is compactly supported in $(0,1)^d$ and weakly $\varphi$-Laplacian modulated we can for some $s\in (0,d]$ factor $GV^{-s}$ as an operator 
\begin{equation}
\label{factorizinggvs}
L_2(W)\xrightarrow{V^{-s}}H^{-s}_\varphi(W)\xrightarrow{\chi}\chi H^{-s}_\varphi(W)\xrightarrow{G}L_2((0,1)^d)=L_2(\mathbbm{T}^d),
\end{equation}
where $\chi \in C^\infty_c((0,1)^d)$ satisfies that $G=G\chi$. In fact, that $G$ is weakly $\varphi$-Laplacian modulated is in this case equivalent to $G$ extending to an operator in $\Lw^{(\frac{d}{d-s})}(H^{-s}_\varphi(W),L_2(W))$, see Proposition \ref{weakregandsob}.

Assuming that $G$ is compactly supported in $K$, it is by definition equivalent for $G$ to be weakly $\varphi$-modulated with respect to $V_{\mathbb{T}^d}$ and $G$ extending to an operator in $\Lw^{(\frac{d}{d-s})}(\tilde{H}^{-s}_\varphi(\mathbb{T}^d),L_2(\mathbb{T}^d))$. Using Lemma \ref{incluofhs} and \eqref{factorizinggvs}, we can factor $G$ as 
$$\tilde{H}^{-s}_\varphi(\mathbb{T}^d)\xrightarrow{\chi}\chi\tilde{H}^{-s}_\varphi(\mathbb{T}^d)=\chi H^{-s}_\varphi(W)\xrightarrow{\chi' G}\chi'L_2(W)\subseteq L_2(\mathbb{T}^d),$$
for a $\chi'\in C^\infty_c((0,1)^d)$ such that $\chi'G=G$. Since $G\in \Lw^{(\frac{d}{d-s})}(H^{-s}_\varphi(W),L_2(W))$ it follows that $G\in \Lw^{(\frac{d}{d-s})}(\tilde{H}^{-s}_\varphi(\mathbb{T}^d),L_2(\mathbb{T}^d))$. We conclude that $GV_{\mathbb{T}^d}^{-s}\in \Lw^{(\frac{d}{d-s})}(L_2(\mathbb{T}^d))$, that is $G$ is weakly $\varphi$-modulated with respect to $V_{\mathbb{T}^d}$.
\end{proof}

We will need the following result (Lemma 11.4.4 in \cite{LSZ}). Using the above mentioned ON-basis, we can identify $\Z^d$ with a lattice in $W^*$.

\begin{lem}
For a Schwartz function $\phi\in \mathcal{S}(W)$ with $\phi=1$ on $[0,1]^d$ it holds that
\begin{equation}
\label{comparingfouriertocut}
\sum_{\mathbbm{k}\in \Z^d: \,\langle\mathbbm{k}\rangle^d\leq t} \mathrm{e}^{2\pi i\langle u,\xi-\mathbbm{k}\rangle}\hat{\phi}(\xi-\mathbbm{k})=\chi_{[0,t]}(\langle\xi\rangle^d)+O\left(\langle t-\langle\xi\rangle^d\rangle^{-1}\right),\quad t>0, \xi\in W^*,
\end{equation}
uniformly in $u\in [0,1]^d$.
\end{lem}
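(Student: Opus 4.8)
The proof (following \cite{LSZ}) rests on two ingredients: Poisson summation over the lattice $\Z^d$, and the rapid decay of $\hat\phi$. First I would set $\phi_u:=\phi(\,\cdot+u\,)$ for $u\in W$, so that $\mathrm{e}^{2\pi i\langle u,\xi-\mathbbm{k}\rangle}\hat\phi(\xi-\mathbbm{k})=\widehat{\phi_u}(\xi-\mathbbm{k})$ and the left-hand side becomes $\sum_{\langle\mathbbm{k}\rangle^d\le t}\widehat{\phi_u}(\xi-\mathbbm{k})$. Since $\{\phi_u:u\in[0,1]^d\}$ is bounded in $\mathcal{S}(W)$, so is $\{\widehat{\phi_u}\}$, hence all decay bounds used below carry constants independent of $u$; this is precisely what yields the asserted uniformity in $u$.

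Next I would apply Poisson summation to the full periodisation: $\sum_{\mathbbm{k}\in\Z^d}\widehat{\phi_u}(\xi-\mathbbm{k})=\sum_{\mathbbm{n}\in\Z^d}\phi(u-\mathbbm{n})\,\mathrm{e}^{2\pi i\langle\mathbbm{n},\xi\rangle}$. Since $[0,1]^d$ is a fundamental domain for $\Z^d$ and $\phi\equiv1$ there, only $\mathbbm{n}=0$ contributes, so $\sum_{\mathbbm{k}\in\Z^d}\widehat{\phi_u}(\xi-\mathbbm{k})=1$. Writing $B:=\{\eta\in W^*:\langle\eta\rangle^d\le t\}$, so that $\chi_B(\eta)=\chi_{[0,t]}(\langle\eta\rangle^d)$, and inserting $\chi_B(\mathbbm{k})=\chi_B(\xi)+\bigl(\chi_B(\mathbbm{k})-\chi_B(\xi)\bigr)$ into the identity $\sum_{\langle\mathbbm{k}\rangle^d\le t}\widehat{\phi_u}(\xi-\mathbbm{k})=\sum_{\mathbbm{k}\in\Z^d}\widehat{\phi_u}(\xi-\mathbbm{k})\chi_B(\mathbbm{k})$, I would obtain
\[
\sum_{\langle\mathbbm{k}\rangle^d\le t}\widehat{\phi_u}(\xi-\mathbbm{k})=\chi_{[0,t]}(\langle\xi\rangle^d)+E(\xi),\qquad E(\xi):=\sum_{\mathbbm{k}\in\Z^d}\widehat{\phi_u}(\xi-\mathbbm{k})\bigl(\chi_B(\mathbbm{k})-\chi_B(\xi)\bigr),
\]
and the lemma reduces to showing $E(\xi)=O\bigl(\langle t-\langle\xi\rangle^d\rangle^{-1}\bigr)$.

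To control $E(\xi)$ I would observe that only those $\mathbbm{k}$ lying on the opposite side of $\partial B=\{\langle\eta\rangle^d=t\}$ from $\xi$ contribute, and for any such $\mathbbm{k}$ the segment $[\xi,\mathbbm{k}]$ crosses $\partial B$, so $|\xi-\mathbbm{k}|\ge\operatorname{dist}(\xi,\partial B)$. Hence $|E(\xi)|\le\sum_{\mathbbm{k}\in\Z^d:\,|\xi-\mathbbm{k}|\ge\operatorname{dist}(\xi,\partial B)}|\widehat{\phi_u}(\xi-\mathbbm{k})|$; inserting the Schwartz bound $|\widehat{\phi_u}(\zeta)|\le C_N\langle\zeta\rangle^{-N}$, comparing with the corresponding integral and taking $N$ large shows that this quantity decays rapidly in $\operatorname{dist}(\xi,\partial B)$. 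It then remains to express this decay in the exact form $O(\langle t-\langle\xi\rangle^d\rangle^{-1})$ by relating $\operatorname{dist}(\xi,\partial B)$ to $\langle t-\langle\xi\rangle^d\rangle$ through the monotonicity and convexity of $s\mapsto\langle s\rangle^d$; together with the previous step this finishes the proof.

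The Poisson-summation step and the Schwartz bookkeeping are routine. The point that requires genuine care is the last conversion — turning the geometric decay in $\operatorname{dist}(\xi,\partial B)$ into precisely the uniform rate $\langle t-\langle\xi\rangle^d\rangle^{-1}$ — because it is there that the (essentially unit) thickness of the transition shell in the $\xi$-variable and the reparametrisation $\xi\mapsto\langle\xi\rangle^d$ must be balanced against each other.
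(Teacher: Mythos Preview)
The paper does not give its own proof of this lemma; it simply cites Lemma~11.4.4 of \cite{LSZ}. Your overall strategy --- Poisson summation to produce the main term, then Schwartz decay across $\partial B$ for the error --- is the standard one, but your Poisson step contains a genuine error, and in fact reveals that the hypothesis as quoted is insufficient.

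You assert that $\sum_{\mathbbm{n}}\phi(u-\mathbbm{n})\,\mathrm{e}^{2\pi i\langle\mathbbm{n},\xi\rangle}=1$ because ``$[0,1]^d$ is a fundamental domain and $\phi\equiv1$ there, so only $\mathbbm{n}=0$ contributes.'' This is a non sequitur: the condition $\phi\equiv1$ on $[0,1]^d$ says nothing about $\phi(u+\mathbbm{n})$ for $\mathbbm{n}\neq0$. Worse, with only this hypothesis the lemma is actually false: take $\phi\in C^\infty_c$ with $\phi\equiv1$ on $[-1,2]^d$ and $u=(\tfrac12,\ldots,\tfrac12)$; then $\phi(u+\mathbbm{n})=1$ for every $\mathbbm{n}\in\{-1,0,1\}^d$, so the full periodisation at $\xi=0$ equals $\sum_{\mathbbm{n}}\phi(u+\mathbbm{n})\ge 3^d$, whereas the asserted estimate would force the left-hand side to tend to $1$ as $t\to\infty$. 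What is really needed --- and what matters for the application (Lemma~\ref{comparingcondval}, where $u$ ranges only over $\mathrm{supp}\,G\subset(0,1)^d$) --- is to take $\phi\in C^\infty_c((0,1)^d)$ with $\phi\equiv1$ on a fixed compact $K\subset(0,1)^d$ and to claim uniformity only for $u\in K$. Under that corrected hypothesis one has $\phi(u+\mathbbm{n})=0$ for $\mathbbm{n}\neq0$, your Poisson identity genuinely gives $1$, and the remainder of your outline (the tail bound via $\operatorname{dist}(\xi,\partial B)$ and its conversion to $\langle t-\langle\xi\rangle^d\rangle^{-1}$) goes through.
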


The following lemma is the key technical result of this section.

\begin{lem}
\label{comparingcondval}
Suppose that $G\in \mathcal{L}_2(L_2(W))$ is compactly supported in $(0,1)^d$ and has $\varphi$-reasonable decay. Then 
$$\sum_{\langle \mathbbm{k}\rangle^d\leq t}\langle G\mathrm{e}_\mathbbm{k},\mathrm{e}_\mathbbm{k}\rangle_{L_2((0,1)^d)}-\int_{\langle\xi\rangle^d\leq t}\int_W p_G(x,\xi)\mathrm{d}x\mathrm{d}\xi=o(\Phi(t)).$$
Here $\mathrm{e}_\mathbbm{k}\in L_2((0,1)^d)=L_2(\mathbb{T}^d)$ denotes the Fourier basis $\mathrm{e}_\mathbbm{k}(u):=\mathrm{e}^{2\pi i\langle \mathbbm{k},u\rangle}$, $\mathbbm{k}\in \Z^d$, which is an eigenbasis for the flat Laplacian $\Delta_{\mathbb{T}^d}$ on $\mathbb{T}^d$.
\end{lem}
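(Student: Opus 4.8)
The plan is to expand the expectation value $\langle G\mathrm{e}_\mathbbm{k},\mathrm{e}_\mathbbm{k}\rangle$ in terms of the symbol $p_G$ and compare the resulting sum over the lattice points $\langle\mathbbm{k}\rangle^d\le t$ with the integral over the ball $\langle\xi\rangle^d\le t$. First I would write, for the Fourier basis $\mathrm{e}_\mathbbm{k}$ on the cube extended to $W$ by a Schwartz cutoff $\phi$ equal to $1$ on $[0,1]^d$, the identity
\begin{equation*}
\langle G\mathrm{e}_\mathbbm{k},\mathrm{e}_\mathbbm{k}\rangle_{L_2((0,1)^d)}=\int_W\int_{W^*}p_G(x,\xi)\,\widehat{\phi}(\xi-\mathbbm{k})\,\mathrm{e}^{2\pi i\langle x,\xi-\mathbbm{k}\rangle}\,\mathrm{d}\xi\,\mathrm{d}x,
\end{equation*}
using that $G$ is supported in $(0,1)^d$ so that $\mathrm{e}_\mathbbm{k}$ may be replaced by $\phi\cdot\mathrm{e}_\mathbbm{k}$ whose Fourier transform is the translate $\widehat{\phi}(\cdot-\mathbbm{k})$. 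Summing over $\langle\mathbbm{k}\rangle^d\le t$ and interchanging sum and integral (justified by $p_G\in L_2$, compact support in $x$, and rapid decay of $\widehat\phi$) gives
\begin{equation*}
\sum_{\langle\mathbbm{k}\rangle^d\le t}\langle G\mathrm{e}_\mathbbm{k},\mathrm{e}_\mathbbm{k}\rangle=\int_W\int_{W^*}p_G(x,\xi)\Bigl(\sum_{\langle\mathbbm{k}\rangle^d\le t}\mathrm{e}^{2\pi i\langle x,\xi-\mathbbm{k}\rangle}\widehat{\phi}(\xi-\mathbbm{k})\Bigr)\mathrm{d}\xi\,\mathrm{d}x.
\end{equation*}

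Next I would invoke the lattice-point lemma \eqref{comparingfouriertocut}, with $u=x$ ranging over the compact support of $p_G$ inside $(0,1)^d$, to replace the inner sum by $\chi_{[0,t]}(\langle\xi\rangle^d)+O(\langle t-\langle\xi\rangle^d\rangle^{-1})$, uniformly in $x$. The main term then yields exactly $\int_{\langle\xi\rangle^d\le t}\int_W p_G(x,\xi)\,\mathrm{d}x\,\mathrm{d}\xi$, so the difference in the statement is bounded by
\begin{equation*}
C\int_{W\times W^*}\frac{|p_G(x,\xi)|}{\langle t-\langle\xi\rangle^d\rangle}\,\mathrm{d}x\,\mathrm{d}\xi,
\end{equation*}
which is precisely $o(\Phi(t))$ by the hypothesis that $G$ has $\varphi$-reasonable decay (Definition \ref{phidec}). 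That closes the argument.

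The step I expect to require the most care is the justification of the interchange of summation and integration and, more subtly, the fact that the error term from \eqref{comparingfouriertocut} is genuinely dominated by $|p_G(x,\xi)|\langle t-\langle\xi\rangle^d\rangle^{-1}$ with a constant uniform in $t$ — one needs the $O$-bound in \eqref{comparingfouriertocut} to hold uniformly for $x$ in the (fixed, compact) support of $p_G$, which is contained in $[0,1]^d$ where $\phi=1$, so the cited lemma applies verbatim. A secondary technical point is that $G$ being merely Hilbert--Schmidt (not trace class) means $\langle G\mathrm{e}_\mathbbm{k},\mathrm{e}_\mathbbm{k}\rangle$ need not be summable in $\mathbbm{k}$; however we only ever sum finitely many terms (those with $\langle\mathbbm{k}\rangle^d\le t$), so no summability issue arises, and the Fubini interchange inside each finite sum is legitimate since for fixed $t$ the integrand is in $L_1(W\times W^*)$ by Cauchy--Schwarz together with $\widehat\phi\in\mathcal S$. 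This mirrors the corresponding step in \cite[Lemma 11.4.5]{LSZ} for the case $\varphi(t)=\tfrac{1}{\e+t}$, the only new ingredient being that the $o$-bound is now measured against $\Phi$ rather than $\log$, which is exactly what $\varphi$-reasonable decay supplies.
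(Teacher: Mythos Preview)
Your proposal is correct and follows essentially the same approach as the paper: the paper states that the proof is identical to that of \cite[Lemma 11.4.6]{LSZ}, and explicitly points to the error term in \eqref{comparingfouriertocut} as the reason for the $\varphi$-reasonable decay hypothesis, which is exactly the mechanism you describe. One minor point: you cite \cite[Lemma 11.4.5]{LSZ} whereas the paper cites Lemma 11.4.6, and you should keep the Fourier normalization consistent with the paper's convention for $p_G$ (which uses $\mathrm{e}^{i\langle x,\xi\rangle}$ rather than $\mathrm{e}^{2\pi i\langle x,\xi\rangle}$), but neither affects the substance of the argument.
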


The proof of Lemma \ref{comparingcondval} is identical to that of \cite[Lemma 11.4.6]{LSZ}. The reader should beware of the fact that $\e_\mathbbm{k}$ is used incorrectly as an eigenbasis for the Dirichlet-Laplacian on $(0,1)^d$ in \cite{LSZ}. We remark that Lemma \ref{comparingcondval} is \emph{the reason} for making the assumption that $G$ has $\varphi$-reasonable decay in the statement of Theorem \ref{CTT_R}. To add slightly more detail, we point towards the error term in Equation \eqref{comparingfouriertocut} as justification for the definition of $\varphi$-reasonable decay.

\begin{proof}[Proof of Theorem \ref{CTT_R}]
We can assume that $G$ is compactly supported in $(0,1)^d$. By Lemma \ref{dirwemod}, $G$ is weakly $\varphi$-modulated with respect to $\varphi((1-\Delta_{\mathbb{T}^d})^{d/2})$. By Lemma \ref{l_exp} and Lemma \ref{comparingcondval}, 
$$\sum_{k=0}^n (\lambda(k,\Re G)-i\lambda(k,\Re (iG))) - \int_{\langle\xi\rangle^d\leq n}\int_W p_G(x,\xi)\mathrm{d}x\mathrm{d}\xi=o(\Phi(n))\ \text{as} \ n\to \infty.$$ 
Since $G\in \Lw$, it follows that $\Re G, \Re (iG)\in \Lw$ and thus the sum in the expression above is $O(\Phi(n))$. Hence,
$$\int_{\langle\xi\rangle^d\leq n}\int_W p_G(x,\xi)\mathrm{d}x\mathrm{d}\xi=O(\Phi(n))\ \text{as} \ n\to \infty.$$ 

By Theorem \ref{weaklymodform} and Lemma \ref{comparingcondval} we compute that 
$${\rm Tr}_\omega(G) = \omega \left(\frac1{\Phi(n+1)} \sum_{\langle \mathbbm{k}\rangle^d\leq n} \langle G\mathrm{e}_\mathbbm{k},\mathrm{e}_\mathbbm{k}\rangle_{L_2((0,1)^d)}\right)=\omega \left(\frac1{\Phi (n+1)} \int_{W} \int_{\langle\xi\rangle^{d}\le n} p_G(x,\xi)\,\mathrm{d}\xi \mathrm{d}x\right).$$
\end{proof}

\section{$\varphi$-Pseudo-differential operators}
\label{psipseudos}

In this section we consider a special class of pseudo-differential operators. They are inspired by and generalizes Lesch's construction of $\log$-classical pseudo-differential operators. The goal of this section is to provide a general machinery for studying pseudo-differential operators belonging to a principal ideal $\mathcal{L}_\varphi$ and the computation of their Dixmier traces. Much of the behaviour found in the class of pseudo-differential that we study resembles that in the class of ordinary pseudo-differential operators, of which it is a subclass. We will briskly recall the basic constructions of pseudo-differential operators following \cite{hormanderIII,Taylorpsidobook} after which we proceed to define the so called $\varphi$-pseudo-differential operators and consider their applications to more general situations.

Throughout this section, the minimal assumption we impose on $\varphi$ is that it has smooth regular variation of any index $\rho>0$. The relevance of this assumption on pseudo-differential calculi stems from the following lemma.

\begin{lem}
\label{A14}
If a function $\varphi$ has smooth regular variation, one has
\begin{enumerate}
\item[(i)] for any $k,m\in \N$, there are constants $C_{k,m}\geq0$ such that
$$\left|\partial_t^m\left( \frac{\partial_t^k\varphi}{\varphi}\right)\right|(t)\leq C_{k,m}\langle t\rangle^{-k-m};$$

\item[(ii)] there exists a constant $m_\varphi\in \R$ such that
$|\varphi(t)|\leq C\langle t\rangle^{m_\varphi}. $
\end{enumerate}
\end{lem}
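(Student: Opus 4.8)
The plan is to reduce both statements to standard properties of smoothly regularly varying functions recorded in Definition \ref{SV} and Theorem \ref{SRV}, exploiting the defining limit \eqref{liminitsrrho} together with the Cauchy-type representation of a smoothly varying function.

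First I would prove (ii). By definition of smooth regular variation of some index $\rho$, the function $\varphi$ belongs to $SR_\rho\subseteq R_\rho$. The classical Potter-type bounds for regularly varying functions (an immediate consequence of the representation theorem, \cite[Theorem 1.5.6]{RegVar}) give, for any $\delta>0$, a constant $C$ with $\varphi(t)\leq C t^{\rho+\delta}$ for all $t$ large. Absorbing the behaviour of $\varphi$ on the compact initial interval into the constant and replacing $t^{\rho+\delta}$ by $\langle t\rangle^{\rho+\delta}$ (harmless since $\varphi$ is continuous and positive on $[0,\infty)$), we obtain (ii) with $m_\varphi:=\rho+\delta$, e.g. $m_\varphi = \rho+1$.

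Next, for (i) I would argue inductively using \eqref{liminitsrrho}. Write $u_k(t):=t^k\varphi^{(k)}(t)/\varphi(t)$. The hypothesis says each $u_k$ converges to the finite limit $\rho(\rho-1)\cdots(\rho-k+1)$ as $t\to\infty$, hence each $u_k$ is bounded on $[1,\infty)$; boundedness on $(0,1]$ is automatic by smoothness. This already gives $|\partial_t^k\varphi/\varphi|(t)=|u_k(t)|\,t^{-k}\leq C_k\langle t\rangle^{-k}$, i.e. the case $m=0$. For $m\geq 1$ one differentiates the identity $\partial_t^k\varphi=\varphi\cdot t^{-k}u_k(t)$ and controls the derivatives of $u_k$: differentiating the relation $t u_k'(t) = u_{k+1}(t)-u_k(t)+k\,u_k(t)\cdot(\text{correction})$ — more precisely, from $t\varphi^{(k+1)}=\varphi^{(k)}\cdot(t u_k'/u_k \cdot u_k + \dots)$ one derives a recursion expressing $t^{m}u_k^{(m)}(t)$ as a polynomial in the quantities $u_j(t)$ for $j\leq k+m$. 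Since all of these are bounded, $t^m u_k^{(m)}(t)$ is bounded, i.e. $|u_k^{(m)}(t)|\leq C_{k,m}\langle t\rangle^{-m}$. Applying the Leibniz rule to $\partial_t^m\bigl(t^{-k}u_k(t)\bigr)$ then yields $\bigl|\partial_t^m(\partial_t^k\varphi/\varphi)\bigr|(t)\leq C_{k,m}\langle t\rangle^{-k-m}$, which is (i).

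The main obstacle is the bookkeeping in this last recursion: one must verify carefully that differentiating the defining asymptotic relations does not introduce uncontrolled growth, i.e. that the algebra of functions $\{u_k\}$ together with all their derivatives is closed under the operations appearing and that each member decays like the appropriate negative power of $t$. A clean way to package this, which I would adopt to avoid a messy induction, is to use the representation \cite[Theorem 1.3.1, Theorem 1.5.6]{RegVar}: a smoothly varying function of index $\rho$ can be written as $\varphi(t)=\exp\bigl(h(t)\bigr)$ where $t h'(t)\to\rho$ and, more to the point, $t^{n}h^{(n)}(t)$ has a finite limit for every $n$ (this is precisely the smooth-variation condition rephrased via the logarithmic derivative). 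Then $\partial_t^k\varphi/\varphi$ is a universal polynomial in $h'(t), t h''(t)/h'(t),\dots$ — concretely in the $g_j(t):=t^{j-1}h^{(j)}(t)$, each of which is bounded with bounded derivatives of the expected decay — and the estimate (i) follows by direct differentiation of this polynomial expression together with the product rule, with no delicate induction required. I would present the proof in this second form, stating the needed facts about $h=\log\varphi$ as a consequence of $\varphi\in SR_\rho$ and then doing the (routine, hence omitted in detail) Leibniz-rule bookkeeping.
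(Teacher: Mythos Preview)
Your proposal is correct and follows essentially the same inductive strategy as the paper: the case $m=0$ is immediate from the defining limits \eqref{liminitsrrho}, and the inductive step uses that differentiating the ratio $\partial_t^k\varphi/\varphi$ yields a polynomial expression in ratios of the same kind. The paper works directly with $a_k:=\partial_t^k\varphi/\varphi$ via the clean identity $\partial_t a_k = a_{k+1}-a_k a_1$ (then Leibniz and induction), which avoids your detour through $u_k=t^k a_k$ or through $h=\log\varphi$, but the content is the same; part (ii) is handled identically via polynomial bounds for slowly/regularly varying functions.
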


\begin{proof} (i) 
For $m=0$ the estimate follows from the definition of smoothly regularly varying function and the boundedness of $\varphi$. Suppose that the estimate holds for every $0\le m \le n-1$ and every $k\in \N$. Since
$$\partial_t\left( \frac{\partial_t^k\varphi}{\varphi}\right)= \frac{\partial_t^{k+1}\varphi}{\varphi}-\frac{\partial_t^k\varphi}{\varphi}\cdot \frac{\partial_t\varphi}{\varphi},$$
it follows that
$$\partial_t^{n}\left( \frac{\partial_t^k\varphi}{\varphi}\right)= \partial_t^{n-1}\left( \frac{\partial_t^{k+1}\varphi}{\varphi}\right)-\sum_{j=0}^{n-1} \begin{pmatrix}
m-1\\j
\end{pmatrix}  \partial_t^{n-1-j}\left( \frac{\partial_t^k\varphi}{\varphi}\right)\cdot \partial_t^{j}\left( \frac{\partial_t\varphi}{\varphi}\right).$$
Using the assumption of induction we obtain
\begin{align*}
\left|\partial_t^n\left( \frac{\partial_t^k\varphi}{\varphi}\right)\right|&\leq C_{k+1,n-1}\langle t\rangle^{-k-1-n+1}+\sum_{j=0}^{n-1} \begin{pmatrix}
m-1\\j
\end{pmatrix} C_{k,n-1-j}\langle t\rangle^{-k-n+1+j}C_{1,j}\langle t\rangle^{-1-j}\\
&=C_{k,n}\langle t\rangle^{-k-n}.
\end{align*}
This proves part (i). The statement in part (ii) follows from that for $\varphi\in SR_\rho$, $\varphi(t)t^{-\rho}\in SR_0$ is slowly varying. Slowly varying functions are polynomially bounded. 
\end{proof}

Recall that $W$ denotes a $d$-dimensional inner product space and $\langle\xi\rangle:=(1+|\xi|^2)^{1/2}$, $\xi\in W^*$. If $\varphi$ has smooth regular variation, the function 
$$\varphi_0(\xi):=\varphi(\langle \xi\rangle^d), \quad \xi\in W^*,$$
is a H\"ormander symbol of order $m_\varphi d$ and type $(1,0)$. Here $m_\varphi$ is as in Lemma  \ref{A14}. This fact follows from Lemma \ref{A14} and that for suitable coefficients $(c_{\beta,k,l})$, 
\begin{equation}
\label{derivativesofpsizero}
\partial_\xi^\alpha \varphi_0(\xi)=\sum_{|\alpha|+|\beta|=k+l} c_{\beta,k,l}\xi^\beta \langle\xi\rangle^{k(d-1)-l}[\partial_t^k\varphi](\langle\xi\rangle^d).
\end{equation}

The examples we have in mind for $\varphi$:s in this section are of the form $\varphi(t)=\langle t\rangle^m\log^k(\e+t)$ for $m\in \R$ and $k\in \Z$. These functions provide examples of elements in $SR_m$. In this case, $m_\varphi$ is an arbitrary number $>m$ if $k>0$ and $m_\varphi=m$ if $k\leq 0$.

\subsection{The local model using $\varphi$-symbols}

The local model for $\varphi$-pseudo-differential operators are operators whose symbols behave like $\varphi$ at infinity. For an open subset $U\subseteq W$, we write $S^m(U)$ for the space of all symbols of order $m$ and H\"ormander type $(1,0)$ on $U$, see \cite[Definition 18.1.]{hormanderIII}. To be precise, $S^m(U)\subseteq C^\infty(U\times W^*)$ and a function $a\in C^\infty(U\times W^*)$ belongs to $S^m(U)$ if for any $\alpha,\beta\in \N^d$ and any compact $K\subseteq U$, there are constants $C_{\alpha,\beta,K}>0$ such that 
$$|\partial_x^\alpha\partial_\xi^\beta a(x,\xi)|\leq C_{\alpha,\beta,K}\langle\xi\rangle^{m-|\beta|},$$
for $(x,\xi)\in K\times W^*$. We write $S^{-\infty}(U):=\cap_{m\in \R}S^m(U)$, see \cite[Definition 18.1.1]{hormanderIII}. If $a_1,a_2\in S^m(U)$ we write $a_1\sim a_2$ if $a_1-a_2\in S^{-\infty}(U)$. The space of symbols $S^m(U)$ are asymptotically complete in the following sense. By \cite[Proposition 18.1.3]{hormanderIII}, if $(a_j)_{j\in \N}\subseteq S^m(U)$ is a sequence such that $a_j\in S^{m_j}(U)$ for a sequence $m_j\to -\infty$, then there is a symbol $a\in S^m(U)$ such that for any $N$, there is a $k>0$ such that $a-\sum_{j<k}a_j\in S^{-N}(U)$. We write $a\sim \sum_j a_j$. The reader should beware that $\sum_j a_j$ rarely exists as a pointwise defined sum.

For a symbol $a\in S^m(U)$, the associated operator $Op(a):=C^\infty_c(U)\to C^\infty(U)$ is defined by
\begin{equation}
\label{opdef}
Op(a)f(x):=\frac{1}{(2\pi)^d}\int_{W^*} a(x,\xi)\hat{f}(\xi)\mathrm{e}^{ix\cdot \xi}\mathrm{d}\xi,
\end{equation}
where $\hat{f}$ denotes the Fourier transform of $f$. In \cite[Theorem 18.1.6]{hormanderIII}, the operator $Op(a)$ is denoted by $a(x,D)$. The space $L^{-\infty}(U):=C^\infty(U\times U)$ acts as operators $C^\infty(U)'\to C^\infty(U)$, we call these operators smoothing operators. We define $L^m(U):=Op(S^m(U))+L^{-\infty}(U)$ which is a subspace of the space of operators $C^\infty_c(U)\to C^\infty(U)$ by \cite[Theorem 18.1.6]{hormanderIII}. An element of $L^m(U)$ is called a pseudo-differential operator of order $m$ on $U$. For two pseudo-differential operators $P_1$ and $P_2$, we write $P_1\sim P_2$ if $P_1-P_2\in L^{-\infty}(U)$. The quantization in Equation \eqref{opdef} induces an isomorphism $Op:S^m(U)/S^{-\infty}(U)\to L^m(U)/L^{-\infty}(U)$ by \cite[Proposition 18.1.19]{hormanderIII}. 

A pseudo-differential operator $P \in L^m(U)$ on an open subset $U\subseteq W$ is said to be properly supported if the Schwartz kernel $K_P$ satisfies that the two projections $\mathrm{supp}(K_P)\to U$ are proper mappings, see \cite[Definition 18.1.21]{hormanderIII}. We write $L^{m,{\rm prop}}(U)\subseteq L^m(U)$ for the space of properly supported pseudo-differential operators on $U$ of order $m$. In fact, properly supported pseudo-differential operators preserve compact support and we can consider $L^{m,{\rm prop}}(U)$ as a subalgebra of the space of linear operators on $C^\infty_c(U)$, see \cite[Definition 18.1.21, Theorem 18.1.23]{hormanderIII}. Similarly, operators from $L^{m,{\rm prop}}(U)$ naturally extend to operators on $C^\infty(U)$. The filtered spaces $(L^{m,{\rm prop}}(U))_{m\in \R}$ and $(L^{m}(U))_{m\in \R}$ come equipped with products that coincide with the composition of operators
$$L^{m}(U)\times L^{m',{\rm prop}}(U)\to L^{m+m'}(U)\quad\mbox{and}\quad L^{m,{\rm prop}}(U)\times L^{m'}(U)\to L^{m+m'}(U).$$

By \cite[Proposition 18.1.22]{hormanderIII} it holds that 
$$L^m(U)/L^{-\infty}(U)=L^{m,{\rm prop}}(U)/(L^{m,{\rm prop}}(U)\cap L^{-\infty}(U))=S^m(U)/S^{-\infty}(U).$$
We tacitly assume that the composition of operators is well defined. Since our operators arise from closed manifolds this assumption will not restrict us. If $P_i=Op(a_i)+S_i$ for $a_i\in S^{m_i}(U)$ and $S_i\in L^{-\infty}(U)$, $i=1,2$, then $P_1P_2=Op(b)+S_3$ where $S_3\in L^{-\infty}(U)$ and $b\in S^{m_1+m_2}(U)$ is given by 
\begin{equation}
\label{productform}
b(x,\xi)\sim \sum_{\alpha}\frac{1}{\alpha!} D^\alpha_\xi a_1(x,\xi)\partial^\alpha_xa_2(x,\xi).
\end{equation}
Here $D^\alpha_\xi:= (i \partial_\xi)^\alpha$, see \cite[Theorem 18.1.8]{hormanderIII}.

\begin{dfn}
\label{defnofsmphi}
Let $U\subseteq W$ be an open subset and $m\in \R$. Define the space of \emph{$\varphi$-symbols} on $U$ to be 
$$S_\varphi^m(U):=\varphi_0 S^m(U)+S^{-\infty}(U).$$ 
If $\varphi$ has smooth regular variation, we define 
$$L_\varphi^m(U):=Op(S_\varphi^m(U))+L^{-\infty}(U).$$
We also define the subspace $L_\varphi^{m,{\rm prop}}(U):=L_\varphi^m(U)\cap L^{m+m_\varphi d,{\rm prop}}(U)$ of properly supported operators. 
\end{dfn}

Note here that if $\varphi$ has smooth regular variation, $S_\varphi^m(U)\subseteq  S^{m+m_\varphi d}(U)$ by Lemma \ref{A14} and $Op(S_\varphi^m(U))\subseteq L^{m+m_\varphi d}(U)$ is well defined. We call an element of $L^m_\varphi(U)$ a \emph{$\varphi$-pseudo-differential operator} of order $m$ on $U$.

\begin{prop}
\label{asymptcomplete}
The space of symbols $S^m_\varphi(U)$ is asymptotically complete, that is, if $(a_j)_{j\in \N}\subseteq S^m_\varphi(U)$ is a sequence such that $a_j\in S^{m_j}_\varphi(U)$ for a sequence $m_j\to -\infty$, then there is a symbol $a\in S^m_\varphi(U)$ such that for any $N$, there is a $k>0$ such that $a-\sum_{j<k}a_j\in S^{-N}_\varphi(U)$. 
\end{prop}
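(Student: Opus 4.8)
The plan is to reduce the statement to the classical asymptotic completeness of ordinary H\"ormander symbols, \cite[Proposition 18.1.3]{hormanderIII}, by factoring out $\varphi_0$. Write each $a_j = \varphi_0\, b_j + r_j$ with $b_j \in S^{m_j}(U)$ and $r_j \in S^{-\infty}(U)$; since $\varphi_0$ is nowhere vanishing (as $\varphi$ is positive) and $1/\varphi_0$ is again a symbol of order $-m_\varphi d$ by Lemma \ref{A14}, we may absorb $r_j$ into $b_j$ at the cost of changing $b_j$ within $S^{m_j}(U)$ and hence assume $a_j = \varphi_0 b_j$ with $b_j \in S^{m_j}(U)$. (More precisely, $\varphi_0^{-1} r_j \in S^{-\infty}(U)$, so $a_j = \varphi_0(b_j + \varphi_0^{-1}r_j)$ and we rename $b_j + \varphi_0^{-1}r_j$ as $b_j$.)

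Next I would invoke \cite[Proposition 18.1.3]{hormanderIII}: since $b_j \in S^{m_j}(U)$ with $m_j \to -\infty$, there is $b \in S^{m}(U)$ with the property that for every $N$ there is $k$ with $b - \sum_{j<k} b_j \in S^{-N}(U)$. Set $a := \varphi_0 b \in \varphi_0 S^m(U) \subseteq S^m_\varphi(U)$. For the asymptotic estimate, fix $N$ and choose $k$ so that $b - \sum_{j<k} b_j \in S^{-N - m_\varphi d + (\text{something})}(U)$; multiplying by $\varphi_0 \in S^{m_\varphi d}(U)$ (Lemma \ref{A14}) gives
$$a - \sum_{j<k} a_j = \varphi_0\Bigl(b - \sum_{j<k} b_j\Bigr) \in \varphi_0 S^{-N'}(U) \subseteq S^{-N'+m_\varphi d}(U),$$
where we pick $N' = N - m_\varphi d$ at the outset so that the right-hand side lies in $S^{-N}(U)$; in particular it lies in $S^{-N}_\varphi(U) = \varphi_0 S^{-N}(U) + S^{-\infty}(U)$ since $\varphi_0 S^{-N'}(U) = \varphi_0 \cdot \varphi_0^{-1}\varphi_0 S^{-N'}(U) \subseteq \varphi_0 S^{-N}(U)$ once we note $\varphi_0^{-1}\varphi_0 = 1$ and rescale the target order. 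The cleanest bookkeeping is to observe directly that $\varphi_0 S^{-M}(U) \subseteq S^{-M}_\varphi(U)$ trivially, so it suffices to get $b - \sum_{j<k}b_j \in S^{-N}(U)$, which is exactly what \cite[Proposition 18.1.3]{hormanderIII} provides.

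The only genuine subtlety — and the step I would be most careful about — is the reciprocal estimate: one must check that $1/\varphi_0 \in S^{-m_\varphi d}(U)$, i.e. that $\partial_\xi^\alpha(1/\varphi_0)$ decays like $\langle\xi\rangle^{-m_\varphi d - |\alpha|}$. This follows from Lemma \ref{A14} together with the Fa\`a di Bruno / quotient-rule expansion already recorded in Equation \eqref{derivativesofpsizero}: the derivatives of $\varphi_0$ are controlled by $\langle\xi\rangle^{m_\varphi d - |\alpha|}$ and the same Leibniz-type bookkeeping applied to $\varphi_0 \cdot (1/\varphi_0) = 1$, using that $\varphi_0$ is bounded below on compact $K\subseteq U$ only in $x$ but genuinely comparable to $\langle\xi\rangle^{m_\varphi d}$ from \emph{below} as well — this last point needs the two-sided nature of smooth regular variation, and is where Lemma \ref{A14}(ii) applied to $1/\varphi$ (which also has smooth regular variation, of index $-\rho$) does the work. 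Everything else is the standard Borel-type summation argument and is carried out verbatim as in the ordinary H\"ormander calculus.
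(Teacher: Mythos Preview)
Your approach is correct and is exactly what the paper has in mind; the paper simply says the result is ``immediate from the asymptotic completeness of $S^m(U)$'' and omits the details.

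One remark: the step you flag as ``the only genuine subtlety'' --- checking that $1/\varphi_0$ is a symbol so as to absorb the $r_j$ into $b_j$ --- is unnecessary. You can simply carry the smoothing remainders along: with $a_j = \varphi_0 b_j + r_j$, take $b \sim \sum_j b_j$ by \cite[Proposition 18.1.3]{hormanderIII}, set $a := \varphi_0 b$, and observe
\[
a - \sum_{j<k} a_j \;=\; \varphi_0\Bigl(b - \sum_{j<k} b_j\Bigr) \;-\; \sum_{j<k} r_j.
\]
The first term lies in $\varphi_0 S^{-N}(U) \subseteq S^{-N}_\varphi(U)$, and the second is a \emph{finite} sum in $S^{-\infty}(U) \subseteq S^{-N}_\varphi(U)$ by definition. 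So no reciprocal estimate on $\varphi_0$ is needed. (Your claim that $1/\varphi_0 \in S^{-m_\varphi d}(U)$ is also slightly off: Lemma~\ref{A14}(ii) gives only an upper bound on $\varphi$, so the order of $1/\varphi_0$ is some $m_{1/\varphi}d$ obtained by applying the lemma to $1/\varphi \in SR_{-\rho}$, not necessarily $-m_\varphi d$. This does not affect the argument since any finite order suffices, but the simpler route above sidesteps the issue entirely.)
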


We write $a\sim \sum_j a_j$ when in the situation of Proposition \ref{asymptcomplete}. The proof of Proposition \ref{asymptcomplete} is immediate from the asymptotic completeness of $S^m(U)$ (see discussion above or \cite[Proposition 18.1.3]{hormanderIII}) and is therefore omitted.

\begin{prop}
\label{productdecomp}
Assume that $\varphi$ has smooth regular variation. Then $T\in L_\varphi^m(U)$ if and only if there is a $T_0\in L^m(U)$ and a $S\in L^{-\infty}(U)$ such that $T=T_0Op(\varphi_0)+S$. 
\end{prop}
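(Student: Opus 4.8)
The plan is to prove the equivalence $T\in L_\varphi^m(U)\iff T=T_0\,Op(\varphi_0)+S$ for some $T_0\in L^m(U)$, $S\in L^{-\infty}(U)$, by analyzing the composition of symbols via the asymptotic product formula \eqref{productform} and using asymptotic completeness.

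\textbf{The ``if'' direction.} Suppose $T=T_0\,Op(\varphi_0)+S$ with $T_0\in L^m(U)$ and $S\in L^{-\infty}(U)$. Write $T_0=Op(a_0)+S_0$ with $a_0\in S^m(U)$ and $S_0\in L^{-\infty}(U)$. Then $T_0\,Op(\varphi_0)=Op(a_0)Op(\varphi_0)+S_0 Op(\varphi_0)$. The second term lies in $L^{-\infty}(U)$ since $\varphi_0\in S^{m_\varphi d}(U)$ and $S^{-\infty}(U)$ composed with any symbol stays smoothing. For the first term, since $\varphi_0(\xi)$ is independent of $x$, all the terms $\partial_x^\alpha\varphi_0=0$ for $\alpha\neq 0$, so the product formula \eqref{productform} collapses: $Op(a_0)Op(\varphi_0)=Op(b)+(\text{smoothing})$ with $b(x,\xi)\sim\sum_\alpha\frac1{\alpha!}D^\alpha_\xi a_0(x,\xi)\,\partial^\alpha_x\varphi_0(\xi)=a_0(x,\xi)\varphi_0(\xi)$. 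Hence $b-a_0\varphi_0\in S^{-\infty}(U)$, and since $a_0\varphi_0\in\varphi_0 S^m(U)\subseteq S^m_\varphi(U)$ we get $b\in S^m_\varphi(U)$. Therefore $T\in Op(S^m_\varphi(U))+L^{-\infty}(U)=L^m_\varphi(U)$.

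\textbf{The ``only if'' direction.} Suppose $T\in L^m_\varphi(U)$, so $T=Op(a)+S$ with $a\in S^m_\varphi(U)$ and $S\in L^{-\infty}(U)$. By definition $a=\varphi_0 c+r$ with $c\in S^m(U)$ and $r\in S^{-\infty}(U)$; since $Op(r)\in L^{-\infty}(U)$ we may absorb it into $S$ and assume $a=\varphi_0 c$. The idea is to construct $T_0\in L^m(U)$ with symbol $a_0\in S^m(U)$ such that $Op(a_0)Op(\varphi_0)\sim Op(\varphi_0 c)$, i.e. such that the symbol $b$ of the composition satisfies $b\sim\varphi_0 c$. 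By \eqref{productform} with $\varphi_0$ independent of $x$ (so only $\alpha=0$ contributes, as above), $b=a_0\varphi_0$ exactly modulo $S^{-\infty}$; thus we simply need $a_0:=c\in S^m(U)$, and then $Op(c)Op(\varphi_0)=Op(\varphi_0 c)+(\text{smoothing})=T-S+(\text{smoothing})$, giving $T=Op(c)Op(\varphi_0)+S'$ with $S'\in L^{-\infty}(U)$. Set $T_0:=Op(c)\in L^m(U)$; note $Op(\varphi_0)$ may need to be taken properly supported, or one works modulo $L^{-\infty}(U)$ using the identification $L^m(U)/L^{-\infty}(U)=L^{m,{\rm prop}}(U)/(\cdots)$, which is permitted under the standing assumption that compositions are well defined.

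\textbf{Expected main obstacle.} The only subtlety is bookkeeping about properly supported operators so that the composition $T_0\,Op(\varphi_0)$ is genuinely defined on $C^\infty_c(U)\to C^\infty(U)$; this is handled exactly as in \cite{hormanderIII} (replace $Op(\varphi_0)$ by a properly supported representative differing by a smoothing operator), and the paper has already declared it tacitly assumes compositions are well defined. The genuine computational content — that the product formula \eqref{productform} degenerates to ordinary multiplication because $\varphi_0$ has no $x$-dependence — is immediate, so there is no hard estimate; the proof is essentially a one-line observation about \eqref{productform} wrapped in the definitions.
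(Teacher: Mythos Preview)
Your argument is correct and follows essentially the same approach as the paper. The only difference is that the paper observes the \emph{exact} identity $Op(a_0\varphi_0)=Op(a_0)Op(\varphi_0)$ directly (since $\varphi_0$ is a Fourier multiplier, $\widehat{Op(\varphi_0)f}=\varphi_0\hat f$), whereas you recover this modulo $S^{-\infty}$ via the asymptotic product formula; since everything in the statement is modulo $L^{-\infty}(U)$ anyway, this distinction is immaterial, and your discussion of proper support is likewise covered by the paper's standing convention on compositions.
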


\begin{proof}
Since $L^m(U)/L^{-\infty}(U)=S^m(U)/S^{-\infty}(U)$, $L_\varphi^m(U)/L^{-\infty}(U)=S^m_\varphi(U)/S^{-\infty}(U)$. Indeed, $T\in L_\varphi^m(U)$ if and only if there is an $a\in S^m_\varphi(U)$ such that $T\sim Op(a)$. Finally, $a\in S^m_\varphi(U)$ if and only if $a=a_0\varphi_0+s$ for symbols $a_0\in S^m(U)$ and $s\in S^{-\infty}(U)$. Seeing that $\varphi_0$ only depends on $\xi$, we have the identity $Op(a_0\varphi_0)=Op(a_0)Op(\varphi_0)$.
\end{proof}

\begin{prop}
\label{products}
Assume that $\varphi$ has smooth regular variation. The subspace $S_\varphi^m(U)\subseteq S^{m+m_\varphi d}(U)$ is closed under pointwise multiplication by $S^0(U)$ and the operator product defines products
$$L^{m}(U)\times L^{m',{\rm prop}}_\varphi(U)\to L^{m+m'}_\varphi(U)\quad\mbox{and}\quad L^{m,{\rm prop}}(U)\times L^{m'}_\varphi(U)\to L^{m+m'}_\varphi(U).$$
In particular, the space $L_\varphi^{0,{\rm prop}}(U)$ forms a $*$-algebra if $m_\varphi\leq 0$. 
\end{prop}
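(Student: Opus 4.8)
The plan is to reduce everything to two ingredients: the symbol-level statement that $S^m_\varphi(U)$ is closed under multiplication by $S^0(U)$, and the product formulae for ordinary pseudo-differential operators recalled earlier (the composition rules $L^m\times L^{m',{\rm prop}}\to L^{m+m'}$ and $L^{m,{\rm prop}}\times L^{m'}\to L^{m+m'}$, together with Equation \eqref{productform}). Since all statements are modulo $L^{-\infty}(U)$, I can work entirely with symbols via the isomorphism $Op:S^m(U)/S^{-\infty}(U)\to L^m(U)/L^{-\infty}(U)$ from \cite[Proposition 18.1.19]{hormanderIII}, which restricts to $Op:S^m_\varphi(U)/S^{-\infty}(U)\to L^m_\varphi(U)/L^{-\infty}(U)$ by Proposition \ref{productdecomp}.

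First I would prove the multiplication statement. Let $a\in S^m_\varphi(U)$ and $b\in S^0(U)$. Write $a=a_0\varphi_0+s$ with $a_0\in S^m(U)$, $s\in S^{-\infty}(U)$. Then $ab=(a_0b)\varphi_0+sb$. Since $S^m(U)$ is an algebra under pointwise multiplication (it is a module over $S^0(U)$ — this is immediate from the Leibniz rule and the defining estimates), $a_0b\in S^m(U)$, and $sb\in S^{-\infty}(U)$ because $S^{-\infty}(U)$ is an ideal under pointwise multiplication. Hence $ab\in S^m_\varphi(U)$.

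Next, the operator products. Take $P\in L^m(U)$ and $Q\in L^{m',{\rm prop}}_\varphi(U)$. By Proposition \ref{productdecomp}, $Q\sim Q_0\,Op(\varphi_0)$ for some $Q_0\in L^{m'}(U)$; since $Q$ is properly supported we may, adjusting by a smoothing operator, take $Q_0$ properly supported (any pseudo-differential operator differs from a properly supported one by a smoothing operator, \cite[Proposition 18.1.22]{hormanderIII}, and $Op(\varphi_0)$ can be taken properly supported up to $L^{-\infty}$). Then $PQ\sim (PQ_0)\,Op(\varphi_0)$, and $PQ_0\in L^{m+m'}(U)$ by the ordinary composition rule $L^m(U)\times L^{m',{\rm prop}}(U)\to L^{m+m'}(U)$. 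Applying Proposition \ref{productdecomp} in the reverse direction gives $PQ\in L^{m+m'}_\varphi(U)$. For the other product, take $P\in L^{m,{\rm prop}}(U)$ and $Q\in L^{m'}_\varphi(U)$, write $Q\sim Op(a_0\varphi_0)=Op(a_0)Op(\varphi_0)$ with $a_0\in S^{m'}(U)$; here one must be slightly more careful because $Op(\varphi_0)$ need not be properly supported, but $P\,Op(a_0)\in L^{m+m'}(U)$ by the ordinary rule $L^{m,{\rm prop}}(U)\times L^{m'}(U)\to L^{m+m'}(U)$, and then $P\,Op(a_0)\,Op(\varphi_0)\in L^{m+m'}_\varphi(U)$ again by Proposition \ref{productdecomp}. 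For the final assertion: when $m_\varphi\leq 0$ we have $L^{0,{\rm prop}}_\varphi(U)=L^0_\varphi(U)\cap L^{0,{\rm prop}}(U)\subseteq L^{0}(U)$ by Definition \ref{defnofsmphi}, so composition is well defined and stays in $L^{0,{\rm prop}}_\varphi(U)$ by the products just established (both factors properly supported, so the product is too and lies in $L^0_\varphi$); closure under adjoints follows because the adjoint of $Op(a_0)Op(\varphi_0)+S$ is $Op(\varphi_0)^*Op(a_0)^*+S^*$, $\varphi_0$ is real so $Op(\varphi_0)^*\sim Op(\varphi_0)$ modulo lower order, $Op(a_0)^*\in L^0(U)$ by the standard adjoint formula, and one reorders using the first product $L^{0,{\rm prop}}(U)\times L^{0}_\varphi(U)\to L^0_\varphi(U)$.

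The step I expect to require the most care is the bookkeeping with proper support in the operator products: the factor $Op(\varphi_0)$ is manifestly not properly supported as written (its symbol depends only on $\xi$), so to invoke the composition rules I must either absorb a smoothing correction to replace it by a properly supported representative, or arrange the factorization $Q\sim Q_0\,Op(\varphi_0)$ so that the proper factor sits on the correct side. This is routine given \cite[Proposition 18.1.22]{hormanderIII}, but it is the one place where a careless argument would be circular. Everything else is a direct translation through Proposition \ref{productdecomp} and the symbol-level computation.
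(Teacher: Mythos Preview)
Your proof is correct and takes essentially the same approach as the paper: both exploit that $\varphi_0$ depends only on $\xi$, so it factors cleanly through the product calculus via Proposition~\ref{productdecomp}. The paper phrases this at the symbol level---it writes out the asymptotic expansion \eqref{productform} for $Op(a_1)Op(a_0\varphi_0)$, observes that $\partial_x^\alpha(a_0\varphi_0)=(\partial_x^\alpha a_0)\varphi_0$ so $\varphi_0$ factors out of every term, and concludes $Op(a_1)Op(a_0\varphi_0)\sim Op(a_1)Op(a_0)Op(\varphi_0)$---whereas you use the operator-level factorization $Q\sim Q_0\,Op(\varphi_0)$ from the outset and are more explicit about the proper-support bookkeeping (which the paper suppresses entirely).
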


\begin{proof}
By Proposition \ref{productdecomp}, it suffices to show that if $a_1\in S^m(U)$ and $a_2=a_0\varphi_0\in S^{m'}_\varphi(U)$, then $Op(a_1)Op(a_2)\in L^{m+m'}_\varphi(U)$. Equation \eqref{productform} implies that $Op(a_1)Op(a_2)=Op(b)$ where $b\in S^{m+m'}(U)$ takes the form 
\begin{equation}
\label{prodformpsi}
b(x,\xi)\sim \sum_{\alpha}\frac{1}{\alpha!} D^\alpha_\xi a_1(x,\xi)\partial^\alpha_xa_2(x,\xi)=\sum_{\alpha}\frac{1}{\alpha!} D^\alpha_\xi a_1(x,\xi)\partial^\alpha_xa_0(x,\xi)\varphi_0(\xi).
\end{equation}
Therefore, $Op(b)\sim Op(a_1)Op(a_0)Op(\varphi_0)$ which by Proposition \ref{productdecomp} belongs to $L^{m+m'}_\varphi(U)$.
\end{proof}

The action of an operator $T\in L^{0,{\rm prop}}(U)$ on $C^\infty_c(U)$ is locally bounded in the $L_2$-norm by \cite[Theorem 18.1.11]{hormanderIII}. By density, we can consider $L^{0,{\rm prop}}(U)$ as an algebra of bounded operators on $L_{2,loc}(U)$.

\subsection{Coordinate changes and additional assumptions on $\varphi$}

For a smoothly regularly varying $\varphi$, we now ensure that the property for an operator to have a symbol in $S_\varphi(U)$ is coordinate independent. With these properties at hand, we will be able to define $\varphi$-pseudo-differential operators on manifolds and prove that these operators are Laplacian modulated.

\begin{rem}
\label{higherdsecond}
Using Equation \eqref{derivativesofpsizero} and Lemma~\ref{A14}, it is easily seen that smooth regular variation implies that for any dimension $d$, the function $\varphi_0(\xi):=\varphi(\langle \xi\rangle^d)$ satisfies the following property for any multi-index $\alpha$,
$$\partial^\alpha_\xi \varphi_0\in S^{-|\alpha|}_\varphi.$$
\end{rem}

\begin{prop}
\label{psiidealprop}
If $\varphi$ is a decreasing function with smooth regular variation then for any $\chi,\chi'\in C^\infty_c(U)$ it holds that $\chi L_\varphi^{0,{\rm prop}}(U)\chi'=\chi L_\varphi^0(U)\chi'\subseteq \mathcal{L}_\varphi(L_2(U))$ when represented as operators on $L_2(U)$. 

\end{prop}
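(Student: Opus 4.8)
The plan is to reduce the statement to the key input Lemma \ref{psiideallem}, which already tells us that $\chi\varphi((1-\Delta)^{d/2})\in\mathcal{L}_\varphi(L_2(W))$ for any $\chi\in C^\infty_c(W)$, together with the pseudo-differential machinery developed in this section. First I would note that the equality $\chi L_\varphi^{0,{\rm prop}}(U)\chi'=\chi L_\varphi^0(U)\chi'$ is essentially formal: cutting off an arbitrary element of $L_\varphi^0(U)$ on both sides by compactly supported functions produces an operator with compactly supported (indeed, compact) Schwartz kernel, hence one which differs from a properly supported representative by a smoothing operator supported in a compact set; and such smoothing operators are trace-class, in particular in $\mathcal{L}_\varphi$. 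So it suffices to prove $\chi T\chi'\in\mathcal{L}_\varphi(L_2(U))$ for $T\in L_\varphi^{0}(U)$.

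For this, by Proposition \ref{productdecomp} I would write $T=T_0\,Op(\varphi_0)+S$ with $T_0\in L^0(U)$ and $S\in L^{-\infty}(U)$, where $\varphi_0(\xi)=\varphi(\langle\xi\rangle^d)$. The smoothing term $\chi S\chi'$ is again trace-class, hence negligible. For the main term, insert a further cutoff: pick $\chi''\in C^\infty_c(W)$ with $\chi''\chi'=\chi'$ and $\chi''$ equal to $1$ on a neighbourhood of $\mathrm{supp}(\chi')$. Write
\begin{align*}
\chi T_0\,Op(\varphi_0)\chi' &= \chi T_0\chi''\,Op(\varphi_0)\chi' + \chi T_0(1-\chi'')\,Op(\varphi_0)\chi'.
\end{align*}
In the first term, $\chi T_0\chi''$ is a compactly supported order-$0$ pseudo-differential operator, hence bounded on $L_2$, while $Op(\varphi_0)\chi'$ — which I would rewrite as $\chi''' Op(\varphi_0)\chi'$ for a suitable cutoff $\chi'''$, up to a relabelling — I want to compare to $\chi'''\varphi((1-\Delta)^{d/2})\chi'$. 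Since $Op(\varphi_0)$ and $\varphi((1-\Delta)^{d/2})$ have the same symbol $\varphi_0$ modulo $S^{-\infty}$ (indeed, modulo lower order by the analogue of Remark \ref{higherdsecond}), their difference localized by compactly supported functions lands in $\mathcal{L}_\varphi$ by the convexification/Hölder arguments already used (e.g. in Proposition \ref{locallylwpsi}); so $\chi''' Op(\varphi_0)\chi'\in\mathcal{L}_\varphi(L_2(U))$ follows from Lemma \ref{psiideallem}. As $\mathcal{L}_\varphi$ is an ideal in $\mathbb{B}(L_2(U))$, multiplying on the left by the bounded operator $\chi T_0\chi''$ keeps us in $\mathcal{L}_\varphi$.

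The second term, $\chi T_0(1-\chi'')\,Op(\varphi_0)\chi'$, is where the only genuine subtlety lies: $\chi$ and $1-\chi''$ have disjoint supports, so $\chi T_0(1-\chi'')$ is a smoothing operator (off-diagonal localization of a pseudo-differential operator), but it need not be properly supported, and we must check it is bounded with $\mathcal{L}_\varphi$-decay after composing with $Op(\varphi_0)\chi'$. The clean way is to observe $\chi T_0(1-\chi'')\in L^{-\infty}(U)$ with Schwartz kernel smooth off a neighbourhood of the diagonal; composing with $Op(\varphi_0)\chi'$ (which by Lemma \ref{A14} has finite order $m_\varphi d$) still yields an operator whose Schwartz kernel is the kernel of $Op(\varphi_0)\chi'$ smoothed against a $C^\infty$ function, producing a bounded — in fact trace-class — operator, since $\chi'$ has compact support and the resulting kernel is square-integrable (even continuous). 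Thus this term is in $\mathcal{L}_\varphi$ as well. I expect the handling of this off-diagonal term — making precise that the composition of a non-properly-supported smoothing operator with a compactly-cutoff $\varphi$-pseudo-differential operator is trace-class — to be the main technical obstacle, though it is standard pseudo-differential bookkeeping; everything else reduces cleanly to Lemma \ref{psiideallem}, the ideal property of $\mathcal{L}_\varphi$, and the Hölder inequality \eqref{holdineq}.
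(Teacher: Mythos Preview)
Your approach is essentially the same as the paper's: reduce to Lemma \ref{psiideallem} via the factorization $T = T_0\,Op(\varphi_0) + S$ from Proposition \ref{productdecomp}, then use boundedness of the order-zero part and the ideal property of $\Lw$. Two points where your write-up is slightly off and can be streamlined.

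First, on $W$ the operator $Op(\varphi_0)$ \emph{is} the Fourier multiplier $\varphi((1-\Delta)^{d/2})$; there is nothing to compare, and Lemma \ref{psiideallem} applies directly to $\tilde\chi\,Op(\varphi_0)$ (and hence to its adjoint $Op(\varphi_0)\tilde\chi$, since $\varphi_0$ is real). Your detour through ``same symbol modulo $S^{-\infty}$'' and the convexification arguments is unnecessary.

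Second, your off-diagonal term is shakier than you indicate. You chose $\chi''$ with $\chi''\chi'=\chi'$, but then claim that $\chi$ and $1-\chi''$ have disjoint supports, which does not follow from that choice. More importantly, even with a corrected $\chi''$, the operator $\chi T_0(1-\chi'')$ has smooth kernel by pseudolocality, but since $T_0$ was only taken in $L^0(U)$ and not assumed properly supported, this kernel need not decay in the second variable, and the composition with $Op(\varphi_0)\chi'$ is not obviously bounded, let alone trace-class. The paper sidesteps this entirely: having already observed $\chi L^0_\varphi(U)\chi' = \chi L^{0,\rm prop}_\varphi(U)\chi'$, one may as well take a properly supported representative from the outset; then $\chi Op(a_0)$ (with $Op(a_0)$ chosen properly supported) already has compactly supported Schwartz kernel, so $\chi Op(a_0) = \chi Op(a_0)\tilde\chi$ for some $\tilde\chi\in C^\infty_c(U)$, and no off-diagonal remainder appears at all. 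With that one adjustment your argument goes through cleanly and matches the paper's.
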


\begin{proof}
It is clear that $\chi L_\varphi^{0,{\rm prop}}(U)\chi'\subseteq \chi L_\varphi^0(U)\chi'$ and the converse inclusion follows from the fact that the Schwartz kernel of an operator in $\chi L_\varphi^0(U)\chi'$ is compactly supported in $U\times U$. Clearly, if $T_1\sim T_2$ in $L_\varphi^0(U)$, then $\chi T_1\chi'- \chi T_2\chi'$ smoothing and compactly supported, so it belongs to $\Lw(L_2(U))$. 

For $T\in \chi L_\varphi^{0,{\rm prop}}(U)$ there is a symbol $a_0\in S^0(U)$ and a $\tilde{\chi}\in C^\infty_c(U)$ such that 
$$T\sim \chi Op(a_0)\tilde{\chi}Op(\varphi_0).$$
Since $\chi Op(a_0)$ is compactly based and of order $0$, it acts as a bounded operator. It therefore suffices to show that $\tilde{\chi}Op(\varphi_0)\in \Lw(L_2(U))$ for any compactly supported $\tilde{\chi}\in C^\infty_c(U)$. We can assume that $U=W$ and the claim follows from Lemma \ref{psiideallem}.
\end{proof}

The reader wary of circular proofs should note that we are not using Proposition \ref{psiidealprop} to prove Corollary \ref{corofphidifflem} -- the result that proves the yet unproven Lemma \ref{psiideallem}.

\begin{prop}
\label{sigmaforquad}
Let $\varphi$ have smooth regular variation. Then for any $k\in \N$ and $b>0$, there is a constant $C_k>0$ such that,
$$\left|\partial_t^k\left(\frac{\varphi(t)}{\varphi(bt)}\right)\right|\leq C_k\langle t\rangle^{-k}.$$
In particular, for any positive definite quadratic form $g$ on $W$, the function
$$\sigma_g(\xi):=\frac{\varphi(\langle \xi\rangle^d)}{\varphi\left((1+|\xi|_g^2)^{d/2}\right)},$$ 
is a symbol $\sigma_g\in S^0(W)$ and H\"ormander type $(1,0)$ depending smoothly on $g$.
\end{prop}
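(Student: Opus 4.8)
The plan is to reduce the statement about $\sigma_g$ to the scalar estimate on $t\mapsto \varphi(t)/\varphi(bt)$, and then prove that scalar estimate by induction on $k$ using the Faà di Bruno-type identities already developed in Lemma \ref{A14}. First I would establish the scalar claim. Write $h_b(t):=\varphi(t)/\varphi(bt)$. By the smooth regular variation of $\varphi$, $h_b(t)\to b^{-\rho}$ as $t\to\infty$ (a constant), so $h_b$ itself is bounded; this handles $k=0$. For the derivatives, I would compute the logarithmic derivative
$$\frac{h_b'(t)}{h_b(t)}=\frac{\varphi'(t)}{\varphi(t)}-b\,\frac{\varphi'(bt)}{\varphi(bt)},$$
and observe that by Lemma \ref{A14}(i) (the case $k=1$) each term is $O(\langle t\rangle^{-1})$, uniformly, hence $h_b'(t)=O(\langle t\rangle^{-1})$. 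More generally, differentiating the relation $h_b'=h_b\cdot\bigl(\tfrac{\varphi'}{\varphi}(t)-b\tfrac{\varphi'}{\varphi}(bt)\bigr)$ and applying the Leibniz rule expresses $\partial_t^k h_b$ as a sum of products of $\partial_t^j h_b$ ($j<k$) with derivatives of $\tfrac{\varphi'}{\varphi}$ evaluated at $t$ and at $bt$; by Lemma \ref{A14}(i), $\partial_t^m\bigl(\tfrac{\varphi'}{\varphi}\bigr)(t)=O(\langle t\rangle^{-1-m})$ and $\partial_t^m\bigl(\tfrac{\varphi'}{\varphi}\bigr)(bt)=O(\langle bt\rangle^{-1-m})=O(\langle t\rangle^{-1-m})$ since $b>0$ is fixed. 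Feeding in the inductive hypothesis $\partial_t^j h_b(t)=O(\langle t\rangle^{-j})$ for $j<k$, every term in the expansion for $\partial_t^k h_b$ is $O(\langle t\rangle^{-k})$, which closes the induction. (One should note the estimates are locally uniform in $b$; this is needed for the smooth dependence on $g$ below.)

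Next I would deduce the symbol statement. Fix a positive definite quadratic form $g$ on $W$ and set $q_g(\xi):=(1+|\xi|_g^2)^{d/2}$, so that $\sigma_g(\xi)=h_{\cdot}$ evaluated appropriately: more precisely, since $|\xi|_g^2$ and $|\xi|^2$ are comparable quadratic forms, there is $c_1=c_1(g)>0$ with $c_1^{-1}\langle\xi\rangle\le (1+|\xi|_g^2)^{1/2}\le c_1\langle\xi\rangle$, and $\sigma_g(\xi)=\varphi(\langle\xi\rangle^d)/\varphi(q_g(\xi))$. The strategy is to view $\sigma_g$ as a composition: both $\langle\xi\rangle^d$ and $q_g(\xi)$ are smooth functions of $\xi$ whose $\xi$-derivatives obey the standard symbol-type bounds $|\partial_\xi^\alpha \langle\xi\rangle^d|\le C_\alpha\langle\xi\rangle^{d-|\alpha|}$ and likewise for $q_g$ (with constants depending smoothly on $g$), and on the line $\{q_g(\xi)=s\}$ we have $s$ comparable to $\langle\xi\rangle^d$. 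Applying the multivariate Faà di Bruno formula to $\sigma_g(\xi)=F(\langle\xi\rangle^d,q_g(\xi))$ where $F(u,v):=\varphi(u)/\varphi(v)$, and using the scalar estimates from the first paragraph (applied with the ratio $b=b(\xi):=q_g(\xi)/\langle\xi\rangle^d$, which stays in a fixed compact subinterval of $(0,\infty)$ by the comparability of the forms) together with the chain/quotient rule bounds on $\partial_u^j\partial_v^l F$ coming from Lemma \ref{A14}, each application of a $\xi$-derivative to $\sigma_g$ produces a gain of $\langle\xi\rangle^{-1}$. Hence $|\partial_\xi^\alpha \sigma_g(\xi)|\le C_\alpha\langle\xi\rangle^{-|\alpha|}$, i.e. $\sigma_g\in S^0(W)$ of type $(1,0)$. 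The smooth dependence on $g$ follows because every constant produced is a continuous (indeed smooth) function of the coefficients of $g$, uniformly on compact sets of forms, and one may also differentiate in the $g$-parameters under the same bookkeeping.

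The main obstacle will be organizing the Faà di Bruno bookkeeping cleanly: one must simultaneously track that (a) derivatives of $\varphi'/\varphi$ at the two arguments $t$ and $bt$ decay at the right rate, (b) the "inner functions" $\langle\xi\rangle^d$ and $q_g(\xi)$ are honest symbols of order $d$, and (c) the arguments of $\varphi$ are comparable to a common power of $\langle\xi\rangle$ so that evaluating the scalar bounds at those arguments yields the claimed power of $\langle\xi\rangle$. A tidy way to finesse (a)–(c) at once is to first record the intermediate lemma that for a symbol $a\in S^d(W)$ with $a(\xi)\ge c\langle\xi\rangle^d$, the composition $\varphi\circ a$ lies in $S^{m_\varphi d}(W)$ and, crucially, $(\varphi\circ a_1)/(\varphi\circ a_2)\in S^0(W)$ whenever $a_1,a_2\in S^d(W)$ are both bounded below by $c\langle\xi\rangle^d$; the scalar computation of the first paragraph is exactly the engine for that lemma, and the proposition is then the special case $a_1=\langle\xi\rangle^d$, $a_2=q_g(\xi)$. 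Everything else is routine once that lemma is in place.
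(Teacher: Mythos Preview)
Your approach is correct and essentially the same as the paper's: both use Lemma \ref{A14} to control $\partial_t^m\varphi/\varphi$ and then organize the quotient/chain rule bookkeeping, with the paper using a direct expansion of $\partial_t^k\sigma_b$ in terms of $a_m:=\partial_t^m\varphi/\varphi$ while you use an equivalent induction via the logarithmic derivative $h_b'=h_b\cdot(\varphi'/\varphi(t)-b\,\varphi'/\varphi(bt))$. The paper omits the multidimensional case entirely as ``tedious computations''; your intermediate lemma ($(\varphi\circ a_1)/(\varphi\circ a_2)\in S^0$ for elliptic $a_1,a_2\in S^d$) is the right way to organize it.

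One small wobble to clean up: your first pass at the symbol estimate, writing $\sigma_g(\xi)=h_{b(\xi)}(\langle\xi\rangle^d)$ with $b(\xi)=q_g(\xi)/\langle\xi\rangle^d$ and invoking the scalar estimate with this variable $b$, is not quite complete as stated, because $\partial_\xi$ then also hits $b(\xi)$ and your scalar paragraph only bounds $\partial_t^k h_b$ for \emph{fixed} $b$ (local uniformity in $b$ is not enough; you would need $\partial_b$-derivative bounds too). Your final intermediate-lemma formulation avoids this issue entirely by working with $F(u,v)=\varphi(u)/\varphi(v)$ and Lemma \ref{A14} directly, so just drop the $b(\xi)$ detour and go straight to that lemma.
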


\begin{proof}
Define the function $\sigma_b(t):=\frac{\varphi(t)}{\varphi(bt)}$. For suitable constants $c_{\alpha,l,j}\geq 0$, we can for any $k$ write 
$$\partial^k_t\sigma_b(t)=\sum_{l=0}^k\sum_{j=0}^l\sum_{\alpha\in \N^j, \, |\alpha|=j} c_{\alpha,l,j} \frac{b^l\partial_t^{k-l}\varphi(t)\prod_{p=1}^j \partial^{\alpha_p}_t\varphi(bt)}{\varphi(bt)^{j+1}}.$$
We set $a_m:=\frac{\partial_t^m\varphi}{\varphi}$, which is a symbol of order $-m$ and H\"ormander type $(1,0)$ by Lemma \ref{A14}. Using that $|\sigma_b(t)|$ is bounded (by~\eqref{quasi}), we estimate 
$$|\partial^k_t\sigma_b(t)|\leq C\sum_{l=0}^k\sum_{j=0}^l\sum_{\alpha\in \N^j, \, |\alpha|=l} c_{\alpha,l,j} |a_{k-l}(t)|\prod_{p=1}^j |a_{\alpha_p}(bt)|\leq C_k\langle t\rangle^{-k},$$
where we in the last step used that $a_{k-l}(t)\prod_{p=1}^j a_{\alpha_p}$ is a symbol of order $-(k-l)-|\alpha|=-k$ and H\"ormander type $(1,0)$. The verification of the statements about $\sigma_g$ goes as in the $1$-dimensional case $g=b$ modulo tedious computations, and will be omitted. 
\end{proof}

\begin{prop}
\label{consofsigmaforquad}
Let $\varphi$ have smooth regular variation and $g$ be a metric on an open subset $U\subseteq W$ and define $\sigma_g(x,\xi):=\sigma_{g(x)}(\xi)$, $(x,\xi)\in U\times W^*$. Then $\sigma_g\in S^0(U)$ is elliptic.
\end{prop}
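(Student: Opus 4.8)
The plan is to reduce the assertion to the pointwise statement of Proposition \ref{sigmaforquad} together with the smoothness of the metric, checking by hand that replacing a scalar dilation parameter by the matrix-valued parameter $g(x)$ costs only bounded factors. Working in local coordinates on $U$, the dual metric is a smooth map $x\mapsto q_x$, $q_x(\xi):=|\xi|^2_{g(x)}$, into positive definite quadratic forms on $W^*$, and on any compact $K\subseteq U$ there is $\lambda_K\ge1$ with $\lambda_K^{-1}|\xi|^2\le q_x(\xi)\le\lambda_K|\xi|^2$. Setting $v(x,\xi):=(1+q_x(\xi))^{d/2}$, one sees that $v$ is a positive elliptic symbol in $S^d(U)$ with $v(x,\xi)\asymp\langle\xi\rangle^d$ uniformly on $K\times W^*$, and
$$\sigma_g(x,\xi)=\varphi_0(\xi)\cdot\frac{1}{\varphi(v(x,\xi))},\qquad\varphi_0(\xi):=\varphi(\langle\xi\rangle^d).$$

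I would first record a scalar estimate: smooth regular variation makes $\varphi(t)/\varphi(2t)$ and $\varphi(2t)/\varphi(t)$ bounded, so by iteration $\varphi(t)/\varphi(bt)$ is bounded above and below by positive constants, uniformly for $t\ge1$ and $b$ in any fixed compact subinterval of $(0,\infty)$. Since $v(x,\xi)/\langle\xi\rangle^d$ stays in such a subinterval for $x\in K$, it follows that $\sigma_g(x,\xi)=\varphi_0(\xi)/\varphi(v(x,\xi))$ is bounded above and below by positive constants on $K\times W^*$ for every compact $K$. This already gives ellipticity of order $0$: $|\sigma_g(x,\xi)|\ge c_K>0$ on $K\times W^*$, and interchanging the roles of $\langle\xi\rangle^d$ and $v$ shows $\sigma_g^{-1}\in S^0(U)$ as well. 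It then remains to check the symbol bounds $|\partial_x^\alpha\partial_\xi^\beta\sigma_g(x,\xi)|\le C_{\alpha,\beta,K}\langle\xi\rangle^{-|\beta|}$ on $K\times W^*$. I would differentiate $\varphi_0\cdot\bigl((1/\varphi)\circ v\bigr)$ by Leibniz and expand each composition by Fa\`a di Bruno. A term of $\partial_\xi^{\beta'}(\varphi\circ\langle\cdot\rangle^d)$ is $\varphi^{(j)}(\langle\xi\rangle^d)$ times $\xi$-derivatives of $\langle\xi\rangle^d$ of total order $jd-|\beta'|$; writing $\varphi^{(j)}=(\varphi^{(j)}/\varphi)\,\varphi$ and using $|\varphi^{(j)}/\varphi|(t)\le C_j\langle t\rangle^{-j}$ from Lemma \ref{A14}(i), such a term equals $\varphi_0(\xi)\cdot O(\langle\xi\rangle^{-|\beta'|})$. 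Likewise a term of $\partial_x^\alpha\partial_\xi^{\beta''}\bigl((1/\varphi)\circ v\bigr)$ is $(1/\varphi)^{(j)}(v(x,\xi))$ times derivatives of $v$ of total $\xi$-order $jd-|\beta''|$ (the $x$-derivatives only hit the bounded coefficients of $g$ and do not affect the $\xi$-order); expanding $(1/\varphi)^{(j)}$ by the quotient rule and applying Lemma \ref{A14}(i) again gives $|\varphi\cdot(1/\varphi)^{(j)}|(t)\le C_j\langle t\rangle^{-j}$, so such a term equals $\bigl(1/\varphi(v(x,\xi))\bigr)\cdot O(\langle\xi\rangle^{-|\beta''|})$. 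Multiplying and summing over $\beta'\le\beta$ recombines everything as $\sigma_g(x,\xi)\cdot O(\langle\xi\rangle^{-|\beta|})$, which is $O(\langle\xi\rangle^{-|\beta|})$ by the boundedness of $\sigma_g$; hence $\sigma_g\in S^0(U)$ and, being bounded below, elliptic of order $0$.

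The main obstacle is purely organizational: arranging the Fa\`a di Bruno expansions so that each factor of $\varphi$ or $1/\varphi$ is paired with a logarithmic-derivative factor $\varphi^{(j)}/\varphi$, separating the ``$\varphi$-part'' (which cancels up to the bounded factor $\sigma_g$) from the ``symbol part'' (which carries the decay $\langle\xi\rangle^{-|\beta|}$). This is the computation indicated but suppressed in the proof of Proposition \ref{sigmaforquad}; the only new ingredient is that the coefficients of $g$ and all their derivatives are bounded on compact subsets of $U$, so the matrix-valued parameter behaves no worse than a scalar dilation. (Alternatively, if ``depending smoothly on $g$'' in Proposition \ref{sigmaforquad} is read as smoothness of $g\mapsto\sigma_g$ into the Fr\'echet space $S^0(W)$, then $\sigma_g\in S^0(U)$ follows at once by composing with the smooth map $x\mapsto g(x)$ and using compactness of $g(K)$.)
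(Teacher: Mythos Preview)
Your argument is correct. For the membership $\sigma_g\in S^0(U)$ you essentially carry out explicitly the Fa\`a di Bruno computation that the paper suppresses in Proposition~\ref{sigmaforquad} and then invokes here; your parenthetical alternative (reading ``depending smoothly on $g$'' as smoothness into the Fr\'echet space $S^0$) is exactly how the paper's one-line citation of Proposition~\ref{sigmaforquad} should be understood.

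Where you differ is in the ellipticity. You bound $\sigma_g$ below directly, using that $\varphi(t)/\varphi(bt)$ is uniformly bounded above and below for $b$ in compact subintervals of $(0,\infty)$ and that $v(x,\xi)/\langle\xi\rangle^d$ stays in such an interval on compacta. The paper instead introduces, for two metrics $g_1,g_2$, the symbol
\[
\sigma_{g_1,g_2}(x,\xi):=\frac{\sigma_{g_1}(x,\xi)}{\sigma_{g_2}(x,\xi)}=\frac{\varphi\bigl((1+|\xi|_{g_2(x)}^2)^{d/2}\bigr)}{\varphi\bigl((1+|\xi|_{g_1(x)}^2)^{d/2}\bigr)},
\]
observes that the same argument proving $\sigma_g\in S^0$ shows $\sigma_{g_1,g_2}\in S^0$, and then notes $\sigma_{g_1,g_2}=\sigma_{g_2,g_1}^{-1}$: since both a symbol and its reciprocal lie in $S^0$, the symbol is elliptic. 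Taking $g_2$ to be the flat metric recovers $\sigma_g$. Your route is more elementary and self-contained; the paper's symmetry trick is shorter and yields the slightly stronger statement that $\sigma_{g_1,g_2}$ is an elliptic element of $S^0(U)$ for \emph{any} two metrics, which is what is actually used later (e.g.\ in Lemma~\ref{comparingpsis}).
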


\begin{proof}
By Proposition \ref{sigmaforquad}, $\sigma_g\in S^0(U)$ is a smooth symbol of order $0$. To verify that $\sigma_g$ is elliptic, we assume that $g_1$ and $g_2$ are two metrics on $U$ and consider the function
\begin{equation}
\label{sigmag1g2}
\sigma_{g_1,g_2}(x,\xi):=\frac{\sigma_{g_1}(x,\xi)}{\sigma_{g_2}(x,\xi)}=\frac{\varphi\left((1+|\xi|_{g_2(x)}^2)^{d/2}\right)}{\varphi\left((1+|\xi|_{g_1(x)}^2)^{d/2}\right)}
\end{equation}
The same proof as that of Proposition \ref{sigmaforquad} shows that $\sigma_{g_1,g_2}\in S^0(U)$. Since $\sigma_{g_1,g_2}=\sigma_{g_2,g_1}^{-1}$ the proof is complete.

\end{proof}

\begin{rem}
It should be remarked that we expect that the main results Theorem \ref{CTT} and Corollary \ref{CTT_psipsido} in this paper hold when relaxing the condition for $\varphi$ to have smooth regular variation to the expression in Equation \eqref{liminitsrrho} only being bounded for finitely many $k$ below some dimensionally dependent cut-off. The pseudo-differential techniques can in this case not be used in our cavalier way, but controlling the relevant remainders ought to only require finitely many derivatives of $\varphi_0$ as in \cite{toolspde}.
\end{rem}

\begin{prop}
\label{closedunderder}
Suppose that $\varphi$ has smooth regular variation. If $a\in S^m_\varphi(U)$, then $\partial_x^\alpha\partial_\xi^\beta a\in S^{m-|\beta|}_\varphi(U)$.
\end{prop}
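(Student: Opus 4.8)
The plan is to reduce the statement to the definition of $S^m_\varphi(U)$ and the corresponding (already known) property for ordinary H\"ormander symbols. Recall $S^m_\varphi(U)=\varphi_0 S^m(U)+S^{-\infty}(U)$, so it suffices to treat the two summands separately. For the $S^{-\infty}(U)$ part there is nothing to prove: $S^{-\infty}(U)$ is closed under arbitrary derivatives, and $S^{-\infty}(U)\subseteq S^{m-|\beta|}_\varphi(U)$. Hence I may assume $a=\varphi_0 a_0$ with $a_0\in S^m(U)$, where $\varphi_0(\xi)=\varphi(\langle\xi\rangle^d)$.

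First I would apply the Leibniz rule in the $\xi$-variables only (the $x$-derivatives commute with multiplication by $\varphi_0$, which is $x$-independent):
\begin{equation*}
\partial_x^\alpha\partial_\xi^\beta(\varphi_0 a_0)=\sum_{\gamma\le \beta}\binom{\beta}{\gamma}\bigl(\partial_\xi^\gamma \varphi_0\bigr)\bigl(\partial_x^\alpha\partial_\xi^{\beta-\gamma}a_0\bigr).
\end{equation*}
Here $\partial_x^\alpha\partial_\xi^{\beta-\gamma}a_0\in S^{m-|\beta|+|\gamma|}(U)$ since $a_0\in S^m(U)$ and $S^m(U)$ is closed under differentiation with the expected order shift. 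The key input is then Remark~\ref{higherdsecond}, which states precisely that $\partial_\xi^\gamma\varphi_0\in S^{-|\gamma|}_\varphi$; writing $\partial_\xi^\gamma\varphi_0=\varphi_0 b_\gamma+s_\gamma$ with $b_\gamma\in S^{-|\gamma|}(U)$ and $s_\gamma\in S^{-\infty}(U)$, each term becomes $\varphi_0\, b_\gamma\,(\partial_x^\alpha\partial_\xi^{\beta-\gamma}a_0)+s_\gamma\,(\partial_x^\alpha\partial_\xi^{\beta-\gamma}a_0)$. Using that $S^0(U)$-symbols multiply $S^m(U)$ into $S^m(U)$ (more generally the product of symbols of orders $r$ and $r'$ lies in $S^{r+r'}(U)$), the first piece lies in $\varphi_0 S^{-|\gamma|+m-|\beta|+|\gamma|}(U)=\varphi_0 S^{m-|\beta|}(U)$ and the second piece lies in $S^{-\infty}(U)$. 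Summing over $\gamma\le\beta$ and over the two pieces gives $\partial_x^\alpha\partial_\xi^\beta a\in \varphi_0 S^{m-|\beta|}(U)+S^{-\infty}(U)=S^{m-|\beta|}_\varphi(U)$, as claimed.

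I do not expect any genuine obstacle here; the statement is a bookkeeping consequence of Leibniz's rule together with Remark~\ref{higherdsecond} and the standard symbol calculus recalled earlier in this section. The only mild subtlety is that one must invoke Remark~\ref{higherdsecond} (which itself uses Lemma~\ref{A14} and Equation~\eqref{derivativesofpsizero}) rather than naively differentiating $\varphi_0$, since $\partial_\xi\varphi_0$ is not literally $\varphi_0$ times a classical symbol on the nose but only modulo $S^{-\infty}$; absorbing that $S^{-\infty}$ error into the $S^{-\infty}(U)$ summand of $S^{m-|\beta|}_\varphi(U)$ is exactly why the definition of $S^m_\varphi(U)$ includes that summand. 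Thus the proof is essentially a one-line reduction followed by the Leibniz expansion above.
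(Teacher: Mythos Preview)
Your proof is correct and follows essentially the same route as the paper: reduce to $a=\varphi_0 a_0$, apply the Leibniz rule (using that $\varphi_0$ is $x$-independent), and invoke Remark~\ref{higherdsecond} to handle $\partial_\xi^\gamma\varphi_0$. Your added remark about the $S^{-\infty}$ correction is slightly overcautious---in fact Lemma~\ref{A14}(i) shows $\partial_t^k\varphi/\varphi$ is itself a symbol, so $\partial_\xi^\gamma\varphi_0$ is $\varphi_0$ times a genuine element of $S^{-|\gamma|}$---but this does not affect the validity of your argument.
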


\begin{proof}
Assume that $a=a_0\varphi_0$ for $a_0\in S^m(U)$. By the product rule, 
$$\partial_x^\alpha\partial_\xi^\beta a=\sum_{\alpha'\leq \alpha, \, \beta'\leq \beta} \partial_x^{\alpha'}\partial_\xi^{\beta'} a_0\partial_x^{\alpha-\alpha'}\partial_\xi^{\beta-\beta'} \varphi_0=\sum_{ \beta'\leq \beta} \partial_x^{\alpha}\partial_\xi^{\beta'} a_0\partial_\xi^{\beta-\beta'} \varphi_0.$$
The proposition follows using Remark \ref{higherdsecond}.
\end{proof}

\begin{prop}
\label{coordinatecha}
Let $\kappa:U\to U'$ be a diffeomorphism of two open sets and $\kappa^*:C^\infty_c(U')\to C^\infty_c(U)$ the associated pull back operator. Consider a pseudo-differential operator $T\in L^m_\varphi(U)$ with $T\sim Op(a)$ for a $\varphi$-symbol $a=a(x,\xi)\in S^m_\varphi(U)$. Then the operator $T_\kappa:=(\kappa^{-1})^*T\kappa^*:C^\infty_c(U')\to C^\infty(U')$ is a pseudo-differential operator of order $m$ on $U'$ and $T_\kappa\sim Op(a_\kappa)$ where 
\begin{equation}
\label{coordtransf}
a_\kappa(y,\eta)\sim \sum_\alpha \frac{1}{\alpha!}[\partial^\alpha_\xi a](\kappa^{-1}(y),(D\kappa)^T(\kappa^{-1}(y))\eta)D^\alpha_z\mathrm{e}^{i\langle\rho_y(z),\eta\rangle}|_{\kappa(z)=y},
\end{equation}
where $\rho_y(z):= \kappa(z)-y-D\kappa(\kappa^{-1}(y))(z-\kappa^{-1}(y))$. In particular, if $\varphi$ has smooth regular variation, then $a_\kappa\in S^m_\varphi(U')$ whenever $a\in S^m_\varphi(U)$ and $T_\kappa\in L^m_\varphi(U')$ whenever $T\in L^m_\varphi(U)$.
\end{prop}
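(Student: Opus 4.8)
The plan is to invoke the classical coordinate-change theorem for ordinary pseudo-differential operators (\cite[Theorem 18.1.17]{hormanderIII} or its variant in \cite{Taylorpsidobook}), which gives precisely the asymptotic formula \eqref{coordtransf} at the level of ordinary H\"ormander symbols, and then to check that this formula \emph{preserves the subclass} $S^m_\varphi$. Concretely: write $a=a_0\varphi_0+s$ with $a_0\in S^m(U)$ and $s\in S^{-\infty}(U)$ (the existence of such a decomposition is the definition of $S^m_\varphi(U)$, together with Lemma~\ref{A14} which tells us $\varphi_0\in S^{m_\varphi d}$ is an honest symbol). The smoothing part $s$ transforms to a smoothing operator on $U'$, so it contributes a term in $S^{-\infty}(U')\subseteq S^m_\varphi(U')$ and may be ignored. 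For the main term, the classical theorem already establishes that $T_\kappa\sim Op(a_\kappa)$ with $a_\kappa\in S^{m+m_\varphi d}(U')$ given by \eqref{coordtransf}; what remains is to show $a_\kappa\in S^m_\varphi(U')$, i.e. that it factors as (a symbol in $S^m(U')$) times $\varphi_0$ modulo $S^{-\infty}$.

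First I would analyze a single term of the asymptotic sum \eqref{coordtransf}, namely
$$
a_\kappa^{(\alpha)}(y,\eta):=\frac{1}{\alpha!}[\partial^\alpha_\xi a](\kappa^{-1}(y),(D\kappa)^T(\kappa^{-1}(y))\eta)\,D^\alpha_z\mathrm{e}^{i\langle\rho_y(z),\eta\rangle}\big|_{\kappa(z)=y}.
$$
By Proposition~\ref{closedunderder}, $\partial^\alpha_\xi a\in S^{m-|\alpha|}_\varphi(U)$, so write $\partial^\alpha_\xi a=b_\alpha\,\varphi_0$ with $b_\alpha\in S^{m-|\alpha|}(U)$. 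The key point is the behaviour of $\varphi_0$ under the linear substitution $\xi\mapsto A(y)\eta$ where $A(y):=(D\kappa)^T(\kappa^{-1}(y))$ is a smooth, invertible matrix-valued function on $U'$: we have
$$
\varphi_0(A(y)\eta)=\varphi\bigl((1+|A(y)\eta|^2)^{d/2}\bigr)=\varphi\bigl((1+|\eta|^2_{g_y})^{d/2}\bigr),
$$
where $g_y$ is the pullback quadratic form. This is exactly $\varphi_0(\eta)\cdot\sigma_{g}(y,\eta)^{-1}$ in the notation of Proposition~\ref{consofsigmaforquad}, and that proposition tells us the ratio $\varphi_0(A(y)\eta)/\varphi_0(\eta)$ is an elliptic element of $S^0(U')$ depending smoothly on $y$. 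Hence $\varphi_0(A(y)\eta)=\tilde\sigma(y,\eta)\varphi_0(\eta)$ with $\tilde\sigma\in S^0(U')$; and $b_\alpha(\kappa^{-1}(y),A(y)\eta)\in S^{m-|\alpha|}(U')$ by the ordinary symbol calculus (polynomial growth of $A$ and $A^{-1}$). The remaining factor $D^\alpha_z\mathrm{e}^{i\langle\rho_y(z),\eta\rangle}|_{\kappa(z)=y}$ is, by the standard estimates on the phase remainder $\rho_y$ (which vanishes to second order in $z-\kappa^{-1}(y)$), a polynomial in $\eta$ of degree $\leq|\alpha|/2$ with coefficients smooth in $y$, hence lies in $S^{\lceil|\alpha|/2\rceil}(U')$. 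Multiplying the three factors: $a_\kappa^{(\alpha)}\in S^{m-|\alpha|+\lceil|\alpha|/2\rceil}(U')\cdot\varphi_0=S^{m-\lceil|\alpha|/2\rceil}_\varphi(U')$, so these terms have orders tending to $-\infty$ as $|\alpha|\to\infty$.

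Then I would assemble the pieces. By Proposition~\ref{asymptcomplete}, the sequence $(a_\kappa^{(\alpha)})_\alpha\subseteq S^m_\varphi(U')$ with orders $\to-\infty$ has an asymptotic sum $\tilde a_\kappa\in S^m_\varphi(U')$ with $\tilde a_\kappa\sim\sum_\alpha a_\kappa^{(\alpha)}$ in the $\varphi$-symbol sense. On the other hand the classical theorem gives $a_\kappa\in S^{m+m_\varphi d}(U')$ with $a_\kappa\sim\sum_\alpha a_\kappa^{(\alpha)}$ in the ordinary sense. Since asymptotic expansions in $S(U')/S^{-\infty}(U')$ are unique, $a_\kappa-\tilde a_\kappa\in S^{-\infty}(U')\subseteq S^m_\varphi(U')$, whence $a_\kappa\in S^m_\varphi(U')$; consequently $T_\kappa\sim Op(a_\kappa)\in L^m_\varphi(U')$, and by Definition~\ref{defnofsmphi} this means $T_\kappa\in L^m_\varphi(U')$, completing the proof. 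The main obstacle is the second-paragraph bookkeeping: verifying that the linear change of variables in the $\xi$-slot maps $\varphi_0$ back into $\varphi_0\cdot S^0$ rather than outside the $\varphi$-class — this is precisely where smooth regular variation of $\varphi$ is essential and is handled by Proposition~\ref{sigmaforquad} and Proposition~\ref{consofsigmaforquad}; everything else is a routine transcription of the classical coordinate-change argument.
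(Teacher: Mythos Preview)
Your proposal is correct and follows essentially the same approach as the paper's proof, which is a terse two-sentence sketch pointing to \cite[Theorem 18.1.17]{hormanderIII}, Proposition~\ref{closedunderder}, and Proposition~\ref{asymptcomplete}. You have filled in the details the paper omits, in particular making explicit the role of Propositions~\ref{sigmaforquad} and~\ref{consofsigmaforquad} in handling the linear substitution $\xi\mapsto (D\kappa)^T\eta$ on $\varphi_0$---these propositions appear immediately before Proposition~\ref{coordinatecha} in the paper and are clearly intended for exactly this purpose, even though the proof text does not cite them.
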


The first statement of Proposition \ref{coordinatecha} and the formula in Equation \eqref{coordtransf} follows immediately from \cite[Theorem 18.1.17]{hormanderIII}. The statement about $L^m_\varphi(U)$ being coordinate invariantly defined follows from Equation \eqref{coordtransf}, Proposition \ref{closedunderder} and the asymptotic completeness of $S^m_\varphi(U)$ (see Proposition \ref{asymptcomplete}).

\subsection{$\varphi$-pseudo-differential operators on closed manifolds}

We now turn to closed manifolds.

\begin{dfn}
\label{lmpsi}
Let $\varphi$ have smooth regular variation, $m\in \mathbb{R}$ and $M$ a closed manifold. Define $L^m_\varphi(M)$ as the space of linear operators $T:C^\infty(M)\to C^\infty(M)$ such that for any $\chi,\chi'\in C^\infty(M)$ being compactly supported in an arbitrary coordinate chart $\kappa:U\to U'\subseteq \R^d$, there is an operator $A\in L^m_\varphi(\R^d)$ such that $\kappa^*\circ (\chi T\chi')\circ (\kappa^{-1})^*\sim A$.
\end{dfn}

For a closed manifold $M$, we can define the symbols $S^m(M)\subseteq C^\infty(T^*M)$ on $M$ of order $m$ to consist of those functions $a\in C^\infty(T^*M)$ such that in any coordinate chart $\kappa:U\to U'\subseteq \R^d$, $(\kappa^{-1})^*a\in S^m(U')$. Choose a metric $g$ on $M$ and define $\varphi_g\in C^\infty(T^*M)$ by
$$\varphi_g(x,\xi):=\varphi((1+|\xi|_g^2)^{d/2}).$$ 
If $\varphi$ has smooth regular variation then $\varphi_g\in S^0(M)$. We define $S^m_\varphi(M)\subseteq C^\infty(T^*M)$ as in the local case by 
\begin{equation}
\label{defnofsmphionm}
S^m_\varphi(M):=\varphi_g S^m(M)+S^{-\infty}(M).
\end{equation}

\begin{prop}
Let $\varphi$ be a function with smooth regular variation, $m\in \mathbb{R}$ and $M$ a closed manifold. The space $S^m_\varphi(M)$ is independent of choice of metric $g$. 
\end{prop}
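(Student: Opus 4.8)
The plan is to reduce the statement to the local ellipticity results already established in Propositions~\ref{sigmaforquad} and~\ref{consofsigmaforquad}. Fix two Riemannian metrics $g_1,g_2$ on $M$. Since $S^{-\infty}(M)$ is manifestly independent of the metric, and since the asserted equality is symmetric in $g_1,g_2$, it suffices to prove the single inclusion $\varphi_{g_1}S^m(M)\subseteq \varphi_{g_2}S^m(M)$.

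First I would introduce the smooth function $\sigma:=\varphi_{g_1}/\varphi_{g_2}\in C^\infty(T^*M)$, which is well defined and nowhere vanishing because $\varphi>0$, and show that $\sigma\in S^0(M)$. By the definition of $S^0(M)$ via coordinate charts this is a purely local assertion: for a chart $\kappa:U\to U'\subseteq\R^d$ in which $g_1,g_2$ have coordinate expressions $\tilde g_1,\tilde g_2$, the pull-back $(\kappa^{-1})^*\sigma$ is the function
$$(y,\eta)\mapsto \frac{\varphi\!\left((1+|\eta|^2_{\tilde g_1(y)})^{d/2}\right)}{\varphi\!\left((1+|\eta|^2_{\tilde g_2(y)})^{d/2}\right)},$$
which is exactly the symbol $\sigma_{\tilde g_2,\tilde g_1}$ appearing in Equation~\eqref{sigmag1g2}; by Proposition~\ref{consofsigmaforquad} (whose proof rests on Proposition~\ref{sigmaforquad}) it lies in $S^0(U')$. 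Hence $\sigma\in S^0(M)$. With this in hand the inclusion is immediate: pointwise multiplication maps $S^0(M)\times S^m(M)$ into $S^m(M)$ (the same estimate via the Leibniz rule as in the Euclidean case), so for any $a\in S^m(M)$ we have $\varphi_{g_1}a=\varphi_{g_2}\,(\sigma a)$ with $\sigma a\in S^m(M)$, giving $\varphi_{g_1}S^m(M)\subseteq \varphi_{g_2}S^m(M)$. Adding $S^{-\infty}(M)$ to both sides and exchanging the roles of $g_1$ and $g_2$ yields
$$\varphi_{g_1}S^m(M)+S^{-\infty}(M)=\varphi_{g_2}S^m(M)+S^{-\infty}(M),$$
i.e.\ the space $S^m_\varphi(M)$ defined in Equation~\eqref{defnofsmphionm} does not depend on $g$.

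There is essentially no obstacle beyond this reduction; the only genuine point requiring care is the verification that $\sigma$ is a bona fide order-zero symbol with estimates uniform as $|\eta|\to\infty$, and this is precisely the content of Proposition~\ref{sigmaforquad}, which uses the smooth regular variation of $\varphi$ to control all derivatives of the ratios $\varphi(t)/\varphi(bt)$ and then feeds these through the chain rule with $t=\langle\eta\rangle^d$ as in Equation~\eqref{derivativesofpsizero}. Patching the local estimates is unproblematic because $M$ is compact, so finitely many charts suffice.
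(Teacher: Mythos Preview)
Your proof is correct and takes essentially the same approach as the paper: both reduce to Proposition~\ref{consofsigmaforquad}, using that the ratio $\varphi_{g_1}/\varphi_{g_2}$ (locally $\sigma_{g_2,g_1}$) is an elliptic symbol of order zero. The only cosmetic difference is that the paper phrases the conclusion as a metric-free local characterization ($a\in S^m_\varphi(M)$ iff $(\kappa^{-1})^*a\in S^m_\varphi(U')$ in every chart), whereas you compare two metrics directly; the underlying argument is the same.
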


\begin{proof}
The proposition is an immediate corollary of Proposition \ref{consofsigmaforquad}. Indeed, Proposition \ref{consofsigmaforquad} implies that $a\in S^m_\varphi(M)$ if and only if for any coordinate chart $\kappa:U\to U'\subseteq \R^d$, $(\kappa^{-1})^*a\in S^m_\varphi(U')$. 

\end{proof}

\begin{dfn}
Let $\varphi$ be a function having smooth regular variation, $m\in \mathbb{R}$ and $M$ a closed manifold. Fix a covering $(U_j)_{j=1}^N$ of $M$, coordinate charts $\kappa_j:U_j\to \R^d$ and a partition of unity $(\chi_j)_{j=1}^N$ subordinate to  $(U_j)_{j=1}^N$ (i.e. $\sum\chi_j^2=1$ and $\chi_j\in C^\infty_c(U_j)$). We define $Op:S^m_\varphi(M)\to L^m_\varphi(M)$ by
$$Op(a)f:=\sum_j \chi_j\kappa_j^*Op(\kappa_j^*a)[(\kappa_j^{-1})^*(\chi_j f)],$$ 
for $a\in S^0_\varphi(M)$. Here $\kappa_j^*a$ is supported in $\R^d$, and $Op(\kappa_j^*a)$ is defined as in~\eqref{opdef}.
\end{dfn}

\begin{lem}
\label{philapuptolot}
If $\varphi$ has smooth regular variation, the mapping $Op$ is well defined and modulo lower order terms independent of choices. In particular, 
$$Op(\varphi_g)-\varphi((1-\Delta_g)^{d/2})\in L^{-1}_\varphi(M).$$
\end{lem}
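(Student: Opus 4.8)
The statement has two parts: that $Op$ is well defined modulo lower order terms independent of choices, and the concrete comparison $Op(\varphi_g)-\varphi((1-\Delta_g)^{d/2})\in L^{-1}_\varphi(M)$. For the first part, the plan is to reduce to the classical statement for ordinary pseudo-differential operators. By Proposition \ref{productdecomp} (or rather its manifold analogue obtained by patching), an operator built from a $\varphi$-symbol $a=a_0\varphi_g$ can be written, modulo $L^{-\infty}$, as $Op(a_0)Op(\varphi_g)$, so the well-definedness of $Op$ on $S^m_\varphi(M)$ follows from the well-definedness of the ordinary quantization $Op:S^m(M)\to L^m(M)/L^{-\infty}(M)$ together with the fact that $L^m_\varphi(M)$ is coordinate-invariantly defined (Definition \ref{lmpsi}, justified by Proposition \ref{coordinatecha}). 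The only genuinely new ingredient is that changing the partition of unity or coordinate charts changes $Op(a)$ by an element of $L^{m-1}_\varphi(M)$ rather than merely $L^{-\infty}$; this is because the partition of unity is quadratic ($\sum\chi_j^2=1$) and the error terms produced when commuting $\chi_j$ past $Op(\kappa_j^*a)$ involve one derivative in $\xi$, landing in $S^{m-1}_\varphi$ by Proposition \ref{closedunderder}. I would spell this out by writing $Op(a)=\sum_j\chi_j Op_j(a)\chi_j$ with $Op_j$ the local quantization in chart $j$, comparing two choices term by term, and invoking Proposition \ref{closedunderder} and the product formula \eqref{productform} to see the difference lies in $Op(S^{m-1}_\varphi(M))+L^{-\infty}(M)=L^{m-1}_\varphi(M)$ (using $m_\varphi\le 0$ or simply tracking the order bookkeeping $S^{-1}_\varphi\subseteq S^{m_\varphi d-1}$).

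\textbf{The comparison with $\varphi((1-\Delta_g)^{d/2})$.} For the second, more concrete part, the plan is to first establish that $(1-\Delta_g)^{d/2}$ is a classical elliptic pseudo-differential operator of order $d$ with principal symbol $|\xi|_g^d$ (standard, e.g. via Seeley's construction of complex powers), hence by the functional calculus for elliptic operators — or by a direct symbol computation — $\varphi((1-\Delta_g)^{d/2})$ is a pseudo-differential operator whose full symbol is asymptotically $\varphi_g(x,\xi)=\varphi((1+|\xi|_g^2)^{d/2})$ modulo $S^{-1}_\varphi(M)$. The key technical point is that each correction term in the symbol of $\varphi((1-\Delta_g)^{d/2})$ relative to the leading term $\varphi_g$ is of the form (symbol of order $\le -1$)$\times$(derivatives of $\varphi$ evaluated at $\langle\xi\rangle_g^d$), and Lemma \ref{A14}(i) together with Remark \ref{higherdsecond} guarantees that differentiating $\varphi_g$ once in $\xi$ gains a genuine factor of $\langle\xi\rangle^{-1}$ while keeping a factor comparable to $\varphi_g$; thus all corrections lie in $S^{-1}_\varphi(M)$. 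On the other side, by construction $Op(\varphi_g)$ has full symbol $\varphi_g$ modulo $S^{-1}_\varphi(M)$ as well — here one uses that patching $\varphi_g$ with the quadratic partition of unity reproduces $\varphi_g$ modulo one order down, exactly as in the first part. Subtracting, the two operators have the same symbol modulo $S^{-1}_\varphi(M)$, so their difference is in $Op(S^{-1}_\varphi(M))+L^{-\infty}(M)=L^{-1}_\varphi(M)$.

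\textbf{Main obstacle.} The crux is verifying that $\varphi$ applied to an elliptic classical $\Psi$DO of order $d$ again produces a $\varphi$-pseudo-differential operator with leading symbol $\varphi_g$ — i.e. that the functional calculus interacts correctly with the $\varphi$-symbol classes. The cleanest route I would take is the standard resolvent/parametrix argument: write $\varphi((1-\Delta_g)^{d/2})$ using an integral representation (or, since $\varphi$ is only smoothly regularly varying and not holomorphic, use the Helffer–Sjöstrand formula $\varphi(A)=\tfrac{1}{\pi}\int \bar\partial\tilde\varphi(z)(z-A)^{-1}\,dL(z)$ with $A=(1-\Delta_g)^{d/2}$), and then expand $(z-A)^{-1}$ in a symbol parametrix. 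The resulting symbol of $\varphi(A)$ has an asymptotic expansion whose leading term is $\varphi(a_d(x,\xi))$ with $a_d=\langle\xi\rangle_g^d$ the principal symbol, and whose lower terms carry at least one $\xi$-derivative of $\varphi$ — precisely the structure controlled by Lemma \ref{A14} and Remark \ref{higherdsecond}. The polynomial growth bound $|\varphi(t)|\le C\langle t\rangle^{m_\varphi}$ from Lemma \ref{A14}(ii) ensures the symbol estimates are uniform and that the integrals converge. This expansion is the place where one must be careful, but it is a routine (if lengthy) adaptation of the classical calculus of functions of elliptic operators, which I would present in summary form, referencing \cite{hormanderIII} and the symbol estimates already established, rather than grinding through every remainder estimate.
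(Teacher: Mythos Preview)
Your proposal is correct and follows essentially the same approach as the paper. The paper's proof is terser: for the well-definedness part it argues exactly as you do (locality plus the product formula \eqref{productform} plus coordinate covariance from Proposition \ref{coordinatecha}), and for the comparison $Op(\varphi_g)-\varphi((1-\Delta_g)^{d/2})\in L^{-1}_\varphi(M)$ it simply invokes \cite[Theorem 1.3, Chapter XII, Section 1]{Taylorpsidobook} to assert that $\varphi((1-\Delta_g)^{d/2})$ is pseudo-differential with full symbol $q$ satisfying $q-\varphi_g\in S^{-1}_\varphi(M)$, rather than spelling out a Helffer--Sj\"ostrand or resolvent-parametrix expansion as you propose. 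Your outline of that expansion, together with the observation that Lemma \ref{A14} and Remark \ref{higherdsecond} control the remainder terms, is exactly what is needed to unpack the citation and is a welcome addition of detail; the parenthetical remark about $m_\varphi\le 0$ is unnecessary, however, since $L^{m-1}_\varphi(M)=Op(S^{m-1}_\varphi(M))+L^{-\infty}(M)$ holds by definition regardless of the sign of $m_\varphi$.
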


We note that it is not necessary for $M$ to be closed for this lemma to hold, although one would need to use a locally finite countable covering of coordinate charts rather than a finite one.

\begin{proof}
By Proposition \ref{coordinatecha}, each summand $\chi_j\kappa_j^*\circ Op(\kappa_j^*a)(\kappa_j^{-1})^*\circ \chi_j$ in $Op(a)$ belongs to $L^m_\varphi(M)$. A partition of unity argument and the product formula \eqref{prodformpsi} shows that the property of $Op$ being independent of choices up to lower order terms is local. In local coordinates, it is clear that $Op$ is independent of choices up to lower order terms from the coordinate covariance up to lower order terms of the symbol, see Proposition \ref{coordinatecha}. 

It remains to prove that $Op(\varphi_g)-\varphi((1-\Delta_g)^{d/2})\in L^{-1}_\varphi(M)$. It follows from \cite[Theorem 1.3, Chapter XII, Section 1]{Taylorpsidobook} that $\varphi((1-\Delta_g)^{d/2})\in L^0(M)$. In fact, the argument in \cite{Taylorpsidobook} shows that the symbol $q=q(x,\xi)$ of $\varphi((1-\Delta_g)^{d/2})$ coincides with $\varphi_g$ up to a term from $S^{-1}_\varphi(M)$. This shows that $q-\varphi_g\in S^{-1}_\varphi(M)$ and as such the argument in the paragraph above shows that  
$$Op(\varphi_g)-\varphi((1-\Delta_g)^{d/2})=Op(\varphi_g)-Op(q)+Op(q)-\varphi((1-\Delta_g)^{d/2})\in L^{-1}_\varphi(M)+L^{-\infty}(M).$$

\end{proof}

\begin{thm}
\label{factpsps}
Let $\varphi$, $\varphi_1$ and $\varphi_2$ be functions of smooth regular variation. The filtration of spaces $(L^m_\varphi(M))_{m\in \mathbb{R}}$ satisfies
\begin{enumerate}
\item[a)] For any $m\in \R$, $L^m_\varphi(M)\subseteq L^{m+m_\varphi d}(M)$.
\item[b)] The composition of operators on $C^\infty(M)$ defines a product $L^m_{\varphi_1}(M)\times L^{m'}_{\varphi_2}(M)\to L^{m+m'}_{\varphi_1\varphi_2}(M)$. 
\end{enumerate}
Furthermore, if $g$ is a metric on $M$ and $T\in L^{m+m_\varphi d}(M)$, the following statements are equivalent:
\begin{enumerate}
\item[(i)] It holds that $T\in L^m_\varphi(M)$.
\item[(ii)] There exists a $T_0\in L^m(M)$ such that 
$$T-T_0\varphi((1-\Delta_g)^{d/2})\in L^{m-1}_\varphi(M).$$ 
\item[(iii)] For any coordinate chart $\kappa:U\to U'\subseteq \R^d$, and $\chi, \chi'\in C^\infty_c(U)$, there is an operator $T_{00}\in L^m(U')$ such that 
$$\kappa^*\circ (\chi T\chi')\circ (\kappa^{-1})^*-T_{00}Op(\varphi_0)\in L^{m-1}_\varphi(U').$$
\end{enumerate}
In fact, $L^m_\varphi(M)=L^m(M)\varphi((1-\Delta_g)^{-d/2})=\varphi((1-\Delta_g)^{-d/2})L^m(M)$.
\end{thm}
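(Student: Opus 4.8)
The strategy is to first establish parts (a) and (b) as consequences of the local theory developed in Propositions \ref{products}, \ref{coordinatecha} and \ref{productdecomp}, then prove the equivalence (i)$\Leftrightarrow$(ii)$\Leftrightarrow$(iii) by reduction to the local statement in Proposition \ref{productdecomp}, using Lemma \ref{philapuptolot} as the bridge between the local quantization $Op(\varphi_0)$ and the global operator $\varphi((1-\Delta_g)^{d/2})$. The final displayed identity then follows by applying (ii) with $m=0$ in both directions.

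First I would dispatch (a): if $T\in L^m_\varphi(M)$ then in each coordinate chart $\chi T\chi'$ has a symbol in $S^m_\varphi(U')\subseteq S^{m+m_\varphi d}(U')$ by Lemma \ref{A14}, so $\chi T\chi'\in L^{m+m_\varphi d}(U')$, and a partition of unity argument assembles these into $T\in L^{m+m_\varphi d}(M)$. For (b), working chart-by-chart and inserting a partition of unity, the composition $\chi(T_1 T_2)\chi'$ is (up to smoothing) a finite sum of terms $\chi\kappa^*Op(a_1)(\kappa^{-1})^*\psi\kappa^*Op(a_2)(\kappa^{-1})^*\chi'$ with $a_i\in S^{m_i}_{\varphi_i}(U')$; writing $a_i=a_{i,0}\varphi_{i,0}$ and using that $Op(a_1)Op(\psi\circ\kappa^{-1})Op(a_{2,0}\varphi_{2,0})$ has symbol of the form $b\cdot\varphi_{2,0}$ with $b\in S^{m_1+m_2}_{\varphi_1}(U')$ (by the product formula \eqref{productform} and the fact that $\varphi_{2,0}$ only depends on $\xi$, combined with Proposition \ref{closedunderder} applied to $\varphi_{1,0}$-factors), one sees the composite symbol lies in $\varphi_{1,0}\varphi_{2,0}S^{m_1+m_2}(U')+S^{-\infty}$, which is $S^{m_1+m_2}_{\varphi_1\varphi_2}(U')$ after noting $\varphi_{1,0}\varphi_{2,0}=(\varphi_1\varphi_2)_0$. (Strictly one must be slightly careful that $\varphi_1\varphi_2$ is again smoothly regularly varying — it is, as a product of such, with index the sum — so that $L^{m}_{\varphi_1\varphi_2}(M)$ is defined.)

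For the equivalences, (i)$\Leftrightarrow$(iii) is essentially Definition \ref{lmpsi} combined with Proposition \ref{productdecomp}: by that proposition, $\kappa^*(\chi T\chi')(\kappa^{-1})^*\in L^m_\varphi(U')$ if and only if it equals $T_{00}Op(\varphi_0)+S$ for some $T_{00}\in L^m(U')$ and smoothing $S$, and absorbing $S$ and sharpening to the $L^{m-1}_\varphi$ remainder is routine using asymptotic completeness (Proposition \ref{asymptcomplete}) — one peels off the leading term of the symbol. For (i)$\Leftrightarrow$(ii): if $T\in L^m_\varphi(M)$, a partition of unity together with (iii) and Lemma \ref{philapuptolot} (which gives $Op(\varphi_g)-\varphi((1-\Delta_g)^{d/2})\in L^{-1}_\varphi(M)$, and locally $Op(\varphi_0)$ agrees with $\kappa^*Op(\varphi_g)(\kappa^{-1})^*$ up to $L^{-1}_\varphi$ by Proposition \ref{coordinatecha} applied to the symbol $\sigma_g$ from Proposition \ref{sigmaforquad}) lets one replace each local $Op(\varphi_0)$ by $\varphi((1-\Delta_g)^{d/2})$ at the cost of an $L^{m-1}_\varphi(M)$ error, yielding $T_0\in L^m(M)$ with $T-T_0\varphi((1-\Delta_g)^{d/2})\in L^{m-1}_\varphi(M)$. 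Conversely, (ii) immediately gives (i) since $\varphi((1-\Delta_g)^{d/2})\in L^0_\varphi(M)$ (again Lemma \ref{philapuptolot}), $L^m(M)L^0_\varphi(M)\subseteq L^m_\varphi(M)$ by part (b), and $L^{m-1}_\varphi(M)\subseteq L^m_\varphi(M)$.

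Finally, applying (ii) with $m=0$ shows $L^0_\varphi(M)\subseteq L^0(M)\varphi((1-\Delta_g)^{d/2})+L^{-1}_\varphi(M)$; iterating the remainder (using asymptotic completeness to sum a Neumann-type series of symbols, exactly as in the construction of parametrices) yields $L^0_\varphi(M)=L^0(M)\varphi((1-\Delta_g)^{d/2})$, and the version with $\varphi((1-\Delta_g)^{d/2})$ on the left follows by taking adjoints (or symmetrically from condition (iii) with the $\varphi_0$ on the left), using that $\varphi((1-\Delta_g)^{d/2})$ is elliptic of its order so that $\varphi((1-\Delta_g)^{-d/2})$ is a genuine pseudo-differential parametrix. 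The main obstacle I anticipate is bookkeeping in the iteration/Neumann-series step for the final identity: one must check that the successive remainders in $L^{-k}_\varphi(M)$ have symbols that can be asymptotically summed within $S^0_\varphi(M)$, which is exactly what Proposition \ref{asymptcomplete} is designed to handle, but the commutation of $\varphi((1-\Delta_g)^{d/2})$ past operators in $L^0(M)$ (needed to rewrite $L^{-1}_\varphi = L^{-1}(M)\varphi((1-\Delta_g)^{d/2})$ and fold it back in) requires invoking part (b) repeatedly and keeping the orders straight.
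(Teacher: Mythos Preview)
Your proposal is correct and follows essentially the same approach as the paper: part (a) from the symbol inclusion $S^m_\varphi\subseteq S^{m+m_\varphi d}$, part (b) from the product formula together with closure of smooth regular variation under products, the equivalences (i)--(iii) via Proposition~\ref{productdecomp} locally and Lemma~\ref{philapuptolot} to pass between $Op(\varphi_0)$ and $\varphi((1-\Delta_g)^{d/2})$, and the final identity by iterating (ii) with asymptotic completeness. Your account is in fact more detailed than the paper's own (rather terse) proof; the only cosmetic remark is that the displayed identity in the theorem should read $\varphi((1-\Delta_g)^{d/2})$ rather than $\varphi((1-\Delta_g)^{-d/2})$, which you already implicitly corrected in your argument.
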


\begin{proof}
Part (a) follows directly from the fact that $S_\varphi^m(M)\subseteq S^m(M)$. Part b) is seen directly from the product formula \eqref{prodformpsi} and from the fact that the function $\varphi_1\varphi_2$ have smooth regular variation.

Note that (ii), is by Lemma \ref{philapuptolot} and (b) equivalent to there existing $T_0$ with $T-T_0Op(\varphi_g)\in L^{m-1}_\varphi(M)$. We further note that (iii) is by Proposition \ref{consofsigmaforquad} equivalent to there for any metric $g'$ and cutoffs $\chi$, $\chi'$ in a coordinate chart existing a $T_{00}$ such that $\kappa^*\circ (\chi T\chi')\circ (\kappa^{-1})^*-T_{00}Op(\varphi_{g'})\in L^{m-1}_\varphi(U')$. As such, the equivalence of the statements (i)--(iii) follows from that $Op$ is local, canonically defined and multiplicative\footnote{Cf. the product formula \eqref{productform}.} up to lower order terms combined with the definition \eqref{defnofsmphionm} (see also Definition \ref{defnofsmphi}). The final statement of the theorem follows from asymptotic completeness of $L^m_\varphi$ and 2).
\end{proof}

\begin{cor}
\label{corofphidifflem}
Let $(M,g)$ be a $d$-dimensional complete Riemannian manifold and $\varphi$ a decreasing function with smooth regular variation. Then $\chi\varphi((1-\Delta_g)^{d/2})\in \Lw(L_2(M))$ for any $\chi\in C^\infty_c(M)$.
\end{cor}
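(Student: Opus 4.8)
The plan is to reduce the statement to the model Euclidean computation, Lemma \ref{psiideallem}, by a localization argument, using the pseudo-differential calculus developed in this section. First I would fix $\chi\in C^\infty_c(M)$ and choose a finite collection of coordinate charts $\kappa_j:U_j\to U_j'\subseteq \R^d$, $j=1,\dots,N$, covering the support of $\chi$, together with functions $\psi_j\in C^\infty_c(U_j)$ such that $\sum_j\psi_j=1$ on a neighbourhood of $\mathrm{supp}(\chi)$; then $\chi\varphi((1-\Delta_g)^{d/2})=\sum_j(\chi\psi_j)\varphi((1-\Delta_g)^{d/2})$, and since $\Lw$ is an ideal it suffices to treat each summand, i.e.\ to prove $\chi_0\varphi((1-\Delta_g)^{d/2})\in\Lw(L_2(M))$ for $\chi_0\in C^\infty_c(U_j)$ supported in a single chart. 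Pick a further cutoff $\chi_1\in C^\infty_c(U_j)$ with $\chi_1\chi_0=\chi_0$. By Theorem \ref{factpsps}, $\varphi((1-\Delta_g)^{d/2})\in L^0_\varphi(M)$ (it satisfies (ii) with $T_0=1$), so $\chi_0\varphi((1-\Delta_g)^{d/2})\chi_1\in\chi_0 L^0_\varphi(M)\chi_1$. By Definition \ref{lmpsi} there is $A\in L^0_\varphi(U_j')$ with $\kappa_j^*\circ(\chi_0\varphi((1-\Delta_g)^{d/2})\chi_1)\circ(\kappa_j^{-1})^*\sim A$; moreover both $\chi_0$ and $\chi_1$ are compactly supported in the chart, so $A$ may be taken compactly based in $U_j'\times U_j'$ and the difference is a compactly supported smoothing operator, hence trace-class and a fortiori in $\Lw$.

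Next I would invoke Proposition \ref{psiidealprop}: since $\varphi$ is decreasing with smooth regular variation, $\chi' L^{0}_\varphi(U_j')\chi''\subseteq\mathcal{L}_\varphi(L_2(U_j'))$ for any $\chi',\chi''\in C^\infty_c(U_j')$. Applying this to the compactly based operator $A$ (writing $A=\chi' A\chi''$ for suitable bump functions equal to $1$ on the relevant supports) gives $A\in\mathcal{L}_\varphi(L_2(U_j'))$. Transporting back through the unitary-up-to-bounded change of coordinates (pullback by a diffeomorphism on a relatively compact set is bounded with bounded inverse on the relevant $L_2$ spaces), and adding back the smoothing remainder, yields $\chi_0\varphi((1-\Delta_g)^{d/2})\chi_1\in\Lw(L_2(M))$; then $\chi_0\varphi((1-\Delta_g)^{d/2})=\chi_0\varphi((1-\Delta_g)^{d/2})\chi_1+\chi_0\varphi((1-\Delta_g)^{d/2})(1-\chi_1)$, and the second term has a smooth compactly supported kernel (the Schwartz kernel of $\varphi((1-\Delta_g)^{d/2})$ is smooth off the diagonal and $\mathrm{supp}(\chi_0)\cap\mathrm{supp}(1-\chi_1)$ is away from the diagonal), hence lies in $\Lw$. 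Summing over $j$ finishes the proof.

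The one genuine point of care — and the reason this is phrased as a corollary rather than proved outright earlier — is that Proposition \ref{psiidealprop} itself rests on Lemma \ref{psiideallem}, which is the actual Euclidean input: one must know that $\tilde\chi\, Op(\varphi_0)\in\Lw(L_2(\R^d))$ for $\tilde\chi\in C^\infty_c(\R^d)$, where $\varphi_0(\xi)=\varphi(\langle\xi\rangle^d)$. Thus the ``hard part'' is not in the corollary's localization step, which is routine given the calculus, but in verifying Lemma \ref{psiideallem}: there one estimates the singular values of $\tilde\chi\,\varphi((1-\Delta)^{d/2})$ directly, using that $\varphi\in SR_\rho$ so that Lemma \ref{A14} controls all derivatives of $\varphi_0$, and that $\tilde\chi$ decays; combined with the mapping properties of the Fourier transform (a Cwikel-type estimate / the $q$-convexification bounds of Proposition \ref{schbelong} applied after a suitable power), this forces $\mu(n,\tilde\chi\,\varphi((1-\Delta)^{d/2}))=O(\varphi(n))$. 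Once Lemma \ref{psiideallem} is in hand, the corollary follows exactly as above by functoriality of the $\varphi$-pseudo-differential calculus and the ideal property of $\Lw$.
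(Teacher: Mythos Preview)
Your proposal is circular in exactly the way the paper warns against. You invoke Proposition \ref{psiidealprop} to prove Corollary \ref{corofphidifflem}, but Proposition \ref{psiidealprop} rests on Lemma \ref{psiideallem}, and the paper's logical order is the reverse: Corollary \ref{corofphidifflem} is proved \emph{independently}, then specialized to $M=W$ to yield Lemma \ref{psiideallem}, which only then completes Proposition \ref{psiidealprop}. You do flag this dependency, but your sketch of an independent proof of Lemma \ref{psiideallem} (``Cwikel-type estimate / the $q$-convexification bounds of Proposition \ref{schbelong}'') is not a proof: Proposition \ref{schbelong} goes the wrong way ($\Lw\subseteq\mathcal{L}_p$, not into $\Lw$), and Cwikel estimates do not in general produce $\Lw$-membership for the regime you need. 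So as written the argument has no base case.

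The paper's route is genuinely different and avoids the Euclidean lemma entirely. It squares the operator, considering $\chi\varphi((1-\Delta_g)^{d/2})^2\chi$, which is compactly supported on \emph{both} sides; then isometrically embeds a neighbourhood of $\mathrm{supp}(\chi)$ into a \emph{closed} Riemannian manifold $(M',g')$. Via Theorem \ref{factpsps} one writes the operator as $T_N\varphi((1-\Delta_{g'})^{d/2})^2$ modulo $L^{-N}_{\varphi^2}$ with $T_N\in L^0(M')$, and now the singular values of $\varphi((1-\Delta_{g'})^{d/2})^2$ are read off directly from the Weyl law on the closed manifold $M'$: $\mu(n,\cdot)\sim c\varphi(n)^2$. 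This gives $\mathcal{L}_{\varphi^2}$-membership, hence $\chi\varphi((1-\Delta_g)^{d/2})\in\Lw$. No appeal to Lemma \ref{psiideallem}, no Cwikel.

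A secondary gap: on a non-compact complete $M$, your claim that $\chi_0\varphi((1-\Delta_g)^{d/2})(1-\chi_1)$ has a ``smooth compactly supported kernel'' is false --- the kernel is supported in $\mathrm{supp}(\chi_0)\times M$, compact only in the first variable. Smoothness off the diagonal alone does not place this piece in $\Lw$ without a decay estimate at infinity. The paper's squaring trick ($\chi$ on both sides) is precisely what sidesteps this issue.
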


This proves Lemma \ref{psiideallem} and finalizes the proof of Proposition \ref{psiidealprop}.

\begin{proof}
We fix $\chi$ and assume it is positive. It is clear that $\chi\varphi((1-\Delta_g)^{d/2})\in \Lw$ if and only if $\chi\varphi((1-\Delta_g)^{d/2})^2\chi\in \mathcal{L}_{\varphi^2}$. We note that $\varphi^2$ has smooth regular variation whenever $\varphi$ does. We pick a closed Riemannian manifold $(M',g')$ such that for a smooth domain $U\subseteq M'$ there is a smooth isometric mapping $\kappa:U\to M$ which is a diffeomorphism onto its image and assumed to contain the support of $\chi$.

By Lemma \ref{philapuptolot}, we have that $\chi\varphi((1-\Delta_g)^{d/2})^2\chi$ is a compactly supported element of $L^0_{\varphi^2}(M)$. By using Theorem \ref{factpsps}, an induction step shows that for any $N$ there is a $T_N\in 1+L^{-1}_{\varphi^2}(M')$ compactly supported in $U$ and $\chi'\in C^\infty_c(U)$ such that 
$$\chi\varphi((1-\Delta_g)^{d/2})^2\chi-(\kappa^{-1})^*\circ (\chi'T_N\varphi((1-\Delta_{g'})^{d/2}))\circ \kappa^* \circ [\chi'\circ \kappa^{-1}]\in L^{-N}_{\varphi^2}(M).$$
Since all operators have compact support, the Weyl law on the compact manifold $M'$ guarantees that 
$$\mu(n,\chi\varphi((1-\Delta_g)^{d/2})^2\chi)= c_{d,\chi} \varphi(n)^2+O(n^{-1/d}\varphi(n)^2),$$ 
for a suitable constant $c_{d,\chi}$. It follows that $\chi\varphi((1-\Delta_g)^{d/2})^2\chi\in \mathcal{L}_{\varphi^2}$ and $\chi\varphi((1-\Delta_g)^{d/2})\in \Lw$.

\end{proof}

\begin{cor}
If $\varphi$ is a decreasing function with smooth regular variation, the representation $L^0(M)\to \mathbb{B}(L_2(M))$ restricts to a representation $L^0_\varphi(M)\to \mathcal{L}_\varphi(L_2(M))$.
\end{cor}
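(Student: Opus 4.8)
The plan is to reduce the statement to the key structural fact established in Theorem \ref{factpsps}, namely the factorization $L^0_\varphi(M)=L^0(M)\varphi((1-\Delta_g)^{-d/2})$, and then apply the compactness result of Corollary \ref{corofphidifflem}. First I would fix a metric $g$ on the closed manifold $M$ and recall that by Theorem \ref{factpsps} any $T\in L^0_\varphi(M)$ can be written as $T=T_0\varphi((1-\Delta_g)^{d/2})$ for some $T_0\in L^0(M)$, with the product interpreted as composition of operators on $C^\infty(M)$ (extended to $L_2(M)$ by the boundedness of order-zero pseudo-differential operators on closed manifolds). The claim then amounts to showing $T_0\varphi((1-\Delta_g)^{d/2})\in \mathcal{L}_\varphi(L_2(M))$ given that $T_0$ is bounded on $L_2(M)$.

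Next I would invoke Corollary \ref{corofphidifflem}: since $M$ is closed, taking $\chi\equiv 1$ (which is in $C^\infty_c(M)$ by compactness) gives $\varphi((1-\Delta_g)^{d/2})\in \Lw(L_2(M))$. Then, because $\Lw$ is a two-sided ideal in $\mathbb{B}(L_2(M))$ (as guaranteed by the standing assumption \eqref{quasi} on $\varphi$), left multiplication by the bounded operator $T_0$ keeps us inside $\Lw$, so $T=T_0\varphi((1-\Delta_g)^{d/2})\in \Lw(L_2(M))$. This is essentially all that is needed; the only care required is that the representation $L^0(M)\to\mathbb{B}(L_2(M))$ is genuinely by bounded operators, which is standard (Calderón--Vaillancourt / \cite[Theorem 18.1.11]{hormanderIII} combined with a partition of unity on the closed manifold), and that the factorization from Theorem \ref{factpsps} is compatible with this representation up to smoothing operators, which themselves lie in every Schatten class and hence in $\Lw$.

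I do not anticipate a serious obstacle here, as the corollary is a direct assembly of Theorem \ref{factpsps} and Corollary \ref{corofphidifflem}; the only mild subtlety is bookkeeping the smoothing remainder in $L^0_\varphi(M)=Op(S^0_\varphi(M))+L^{-\infty}(M)$. I would handle this by noting that $L^{-\infty}(M)=C^\infty(M\times M)$ consists of trace-class (indeed, by Weyl asymptotics, rapidly decaying) operators on the closed manifold $M$, so $L^{-\infty}(M)\subseteq \mathcal{L}_\varphi(L_2(M))$ trivially, and then the $Op(S^0_\varphi(M))$ part is covered by the factorization argument above. Thus the proof is: write $T\in L^0_\varphi(M)$ as $T_0\varphi((1-\Delta_g)^{d/2})+S$ with $T_0\in L^0(M)$ bounded and $S\in L^{-\infty}(M)$; observe $\varphi((1-\Delta_g)^{d/2})\in\Lw(L_2(M))$ by Corollary \ref{corofphidifflem} with $\chi=1$; conclude $T_0\varphi((1-\Delta_g)^{d/2})\in\Lw(L_2(M))$ by the ideal property; and add $S\in\Lw(L_2(M))$. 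This establishes that $L^0(M)\to\mathbb{B}(L_2(M))$ restricts to $L^0_\varphi(M)\to\mathcal{L}_\varphi(L_2(M))$.
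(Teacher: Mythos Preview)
Your proposal is correct and follows essentially the same route as the paper: factorize via Theorem \ref{factpsps} and then use that $\varphi((1-\Delta_g)^{d/2})\in\Lw(L_2(M))$. The only cosmetic difference is that you invoke the exact factorization $L^0_\varphi(M)=L^0(M)\varphi((1-\Delta_g)^{d/2})$ together with Corollary \ref{corofphidifflem} (with $\chi\equiv 1$) and the ideal property, whereas the paper uses the approximate factorization $T=T_N\varphi((1-\Delta_g)^{d/2})+L^{-N}_\varphi(M)$ and appeals to the Weyl law directly for the singular value estimate $\mu(n,T)\leq \|T_N\|\varphi(n)+o(\varphi(n))$; your packaging is arguably cleaner.
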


\begin{proof}
Pick a metric $g$ on $M$. Using induction, we can by Theorem \ref{factpsps} write any $T\in L^0_\varphi(M)$ in the form $T=T_N\varphi((1-\Delta_g)^{d/2})+L^{-N}_\varphi(M)$ for a $T_N\in L^0(M)$. Since $M$ is closed, $L^0(M)$ acts as bounded operators and the Weyl law for $\Delta_g$ guarantees that 
$$\mu(n,T)\leq \|T_N\|_{\mathbb{B}(L_2(M))}\varphi(n)+o(\varphi(n))=O(\varphi(n)).$$

\end{proof}

\section{Localizing $\varphi$-modulated operators on manifolds}
\label{sec:localizing}

In this section we will use the theory of the previous section to study more general operators on $L_2(M)$ for a closed manifold $M$. We are interested in computing Dixmier traces, and can therefore to a large extent work modulo the kernel of all traces. A key tool is to localize to operators that propagate supports small distances. Along the way we prove Lemma \ref{incluofhs} which allowed us to localize the property of being weakly $\varphi$-Laplacian modulated in an inner product space. To extend the computations for $\varphi$-Laplacian modulated operators from Section \ref{cttsection} to a general manifold, we localize our operators to coordinate neighbourhoods.

\begin{dfn}
Let $X$ be a proper metric space and $\mathcal{H}$ a Hilbert space with an action of $C_0(X)$. An operator $G\in \mathbb{B}(\mathcal{H})$ is said to have \emph{propagation speed} $\epsilon$, if $\chi G\chi'=0$ whenever $\chi,\chi'\in C_c(X)$ satisfy $\mathrm{d}(\mathrm{supp}(\chi),\mathrm{supp}(\chi'))> \epsilon$.

For $\epsilon>0$, the subspace $\mathcal{L}_{2,\epsilon}(\mathcal{H})\subseteq \mathcal{L}_{2}(\mathcal{H})$ is defined as the Hilbert-Schmidt operators of propagation speed $\epsilon$.
\end{dfn}

\begin{rem}
We note that $\mathcal{L}_{2,\epsilon}(\mathcal{H})$ is not closed under multiplication, but $\mathcal{L}_{2,\epsilon}(\mathcal{H})\mathcal{L}_{2,\epsilon}(\mathcal{H})\subseteq \mathcal{L}_{2,2\epsilon}(\mathcal{H})$.
\end{rem}

The situation of interest in this paper is $L_2(M)$ for a closed manifold $M$. 

\begin{dfn}
\label{coveringdata}
For a closed manifold $M$, \emph{covering data} is a collection
$$\mathcal{U}:=(U_j,\kappa_j,\chi_{j})_{j=1}^N.$$
where $(U_j)_{j=1}^N$ is a cover of $M$ with coordinate charts $\kappa_j:U_j\to U_j'\subseteq \R^d$ and $(\chi_j)_{j=1}^N$ a subordinate smooth subordinate partition of unity. 

If $M$ is Riemannian, and $\mathrm{diam}(U_j)<\epsilon/2$ we say that $\mathcal{U}$ is a set of $\epsilon$-covering data for $M$.
\end{dfn}

It is easily seen that we can find $\epsilon$-covering data for $M$ for any $\epsilon$ strictly smaller than half the injectivity radius of $M$. We tacitly assume that $\epsilon$ is small enough for $\epsilon$-covering data for $M$ to exist.

Assuming that $\kappa_j$ extends smoothly to a neighborhood of $\overline{U}_j$, we obtain a bounded invertible operator $\kappa_j^*:L_2(U_j')\to L_2(U_j)$. Let $Z_j:L_2(U_j')\to L_2(U_j)$ denote the unitary associated with $\kappa_j^*$ through polar decomposition, in fact $Z_j^*\kappa_j^*$ and $\kappa_j^*Z_j^*$ are multiplication operators. Moreover, $Z_j(af)=\kappa_j^*(a)Z_j(f)$ for $f\in L_2(U_j')$ and $a\in C_c(U_j')$. We define the following operators
\begin{align}
\nonumber
u_{\mathcal{U},0}:L_2(M)&\to \bigoplus_{j=1}^N L_2(U_j), \quad f\mapsto (\chi_j f)_{j=1}^N,\\
\nonumber
Z:=\oplus_{j=1}^NZ_j:\bigoplus_{j=1}^N L_2(U_j')&\to \bigoplus_{j=1}^N L_2(U_j), \quad\mbox{and}\\
\label{uudef}
u_{\mathcal{U}}:=Z^*u_{\mathcal{U},0}:L_2(M)&\to \bigoplus_{j=1}^N L_2(U_j')\subseteq \bigoplus_{j=1}^N L_2(\R^d).
\end{align}
The operators $u_{\mathcal{U},0}$ and $u_{\mathcal{U}}$ are isometries and $Z$ is unitary. We note that for an operator $G\in \mathbbm{B}(L_2(M))$, 
$$u_{\mathcal{U},0}Gu_{\mathcal{U},0}^*=(G_{jk})_{j,k=1}^N,$$
where $G_{jk}:=\chi_j G\chi_k:L_2(U_k)\to L_2(U_j)$. As such 
$$u_{\mathcal{U},0}^*(G_{jk})_{j,k=1}^N u_{\mathcal{U},0}=\sum_{j,k} \chi_jG_{jk}\chi_k=\sum_{j,k} \chi_j^2G\chi_k^2=G.$$

\begin{prop}
\label{localizprop}
Let $\varphi\in R_{-1}$ be decreasing and $\mathcal{U}$ be $\epsilon$-covering data for $M$ as in Definition \ref{coveringdata}. Define the \emph{localization mapping} $\ell_\mathcal{U}:\mathcal{L}_2(L_2(M))\to \mathcal{L}_{2,\epsilon}(L_2(M))$ by 
$$\ell_\mathcal{U}(G):=\sum_{j}G_{jj}, \quad \mbox{where} \quad G_{jj}:=\chi_jG\chi_j.$$
It holds that $\ell_\mathcal{U}:\Lw(L_2(M))\to \Lw(L_2(M))\cap \mathcal{L}_{2,\epsilon}(L_2(M))$ and 
$$\mathrm{Tr}_\omega(G)=\mathrm{Tr}_\omega(\ell_\mathcal{U}(G)),$$
for any $G\in \Lw$ and any Dixmier trace. In particular, up to terms of vanishing Dixmier traces, any element of $\Lw(L_2(M))$ is a finite sum of operators with propagation $\epsilon$, compactly supported in a coordinate chart and belonging to $\Lw(L_2(M))$.
\end{prop}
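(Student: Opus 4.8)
The plan is to prove that $\ell_{\mathcal{U}}$ maps $\Lw$ into $\Lw\cap\mathcal{L}_{2,\epsilon}$ and that it preserves all Dixmier traces, in two separate steps.

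\textbf{Step 1: $\ell_{\mathcal{U}}$ preserves $\Lw$ and lands in $\mathcal{L}_{2,\epsilon}$.} First I would observe that each summand $G_{jj}=\chi_j G\chi_j$ is of the form (multiplication operator)$\,\cdot\,G\,\cdot\,$(multiplication operator), hence belongs to $\Lw$ whenever $G$ does, since $\Lw$ is an ideal in $\mathbb{B}(L_2(M))$. A finite sum of elements of $\Lw$ is again in $\Lw$, so $\ell_{\mathcal{U}}(G)\in\Lw(L_2(M))$. For the propagation claim: since $\mathrm{diam}(U_j)<\epsilon/2<\epsilon$ and $G_{jj}$ is supported in $U_j\times U_j$, we have $\mathrm{d}(\mathrm{supp}(\chi),\mathrm{supp}(\chi'))>\epsilon$ forces one of the two cutoffs to vanish on $U_j$, hence $\chi G_{jj}\chi'=0$; summing, $\ell_{\mathcal{U}}(G)$ has propagation speed $\epsilon$. (Here one also uses that $\Lw\subseteq\mathcal{L}_2$ by Proposition~\ref{schbelong}, so $\ell_{\mathcal{U}}(G)\in\mathcal{L}_{2,\epsilon}$.) Finally, any $\chi_j G\chi_j$ is compactly supported in the coordinate chart $U_j$, which gives the ``in particular'' statement once the trace identity is established.

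\textbf{Step 2: The trace identity $\mathrm{Tr}_\omega(G)=\mathrm{Tr}_\omega(\ell_{\mathcal{U}}(G))$.} The key point is that $G-\ell_{\mathcal{U}}(G)=\sum_{j\neq k}\chi_j G\chi_k$ must have vanishing Dixmier trace. I would use the identity $\sum_j\chi_j^2=1$ to write $G=\sum_{j,k}\chi_j^2 G\chi_k^2$, so that $G-\ell_{\mathcal{U}}(G)=\sum_{j,k}\chi_j(\chi_j G\chi_k)\chi_k-\sum_j\chi_j(\chi_j G\chi_j)\chi_j$ is a finite sum of commutator-type terms. More precisely, for each pair $(j,k)$ write $\chi_j^2 G\chi_k^2-[\text{diagonal}]$ in the form $[\chi_j, A]B$ or a sum of such, where $A,B$ are built from $G$ and the $\chi$'s and lie in suitable $\mathcal{L}_\varphi^{(q)}$-spaces; since any Dixmier trace is a trace, it vanishes on commutators whose factors satisfy the quasi-H\"older inequality \eqref{holdineq} with total exponent summing to $1$. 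Concretely, $\chi_j^2 G\chi_k^2 = \chi_j(\chi_j G\chi_k\chi_k) = \chi_j G \chi_j\chi_k\chi_k + [\chi_j,\,\cdot\,]$-corrections; for $j\neq k$ one further uses that the $\chi_j$ can be chosen so the off-diagonal pieces telescope into traces of commutators. Using that $\Lw$ is an ideal and that Dixmier traces annihilate commutators $[\mathbb{B}(L_2(M)),\Lw]$ (the standard fact that a trace on $\Lw$ kills $[\mathcal{L}_\varphi^{(p)},\mathcal{L}_\varphi^{(p')}]$ for $1/p+1/p'=1$, and in particular $[\mathbb{B},\Lw]$), each off-diagonal contribution has zero Dixmier trace. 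Summing over the finitely many pairs $(j,k)$ gives $\mathrm{Tr}_\omega(G-\ell_{\mathcal{U}}(G))=0$.

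\textbf{Main obstacle.} The delicate point is the bookkeeping in Step 2: one must carefully rewrite $\sum_{j\neq k}\chi_jG\chi_k$ as a finite sum of terms each of which is manifestly in the commutator ideal $[\mathbb{B}(L_2(M)),\Lw(L_2(M))]$ (or more generally in the span of $[\mathcal{L}_\varphi^{(p)},\mathcal{L}_\varphi^{(p')}]$ with conjugate exponents). The clean way is: for $f,g\in C^\infty(M)$ and $G\in\Lw$, the operator $fGg-gGf$ lies in the commutator ideal, and since $\sum_j\chi_j^2=1$ one has $G=\sum_{j,k}\chi_j^2 G\chi_k^2$, hence modulo the commutator ideal $G\equiv\sum_{j,k}\chi_j\chi_k G\chi_j\chi_k=\sum_{j}\chi_j^2 G\chi_j^2 + \sum_{j\neq k}\chi_j\chi_kG\chi_j\chi_k$, and the cross terms with $\mathrm{supp}(\chi_j)\cap\mathrm{supp}(\chi_k)$ small can be absorbed or shown directly to be commutators via the same trick applied to $\chi_j\chi_k$; iterating/reindexing collapses everything onto $\ell_{\mathcal{U}}(G)=\sum_j\chi_j G\chi_j$ modulo the commutator ideal, on which all Dixmier traces vanish. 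The rest of the argument is routine given Proposition~\ref{schbelong} and the ideal property of $\Lw$.
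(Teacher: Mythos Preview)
Your Step 1 is correct and matches the paper's argument.

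Your Step 2, however, contains genuine errors and never actually closes. The first displayed identity $G-\ell_{\mathcal{U}}(G)=\sum_{j\neq k}\chi_j G\chi_k$ is false: the covering data in Definition~\ref{coveringdata} has $\sum_j\chi_j^2=1$, \emph{not} $\sum_j\chi_j=1$, so $G\neq\sum_{j,k}\chi_jG\chi_k$ in general. You then silently replace $\ell_{\mathcal{U}}(G)=\sum_j\chi_jG\chi_j$ by $\sum_j\chi_j^2G\chi_j^2$, which is a different operator. The ``iterating/reindexing'' at the end is hand-waving: after your move $G\equiv\sum_{j,k}\chi_j\chi_kG\chi_j\chi_k$ modulo commutators you are no closer to $\sum_j\chi_jG\chi_j$ than before, and nothing forces the cross terms to vanish.

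The fix is a one-liner, and it is essentially what the paper does. Using only cyclicity of $\mathrm{Tr}_\omega$ on $[\mathbb{B}(L_2(M)),\Lw]$ and $\sum_j\chi_j^2=1$:
\[
\mathrm{Tr}_\omega(\ell_{\mathcal{U}}(G))=\sum_j\mathrm{Tr}_\omega(\chi_jG\chi_j)=\sum_j\mathrm{Tr}_\omega(\chi_j^2G)=\mathrm{Tr}_\omega\Big(\big(\textstyle\sum_j\chi_j^2\big)G\Big)=\mathrm{Tr}_\omega(G).
\]
Equivalently, if you insist on the commutator-ideal phrasing, the correct identity is
\[
G-\ell_{\mathcal{U}}(G)=\sum_j\big(\chi_j^2G-\chi_jG\chi_j\big)=\sum_j[\chi_j,\chi_jG],
\]
a finite sum of commutators with one factor bounded and the other in $\Lw$, so every Dixmier trace kills it. No off-diagonal bookkeeping, no iteration.

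The paper packages the same computation via the isometry $u_{\mathcal{U},0}:f\mapsto(\chi_jf)_j$ into $\bigoplus_jL_2(U_j)$: one has $\mathrm{Tr}_\omega(G)=\mathrm{Tr}_\omega(u_{\mathcal{U},0}Gu_{\mathcal{U},0}^*)$ by unitary invariance on the range of $u_{\mathcal{U},0}$, and then the trace of the block matrix $(G_{jk})_{j,k}$ is $\sum_j\mathrm{Tr}_\omega(G_{jj})$ because $P_kP_j=0$ for $j\neq k$ kills the off-diagonal blocks by cyclicity. This is the same cyclicity argument in block-matrix clothing.
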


\begin{proof}
The ideal property for Hilbert-Schmidt operators and the fact that the diameter of $U_j$ is bounded by $\epsilon$ implies that $G_{jj}\in \mathcal{L}_{2,\epsilon}(L_2(M))$ for any $j$. The ideal property for $\Lw$ implies that $G_{jj}\in \Lw$ if $G\in \Lw$. Finally, we compute that 
$$\mathrm{Tr}_\omega(G)=\mathrm{Tr}_\omega(u_{\mathcal{U},0}Gu_{\mathcal{U},0}^*)=\sum_{j=1}^N \mathrm{Tr}_\omega(G_{jj})=\mathrm{Tr}_\omega(\ell_\mathcal{U}(G)).$$
\end{proof}

\begin{dfn}
\label{locallyweakly}
Let $G\in \mathbbm{B}(L_2(M))$ be an operator. We say that $G$ is {\em locally} strongly (or spectrally, or weakly) $\varphi$-Laplacian modulated if for any coordinate chart $\kappa:U\to U'\subseteq \R^d$, and $\chi, \chi'\in C^\infty_c(U)$, the compactly supported operator $\kappa^*\circ (\chi G\chi')\circ (\kappa^{-1})^*$ on $L_2(\R^d)$ is strongly (or spectrally, or weakly) $\varphi$-Laplacian modulated.
\end{dfn}

The following is an immediate consequence of the definition of local modulation.

\begin{prop}
\label{decomposingt}
Let $\varphi$ be a decreasing function with smooth regular variation and $\mathcal{U}$ be $\epsilon$-covering data for $M$ as in Definition \ref{coveringdata}. Suppose that $G\in \mathbbm{B}(L_2(M))$ is locally strongly (or spectrally, or weakly) $\varphi$-Laplacian modulated. Then each of the operators 
$$G_{jk}:=\chi_jG\chi_k,$$
are locally strongly (or spectrally, or weakly) $\varphi$-Laplacian modulated. In particular, for any locally strongly (or spectrally, or weakly) $\varphi$-Laplacian modulated operator $G$, the localization $\ell_\mathcal{U}(G)$ is a finite sum of locally strongly (or spectrally, or weakly) $\varphi$-Laplacian modulated operators with propagation $\epsilon$, compactly supported in a coordinate chart and belongs to $\Lw(L_2(M))$.
\end{prop}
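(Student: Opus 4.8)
The plan is to reduce this proposition to a purely local statement about the operators $G_{jk}$ and then invoke Proposition \ref{localizprop}. First I would observe that the claim ``each $G_{jk}$ is locally strongly (respectively spectrally, weakly) $\varphi$-Laplacian modulated'' is essentially immediate from Definition \ref{locallyweakly}: given a coordinate chart $\kappa:U\to U'$ and $\psi,\psi'\in C^\infty_c(U)$, we must show $\kappa^*\circ(\psi G_{jk}\psi')\circ(\kappa^{-1})^*$ is strongly (respectively spectrally, weakly) $\varphi$-Laplacian modulated on $L_2(\R^d)$. But $\psi G_{jk}\psi'=\psi\chi_j G\chi_k\psi'=(\psi\chi_j)G(\chi_k\psi')$, and $\psi\chi_j,\,\chi_k\psi'\in C^\infty_c(U)$ (using that $\chi_j,\chi_k$ are smooth, so their products with $\psi,\psi'$ remain compactly supported in $U$ — here one should note that $\psi\chi_j$ is supported in $\mathrm{supp}(\psi)\cap U_j$, which is contained in $U$ since $\mathrm{supp}(\psi)\subseteq U$). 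Thus $\kappa^*\circ(\psi G_{jk}\psi')\circ(\kappa^{-1})^*=\kappa^*\circ((\psi\chi_j) G(\chi_k\psi'))\circ(\kappa^{-1})^*$, which is strongly (respectively spectrally, weakly) $\varphi$-Laplacian modulated directly by the hypothesis that $G$ is locally strongly (respectively spectrally, weakly) $\varphi$-Laplacian modulated applied with the cutoffs $\psi\chi_j$ and $\chi_k\psi'$.

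Next I would assemble the statement about the localization $\ell_\mathcal{U}(G)=\sum_j G_{jj}$. By the first part, each $G_{jj}$ is locally strongly (respectively spectrally, weakly) $\varphi$-Laplacian modulated; moreover $G_{jj}=\chi_j G\chi_j$ is compactly supported in the coordinate chart $U_j$ (its Schwartz kernel is supported in $\mathrm{supp}(\chi_j)\times\mathrm{supp}(\chi_j)\subseteq U_j\times U_j$), and since $\mathrm{diam}(U_j)<\epsilon/2<\epsilon$ it has propagation speed $\epsilon$. To see that $G_{jj}\in\Lw(L_2(M))$, I would invoke Proposition \ref{localizprop}: that proposition already records $\ell_\mathcal{U}:\Lw(L_2(M))\to\Lw(L_2(M))\cap\mathcal{L}_{2,\epsilon}(L_2(M))$, and inspecting its proof shows each summand $G_{jj}$ lies in $\Lw$ by the ideal property. (Strictly speaking one should first know $G\in\Lw(L_2(M))$; a locally weakly $\varphi$-Laplacian modulated operator compactly supported in a chart is in $\Lw$ by Lemma \ref{dirwemod} combined with Lemma \ref{l_exp} — but here the cleanest route is simply that $\ell_\mathcal{U}(G)$ and each $G_{jj}$ are Hilbert-Schmidt, so one applies the already-established Proposition \ref{localizprop} once $G\in\Lw$; if $G$ is only assumed to be a locally modulated operator in $\mathbb{B}(L_2(M))$, the membership $G_{jj}\in\Lw$ follows from the local modulation hypothesis via Lemma \ref{dirwemod} after a change of coordinates flattening $U_j$, then Lemma \ref{l_exp}.)

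This proposition is genuinely ``an immediate consequence of the definition,'' as stated in the text, so there is no serious obstacle; the only point requiring any care is bookkeeping of supports — ensuring that products like $\psi\chi_j$ stay compactly supported inside a single coordinate chart so that Definition \ref{locallyweakly} applies verbatim — and making precise that ``belongs to $\Lw(L_2(M))$'' refers to the individual summands $G_{jj}$, which inherit membership from the ideal property of $\Lw$ once the relevant operator theory (Lemma \ref{dirwemod}, Lemma \ref{l_exp}, or Proposition \ref{localizprop}) is in place. I would therefore present the proof as: (1) the support bookkeeping showing $\psi G_{jk}\psi'=(\psi\chi_j)G(\chi_k\psi')$ with both cutoffs in $C^\infty_c(U)$; (2) conclude local modulation of each $G_{jk}$ directly from Definition \ref{locallyweakly}; (3) note $G_{jj}$ has propagation $\epsilon$ and compact support in $U_j$; (4) cite Proposition \ref{localizprop} for $G_{jj}\in\Lw$ and finiteness of the sum $\ell_\mathcal{U}(G)=\sum_j G_{jj}$.
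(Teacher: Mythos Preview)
Your proof is correct and follows essentially the same approach as the paper's: the paper simply states that the first claim ``follows from the construction'' (your support bookkeeping $\psi G_{jk}\psi'=(\psi\chi_j)G(\chi_k\psi')$ makes this explicit), and for the second claim cites Proposition~\ref{localizprop} together with the lemmas guaranteeing that compactly supported modulated operators on $\R^d$ land in $\Lw$ (the paper invokes Lemmas~\ref{l_exp}, \ref{VAmod}, \ref{mod_w-mod} and Corollary~\ref{corofphidifflem}, whereas you route through Lemma~\ref{dirwemod} and Lemma~\ref{l_exp}, which is an equivalent path). Your caveat that Proposition~\ref{localizprop} alone does not suffice because it presupposes $G\in\Lw$ is exactly right, and your fallback argument is the correct fix.
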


\begin{proof}
It follows from the construction that each $G_{jk}$ is locally strongly  (or spectrally, or weakly) $\varphi$-Laplacian modulated. The last statement follows from Proposition \ref{localizprop} and the fact that compactly supported strongly (or spectrally, or weakly) $\varphi$-Laplacian modulated operators on $\R^d$ belong to $\Lw(L_2(M))$ by Lemma \ref{l_exp}, Lemma \ref{VAmod}, and Lemma \ref{mod_w-mod} (see Lemma \ref{l_exp} and Corollary \ref{corofphidifflem} for the weakly modulated case).
\end{proof}

\subsection{$\varphi$-Sobolev spaces}

An important tool will be that of \emph{$\varphi$-Sobolev spaces}. These spaces were introduced in Definition \ref{sobsparn} for inner product spaces. We shall define their analogues for manifolds.

\begin{dfn}
Let $M$ be a closed $d$-dimensional Riemannian manifold with metric $g$. For $s\in \R$, we define $H^s_\varphi(M)$ as the space of distributions $f\in \mathcal{D}'(M)$ such that for any coordinate chart $\kappa:U\to U'\subseteq \R^d$, and $\chi\in C^\infty_c(U)$, $(\kappa^{-1})^*(\chi f)\in H^s_\varphi(\R^d)$.

We topologize $H^s_\varphi(M)$ as a Banach space by picking covering data $\mathcal{U}$ (as in Definition \ref{coveringdata}) and declaring the operator $u_{\mathcal{U}}$ from Equation \eqref{uudef} to be an isometry
$$u_{\mathcal{U}}:H^s_\varphi(M)\to \bigoplus_{j=1}^N H^s_\varphi(\R^d).$$
\end{dfn}

The careful reader notes that $u_{\mathcal{U}}$ is well defined while for any $f$, $u_{\mathcal{U}}(f)=(Z_j^*(\chi_jf))_{j=1}^N$ and by definition, $Z_j^*(\chi_jf)\in H^s_{\varphi}(\R^d)$ whenever $f\in H^s_\varphi(M)$. It is at this point not immediate that $H^s_\varphi(M)$ is well behaved. The next theorem gives a coordinate free definition of $H^s_\varphi(M)$ inducing a Hilbert space structure related to the ``$\varphi$-Laplacian" $\varphi((1-\Delta_g)^{d/2})$.

\begin{thm}
\label{psisobol}
Let $\varphi$ be a smoothly regularly varying function and $M$ a closed manifold. The space $H^s_\varphi(M)$ is well defined and 
$$H^s_\varphi(M)=\varphi((1-\Delta_g)^{d/2})^{s/d}L_2(M).$$
\end{thm}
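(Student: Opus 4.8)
The strategy is to reduce the statement about the manifold $M$ to the corresponding, already-packaged facts about $\varphi$-pseudo-differential operators in Theorem~\ref{factpsps} and the local theory over $\R^d$. First I would note that $\varphi((1-\Delta_g)^{d/2})^{s/d}$ makes sense as a positive self-adjoint operator by the spectral theorem (since $\varphi$ is positive and the function $t\mapsto \varphi(t^{?})$ composed appropriately is measurable), and that by Lemma~\ref{philapuptolot} and the composition rule in Theorem~\ref{factpsps}(b), for each real $s$ the operator $\varphi((1-\Delta_g)^{d/2})^{s/d}$ lies in $L^{s/d \cdot d}_{?}$... more precisely, by the final assertion of Theorem~\ref{factpsps} one has $L^0_\varphi(M)=\varphi((1-\Delta_g)^{-d/2})L^0(M)$ and, iterating the product structure, $\varphi((1-\Delta_g)^{d/2})^{s/d}\in L^0_{\varphi^{s}}(M)$ for every $s$, where $\varphi^s$ again has smooth regular variation. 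Hence it is an elliptic operator in this calculus with a parametrix of the same type, namely $\varphi((1-\Delta_g)^{d/2})^{-s/d}$, modulo smoothing operators.

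Next I would establish the two inclusions defining $H^s_\varphi(M)=\varphi((1-\Delta_g)^{d/2})^{s/d}L_2(M)$ with equivalent norms. For the inclusion ``$\supseteq$'': if $f=\varphi((1-\Delta_g)^{d/2})^{s/d}h$ with $h\in L_2(M)$, then for any chart $\kappa:U\to U'$ and $\chi,\chi'\in C^\infty_c(U)$ with $\chi'\chi=\chi$, write $\chi f=\chi\,\varphi((1-\Delta_g)^{d/2})^{s/d}\chi' h+\chi\,\varphi((1-\Delta_g)^{d/2})^{s/d}(1-\chi')h$. The second term is smoothing (propagation-zero localizations of a pseudo-differential operator away from the diagonal), hence maps $L_2$ into $C^\infty_c(U)\subseteq H^s_\varphi(\R^d)$; the first term, transported by $\kappa$, is (by Theorem~\ref{factpsps}(iii) and Proposition~\ref{coordinatecha}) of the form $T_{00}\,Op(\varphi_0^{s/d})$ modulo lower order, and since $Op(\varphi_0^{s/d})$ differs from $\varphi((1-\Delta)^{d/2})^{s/d}$ on $\R^d$ by an operator in $L^{-1}_{\varphi^s}$, one gets $(\kappa^{-1})^*(\chi f)\in H^s_\varphi(\R^d)$ by the very definition of the latter space in Definition~\ref{sobsparn}. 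For the inclusion ``$\subseteq$'': given $f$ with all local pieces in $H^s_\varphi(\R^d)$, apply the parametrix: $f=\varphi((1-\Delta_g)^{d/2})^{s/d}\big[\varphi((1-\Delta_g)^{d/2})^{-s/d}f\big]$, and one must check $h:=\varphi((1-\Delta_g)^{d/2})^{-s/d}f\in L_2(M)$. Using a partition of unity $\sum\chi_j^2=1$ subordinate to the charts, write $h=\sum_j \chi_j^2 h$ and estimate each $\chi_j h$ in $L_2$ by transporting to $\R^d$, using that $\varphi((1-\Delta_g)^{d/2})^{-s/d}$ localizes (modulo smoothing) to an operator in $L^0_{\varphi^{-s}}(U_j')$ composed with $\varphi_0^{-s/d}$, i.e. to $\varphi((1-\Delta)^{d/2})^{-s/d}$ up to lower order, which by definition of $H^s_\varphi(\R^d)$ maps $(\kappa_j^{-1})^*(\chi_j f)\in H^s_\varphi(\R^d)$ back into $L_2$.

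Finally I would record that these two maps are mutually inverse bounded linear bijections, so the norms are equivalent and the Hilbert space structure on the right-hand side (pulled back from $L_2(M)$ via $\varphi((1-\Delta_g)^{d/2})^{-s/d}$, exactly as in Definition~\ref{sobsparn}) agrees with the Banach structure on $H^s_\varphi(M)$ defined through $u_{\mathcal U}$; well-definedness (independence of covering data) of $H^s_\varphi(M)$ then follows since the right-hand side is manifestly coordinate-free. The main obstacle I anticipate is the bookkeeping in the ``$\subseteq$'' direction: one needs that the \emph{a priori} distributional assumption ``$(\kappa^{-1})^*(\chi f)\in H^s_\varphi(\R^d)$ for every chart'' genuinely globalizes to $\varphi((1-\Delta_g)^{d/2})^{-s/d}f\in L_2(M)$, which requires care that the off-diagonal (smoothing) error terms produced by the partition-of-unity surgery are controlled in $L_2$ and that the local $\varphi$-Sobolev norms on $\R^d$ are genuinely equivalent to ``$\varphi((1-\Delta)^{d/2})^{-s/d}(\cdot)\in L_2$'' after cutting off — this is exactly where smooth regular variation of $\varphi^{\pm s}$ (ensuring $\varphi_0^{\pm s/d}\in S^{?}$ with the right calculus behaviour, via Lemma~\ref{A14} and Proposition~\ref{closedunderder}) is used in an essential way.
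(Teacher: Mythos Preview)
Your approach is broadly correct and workable, but it differs from the paper's route in a noticeable way, and there is one justification that needs repair.

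\textbf{Comparison with the paper.} The paper does \emph{not} attack general $s$ head-on. Instead it reduces to the single case $s=d$, where the operator in question is $\varphi((1-\Delta_g)^{d/2})$ itself, and then obtains all other $s$ softly: induction over $s\in d\N$ (already carried out inside the proof of Lemma~\ref{actingwithlzero}), complex interpolation for intermediate $s$, and duality for negative $s$. For $s=d$, the paper reduces to $f$ supported in a single chart and then invokes Lemma~\ref{comparingpsis} --- the existence of an elliptic $\Lambda_{g_1,g_2}\in L^0(M)$ with $\Lambda_{g_1,g_2}\varphi((1-\Delta_{g_1})^{d/2})-\varphi((1-\Delta_{g_2})^{d/2})$ smoothing --- to compare the metric $g$ with a Euclidean pullback metric $\tilde g$. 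Your argument instead treats every real $s$ at once, localizes by cutoffs $\chi,\chi'$, and appeals to Theorem~\ref{factpsps}(iii) to identify the localized operator with $T_{00}\,Op(\varphi_0^{s/d})$ modulo lower order. Both routes ultimately rest on the same ingredient (the elliptic order-zero symbol $\sigma_{g_1,g_2}$ of Proposition~\ref{consofsigmaforquad}), but the paper packages it through Lemma~\ref{comparingpsis} and avoids having to discuss fractional powers of $\varphi((1-\Delta_g)^{d/2})$ as pseudo-differential operators. Your approach is more hands-on and slightly heavier in bookkeeping; its advantage is that it is uniform in $s$ and does not need the interpolation/duality step.

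\textbf{A point to fix.} Your claim that $\varphi((1-\Delta_g)^{d/2})^{s/d}\in L^0_{\varphi^{s/d}}(M)$ follows by ``iterating the product structure'' in Theorem~\ref{factpsps} is not right for non-integer $s/d$: iteration only gives integer powers. The correct justification is to observe that $\varphi((1-\Delta_g)^{d/2})^{s/d}=(\varphi^{s/d})((1-\Delta_g)^{d/2})$ by the spectral theorem, and that $\varphi^{s/d}$ again has smooth regular variation (immediate from Definition~\ref{SV}(ii)), so Lemma~\ref{philapuptolot} applies with $\varphi^{s/d}$ in place of $\varphi$. With this in hand, your localization argument in both directions goes through: the off-diagonal pieces $\chi\,P\,(1-\chi')$ are smoothing on the closed manifold and hence map $\mathcal D'(M)$ into $C^\infty(M)\subseteq L_2(M)$, and the diagonal pieces transported to $\R^d$ land, by Theorem~\ref{factpsps}(b) applied to $\varphi^{\pm s/d}$, in $L^0(\R^d)$ after pre- or post-composition with $\varphi((1-\Delta)^{d/2})^{\mp s/d}$, giving the required $L_2$-boundedness.
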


\begin{rem}
The space $H^s_\varphi(M)$ is by Theorem \ref{psisobol} a Hilbert space in the inner product
$$\langle f_1,f_2\rangle_{H^s_\varphi(M)}:=\langle \varphi((1-\Delta_g)^{d/2})^{-s/d}f_1,\varphi((1-\Delta_g)^{d/2})^{-s/d}f_2\rangle_{L_2(M)}.$$
We also note that 
$$H^s_{\frac{1}{\varphi}}(M)=H^{-s}_\varphi(M).$$
\end{rem}

To prove Theorem \ref{psisobol}, we require some lemmas.

\begin{lem}
\label{actingwithlzero}
Let $M$ be a closed manifold and $\varphi$ be a function of smooth regular variation. The action of $L^0(M)$ on $L_2(M)$ defines a continuous action of $L^0(M)$ on $H^s_\varphi(M)$ for all $s\in \R$. 
\end{lem}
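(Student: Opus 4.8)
The plan is to reduce, via a partition of unity together with a decomposition into a small‑propagation part plus a smoothing part, to the corresponding statement on Euclidean space, and then to prove that local statement by conjugating with $\varphi((1-\Delta)^{d/2})^{s/d}$ and exploiting the ``symbol gain'' recorded in Remark~\ref{higherdsecond}. Throughout I work with a fixed choice of $\epsilon$‑covering data $\mathcal{U}=(U_j,\kappa_j,\chi_j)_{j=1}^N$ for $M$ (Definition~\ref{coveringdata}), with respect to which the topology of $H^s_\varphi(M)$ is the one making $u_{\mathcal U}$ of Equation~\eqref{uudef} an isometry into $\bigoplus_j H^s_\varphi(\R^d)$. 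Given $T\in L^0(M)$, I would first cut its Schwartz kernel near the diagonal at scale $<\epsilon$ to write $T=T_0+R$, with $T_0\in L^0(M)$ of propagation $<\epsilon$ and $R$ smoothing. The smoothing part is easy: by Lemma~\ref{A14}(ii) the function $\varphi_0^{\pm s/d}$ is polynomially bounded above and below, which on $\R^d$, hence through $\mathcal U$ on $M$, gives continuous inclusions $H^N(M)\hookrightarrow H^s_\varphi(M)\hookrightarrow H^{-N'}(M)$ for $N,N'$ large; since $R$ has a smooth Schwartz kernel on the closed manifold $M$ it maps $H^{-N'}(M)\to C^\infty(M)\subseteq H^N(M)$ continuously, so composing the inclusions shows $R$ is bounded on $H^s_\varphi(M)$.

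For $T_0$: because $\mathrm{diam}(\mathrm{supp}\,\chi_j)<\epsilon/2$ and $T_0$ has propagation $<\epsilon$, the operator $\chi_jT_0\chi_k$ vanishes unless $\mathrm{supp}\,\chi_j\cup\mathrm{supp}\,\chi_k$ is contained in a geodesic ball, hence in a single coordinate chart. Transported to $\R^d$ via such a chart, $\chi_jT_0\chi_k$ becomes a compactly supported operator in $L^0(\R^d)$; using the coordinate invariance of $L^0(M)$ (classical, or Proposition~\ref{coordinatecha} with $\varphi\equiv 1$) to match this chart with $\kappa_j$ and $\kappa_k$, the boundedness of $T_0$ on $H^s_\varphi(M)$ follows from finitely many instances of the local claim: \emph{every compactly supported $A\in L^0(\R^d)$ is bounded on $H^s_\varphi(\R^d)$.} To prove the latter, write $A=Op(a)$ with $a\in S^0(\R^d)$ compactly supported in $x$ (the left‑symbol normal form of a compactly supported operator), set $\Lambda^t:=\varphi((1-\Delta)^{d/2})^{t/d}=Op(\varphi_0^{t/d})$, so that $H^s_\varphi(\R^d)=\Lambda^sL_2(\R^d)$ and the claim is equivalent to the $L_2(\R^d)$‑boundedness of $\Lambda^{-s}A\Lambda^s$. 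Since $\varphi_0^{s/d}$ depends only on $\xi$, $A\Lambda^s=Op(a\varphi_0^{s/d})$ exactly, and the product formula~\eqref{productform} gives, modulo a smoothing operator,
\begin{equation*}
\Lambda^{-s}A\Lambda^s\ \equiv\ Op\Big(\textstyle\sum_\alpha \tfrac{1}{\alpha!}\,(D_\xi^\alpha\varphi_0^{-s/d})\,\partial_x^\alpha(a\varphi_0^{s/d})\Big)\ =\ Op\Big(\textstyle\sum_\alpha \tfrac{i^{|\alpha|}}{\alpha!}\,(\partial_\xi^\alpha\varphi_0^{-s/d})\,(\partial_x^\alpha a)\,\varphi_0^{s/d}\Big).
\end{equation*}
The quotients $(\varphi^{-s/d})^{(k)}/\varphi^{-s/d}$ are universal polynomials in the $\varphi^{(j)}/\varphi$ with $j\le k$, so $\varphi^{-s/d}$ satisfies the estimates of Lemma~\ref{A14}, and Remark~\ref{higherdsecond} yields $\partial_\xi^\alpha\varphi_0^{-s/d}=c_\alpha\,\varphi_0^{-s/d}$ with $c_\alpha\in S^{-|\alpha|}(\R^d)$ and $c_0=1$. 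Hence the $\alpha$‑th term equals $\tfrac{i^{|\alpha|}}{\alpha!}\,c_\alpha(\xi)\,(\partial_x^\alpha a)(x,\xi)$, a symbol of order $-|\alpha|$ compactly supported in $x$, so the asymptotic sum is a symbol $b\in S^0(\R^d)$; then $Op(b)$ is bounded on $L_2(\R^d)$ by \cite[Theorem 18.1.11]{hormanderIII}, while the smoothing remainder — sandwiched between the finite‑order operators $\Lambda^{\pm s}$ around the compactly supported $A$ — has a smooth, rapidly decaying Schwartz kernel and is Hilbert–Schmidt on $L_2(\R^d)$. This gives the local claim and with it the boundedness of $T$ on $H^s_\varphi(M)$; that $T\mapsto T|_{H^s_\varphi(M)}$ is an algebra action is automatic from the action on $\mathcal D'(M)$.

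The one genuinely delicate point — and the only place the hypotheses on $\varphi$ are used — is the order count for $\Lambda^{-s}A\Lambda^s$: a priori $\Lambda^{\pm s}$ are pseudo‑differential operators of the possibly positive orders $m_{\varphi^{\pm s/d}}d$, and $m_{\varphi^{s/d}}+m_{\varphi^{-s/d}}$ need not be $\le 0$ (already for $\varphi(t)=\log(\e+t)$ and $s>0$), so the crude estimate only yields $\Lambda^{-s}A\Lambda^s\colon L_2\to H^{-(m_{\varphi^{s/d}}+m_{\varphi^{-s/d}})d}$, which is insufficient. It is precisely the ``conjugation cancellation'' $\varphi_0^{-s/d}\cdot(\text{order }{-}|\alpha|)\cdot\varphi_0^{s/d}$ afforded by Remark~\ref{higherdsecond} that keeps the order at $0$ term by term. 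The remaining ingredients — the decomposition $T=T_0+R$, the matching of coordinate charts above, and the routine mapping properties invoked for the smoothing remainders — are standard, and I would carry them out along the lines of \cite[Chapter XII]{Taylorpsidobook} and \cite{hormanderIII}.
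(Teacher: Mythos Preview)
Your approach is correct and takes a genuinely different route from the paper's proof. You localize to $\R^d$ and directly verify, for every $s\in\R$ at once, that conjugation by $\Lambda^{\pm s}=Op(\varphi_0^{\pm s/d})$ preserves order zero; the crucial cancellation $(\partial_\xi^\alpha\varphi_0^{-s/d})\cdot\varphi_0^{s/d}\in S^{-|\alpha|}$ is exactly what smooth regular variation supplies (applied to the power $\varphi^{-s/d}$, which inherits the estimates of Lemma~\ref{A14} as you note). The paper instead stays on the closed manifold and argues by induction over $s=k\in d\N$: using Theorem~\ref{factpsps} it characterizes $f\in H^k_\varphi(M)$ by the existence of $f_0\in L_2(M)$ with $f-Op(\varphi_g)^{k/d}f_0\in H^{k-d}_\varphi(M)$, and then the commutator fact $[A,B]\in L^{m-1}_\varphi(M)$ for $A\in L^0(M)$, $B\in L^m_\varphi(M)$ (Proposition~\ref{products} and~\eqref{prodformpsi}) closes the induction; the remaining $s\in\R$ are obtained via duality $(H^s_\varphi)^*=H^{-s}_\varphi$ and complex interpolation $[H^{s_0}_\varphi,H^{s_1}_\varphi]_\theta=H^{s_\theta}_\varphi$. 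Your argument is more hands-on and handles all $s$ uniformly without interpolation, at the price of having to control the product-formula remainder on the non-compact space $\R^d$ (your justification there is brisk but standard: truncate the asymptotic sum so the remainder has strictly negative order, noting that compact $x$-support of $a$ persists through the partial sum). The paper's proof never leaves the closed-manifold setting and is terser, but leans on the interpolation/duality machinery.
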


\begin{proof}
By a partition of unity argument, and Theorem \ref{factpsps}, for $k\in d\N$, $f\in H^k_\varphi(M)$ if and only if there is an $f_0\in L_2(M)$ such that $f-Op(\varphi_g)^{k/d}f_0\in H^{k-d}_\varphi(M)$. By an induction argument over $k\in d\N$, and the fact that $[A,B]\in L^{m-1}_\varphi(M)$ for $A\in L^0(M)$ and $B\in L^m_\varphi(M)$, by Proposition \ref{products} and the product formula \eqref{prodformpsi}, we see that the action of $L^0(M)$ on $H^k_\varphi(M)$ is well defined and continuous for $k\in d\N$. 

The isometry $u_{\mathcal{U}}:f \mapsto \left(Z_j^*(\chi_jf)\right)_{j=1}^N$ is split by the mapping $u^*:(f_j)_{j=1}^N\mapsto \sum_{j=1}^N \chi_jZ_j(f_j)$. We can deduce that $(H^s_\varphi(M))^*=H^{-s}_\varphi(M)$, with equivalent norms, via the $L_2$-pairing. Moreover, we can for $s_0,s_1\in \R$ do complex interpolation 
$$[H^{s_0}_\varphi(M),H^{s_1}_\varphi(M)]_{\theta}=H^{s_\theta}_\varphi(M), \quad s_\theta=(1-\theta)s_0+\theta s_1.$$
It now follows by interpolation and duality that the action of $L^0(M)$ on $H^s_\varphi(M)$ is well defined and continuous for $s\in \R$.
\end{proof}

\begin{lem}
\label{comparingpsis}
Let $M$ be a complete manifold and $\varphi$ have smooth regular variation. For any two metrics $g_1,g_2$ on $M$ there is an elliptic operator $\Lambda_{g_1,g_2}\in L^0(M)$ such that $\Lambda_{g_1,g_2}\varphi((1-\Delta_{g_1})^{d/2})-\varphi((1-\Delta_{g_2})^{d/2})$ is a smoothing operator.
\end{lem}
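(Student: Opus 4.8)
The plan is to build $\Lambda_{g_1,g_2}$ by an exact functional-calculus manipulation and then to extract ellipticity from the $\varphi$-symbol calculus of Section~\ref{psipseudos}. First I would recall, exactly as in the proof of Lemma~\ref{philapuptolot} via \cite[Theorem~1.3, Chapter~XII, Section~1]{Taylorpsidobook}, that for a smoothly regularly varying function $\psi$ the operator $\psi((1-\Delta_g)^{d/2})$ lies in $L^0_\psi(M)$ with full symbol equal to $\psi_g(x,\xi)=\psi((1+|\xi|_g^2)^{d/2})$ modulo $S^{-1}_\psi(M)$. I would apply this with $\psi=\varphi$ for both metrics and with $\psi=1/\varphi$ for $g_1$; since $1/\varphi$ is again smoothly regularly varying (of the opposite index, which is permitted throughout Section~\ref{psipseudos}), this gives $(1/\varphi)((1-\Delta_{g_1})^{d/2})\in L^0_{1/\varphi}(M)$.

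Next, as $\varphi>0$, the operator $\varphi((1-\Delta_{g_1})^{d/2})$ is injective with dense range and its (in general unbounded) inverse is precisely $(1/\varphi)((1-\Delta_{g_1})^{d/2})$ by the spectral theorem. I would then define
$$\Lambda_{g_1,g_2}:=\varphi((1-\Delta_{g_2})^{d/2})\,(1/\varphi)((1-\Delta_{g_1})^{d/2}),$$
and observe that by Theorem~\ref{factpsps}(b) this composition belongs to $L^0_{\varphi\cdot(1/\varphi)}(M)=L^0_1(M)=L^0(M)$ (the $\varphi$-calculus for the constant function $1$ being the usual pseudo-differential calculus), so in particular $\Lambda_{g_1,g_2}$ extends to a bounded operator on $L_2(M)$. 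Acting on $f\in C^\infty(M)$, where all functional-calculus operators are defined and preserve $C^\infty(M)$, the identity $(1/\varphi)((1-\Delta_{g_1})^{d/2})\varphi((1-\Delta_{g_1})^{d/2})f=f$ gives $\Lambda_{g_1,g_2}\varphi((1-\Delta_{g_1})^{d/2})f=\varphi((1-\Delta_{g_2})^{d/2})f$; both sides being bounded operators that agree on the dense subspace $C^\infty(M)$, they coincide, so the difference $\Lambda_{g_1,g_2}\varphi((1-\Delta_{g_1})^{d/2})-\varphi((1-\Delta_{g_2})^{d/2})$ is in fact $0$, a fortiori smoothing.

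It remains to check ellipticity. Using the product formula \eqref{prodformpsi} together with the symbol description recalled above, the full symbol of $\Lambda_{g_1,g_2}$ equals $\varphi_{g_2}(x,\xi)/\varphi_{g_1}(x,\xi)$ modulo $S^{-1}(M)$, which is exactly the symbol $\sigma_{g_1,g_2}$ appearing in the proof of Proposition~\ref{consofsigmaforquad}. That proof already shows $\sigma_{g_1,g_2}\in S^0(M)$ with $\sigma_{g_1,g_2}^{-1}=\sigma_{g_2,g_1}\in S^0(M)$; being simultaneously in $S^0(M)$, $\sigma_{g_1,g_2}$ is bounded above and bounded away from $0$, hence elliptic of order $0$. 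Thus $\Lambda_{g_1,g_2}\in L^0(M)$ is elliptic.

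On a closed manifold the argument is complete as stated, because $C^\infty(M)$ is a core for every operator involved and $L^0(M)\subseteq\mathbb{B}(L_2(M))$. For a general complete $M$ I would run the same steps locally, using a locally finite coordinate cover exactly as in the remark following Lemma~\ref{philapuptolot}, so that Taylor's symbol computation, the factorization of Theorem~\ref{factpsps}, and the $L^0$-boundedness still apply. The main obstacle is precisely the domain bookkeeping around the unbounded operator $(1/\varphi)((1-\Delta_{g_1})^{d/2})$: one must verify that the composition defining $\Lambda_{g_1,g_2}$ genuinely is a bounded pseudo-differential operator, and that the exact identity $\Lambda_{g_1,g_2}\varphi((1-\Delta_{g_1})^{d/2})=\varphi((1-\Delta_{g_2})^{d/2})$ persists in passing from $C^\infty(M)$ to $L_2(M)$ — this is where Theorem~\ref{factpsps} (order-$0$ boundedness) and the spectral theorem (the exact inverse identity) have to be combined carefully.
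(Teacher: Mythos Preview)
Your approach is correct and genuinely different from the paper's. The paper constructs $\Lambda_{g_1,g_2}$ by an iterative parametrix scheme: starting from $B_0=Op(\sigma_{g_1,g_2})$ it peels off the error $B_0\varphi((1-\Delta_{g_1})^{d/2})-\varphi((1-\Delta_{g_2})^{d/2})\in L^{-1}_\varphi(M)$, writes this error as $C_0\varphi((1-\Delta_{g_2})^{d/2})$ modulo $L^{-2}_\varphi$ via Theorem~\ref{factpsps}, corrects $B_0$ to $B_1=Q_0B_0$ with $Q_0$ a parametrix of $1+C_0$, and continues inductively before asymptotically summing. Your route instead exploits that $1/\varphi$ again has smooth regular variation, so Lemma~\ref{philapuptolot} applies to it and Theorem~\ref{factpsps}(b) places the exact functional-calculus product $\varphi((1-\Delta_{g_2})^{d/2})(1/\varphi)((1-\Delta_{g_1})^{d/2})$ in $L^0_{\varphi\cdot(1/\varphi)}(M)=L^0(M)$. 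This buys you a stronger conclusion (the difference is literally zero rather than merely smoothing) and a one-line construction, at the cost of invoking the $\varphi$-calculus for the possibly unbounded weight $1/\varphi$ and tracking the domain of the unbounded factor---points you correctly flag. The paper's iterative argument is more self-contained in that it never leaves the bounded-$\varphi$ calculus, and it makes the ellipticity of $\Lambda_{g_1,g_2}$ manifest from the outset since every correction $B_{k+1}-B_k$ lies in $L^{-k-1}(M)$. Both arguments ultimately rest on Proposition~\ref{consofsigmaforquad} for the ellipticity of the leading symbol $\sigma_{g_1,g_2}$.
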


\begin{proof}
Consider the elliptic symbol $\sigma_{g_1,g_2}\in S^0(M)$ defined from Equation \eqref{sigmag1g2}, it is elliptic by Proposition \ref{consofsigmaforquad}. The product formula \eqref{productform} implies that $B_0:=Op(\sigma_{g_1,g_2})\in L^0(M)$ satisfies that $A_0:=B_0\varphi((1-\Delta_{g_1})^{d/2})-\varphi((1-\Delta_{g_2})^{d/2})\in L^{-1}_\varphi(M)$. Using Theorem \ref{factpsps}, we can find a $C_0\in L^{-1}(M)$ such that $A_0-C_0\varphi((1-\Delta_{g_2})^{d/2})\in L^{-2}_\varphi(M)$. Let $Q_0$ be a parametrix to $1+C_0$ and $B_1:=Q_0B_0$. We note that $B_1-B_0\in L^{-1}(M)$ and $A_1:=B_1\varphi((1-\Delta_{g_1})^{d/2})-\varphi((1-\Delta_{g_2})^{d/2})\in L^{-2}_\varphi(M)$. 

Again using Theorem \ref{factpsps}, we can find a $C_1\in L^{-2}(M)$ such that $A_1-C_1\varphi((1-\Delta_{g_2})^{d/2})\in L^{-3}_\varphi(M)$. Let $Q_1$ be a parametrix to $1+C_1$ and $B_2:=Q_1B_1$. We note that $B_2-B_1\in L^{-2}(M)$ and $A_2:=B_2\varphi((1-\Delta_{g_1})^{d/2})-\varphi((1-\Delta_{g_2})^{d/2})\in L^{-3}_\varphi(M)$. 

Proceeding by induction, we find elliptic $B_0,\ldots, B_N\in L^0(M)$ such that $B_k-B_{k+1}\in L^{-k-1}(M)$ and $A_N:=B_N\varphi((1-\Delta_{g_1})^{d/2})-\varphi((1-\Delta_{g_2})^{d/2})\in L^{-N-1}_\varphi(M)$. By asymptotic completeness, we can define the elliptic operator $\Lambda_{g_1,g_2}\sim B_0+\sum_{k=0}^\infty (B_{k+1}-B_k)\in L^0(M)$; this operator will satisfy that $\Lambda_{g_1,g_2}\varphi((1-\Delta_{g_1})^{d/2})-\varphi((1-\Delta_{g_2})^{d/2})$ is a smoothing operator.
\end{proof}

\begin{proof}[Proof of Theorem \ref{psisobol}]
We need only to consider the case $s=d$, the other cases follow from induction using Lemma \ref{actingwithlzero}, interpolation and duality. We need only to prove that $f\in \varphi((1-\Delta_g)^{d/2})L_2(M)$ if and only if $f\in H^d_\varphi(M)$ for $f$ supported in one coordinate chart. In this case, we can clearly reduce to showing that $f\in \varphi((1-\Delta_g)^{d/2})L_2(M)$ if and only if $f\in \varphi((1-\Delta_{\tilde{g}})^{d/2})L_2(M)$ for a metric $\tilde{g}$ which is the pullback of the Euclidean metric to the support of $f$. But $f\in \varphi((1-\Delta_g)^{d/2})L_2(M)$ is equivalent to $f\in \varphi((1-\Delta_{\tilde{g}})^{d/2})L_2(M)$ by elliptic regularity and Lemma \ref{comparingpsis}.
\end{proof}

\begin{cor}
\label{coronmod}
Let $\varphi$ be a decreasing function of smooth regular variation and pick $\epsilon$-covering data $\mathcal{U}$ of $M$ (see Definition \ref{coveringdata}). For $G\in \mathcal{L}_2(L_2(M))$, let $\ell_\mathcal{U}(G)$ be its localization as in Proposition \ref{localizprop}. Then the following folds
\begin{enumerate}
\item $\ell_\mathcal{U}(G)$ is locally weakly $\varphi$-Laplacian modulated if and only if $\ell_\mathcal{U}(G)\in \Lw$ is weakly $\varphi$-modulated with respect to $\varphi((1-\Delta_g)^{d/2})$.
\item If $G$ is weakly $\varphi$-modulated with respect to $\varphi((1-\Delta_g)^{d/2})$, then $G$ is locally weakly $\varphi$-Laplacian modulated. 
\item If $G$ is locally weakly $\varphi$-Laplacian modulated, then $\ell_\mathcal{U}(G)$ is locally weakly $\varphi$-Laplacian modulated. 
\end{enumerate}
\end{cor}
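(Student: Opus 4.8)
The plan is to recast all three claims through $\varphi$-Sobolev spaces. The key translation: combining Proposition \ref{weakregandsob} with Theorem \ref{psisobol}, for $V_M:=\varphi((1-\Delta_g)^{d/2})$ an operator $T$ on $L_2(M)$ is weakly $\varphi$-modulated with respect to $V_M$ precisely when $T\in\Lw^{(d/(d-s))}(H^{-s}_\varphi(M),L_2(M))$ for some $s\in(0,d]$, and the same holds on $L_2(\R^d)$ with $H^{-s}_\varphi(\R^d)$ in place of $H^{-s}_\varphi(M)$. I will also use --- exactly as in Lemma \ref{incluofhs}, which rests on Theorem \ref{psisobol} and the coordinate invariance of the $\varphi$-pseudo-differential calculus (Proposition \ref{coordinatecha}) --- that for any chart $\kappa:U\to U'$ and $\psi,\psi'\in C^\infty_c(U)$ with $\psi\chi=\chi$, $\chi'\psi'=\chi'$, passage to local coordinates restricts to Banach-space isomorphisms between $\psi'H^{-s}_\varphi(M)$ (resp.\ $\psi L_2(M)$) and cut-off copies of $H^{-s}_\varphi(\R^d)$ (resp.\ $L_2(\R^d)$), with equivalent norms. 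Finally, the Weyl law on the closed manifolds $M$ and $\mathbb{T}^d$ (cf.\ Corollary \ref{corofphidifflem}) shows $V_M$ and $V_{\mathbb{T}^d}:=\varphi((1-\Delta_{\mathbb{T}^d})^{d/2})$ are compact and lie in $\Lw$; this compactness will be crucial.

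For the second claim, fix $s\in(0,d]$ with $G\in\Lw^{(d/(d-s))}(H^{-s}_\varphi(M),L_2(M))$. Given a chart $\kappa:U\to U'$ and $\chi,\chi'\in C^\infty_c(U)$, I factor the local representative of $\chi G\chi'$ as the composition $H^{-s}_\varphi(\R^d)\to\psi'H^{-s}_\varphi(M)\xrightarrow{G}L_2(M)\to L_2(\R^d)$, the outer arrows being the bounded cut-off-plus-coordinate maps above; by the ideal property this composition lies in $\Lw^{(d/(d-s))}(H^{-s}_\varphi(\R^d),L_2(\R^d))$, i.e.\ is weakly $\varphi$-Laplacian modulated. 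Applying this with $G$ replaced by $\ell_\mathcal{U}(G)$ yields the ``$\Leftarrow$'' implication of the first claim.

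For ``$\Rightarrow$'' of the first claim, write $\ell_\mathcal{U}(G)=\sum_{j=1}^N\chi_jG\chi_j$. For each $j$ the hypothesis applied in the chart $\kappa_j$ gives, after inverting the factorization above (the outer arrows now being isomorphisms onto cut-off subspaces), that $\chi_jG\chi_j V_M^{-s_j/d}\in\Lw^{(d/(d-s_j))}(L_2(M))$ for some $s_j\in(0,d]$. Put $s:=\min_js_j$. Since $V_M\in\Lw$ is compact, $V_M^{(s_j-s)/d}\in\Lw^{(d/(s_j-s))}$ (or is bounded if $s_j=s$), so the quasi-H\"older inequality \eqref{holdineq} gives $\chi_jG\chi_j V_M^{-s/d}=(\chi_jG\chi_j V_M^{-s_j/d})V_M^{(s_j-s)/d}\in\Lw^{(d/(d-s))}$ for every $j$. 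Adding the finitely many summands inside this quasi-Banach ideal shows $\ell_\mathcal{U}(G)V_M^{-s/d}\in\Lw^{(d/(d-s))}$, so $\ell_\mathcal{U}(G)$ is weakly $\varphi$-modulated with respect to $V_M$ (with $p=d/s$); that $\ell_\mathcal{U}(G)\in\Lw$ then follows from Lemma \ref{l_exp}.

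For the third claim, Proposition \ref{decomposingt} reduces us to showing that a finite sum of locally weakly $\varphi$-Laplacian modulated operators is again locally weakly $\varphi$-Laplacian modulated. Given a chart and cutoffs $\psi,\psi'$, the local representative of $\psi(\sum_j\chi_jG\chi_j)\psi'$ is a finite sum of compactly supported weakly $\varphi$-Laplacian modulated operators on $L_2(\R^d)$; by coordinate invariance (again cf.\ Proposition \ref{coordinatecha} and Lemma \ref{incluofhs}) we may assume the chart image lies in $(0,1)^d$, whereupon the proof of Lemma \ref{dirwemod} identifies being weakly $\varphi$-Laplacian modulated with being weakly $\varphi$-modulated with respect to the \emph{compact} operator $V_{\mathbb{T}^d}$. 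Running the $\min_js_j$/quasi-H\"older argument of the previous paragraph with $V_{\mathbb{T}^d}$ in place of $V_M$ shows the sum is weakly $\varphi$-modulated with respect to $V_{\mathbb{T}^d}$, hence weakly $\varphi$-Laplacian modulated. The one genuine obstacle, shared by ``$\Rightarrow$'' of the first claim and by the third, is that weakly $\varphi$-modulated operators need not form a vector space unless the exponent $p$ is fixed; the compactness of $V_M$ and $V_{\mathbb{T}^d}$ on the relevant closed manifolds is exactly what lets the quasi-H\"older inequality bring all finitely many summands to a common exponent.
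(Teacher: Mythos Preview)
Your Sobolev-space approach matches the paper's, which invokes Proposition~\ref{weakregandsob}, Theorem~\ref{psisobol}, and Lemma~\ref{comparingpsis} to reduce (1) and (2) to a local equivalence, and cites Proposition~\ref{decomposingt} for (3). You are in fact more careful than the paper on one point: weakly $\varphi$-modulated operators need not form a vector space, and your $\min_j s_j$/quasi-H\"older argument addresses this explicitly, whereas the paper's appeal to Proposition~\ref{decomposingt} for (3) only exhibits $\ell_\mathcal{U}(G)$ as a \emph{sum} of locally weakly modulated pieces without saying why the sum is one too.

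There is, however, a genuine gap in your ``$\Rightarrow$'' for (1). You write $\ell_\mathcal{U}(G)=\sum_j\chi_jG\chi_j$ and claim that applying the hypothesis in the chart $\kappa_j$ yields $\chi_jG\chi_jV_M^{-s_j/d}\in\Lw^{(d/(d-s_j))}$. But the hypothesis is that $\ell_\mathcal{U}(G)$ is locally weakly modulated: it controls $\psi\,\ell_\mathcal{U}(G)\,\psi'$ for cutoffs $\psi,\psi'$ in a chart, not the individual summand $\chi_jG\chi_j$. These differ: taking $\psi=\psi'$ equal to $1$ on $\mathrm{supp}\,\chi_j$ gives $\psi\,\ell_\mathcal{U}(G)\,\psi'=\chi_jG\chi_j+\sum_{k\neq j}(\psi\chi_k)G(\chi_k\psi')$, and you have no control over the extra terms since you do not know $G$ itself is locally weakly modulated here --- only $\ell_\mathcal{U}(G)$ is.

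The fix is to decompose $\ell_\mathcal{U}(G)$ via cutoffs of $\ell_\mathcal{U}(G)$ itself: write $\ell_\mathcal{U}(G)=\sum_{l,m}\rho_l\,\ell_\mathcal{U}(G)\,\rho_m$ for a fine partition of unity. Since $\ell_\mathcal{U}(G)$ has propagation $<\epsilon$, each nonzero term has $\mathrm{supp}\,\rho_l\cup\mathrm{supp}\,\rho_m$ contained in some (possibly larger than $U_j$) chart, so the hypothesis applies directly to each $\rho_l\,\ell_\mathcal{U}(G)\,\rho_m$; then your $\min$-$s$/quasi-H\"older argument finishes exactly as written. Your argument for (3) is correct as stated, because there Proposition~\ref{decomposingt} applies to $G$ rather than $\ell_\mathcal{U}(G)$ and does give each $\chi_jG\chi_j$ locally weakly modulated.
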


\begin{proof}
By Proposition \ref{weakregandsob} and Theorem \ref{psisobol}, we can assume that $G$ is compactly supported in $M=\R^d$ and $g$ being the Euclidean metric. By Lemma \ref{comparingpsis}, it is equivalent for compactly supported operators in $\R^d$ to be locally weakly $\varphi$-Laplacian modulated and to be weakly $\varphi$-modulated with respect to $\varphi((1-\Delta)^{d/2})$. This proves 1) and 2). Part 3) was already proven in Proposition \ref{decomposingt}.
\end{proof}

\begin{cor}
Let $\varphi$ be a decreasing function with smooth regular variation. Consider a closed Riemannian manifold $(M,g)$. The set of strongly $\varphi$-modulated operators with respect to $\varphi((1-\Delta_g)^{d/2})$ is closed under left and right multiplication by $L^0(M)$. Moreover, an operator is strongly $\varphi$-modulated operators with respect to $\varphi((1-\Delta_g)^{d/2})$ if and only if it is strongly $\varphi$-modulated operators with respect to $\varphi((1-\Delta_{g'})^{d/2})$ for any metric $g'$.
\end{cor}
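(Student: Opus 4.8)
The statement to be proved is the final Corollary: for $\varphi$ decreasing with smooth regular variation and $(M,g)$ a closed Riemannian manifold, the set of strongly $\varphi$-modulated operators with respect to $\varphi((1-\Delta_g)^{d/2})$ is (i) closed under left and right multiplication by $L^0(M)$, and (ii) independent of the choice of metric. The strategy is to reduce both claims to the structural results already established: Lemma~\ref{VAmod} (on passing $\varphi$-modulation through a bounded factor), Lemma~\ref{philapuptolot} and Theorem~\ref{factpsps} (on the pseudodifferential structure of $\varphi((1-\Delta_g)^{d/2})$), and Lemma~\ref{comparingpsis} (on comparing Laplacians for two metrics). Since $M$ is closed, all operators in sight act boundedly on $L_2(M)$ and composition is unrestricted, so there are no domain subtleties.

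**Step 1: right multiplication by $L^0(M)$.** Write $V:=\varphi((1-\Delta_g)^{d/2})$ and suppose $G$ is strongly $\varphi$-modulated with respect to $V$; take $A\in L^0(M)$. By Lemma~\ref{VAmod}, $GA$ is strongly $\varphi$-modulated with respect to $|VA|$. So it suffices to show: for any $A\in L^0(M)$, strong $\varphi$-modulation with respect to $|VA|$ is equivalent to strong $\varphi$-modulation with respect to $V$. One direction of this is again Lemma~\ref{VAmod} applied with the roles reversed, provided $A$ is invertible; in general, one compares spectral projections $E_{|VA|}[0,t^{-1}]$ and $E_V[0,Ct^{-1}]$ using that $|VA|$ and $V$ are, up to bounded factors, the same modulo lower order: by Theorem~\ref{factpsps}, $V\in L^0_\varphi(M)$ and $VA\in L^0_\varphi(M)$, and ellipticity of $V$ gives a parametrix so that $VA = B V + S$ with $B\in L^0(M)$ and $S$ smoothing (hence trace class). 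The cleanest route is actually to bypass $|VA|$ entirely: observe that strong $\varphi$-modulation of $G$ with respect to $V$ is equivalent (Lemma~\ref{spec_char}) to spectral $\varphi$-modulation, and that $E_V[0,t^{-1}]$ differs from $\chi(\tfrac{1}{t}\varphi((1-\Delta_g)^{-d/2}))$ by controlled terms; then $\|GAE_V[0,t^{-1}]\|_2 \le \|G E_V[0,t^{-1}]\|_2\cdot\|A\|_\infty + (\text{commutator terms})$, and the commutator $[A, E_V[0,t^{-1}]]$ is handled because $A\in L^0(M)$ almost commutes with functions of the Laplacian (order drops by one), giving Hilbert–Schmidt norm $O(\sqrt{\varphi(t)}\cdot t^{-1/d})$ by Corollary~\ref{corofphidifflem} applied to $\varphi(t)t^{-1}$ (which is again smoothly regularly varying). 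I would present this commutator estimate as the technical heart of part (i).

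**Step 2: left multiplication by $L^0(M)$.** This is immediate: the set of strongly $\varphi$-modulated operators with respect to any fixed positive $V$ forms a \emph{left ideal} in $\mathbb{B}(\mathcal{H})$, as already noted in the remark following Lemma~\ref{VAmod}. In particular it is closed under left multiplication by $L^0(M)\subseteq \mathbb{B}(L_2(M))$. Combined with Step~1, the set is closed under left and right multiplication by $L^0(M)$.

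**Step 3: metric independence.** Let $g_1,g_2$ be two metrics and set $V_i:=\varphi((1-\Delta_{g_i})^{d/2})$. By Lemma~\ref{comparingpsis}, there is an elliptic $\Lambda:=\Lambda_{g_1,g_2}\in L^0(M)$ with $\Lambda V_1 - V_2$ smoothing. Let $\Psi\in L^0(M)$ be a parametrix for $\Lambda$, so $\Psi\Lambda = 1 + S'$ with $S'$ smoothing. If $G$ is strongly $\varphi$-modulated with respect to $V_1$, then by Lemma~\ref{VAmod} (with the bounded operator $\Lambda$), $G\Lambda$ — equivalently $G(\Lambda V_1 V_1^{-1})$ — relates to modulation with respect to $|V_1\Lambda|$; but more directly, $E_{V_2}[0,t^{-1}]$ and $E_{V_1}[0,Ct^{-1}]$ have the same range up to a finite-rank/smoothing perturbation because $V_2 = \Lambda V_1 + (\text{smoothing})$ and $\Lambda$ is invertible modulo smoothing. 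Then $\|GE_{V_2}[0,t^{-1}]\|_2 \le \|GE_{V_1}[0,Ct^{-1}]\|_2 + \|G(E_{V_2}[0,t^{-1}] - E_{V_1}[0,Ct^{-1}])\|_2$, where the second term is $O(\sqrt{\varphi(t)})$ (in fact far better) since the projection difference is a fixed trace-class operator up to terms of order $\le -1$ controlled via Corollary~\ref{corofphidifflem} with weight $\varphi(t)t^{-1}$. Symmetry in $g_1,g_2$ finishes the equivalence.

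**Main obstacle.** The genuine work is the spectral comparison: turning the statement "$V_2 = \Lambda V_1$ modulo smoothing, $\Lambda$ elliptic of order $0$" into a comparison of the spectral truncations $E_{V_i}[0,t^{-1}]$ with error controlled by $\sqrt{\varphi(t)}$. Unlike the $\mathcal{L}_{1,\infty}$ case of \cite{LSZ}, here $\varphi$ is general, so one cannot invoke coincidental features of $1/t$; instead one must feed $\varphi(t)t^{-1}$ (smoothly regularly varying whenever $\varphi$ is) back into Corollary~\ref{corofphidifflem} and the commutator/parametrix machinery of Theorem~\ref{factpsps}. I expect the bookkeeping of which pseudodifferential order gains translate into which power of $\langle t\rangle$ improvement in the Hilbert–Schmidt estimate to be the fiddly part, but each individual estimate is a routine consequence of the calculus developed in Section~\ref{psipseudos}.
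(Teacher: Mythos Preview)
Your proposal identifies the correct ingredients---Lemma~\ref{VAmod}, Theorem~\ref{factpsps}, and Lemma~\ref{comparingpsis}---and these are exactly what the paper invokes (it simply says ``the proof goes as in \cite[Lemma 11.6.2 and 11.6.3]{LSZ} using Theorem~\ref{factpsps} and Lemma~\ref{comparingpsis}'' and gives no further detail). Step~2 is correct as stated.

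The one place where your route diverges from the standard LSZ-style argument, and becomes shakier, is your plan in Steps~1 and~3 to estimate commutators $[A,E_V[0,t^{-1}]]$ directly. Spectral projections are discontinuous functions of $V$, so the claim that ``$A\in L^0(M)$ almost commutes with functions of the Laplacian (order drops by one)'' does not apply to them without a further smoothing argument you have not supplied. The cleaner LSZ approach avoids this entirely by working with the resolvent: write
\[
GA(1+tV)^{-1}=G(1+tV)^{-1}A + tG(1+tV)^{-1}[V,A](1+tV)^{-1},
\]
and use Theorem~\ref{factpsps} to factor $[V,A]\in L^{-1}_\varphi(M)=L^{-1}(M)\cdot V$, i.e.\ $[V,A]=CV$ modulo smoothing with $C\in L^{-1}(M)$ bounded. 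Then $t[V,A](1+tV)^{-1}=C\cdot tV(1+tV)^{-1}+(\text{smoothing term})$ is uniformly bounded in operator norm, and the estimate $\|GA(1+tV)^{-1}\|_2=O(\sqrt{\varphi(t)})$ follows immediately. Metric independence then runs the same way, feeding in $V_{g_2}=\Lambda V_{g_1}+S$ from Lemma~\ref{comparingpsis} with $\Lambda$ elliptic of order zero. This is almost certainly what the paper intends by its one-line reference to \cite{LSZ}; your spectral-projection route may be salvageable but is not the path of least resistance.
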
 

The proof goes as in \cite[Lemma 11.6.2 and 11.6.2]{LSZ} using Theorem \ref{factpsps} and Lemma \ref{comparingpsis}.

We now turn to proving Lemma \ref{incluofhs} by means of a corollary to Theorem \ref{psisobol}. Recall the definition of $ \tilde{H}^s_\varphi(\mathbb{T}^d)$ from Definition \ref{sobsparn}. By Theorem \ref{psisobol},  $\tilde{H}^s_\varphi(\mathbb{T}^d)=H^s_\varphi(\mathbb{T}^d)$ and the following corollary holds.

\begin{cor}
\label{compsobspaceoncube}
Let $\chi\in C^\infty_c((0,1)^d)$. Via the inclusion $(0,1)^d\hookrightarrow \mathbb{T}^d:=\R^/\Z^d$ of the fundamental domain and $(0,1)^d\hookrightarrow \R^d$, we have the equality
$$\chi \tilde{H}^s_\varphi(\mathbb{T}^d)=\chi H^s_\varphi(\R^d).$$
\end{cor}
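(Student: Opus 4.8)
\textbf{Proof plan for Corollary \ref{compsobspaceoncube}.}
The statement is local — both sides are subspaces of distributions supported in the fixed compact set $\mathrm{supp}(\chi)\subseteq (0,1)^d$ — so the plan is to reduce everything to a comparison of two ``$\varphi$-Laplacians" acting near that compact set, one built from the flat metric on $\mathbb{T}^d$ and one built from the Euclidean metric on $\R^d$, and then invoke Theorem \ref{psisobol} together with Lemma \ref{comparingpsis}. First I would record that, by Theorem \ref{psisobol} applied to the closed manifold $\mathbb{T}^d$ with its flat metric $g_{\mathbb{T}^d}$, we have $\tilde H^s_\varphi(\mathbb{T}^d)=H^s_\varphi(\mathbb{T}^d)=\varphi((1-\Delta_{\mathbb{T}^d})^{d/2})^{s/d}L_2(\mathbb{T}^d)$, and in particular the coordinate-free description of $H^s_\varphi(\mathbb{T}^d)$ in terms of its defining charts: $f\in H^s_\varphi(\mathbb{T}^d)$ iff $(\kappa^{-1})^*(\psi f)\in H^s_\varphi(\R^d)$ for every chart $\kappa$ and every $\psi\in C^\infty_c$ of the chart domain. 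Taking the identity chart on the open set $(0,1)^d\subseteq \mathbb{T}^d$, which is also an open subset of $\R^d$, this immediately gives that for $f$ supported in $\mathrm{supp}(\chi)$, membership in $\tilde H^s_\varphi(\mathbb{T}^d)$ depends only on the behaviour of $f$ as a distribution on $(0,1)^d\subseteq \R^d$ composed with the local $\varphi$-Sobolev condition. The same is true for $H^s_\varphi(\R^d)$ by its very definition. Hence $\chi f\in \tilde H^s_\varphi(\mathbb{T}^d)$ iff $\chi f\in H^s_\varphi(\R^d)$, with equivalence of norms following from the boundedness of $u_{\mathcal U}$ and its splitting $u^*$.

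More carefully, I would argue as follows. Fix $\chi'\in C^\infty_c((0,1)^d)$ with $\chi'\chi=\chi$. For $f\in\mathcal{D}'$ supported in $\mathrm{supp}(\chi)$, by the chart description of $H^s_\varphi(\mathbb{T}^d)$ (using the identity chart on $(0,1)^d$) we have $f\in \tilde H^s_\varphi(\mathbb{T}^d)$ iff $\chi' f\in H^s_\varphi(\R^d)$, which since $\chi'f=f$ is just $f\in H^s_\varphi(\R^d)$; conversely the chart description of $H^s_\varphi(\R^d)$ and $H^s_\varphi(\mathbb{T}^d)$ coincide on the overlap $(0,1)^d$ because the transition functions are identities there. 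This gives the set-theoretic equality $\chi \tilde H^s_\varphi(\mathbb{T}^d)=\chi H^s_\varphi(\R^d)$. For the equivalence of norms, I would use that on the one side $\|\chi f\|_{\tilde H^s_\varphi(\mathbb{T}^d)}$ is controlled by $\|u_{\mathcal U}(\chi f)\|$ for any choice of covering data $\mathcal U$, one of whose charts can be taken to contain $\mathrm{supp}(\chi)$, so that this component contributes $\|(\kappa^{-1})^*(\chi f)\|_{H^s_\varphi(\R^d)}$ up to the bounded, boundedly invertible operators $Z_j$ and the multiplication by partition-of-unity functions equal to $1$ on $\mathrm{supp}(\chi)$; the other components vanish. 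The reverse inequality is symmetric.

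The step that requires the real content, and which I expect to be the main obstacle, is justifying that the chart-based definition of $H^s_\varphi(\R^d)$ used on the left (inherited from $\mathbb{T}^d$) actually agrees with $\varphi((1-\Delta_{\R^d})^{d/2})^{s/d}L_2(\R^d)$ localized to a compact set — i.e. that the ``global" $\varphi$-Laplacian on $\R^d$ and the one on $\mathbb{T}^d$ define the same local spaces. This is precisely where Lemma \ref{comparingpsis} enters: the flat metric on $\mathbb{T}^d$ pulls back, in the identity chart on $(0,1)^d$, to the Euclidean metric on $\R^d$, and Lemma \ref{comparingpsis} (applied on $\mathbb{T}^d$, or rather its proof, which is purely local modulo smoothing operators) gives an elliptic $\Lambda\in L^0$ intertwining $\varphi((1-\Delta_{\mathbb{T}^d})^{d/2})$ and $\varphi((1-\Delta)^{d/2})$ up to a smoothing operator, so that after composing with a cutoff the two operators have the same range when restricted to distributions supported in $\mathrm{supp}(\chi)$; elliptic regularity handles the smoothing remainder. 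Once this local comparison is in hand the corollary follows by the bookkeeping above, and it is also exactly this local comparison that is already invoked in the proof of Theorem \ref{psisobol}, so in practice I would phrase the proof as: apply Theorem \ref{psisobol} on $\mathbb{T}^d$ to identify $\tilde H^s_\varphi(\mathbb{T}^d)=H^s_\varphi(\mathbb{T}^d)$, then observe that the defining local conditions for $H^s_\varphi(\mathbb{T}^d)$ and $H^s_\varphi(\R^d)$ literally coincide on $(0,1)^d$, hence multiplying by $\chi$ yields the claimed equality with equivalent norms.
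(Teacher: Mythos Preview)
Your proposal is correct and follows essentially the same approach as the paper: the paper's entire argument is the one sentence ``By Theorem \ref{psisobol}, $\tilde{H}^s_\varphi(\mathbb{T}^d)=H^s_\varphi(\mathbb{T}^d)$ and the following corollary holds,'' and you have simply unpacked the implicit reasoning --- that the chart-wise definition of $H^s_\varphi(\mathbb{T}^d)$ on the identity chart $(0,1)^d$ literally coincides with the $H^s_\varphi(\R^d)$ condition, so multiplying by $\chi$ gives the equality. Your discussion of Lemma \ref{comparingpsis} is slightly more than needed here (the flat torus metric and the Euclidean metric agree on $(0,1)^d$, so the comparison is already built into the proof of Theorem \ref{psisobol}), but this does not affect correctness.
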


\subsection{Symbols and symbol properties on arbitrary closed manifolds}

In this subsection, we will define abstract symbols of general Hilbert-Schmidt operators on closed manifolds. The construction of an abstract symbol will depend on covering data (see Definition \ref{coveringdata}) and say little about the operator except for its Dixmier traces when it is weakly $\varphi$-Laplacian modulated. Connes' trace formula on closed manifolds will be the means through which the Dixmier trace computations are done, and the abstract symbol the main player. For operators of pseudo-differential type, the abstract symbol and the pseudo-differential symbol provide the same formulas for Dixmier traces.

For a closed Riemannian manifold $M$, we denote
$$\mathrm{Diag}_\epsilon:=\{(x,y)\in M\times M: \mathrm{d}(x,y)<\epsilon\}.$$
For $\epsilon>0$ small enough, $\mathrm{Diag}_\epsilon$ is a tubular neighborhood of the diagonal in $M\times M$ and we can therefore choose a diffeomorphism $\phi:\mathrm{Diag}_{2\epsilon}\to TM$ such that $\pi_{TM}\circ \phi(x,y)=x$. Here $\pi_{TM}:TM\to M$ denotes the tangent bundle. Under the identification of $\mathcal{L}_2(L_2(M))$ with $L_2(M\times M)$, the space $\mathcal{L}_{2,\epsilon}(L_2(M))$ corresponds to $L_2(\mathrm{Diag}_\epsilon)$. 

\begin{dfn}
\label{abssymbol}
The abstract symbol $\sigma_{\epsilon,\phi}:\mathcal{L}_{2,\epsilon}(L_2(M))\to L_2(T^*M)$ is defined as the composition 
$$\mathcal{L}_{2,\epsilon}(L_2(M))\cong L_2(\mathrm{Diag}_\epsilon)\xrightarrow{(\phi^{-1})^*}L_2(TM)\xrightarrow{\mathcal{F}} L_2(T^*M),$$
where $\mathcal{F}:L_2(TM)\xrightarrow{\sim} L_2(T^*M)$ is the fiberwise Fourier transform.

For a choice of covering data $\mathcal{U}$ (see Definition \ref{coveringdata}), we define the localized symbol 
$$\sigma_{\epsilon,\phi,\mathcal{U}}:\mathcal{L}_{2}(L_2(M))\to  L_2(T^*M), \quad \sigma_{\epsilon,\phi,\mathcal{U}}:=\sigma_{\epsilon,\phi}\circ \ell_{\mathcal{U}}.$$
\end{dfn}

\begin{rem}
The reader should note that we are using a diffeomorphism $\phi:\mathrm{Diag}_{2\epsilon}\to TM$ since we do not wish to concern ourselves with the boundary behaviour at the boundary of the tubular neighborhood. This choice will not affect values of traces that only depend on the behaviour near the diagonal. This obstructs the abstract symbol  $\sigma_{\epsilon,\phi}:\mathcal{L}_{2,\epsilon}(L_2(M))\to L_2(T^*M)$ being surjective, but it is nevertheless injective.
\end{rem}

\begin{prop}
For $G\in \Lw(L_2(M))$, the value of $\mathrm{Tr}_\omega(G)$ depends only on the localized abstract symbol $\sigma_{\epsilon,\phi,\mathcal{U}}(G)\in L_2(T^*M)$.
\end{prop}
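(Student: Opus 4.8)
The plan is to show that the value of $\mathrm{Tr}_\omega(G)$ is unchanged if we modify $G$ without changing its localized abstract symbol $\sigma_{\epsilon,\phi,\mathcal{U}}(G)$. So suppose $G_1,G_2\in \Lw(L_2(M))$ have $\sigma_{\epsilon,\phi,\mathcal{U}}(G_1)=\sigma_{\epsilon,\phi,\mathcal{U}}(G_2)$; we must prove $\mathrm{Tr}_\omega(G_1)=\mathrm{Tr}_\omega(G_2)$. By Proposition~\ref{localizprop}, $\mathrm{Tr}_\omega(G_i)=\mathrm{Tr}_\omega(\ell_{\mathcal{U}}(G_i))$, so it suffices to prove this for $\ell_{\mathcal{U}}(G_1)$ and $\ell_{\mathcal{U}}(G_2)$, which both lie in $\mathcal{L}_{2,\epsilon}(L_2(M))\cap \Lw(L_2(M))$. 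The definition of the abstract symbol $\sigma_{\epsilon,\phi}$ as the composition $\mathcal{L}_{2,\epsilon}(L_2(M))\cong L_2(\mathrm{Diag}_\epsilon)\xrightarrow{(\phi^{-1})^*}L_2(TM)\xrightarrow{\mathcal{F}} L_2(T^*M)$ is a composition of a unitary identification, a pullback along a diffeomorphism (hence a bounded invertible map), and the fiberwise Fourier transform (a unitary). In particular $\sigma_{\epsilon,\phi}$ is \emph{injective} on $\mathcal{L}_{2,\epsilon}(L_2(M))$, as noted in the remark preceding the proposition. Therefore $\sigma_{\epsilon,\phi,\mathcal{U}}(G_1)=\sigma_{\epsilon,\phi,\mathcal{U}}(G_2)$ forces $\ell_{\mathcal{U}}(G_1)=\ell_{\mathcal{U}}(G_2)$ as operators, and hence $\mathrm{Tr}_\omega(G_1)=\mathrm{Tr}_\omega(\ell_{\mathcal{U}}(G_1))=\mathrm{Tr}_\omega(\ell_{\mathcal{U}}(G_2))=\mathrm{Tr}_\omega(G_2)$.

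First I would state the reduction to the localizations via Proposition~\ref{localizprop}, recording that $\ell_{\mathcal{U}}$ maps $\Lw(L_2(M))$ into $\Lw(L_2(M))\cap \mathcal{L}_{2,\epsilon}(L_2(M))$ and preserves Dixmier traces. Then I would spell out why $\sigma_{\epsilon,\phi}$ is injective: under the identification $\mathcal{L}_{2,\epsilon}(L_2(M))\cong L_2(\mathrm{Diag}_\epsilon)$ (the restriction of the Hilbert-Schmidt kernel to the tubular neighborhood), the map $(\phi^{-1})^*$ is a linear isomorphism of Hilbert spaces and $\mathcal{F}$ is unitary, so the kernel of $\sigma_{\epsilon,\phi}$ is trivial. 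Combining injectivity with the identity $\sigma_{\epsilon,\phi,\mathcal{U}}=\sigma_{\epsilon,\phi}\circ\ell_{\mathcal{U}}$ closes the argument.

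The only subtlety — and the point worth articulating carefully rather than any genuine obstacle — is making precise that "depends only on $\sigma_{\epsilon,\phi,\mathcal{U}}(G)$" means exactly that $\sigma_{\epsilon,\phi,\mathcal{U}}(G_1)=\sigma_{\epsilon,\phi,\mathcal{U}}(G_2)$ implies $\mathrm{Tr}_\omega(G_1)=\mathrm{Tr}_\omega(G_2)$, and that this follows from injectivity of $\sigma_{\epsilon,\phi}$ on the domain relevant to us, namely $\mathcal{L}_{2,\epsilon}(L_2(M))$ — one does not need surjectivity, which is precisely why the $2\epsilon$ versus $\epsilon$ discrepancy in the choice of $\phi$ is harmless. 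No regularity or regular-variation hypotheses on $\varphi$ beyond those already standing are needed here; the statement is essentially formal once Proposition~\ref{localizprop} is in hand.
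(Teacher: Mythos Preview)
Your proof is correct and follows essentially the same approach as the paper's: the paper's proof is the single sentence ``This fact follows trivially from Proposition~\ref{localizprop} because $\sigma_{\epsilon,\phi}$ is injective,'' and you have simply spelled out that sentence in full.
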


\begin{proof}
This fact follows trivially from Proposition \ref{localizprop} because $\sigma_{\epsilon,\phi}$ is injective.
\end{proof}

For $G\in \mathcal{L}_2(L_2(M))$, and a choice of auxiliary data $(\epsilon,\phi,\mathcal{U})$ as above, we often write 
\begin{equation}
p_G:=\sigma_{\epsilon,\phi,\mathcal{U}}(G).
\end{equation}
We call $p_G$ \emph{a localized abstract symbol} of $G$ when we wish to suppress the dependence on the auxiliary data. The arbitrariness in the construction of a localized abstract symbol is less so, yet still present, for $\varphi$-pseudo-differential operators as the next proposition shows. 

\begin{prop}
Let $\varphi$ be a decreasing function of smooth regular variation, $M$ a closed manifold, $(\epsilon,\phi)$ as in the paragraph proceeding Definition \ref{abssymbol} and $\mathcal{U}$ covering data. If $G\in L^0_\varphi(M)$ has pseudo-differential symbol $p\in S^0_\varphi(M)$, then $\sigma_{\epsilon,\phi,\mathcal{U}}(G)\in S^0_\varphi(M)$ and
$$p-\sigma_{\epsilon,\phi,\mathcal{U}}(G)\in S^{-1}_\varphi(M).$$
In particular, $\mathrm{Tr}_\omega(G)$ depends only on the pseudo-differential symbol $[p]\in S^0_\varphi(M)/S^{-1}_\varphi(M)$.
\end{prop}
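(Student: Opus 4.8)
The plan is to reduce the statement to a local computation comparing the localized abstract symbol with the pseudo-differential symbol. First I would use Proposition \ref{localizprop} together with Proposition \ref{decomposingt}: since $\mathrm{Tr}_\omega(G)=\mathrm{Tr}_\omega(\ell_\mathcal{U}(G))$ and $\ell_\mathcal{U}(G)$ is a finite sum of compactly supported operators each living in a single coordinate chart, it suffices to work in one chart $\kappa:U\to U'\subseteq\R^d$ and to prove that, for $G\in L^0_\varphi(M)$ compactly supported in $U$, the fiberwise Fourier transform of the Schwartz kernel of $G$ pushed forward along $\phi$ agrees with the pseudo-differential symbol $p\in S^0_\varphi(U')$ modulo $S^{-1}_\varphi(U')$. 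Using Theorem \ref{factpsps} we may write $G\sim T_0 Op(\varphi_0)$ with $T_0\in L^0(U')$, so the claim is reduced to the analogous statement for honest pseudo-differential operators (which is classical, e.g.\ via \cite[Theorem 18.1.17, Proposition 18.1.19]{hormanderIII}) multiplied through by $\varphi_0$, using that $S^m_\varphi=\varphi_0 S^m + S^{-\infty}$ and Proposition \ref{closedunderder}.

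The key steps in order: (1) localize via $\ell_\mathcal{U}$ and reduce to a single coordinate chart, invoking that the construction of $\sigma_{\epsilon,\phi,\mathcal{U}}$ only changes the operator by something with vanishing Dixmier trace (Proposition \ref{localizprop}); (2) in local coordinates, recall that the standard quantization $Op$ recovers a symbol from the kernel of a pseudo-differential operator up to $S^{-1}$, and that this is precisely the statement that the kernel-side Fourier transform (i.e.\ the map $\mathcal F\circ(\phi^{-1})^*$ for the Euclidean choice of $\phi$) inverts $Op$ modulo one order; (3) observe that different admissible choices of the diffeomorphism $\phi$ differ by a fiberwise diffeomorphism fixing the zero section to first order, which under fiberwise Fourier transform changes the symbol only by lower-order terms — this is exactly the content of the asymptotic expansion in Equation \eqref{coordtransf} specialized to such near-identity changes, hence stays within $S^m_\varphi/S^{m-1}_\varphi$ by Proposition \ref{closedunderder} and Proposition \ref{asymptcomplete}; (4) factor $G\sim T_0 Op(\varphi_0)+S$ via Theorem \ref{factpsps}, apply step (2)--(3) to the order-zero factor, and multiply back by $\varphi_0$, noting that multiplication by $\varphi_0$ carries $S^{-1}(U')$ into $S^{-1}_\varphi(U')$ by definition; (5) conclude $p-\sigma_{\epsilon,\phi,\mathcal{U}}(G)\in S^{-1}_\varphi(M)$, and since $S^{-1}_\varphi(M)$ consists of operators in $L^{-1}_\varphi(M)$, these lie in $\mathcal L_{\varphi^{(d/(d+\ldots))}}$, in any case in a smaller ideal on which all Dixmier traces vanish (using Proposition \ref{regvarresult} or the Weyl-law estimate in the proof of Corollary \ref{corofphidifflem} to see $\mu(n,\cdot)=o(\varphi(n))$), so $\mathrm{Tr}_\omega(G)$ depends only on $[p]\in S^0_\varphi(M)/S^{-1}_\varphi(M)$.

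The main obstacle I expect is step (3): controlling the dependence on the auxiliary diffeomorphism $\phi$ and on the cutoff/covering data in a way that is genuinely modulo $S^{-1}_\varphi$ rather than merely modulo smoothing. One must check that the Jacobian factors and the phase corrections $\rho_y(z)$ appearing in Equation \eqref{coordtransf}, when applied to a $\varphi$-symbol, produce only terms in $S^{m-1}_\varphi$; this uses Proposition \ref{closedunderder} (derivatives in $\xi$ lower the $\varphi$-order) together with the fact that $\rho_y$ vanishes to second order, so each term in the stationary-phase-type expansion gains at least one order. A secondary technical point is the passage between the polar-decomposition unitaries $Z_j$ in $u_{\mathcal U}$ and the pullback operators $\kappa_j^*$: since $Z_j^*\kappa_j^*$ is a multiplication operator by a smooth positive function, conjugating by it is an order-zero pseudo-differential operation, hence preserves $S^0_\varphi/S^{-1}_\varphi$ by Proposition \ref{products}; this should be remarked but not belabored. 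Everything else is bookkeeping with the ideal inclusions and the fact, already used repeatedly in the paper, that operators of $\varphi$-order $-1$ on a closed manifold have eigenvalue decay $o(\varphi(n))$ and hence vanishing Dixmier trace.
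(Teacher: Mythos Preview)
Your proposal is correct and follows essentially the same route as the paper: localize via $\ell_\mathcal{U}$, observe that $G-\ell_\mathcal{U}(G)\in L^{-1}_\varphi(M)$ by the product formula (commutators with the cutoffs $\chi_j$ drop one order), compare the abstract symbol of $\ell_\mathcal{U}(G)$ to $p$ modulo $S^{-1}_\varphi$, and finish by noting that Dixmier traces vanish on $L^{-1}_\varphi(M)$. The paper's proof is terse precisely at your step~(3)---it invokes Lemma~\ref{philapuptolot} and injectivity of $\sigma_{\epsilon,\phi}$ without spelling out the stationary-phase-type argument you describe---so your expanded treatment of the $\phi$-dependence via Proposition~\ref{closedunderder} and Equation~\eqref{coordtransf} is a genuine clarification rather than a different argument. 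The only cosmetic difference is that the paper justifies the vanishing of Dixmier traces on $L^{-1}_\varphi(M)$ by writing $L^{-1}_\varphi(M)=L^{0}_\varphi(M)L^{-1}(M)$ and observing this lies in the $\Lw$-norm closure of the finite rank operators, whereas you invoke the Weyl-law estimate $\mu(n,\cdot)=o(\varphi(n))$; these amount to the same thing.
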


\begin{proof}
The product formula \eqref{prodformpsi} implies that $G-\ell_\mathcal{U}(G)\in L^{-1}_\varphi(M)$. Therefore, we have that $\ell_\mathcal{U}(G)-Op(p)\in L^{-1}_\varphi(M)$. Proposition \ref{philapuptolot} and injectivity of $\sigma_{\epsilon,\phi}$ implies that $p-\sigma_{\epsilon,\phi,\mathcal{U}}(G)\in S^{-1}_\varphi(M)$. Since $p\in S^0_\varphi(M)$ we have $\sigma_{\epsilon,\phi,\mathcal{U}}(G)\in S^{0}_\varphi(M)$. We have that $\mathrm{Tr}_\omega(G)$ only depends on the class $[p]\in S^0_\varphi(M)/S^{-1}_\varphi(M)$ because Dixmier traces vanishes on $L^{-1}_\varphi(M)=L^{0}_\varphi(M)L^{-1}(M)$ as it is a subalgbra of the norm closure of the finite rank operators in $\Lw$.
\end{proof}

Prior to the next definition, note that on a closed Riemannian manifold, the function $\langle t-\langle\xi\rangle^d\rangle^{-1}$ belongs to $L_2(T^*M)$ for any $t>0$ As such, the function $\frac{p_G(x,\xi)}{\langle t-\langle\xi\rangle^d\rangle}$ is integrable for any $p\in L_2(T^*M)$.

\begin{dfn}
\label{phidecmnfd}
Let $M$ be a closed Riemannian manifold. We say that $p\in L_2(T^*M)$ has $\varphi$-reasonable decay if 
$$\int_{T^*M} \frac{|p(x,\xi)|}{\langle t-\langle\xi\rangle^d\rangle}\, \mathrm{d}x \mathrm{d}\xi =o(\Phi(t)), \quad\mbox{as $t\to \infty$}.$$
\end{dfn}

\begin{rem}
It is easily seen that for $p\in L_2(T^*M)$, the property of having $\varphi$-reasonable decay is independent of metric and is equivalent to $(D\kappa^{-1})^*(\pi_{T^*M}^*(\chi) p)\in L_2(\R^{2d})$ having  $\varphi$-reasonable decay (in the sense of Definition \ref{phidec}) for any coordinate chart $\kappa:U\to \R^d$ and $\chi\in C^\infty_c(U)$. 
\end{rem}

\begin{prop}
\label{reasondecayofphipsido}
Let $\varphi\in SR_{-1}$ be a decreasing function satisfyin (W2) (see Definition \ref{someavass} on page \pageref{someavass}) and $M$ a closed Riemannian manifold. Any $p\in S^0_\varphi(M)$ has $\varphi$-reasonable decay. 
\end{prop}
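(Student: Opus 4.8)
The plan is to reduce the statement to the Euclidean, compactly supported case already handled in the bulk of the text and then invoke the localization machinery. Specifically, I would first note that $\varphi$-reasonable decay of $p\in L_2(T^*M)$ is, by the remark preceding the proposition, equivalent to the statement that for every coordinate chart $\kappa:U\to U'\subseteq\R^d$ and every $\chi\in C^\infty_c(U)$, the push-forward symbol $(D\kappa^{-1})^*(\pi_{T^*M}^*(\chi)p)$ has $\varphi$-reasonable decay in the sense of Definition \ref{phidec}. Thus it suffices to work locally and, by a partition of unity argument, to treat a single $p=\chi p$ with $\chi$ supported in a coordinate chart.

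Next I would use the symbol structure. Since $p\in S^0_\varphi(M)=\varphi_g S^0(M)+S^{-\infty}(M)$, locally we may write $p=a_0\varphi_g+s$ with $a_0\in S^0(U)$ compactly based and $s\in S^{-\infty}(U)$; the smoothing part $s$ contributes an $L_2$-symbol with $\varphi$-reasonable decay essentially trivially (it decays faster than any polynomial in $\xi$, so the Cauchy--Schwarz argument at the end of the proof of Lemma \ref{growthtodecay} applies with room to spare), so it remains to handle $a_0\varphi_g$. The operator $Op(a_0\varphi_g)$ is, up to smoothing and lower order terms, of the form $Op(a_0)\,\chi'\varphi((1-\Delta_{g})^{d/2})$ for a cutoff $\chi'$, where $Op(a_0)$ is a compactly based order-zero pseudo-differential operator, hence bounded. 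By Proposition \ref{psi_mod} (using (W2)), $\chi'\varphi((1-\Delta)^{d/2})$ is spectrally $\varphi$-Laplacian modulated, and by Lemma \ref{VAmod} together with the left-ideal property of spectrally modulated operators (the remark following Lemma \ref{VAmod}), $Op(a_0)\chi'\varphi((1-\Delta)^{d/2})$ is spectrally $\varphi$-modulated with respect to $|\,\varphi((1-\Delta)^{d/2})A\,|$ for the relevant bounded $A$; one then checks this auxiliary operator differs from $\varphi((1-\Delta)^{d/2})$ only by operations preserving the modulation property, or alternatively that being spectrally $\varphi$-Laplacian modulated is what is genuinely needed. Likewise, Proposition \ref{psi_growth} gives that $\chi'\varphi((1-\Delta)^{d/2})$ has $\varphi$-moderate growth, and since $\varphi$-moderate growth is visibly preserved under composition with a bounded compactly based pseudo-differential operator (the estimate in Definition \ref{phidec} is on the $L_1$-norm of the symbol over phase-space shells, controlled via Cauchy--Schwarz as in the proof of Lemma \ref{phimodphireas}, using $\varphi\in R_{-1}$), we conclude that the local symbol of $Op(a_0\varphi_g)$ has $\varphi$-moderate growth and is spectrally $\varphi$-Laplacian modulated.

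Finally I would apply Lemma \ref{growthtodecay}: a compactly based Hilbert--Schmidt operator on $L_2(W)$ with $\varphi$-moderate growth that is spectrally $\varphi$-Laplacian modulated has $\varphi$-reasonable decay. This gives $\varphi$-reasonable decay for the local symbol, and re-assembling over the partition of unity (finitely many charts, and $\varphi$-reasonable decay is an $o(\Phi(t))$-type estimate stable under finite sums) yields the claim for $p\in S^0_\varphi(M)$ on the closed manifold $M$.

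The main obstacle I anticipate is bookkeeping at the interface between the pseudo-differential calculus and the modulation estimates: one must be careful that composing with the compactly based order-zero factor $Op(a_0)$ genuinely preserves both ``spectrally $\varphi$-Laplacian modulated'' and ``$\varphi$-moderate growth'', rather than only the weaker ``strongly modulated with respect to some perturbed auxiliary operator''. The cleanest route is probably to argue at the level of the symbol estimates in Lemma \ref{p_mod} and Lemma \ref{summinupp}/Lemma \ref{phimodphireas} directly — bounding $\int_{W}|p_{Op(a_0)T}(x,\xi)|^2\mathrm{d}x$ shell-by-shell in terms of the corresponding quantity for $T=\chi'\varphi((1-\Delta)^{d/2})$ using boundedness of $Op(a_0)$ and that its Schwartz kernel is concentrated near the diagonal — rather than chasing the auxiliary operators through Lemma \ref{VAmod}. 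Everything else is routine given the results already established.
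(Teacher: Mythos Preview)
Your approach is correct in spirit but takes a long detour around a one-line argument, and the detour has a soft spot exactly where you flag it.

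The paper's proof is simply this: by definition $p\in S^0_\varphi(M)$ means $p=a_0\varphi_g+s$ with $a_0\in S^0(M)$ and $s\in S^{-\infty}(M)$, so there is a constant $C>0$ with $|p(x,\xi)|\le C\varphi_g(x,\xi)$ for all $(x,\xi)$ (the $a_0$ factor is bounded and $s$ is Schwartz in $\xi$). Since $\varphi$-reasonable decay is an estimate on $\int_{T^*M}|p(x,\xi)|\langle t-\langle\xi\rangle^d\rangle^{-1}\,\mathrm{d}x\,\mathrm{d}\xi$, this pointwise bound together with compactness of $M$ reduces the claim to the corresponding estimate for $\varphi_g$ itself, which after localizing to a chart (and using that $\varphi_g/\varphi_0$ is bounded by Proposition~\ref{consofsigmaforquad}) is exactly Proposition~\ref{psi_growth}. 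No operator-theoretic detour is needed: the statement is about the function $p$, and the function admits an explicit pointwise majorant.

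Your route instead passes to the operator $Op(a_0\varphi_g)$, factors it as $Op(a_0)\cdot\chi'\varphi((1-\Delta)^{d/2})$ modulo lower order, and then tries to verify that this composite is spectrally $\varphi$-Laplacian modulated \emph{and} has $\varphi$-moderate growth so that Lemma~\ref{growthtodecay} applies. The spectral-modulation part is fine (left-ideal property). The problematic step is the preservation of $\varphi$-moderate growth under left composition with $Op(a_0)$: your appeal to the Cauchy--Schwarz argument ``as in the proof of Lemma~\ref{phimodphireas}'' does not go through, because that proof uses $\varphi(t)=O(t^{-1})$ in the last step to turn $\sqrt{t\,\varphi(t)}$ into a bounded quantity. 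Here only $\varphi\in SR_{-1}$ and (W2) are assumed, and for instance $\varphi(t)=\log^k(\mathrm{e}+t)/(\mathrm{e}+t)$ with $k\ge 1$ has $t\varphi(t)\to\infty$. If instead you try to check $\varphi$-moderate growth of the composite by computing its $L_2$-symbol via the pseudo-differential calculus, you find that symbol lies in $S^0_\varphi$ and is hence pointwise bounded by $C\varphi_0$ --- but that is precisely the paper's direct argument, and renders the operator factorization and Lemma~\ref{growthtodecay} superfluous.

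In short: the obstacle you anticipate is real and is not resolved by the tools you cite; the way around it is to never leave the symbol level in the first place.
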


\begin{proof}
By definition, if $p\in S^0_\varphi(M)$ there is a $C>0$ such that $|p(x,\xi)|\leq C\varphi_g(\xi)$. The proposition now follows from Proposition \ref{psi_growth}.
\end{proof}

\begin{prop}
\label{strongandreas}
Let $\varphi\in SR_{-1}$ be a decreasing function, $M$ a closed Riemannian manifold and $G\in \mathbb{B}(L_2(M))$ an operator which is strongly $\varphi$-modulated with respect to $\varphi((1-\Delta_g)^{d/2})$. Then $G$ is locally strongly $\varphi$-Laplacian modulated. 

In particular, if $\varphi\in SR_{-1}$ and $\varphi(t)=O(t^{-1})$ then any localized abstract symbol of an operator which is strongly $\varphi$-modulated with respect to $\varphi((1-\Delta_g)^{d/2})$ has $\varphi$-reasonable decay.
\end{prop}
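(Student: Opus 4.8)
The plan is to first prove the assertion that strong $\varphi$-modulation with respect to $\varphi((1-\Delta_g)^{d/2})$ implies local strong $\varphi$-Laplacian modulation, and then to deduce the statement on localized abstract symbols from it together with Lemma \ref{spec_char} and Lemma \ref{phimodphireas}. This mirrors the treatment of the weakly modulated case in Corollary \ref{coronmod} and of the metric-independence statement in the corollary preceding the present proposition.

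For the first assertion, fix a coordinate chart $\kappa:U\to U'\subseteq\R^d$ and $\chi,\chi'\in C^\infty_c(U)$; the task is to show that $\tilde G:=\kappa^*\circ(\chi G\chi')\circ(\kappa^{-1})^*$ is strongly $\varphi$-modulated with respect to $\varphi((1-\Delta_{\R^d})^{d/2})$ on $L_2(\R^d)$. Since multiplication by functions in $C^\infty(M)$ lies in $L^0(M)$, the corollary preceding this proposition shows that $P:=\chi G\chi'$ is strongly $\varphi$-modulated with respect to $\varphi((1-\Delta_{g'})^{d/2})$ for every metric $g'$ on $M$; letting $t\to 0^+$ in the defining inequality and using that $\varphi$ is bounded (by \eqref{quasi}), $P$, and hence $\tilde G$ extended by zero to $L_2(\R^d)$, is Hilbert--Schmidt and compactly supported. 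I would then choose $g'$ to agree with $(\kappa^{-1})^*(\text{euclidean metric})$ on a neighbourhood of $\mathrm{supp}(\chi)\cup\mathrm{supp}(\chi')$. Localised near these supports, the pushforward of $\varphi((1-\Delta_{g'})^{d/2})$ is the $\varphi$-Laplacian of $(\kappa^{-1})^*g'$, which equals $\varphi((1-\Delta_{\R^d})^{d/2})$ modulo an operator in $L^{-1}_\varphi(\R^d)$ plus a smoothing operator, by Lemma \ref{philapuptolot} and Theorem \ref{factpsps}. The argument of the preceding corollary, namely producing an elliptic $\Lambda\in 1+L^{-1}(\R^d)$ with $\Lambda\varphi((1-\Delta_{g'})^{d/2})$ equal to $\varphi((1-\Delta_{\R^d})^{d/2})$ modulo a smoothing operator near the supports (here using Lemma \ref{comparingpsis}, which is legitimate since $\R^d$ is complete) and then invoking Lemma \ref{VAmod} together with the left-ideal property, transfers the modulation estimate for $P$ across $\kappa$ to the conclusion that $\tilde G$ is strongly $\varphi$-modulated with respect to $\varphi((1-\Delta_{\R^d})^{d/2})$. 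This proves the first assertion.

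Assume now in addition that $\varphi(t)=O(t^{-1})$, and fix $\epsilon$-covering data $\mathcal{U}=(U_j,\kappa_j,\chi_j)_{j=1}^N$ and auxiliary data $(\epsilon,\phi)$ as in Definition \ref{abssymbol}. Again letting $t\to 0^+$ shows $G\in\mathcal{L}_2(L_2(M))$, so $\ell_\mathcal{U}(G)=\sum_j\chi_jG\chi_j$ is a well-defined Hilbert--Schmidt operator and any localized abstract symbol is $p_G=\sigma_{\epsilon,\phi,\mathcal{U}}(G)=\sigma_{\epsilon,\phi}(\ell_\mathcal{U}(G))\in L_2(T^*M)$. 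By the first assertion and Lemma \ref{spec_char}, each $\kappa_j^*(\chi_jG\chi_j)(\kappa_j^{-1})^*$ is a compactly based spectrally $\varphi$-Laplacian modulated Hilbert--Schmidt operator on $L_2(\R^d)$, so Lemma \ref{phimodphireas} (which uses $\varphi(t)=O(t^{-1})$) gives it $\varphi$-moderate growth and hence $\varphi$-reasonable decay. By the remark following Definition \ref{phidecmnfd}, $\varphi$-reasonable decay of $p_G$ is a local condition on the chart pushforwards $(D\kappa^{-1})^*(\pi_{T^*M}^*(\chi)p_G)$; near the diagonal, reading $\sigma_{\epsilon,\phi}$ in a chart amounts, up to the fibrewise base-point-preserving change of variables induced by $\phi$, which alters $\langle\xi\rangle$ only by bounded factors and therefore preserves $\varphi$-reasonable decay, to passing from an operator to its $L_2$-symbol. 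Thus the chart pushforwards of $p_G$ are, up to such harmless changes of variable, finite sums of the $L_2$-symbols of the operators $\kappa_j^*(\chi_jG\chi_j)(\kappa_j^{-1})^*$, each having $\varphi$-reasonable decay, and a finite sum of functions whose weighted integrals are $o(\Phi(t))$ again has weighted integral $o(\Phi(t))$. Hence $p_G$ has $\varphi$-reasonable decay.

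I expect the main obstacle to be transferring strong $\varphi$-modulation from $M$ to the chart with control of the error terms that is uniform in the modulation parameter $t$: the resolvents $(1+t\varphi((1-\Delta_g)^{d/2}))^{-1}$ have globally supported Schwartz kernels, so comparing them with their Euclidean counterparts after cutting off is not immediate and must be routed through the $\varphi$-pseudo-differential calculus — the order-lowering of Theorem \ref{factpsps} and the metric comparison of Lemma \ref{comparingpsis} — exactly as in the proof of the preceding corollary.
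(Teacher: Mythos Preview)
Your proposal is correct and follows essentially the same approach as the paper: the first assertion is proved via Lemma \ref{VAmod} and the $\varphi$-pseudo-differential machinery (Theorem \ref{factpsps}, Lemma \ref{comparingpsis}), and the second via Lemma \ref{phimodphireas} applied chartwise to the pieces of $\ell_\mathcal{U}(G)$. You actually supply more detail than the paper, which for the first part simply cites \cite[Proposition 11.6.7]{LSZ} together with Lemma \ref{VAmod} and Theorem \ref{factpsps}, and for the second part invokes Proposition \ref{decomposingt} and Lemma \ref{phimodphireas} without spelling out the passage between abstract symbols and chartwise $L_2$-symbols that you carefully address.
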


\begin{proof}
The proof that $G$ is locally strongly $\varphi$-Laplacian modulated whenever it is strongly $\varphi$-modulated with respect to $\varphi((1-\Delta_g)^{d/2})$ goes as in \cite[Propostion 11.6.7]{LSZ} (using Lemma \ref{VAmod} and Theorem \ref{factpsps}) and is omitted. To prove the final statement, fix the auxiliary data $(\epsilon,\phi,\mathcal{U})$ needed to define the localized abstract symbol. We note that $\ell_\mathcal{U}(G)$ is by Proposition \ref{decomposingt} a finite sum of locally strongly $\varphi$-Laplacian modulated operators compactly supported in a coordinate chart. As such, the localized abstract symbol $\sigma_{\epsilon,\phi,\mathcal{U}}(G)$ has $\varphi$-reasonable decay by Lemma \ref{phimodphireas}.

\end{proof}

\section{Connes' trace formula on closed manifolds}
\label{sec:CTTonmanifolds}
Now we prove Connes' trace formula for $\varphi$-Laplacian modulated operators on manifolds.

\begin{thm}
\label{CTT}
Let $\varphi\in SR_{-1}$ be a decreasing function. Assume that $M$ is a $d$-dimensional Riemannian closed manifold. Consider an operator $G\in \Lw(L_2(M))$ which is 
\begin{enumerate}
\item locally weakly $\varphi$-Laplacian modulated (see Definition \ref{locallyweakly}),
\item having a localized abstract symbol $p_G$ with $\varphi$-reasonable decay (see Definition \ref{phidecmnfd}).
\end{enumerate}
Then for every extended limit $\omega$ on $\ell_\infty$ we have
$${\rm Tr}_\omega(G) = \omega \left(\frac1{\Phi (n+1)} \int_{M} \int_{|\xi|\le n^{1/d}} p_G(x,\xi)\,\mathrm{d}\xi \mathrm{d}x\right).$$
\end{thm}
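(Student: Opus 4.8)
The plan is to reduce Theorem \ref{CTT} to the Euclidean result Theorem \ref{CTT_R} via the localization machinery of Section \ref{sec:localizing}. First I would fix $\epsilon$-covering data $\mathcal{U}=(U_j,\kappa_j,\chi_j)_{j=1}^N$ for $M$ (small enough that $\epsilon$-covering data exists), chosen compatibly with the auxiliary data $(\epsilon,\phi)$ used to define the localized abstract symbol $p_G=\sigma_{\epsilon,\phi,\mathcal{U}}(G)$. By Proposition \ref{localizprop}, $\mathrm{Tr}_\omega(G)=\mathrm{Tr}_\omega(\ell_\mathcal{U}(G))$ and $\ell_\mathcal{U}(G)=\sum_j G_{jj}$ with $G_{jj}=\chi_jG\chi_j$, each compactly supported in the coordinate chart $U_j$, of propagation $\epsilon$, and lying in $\Lw(L_2(M))$. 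By linearity of the Dixmier trace it then suffices to prove the trace formula for each $G_{jj}$ individually, pushed forward to $\R^d$ via $\kappa_j$.

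Next I would check that each localized piece satisfies the hypotheses of Theorem \ref{CTT_R}. By Proposition \ref{decomposingt} (applied with "weakly"), each $G_{jj}$ is locally weakly $\varphi$-Laplacian modulated, and being compactly supported in a single chart, its pushforward $\tilde{G}_j:=\kappa_j^*\circ G_{jj}\circ(\kappa_j^{-1})^*$ is a compactly supported weakly $\varphi$-Laplacian modulated operator on $L_2(\R^d)$ lying in $\Lw(L_2(\R^d))$. For the symbol property, I would use the remark following Definition \ref{phidecmnfd}: $p_G$ having $\varphi$-reasonable decay on $M$ is equivalent to $(D\kappa_j^{-1})^*(\pi_{T^*M}^*(\chi)p_G)$ having $\varphi$-reasonable decay in the Euclidean sense for each chart. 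Here the main technical point is relating the $L_2$-symbol $p_{\tilde{G}_j}$ of $\tilde{G}_j$ (in the sense of Section \ref{subsec:symbprop}) to the localized abstract symbol $p_G$ restricted to $U_j$; these differ only through the choice of the tubular-neighborhood diffeomorphism $\phi$ versus the exponential-type identification implicit in the Euclidean $L_2$-symbol, and I expect this discrepancy to preserve $\varphi$-reasonable decay (it amounts to a fibrewise change of variables that is the identity to first order near the zero section, controlled exactly as in the $\varphi$-pseudo-differential coordinate-change computations of Section \ref{psipseudos}). This is the step I expect to be the main obstacle: carefully arguing that the abstract symbol and the chart-wise Euclidean $L_2$-symbol agree modulo terms whose integral against $\langle t-\langle\xi\rangle^d\rangle^{-1}$ is $o(\Phi(t))$.

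Having verified the hypotheses, I would apply Theorem \ref{CTT_R} to each $\tilde{G}_j$ to get, with $p_j$ its Euclidean $L_2$-symbol,
$$\mathrm{Tr}_\omega(G_{jj})=\omega\left(\frac{1}{\Phi(n+1)}\int_{\R^d}\int_{\langle\xi\rangle\le n^{1/d}}p_j(x,\xi)\,\mathrm{d}\xi\,\mathrm{d}x\right).$$
Summing over $j$, using linearity of $\omega$, and translating the sum of local integrals back to an integral over $T^*M$ of the localized abstract symbol $p_G$ (again invoking the chart-wise comparison, plus the fact that $\sum_j$ of the chart contributions reassembles $\int_M\int$ by the partition of unity structure built into $\ell_\mathcal{U}$ and $\sigma_{\epsilon,\phi,\mathcal{U}}$), yields
$$\mathrm{Tr}_\omega(G)=\omega\left(\frac{1}{\Phi(n+1)}\int_M\int_{|\xi|\le n^{1/d}}p_G(x,\xi)\,\mathrm{d}\xi\,\mathrm{d}x\right).$$
One cosmetic point: Theorem \ref{CTT_R} uses the cutoff $\langle\xi\rangle\le n^{1/d}$ while the statement here uses $|\xi|\le n^{1/d}$; since $\varphi\in SR_{-1}$ and the difference of the two regions is a shell of bounded $\Phi$-measure (here one would want the extra observation that $p_G\in L_2(T^*M)$ together with the reasonable-decay estimate forces $\int$ over this shell to be $o(\Phi(n))$), the two formulas agree after applying $\omega$. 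I would close by remarking that all the arbitrariness in $p_G$ lives in $S^{-\infty}$-type corrections that do not affect the right-hand side, consistent with the propositions established at the end of Section \ref{sec:localizing}.
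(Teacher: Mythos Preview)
Your proposal is correct and follows essentially the same route as the paper: localize via the covering data to reduce to operators compactly supported in a single chart, then invoke Theorem~\ref{CTT_R}. The paper's proof is much terser, citing Corollary~\ref{coronmod} in one line for the reduction (rather than Propositions~\ref{localizprop} and~\ref{decomposingt} separately) and then applying Theorem~\ref{CTT_R} directly, without pausing over the symbol-comparison issue or the $\langle\xi\rangle$ versus $|\xi|$ cutoff that you flagged; your version simply makes those swept-under-the-rug points explicit.
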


\begin{proof}
We can by Corollary \ref{coronmod} assume that $G$ is supported in a coordinate chart $U$ in which it is defined from an $L_2$-symbol $p_G$ having $\varphi$-reasonable decay. Theorem \ref{CTT_R} implies that 
$${\rm Tr}_\omega(G) = \omega \left(\frac1{\Phi (n+1)} \int_{U} \int_{|\xi|\le n^{1/d}} p_G(x,\xi)\,\mathrm{d}\xi \mathrm{d}x\right)= \omega \left(\frac1{\Phi (n+1)} \int_{M} \int_{|\xi|\le n^{1/d}} p_G(x,\xi)\,\mathrm{d}\xi \mathrm{d}x\right).$$

\end{proof}

\begin{rem}
By Corollary \ref{coronmod}, Theorem \ref{CTT} applies if $G$ is weakly $\varphi$-modulated with respect to $\varphi((1-\Delta_g)^{d/2})$ and has a localized abstract symbol of $\varphi$-reasonable decay. Moreover, the reader is encouraged to recall Lemma \ref{mod_w-mod} stating that strongly $\varphi$-modulated operators are weakly $\varphi$-modulated if $\varphi$ has property (W1).

If $\varphi(t)=O(t^{-1})$ then having symbols of $\varphi$-reasonable decay is automatic for any strongly $\varphi$-modulated operator by Proposition \ref{strongandreas}.  

\end{rem}

\begin{cor}
\label{CTT_psipsido}
Let $\varphi\in SR_{-1}$ be a decreasing function. Assume that $M$ is a $d$-dimensional Riemannian closed manifold. For $G\in L^0_\varphi(M)$, with $\varphi$-symbol $p\in S^0_\varphi(M)$, and for every extended limit $\omega$ on $\ell_\infty$ we have
$${\rm Tr}_\omega(G) = \omega \left(\frac1{\Phi (n+1)} \int_{M} \int_{|\xi|\le n^{1/d}} p(x,\xi)\,\mathrm{d}\xi \mathrm{d}x\right).$$
\end{cor}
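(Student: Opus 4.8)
The plan is to deduce Corollary \ref{CTT_psipsido} directly from Theorem \ref{CTT} by checking that a $\varphi$-pseudo-differential operator $G\in L^0_\varphi(M)$ satisfies the two hypotheses of that theorem, and that the integral appearing there can be computed from the $\varphi$-symbol $p\in S^0_\varphi(M)$ rather than from a localized abstract symbol.

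\textbf{Step 1: $G\in \Lw(L_2(M))$.} This is immediate from the corollary preceding Section \ref{sec:localizing}: the representation $L^0(M)\to \mathbb{B}(L_2(M))$ restricts to $L^0_\varphi(M)\to \mathcal{L}_\varphi(L_2(M))$, since $\varphi\in SR_{-1}$ has smooth regular variation.

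\textbf{Step 2: $G$ is locally weakly $\varphi$-Laplacian modulated.} By Definition \ref{locallyweakly}, it suffices to check that for any coordinate chart $\kappa:U\to U'$ and $\chi,\chi'\in C^\infty_c(U)$, the operator $\kappa^*\circ(\chi G\chi')\circ(\kappa^{-1})^*$ on $L_2(\R^d)$ is weakly $\varphi$-Laplacian modulated. By Proposition \ref{coordinatecha} this operator lies in $L^0_\varphi(\R^d)$ and is compactly supported, hence by Proposition \ref{productdecomp} it has the form $T_0 Op(\varphi_0)+S$ with $T_0\in L^0(\R^d)$ compactly based and $S$ smoothing compactly supported. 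Now $Op(\varphi_0)$ differs from $\varphi((1-\Delta)^{d/2})$ by an operator in $L^{-1}_\varphi(\R^d)$ (Lemma \ref{philapuptolot}), and applying Proposition \ref{weakregandsob}/Proposition \ref{locallylwpsi} together with the compactly based factor $T_0$ shows $\kappa^*(\chi G\chi')(\kappa^{-1})^* V^{-s}$ extends boundedly into the appropriate convexification of $\Lw$ for $s$ small; in fact one can more cleanly observe that since $G$ is a $\varphi$-pseudo-differential operator, a compactly supported piece is even \emph{strongly} $\varphi$-Laplacian modulated (by the machinery around Lemma \ref{p_mod}, as $|p(x,\xi)|\leq C\varphi_g(\xi)$), and then invoke Proposition \ref{strongandreas} / the discussion in Proposition \ref{decomposingt} which explicitly lists $\varphi$-pseudo-differential operators as locally (strongly, hence weakly) $\varphi$-Laplacian modulated. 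Either route gives hypothesis (1) of Theorem \ref{CTT}.

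\textbf{Step 3: the localized abstract symbol has $\varphi$-reasonable decay, and equals $p$ modulo $S^{-1}_\varphi(M)$.} By the proposition preceding Definition \ref{phidecmnfd}, if $G\in L^0_\varphi(M)$ has pseudo-differential symbol $p\in S^0_\varphi(M)$ then its localized abstract symbol $\sigma_{\epsilon,\phi,\mathcal{U}}(G)$ lies in $S^0_\varphi(M)$ and $p-\sigma_{\epsilon,\phi,\mathcal{U}}(G)\in S^{-1}_\varphi(M)$. By Proposition \ref{reasondecayofphipsido} every element of $S^0_\varphi(M)$ has $\varphi$-reasonable decay (here one uses that $\varphi$ satisfies (W2); this holds for the relevant $\varphi$'s, and if (W2) is not available one notes that the statement of Proposition \ref{psi_growth}/\ref{phimodphireas} combined with the estimate $|p(x,\xi)|\le C\varphi_g(\xi)$ still gives $\varphi$-moderate growth hence $\varphi$-reasonable decay via Lemma \ref{growthtodecay}). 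This gives hypothesis (2). Moreover, since $\sigma_{\epsilon,\phi,\mathcal{U}}(G)-p\in S^{-1}_\varphi(M)$, the difference $q:=\sigma_{\epsilon,\phi,\mathcal{U}}(G)-p$ satisfies $|q(x,\xi)|\le C\langle\xi\rangle^{-1}\varphi_g(\xi)$, so
$$\left|\int_M\int_{|\xi|\le n^{1/d}} q(x,\xi)\,\mathrm{d}\xi\,\mathrm{d}x\right| \le C\,\mathrm{vol}(M)\int_{|\xi|\le n^{1/d}}\langle\xi\rangle^{-1}\varphi_g(\xi)\,\mathrm{d}\xi = O\!\left(n^{(d-1)/d}\varphi(n)\right)=o(\Phi(n)),$$
using Proposition \ref{regvarresult}(i) and $\varphi\in R_{-1}$ (so that $n\varphi(n)=o(\Phi(n))$, a fortiori $n^{(d-1)/d}\varphi(n)=o(\Phi(n))$). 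Hence replacing $\sigma_{\epsilon,\phi,\mathcal{U}}(G)$ by $p$ inside the Cesàro-type average $\frac{1}{\Phi(n+1)}\int_M\int_{|\xi|\le n^{1/d}}(\cdot)$ changes it by a null sequence, which $\omega$ annihilates.

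\textbf{Conclusion.} Applying Theorem \ref{CTT} to $G$ gives ${\rm Tr}_\omega(G)=\omega\big(\frac{1}{\Phi(n+1)}\int_M\int_{|\xi|\le n^{1/d}}\sigma_{\epsilon,\phi,\mathcal{U}}(G)(x,\xi)\,\mathrm{d}\xi\,\mathrm{d}x\big)$, and by Step 3 this equals $\omega\big(\frac{1}{\Phi(n+1)}\int_M\int_{|\xi|\le n^{1/d}}p(x,\xi)\,\mathrm{d}\xi\,\mathrm{d}x\big)$, as claimed. I expect the only genuinely nontrivial point to be the bookkeeping in Step 2 — verifying that a compactly supported $\varphi$-pseudo-differential operator really is weakly $\varphi$-Laplacian modulated — but this has effectively been done already in Proposition \ref{decomposingt} and Proposition \ref{strongandreas}, so the corollary is a packaging of results already in place; the rest is the elementary remainder estimate in Step 3.
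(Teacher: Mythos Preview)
Your approach is essentially the paper's: verify the two hypotheses of Theorem~\ref{CTT} and pass from the localized abstract symbol to the $\varphi$-symbol $p$ using that their difference lies in $S^{-1}_\varphi(M)$. The paper handles your Step~2 more cleanly by invoking Theorem~\ref{factpsps} directly (the identity $L^0_\varphi(M)=L^0(M)\,\varphi((1-\Delta_g)^{d/2})$ shows $G\varphi((1-\Delta_g)^{d/2})^{-1}\in L^0(M)$ is bounded, i.e.\ $G$ is weakly $\varphi$-modulated with $p=1$), while your Step~3 is actually more explicit than the paper's, which simply reduces to $G=Op(p)$ via the vanishing of Dixmier traces on $L^{-1}_\varphi(M)$. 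One minor slip: your intermediate bound $\int_{|\xi|\le n^{1/d}}\langle\xi\rangle^{-1}\varphi_g(\xi)\,\mathrm{d}\xi = O(n^{(d-1)/d}\varphi(n))$ is not correct---after the radial substitution $s=|\xi|^d$ the integrand becomes $s^{-1/d}\varphi(s)\in R_{-1-1/d}$, which is integrable at infinity, so the integral is $O(1)$; this only strengthens the conclusion $o(\Phi(n))$.
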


\begin{proof}
We note that $G-Op(p)\in L^{-1}_\varphi(M)$ so ${\rm Tr}_\omega(G)= {\rm Tr}_\omega(Op(p))$ because all Dixmier traces vanish on $L^{-1}_\varphi(M)$. To prove the corollary, we can assume that $G=Op(p)$ and in this case verify the assumptions of Theorem \ref{CTT}. It follows from Theorem \ref{factpsps} that any $G\in L^0_\varphi(M)$ is locally weakly $\varphi$-Laplacian modulated, in fact, Theorem \ref{factpsps} shows that any $G\in L^0_\varphi(M)$ is  weakly $\varphi$-modulated with respect to $\varphi((1-\Delta_g)^{d/2})$. By Proposition \ref{reasondecayofphipsido}, any symbol $p\in S^0_\varphi(M)$ has $\varphi$-reasonable decay.
\end{proof}

\subsection{$\log$-classical pseudo-differential operators}

In~\cite{Lesch}, Lesch considered classes of pseudo-differential operators $CL^{m,k}(M)$, $m\in \Z$, $k\in \N$, consisting of log-polyhomogeneous pseudo-differential operators on a $d$-dimensional manifold $M$. These are operators given by~\eqref{opdef} with symbols of the following form:
$$a(x,\xi) \sim \sum_{j=0}^\infty \sum_{i=0}^k a_{m-j,i}(x,\xi) \log^i |\xi|,$$
where functions $a_{m-j,i}$ are homogeneous of degree $m-j$ in the second argument.

The classes $CL^{-d,k}(M)$ are subclasses of $L^0_{\varphi_k}(M)$ for $\varphi_k(t) = \frac{\log^k (\e+t)}{\e+t}$, $k\in \N$. For the classes $CL^{m,k}(M)$, Lesch constructed a Wodzicki-type non-commutative residue ${\rm Res}_k$ and established the following expression for expression for ${\rm Res}_k$ in terms of the symbol of an operator:

If $M$ is a $d$-dimensional manifold and $A\in CL^{-d,k}(M)$, then 
\begin{equation}
\label{Res_k}
\mathrm{Res}_k(A) = \frac{(k+1)!}{(2\pi)^n} \int_{S^*M}  a_{-d,k}(x, \xi) \ \mathrm{d}\xi \mathrm{d}x.
\end{equation}

\begin{thm}
\label{cor_logcl} 
Let $M$ be a compact Riemannian $d$-dimensional manifold and let $A\in CL^{-d,k}(M)$, $k\in \N$, then

(i) $A \in \mL_{\varphi_k}$ for $\varphi_k(t) = \frac{\log^k (\e+t)}{\e+t}$;

(ii) For every Dixmier trace ${\rm Tr}_\omega$ on $\mL_{\varphi_k}$ we have
\begin{equation}
{\rm Tr}_\omega (A) = \frac{(2\pi)^n}{(k+1)! \cdot d^{k+1}} \  \mathrm{Res}_k(A).
\end{equation}
\end{thm}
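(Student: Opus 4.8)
The plan is to deduce Theorem~\ref{cor_logcl} from the general Connes' trace formula on closed manifolds, Corollary~\ref{CTT_psipsido}, by unpacking what the $\varphi_k$-symbol class looks like for a log-classical operator and computing the resulting phase-space integral. First I would check that $\varphi_k(t)=\frac{\log^k(\e+t)}{\e+t}\in SR_{-1}$ is decreasing (for $k\ge 1$ it is eventually decreasing, and modifying $\varphi_k$ on a compact set changes neither $\Lw$ nor Dixmier traces), so that all the machinery of Section~\ref{psipseudos} and Section~\ref{sec:CTTonmanifolds} applies. The key structural observation is that if $A\in CL^{-d,k}(M)$ has symbol $a(x,\xi)\sim\sum_{j\ge 0}\sum_{i=0}^k a_{-d-j,i}(x,\xi)\log^i|\xi|$, then $a\in S^0_{\varphi_k}(M)$: indeed $\varphi_{k,0}(\xi)=\varphi_k(\langle\xi\rangle^d)$ behaves like $\langle\xi\rangle^{-d}\log^k\langle\xi\rangle$ up to lower order, so $a/\varphi_{k,0}$ is a symbol of order $0$ and type $(1,0)$ (one verifies the derivative estimates using Lemma~\ref{A14} together with the fact that $\log^i|\xi|\le C\log^k|\xi|$ for $i\le k$ on $|\xi|\ge 1$, and that terms with $j\ge 1$ or $i<k$ land in $S^{-1}_{\varphi_k}(M)$). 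This gives $A\in L^0_{\varphi_k}(M)$, hence part~(i) follows from the final corollary of Section~\ref{psipseudos}, which says $L^0_\varphi(M)$ is represented inside $\mathcal{L}_\varphi(L_2(M))$.

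For part~(ii), I would apply Corollary~\ref{CTT_psipsido} with the $\varphi_k$-symbol $p=a$ to get
$$\mathrm{Tr}_\omega(A)=\omega\left(\frac{1}{\Phi_k(n+1)}\int_M\int_{|\xi|\le n^{1/d}}a(x,\xi)\,\mathrm{d}\xi\,\mathrm{d}x\right),$$
where $\Phi_k(t)=\int_0^t\varphi_k(s)\,\mathrm{d}s\sim\frac{\log^{k+1}(\e+t)}{k+1}$ (Example~\ref{logkont}). The task is then to evaluate the inner integral asymptotically. Only the top logarithmic part of the symbol contributes: writing $a(x,\xi)=a_{-d,k}(x,\xi)\log^k|\xi|+r(x,\xi)$ with $r\in S^0_{\varphi_k}$ a symbol whose leading behaviour is $O(\langle\xi\rangle^{-d}\log^{k-1}\langle\xi\rangle)$, one shows $\int_M\int_{|\xi|\le R}r(x,\xi)\,\mathrm{d}\xi\,\mathrm{d}x=O(\log^k R)=o(\Phi_k(R^d))=o(\log^{k+1}R)$, using polar coordinates in $\xi$ and the fact that $\int_1^R\rho^{-1}\log^{k-1}\rho\,\mathrm{d}\rho=O(\log^k R)$ (here one must also absorb the contributions of the non-leading homogeneous terms $a_{-d-j,i}$ with $j\ge 1$, which are integrable at infinity, and the local behaviour near $\xi=0$, which is harmless after a smooth cutoff). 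For the main term, split $\xi\in W^*$ into radial and angular parts: with $|\xi|=\rho$ and $\omega=\xi/|\xi|$,
$$\int_{|\xi|\le n^{1/d}}a_{-d,k}(x,\xi)\log^k|\xi|\,\mathrm{d}\xi=\int_{S^{d-1}}a_{-d,k}(x,\omega)\,\mathrm{d}\omega\int_0^{n^{1/d}}\rho^{-d}\log^k(\rho)\,\rho^{d-1}\,\mathrm{d}\rho,$$
using homogeneity $a_{-d,k}(x,\rho\omega)=\rho^{-d}a_{-d,k}(x,\omega)$. The radial integral is $\int_1^{n^{1/d}}\rho^{-1}\log^k\rho\,\mathrm{d}\rho=\frac{1}{k+1}\log^{k+1}(n^{1/d})+O(1)=\frac{1}{(k+1)d^{k+1}}\log^{k+1}n+O(1)$.

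Putting these together, $\int_M\int_{|\xi|\le n^{1/d}}a(x,\xi)\,\mathrm{d}\xi\,\mathrm{d}x=\frac{\log^{k+1}n}{(k+1)d^{k+1}}\int_{S^*M}a_{-d,k}(x,\xi)\,\mathrm{d}\xi\,\mathrm{d}x+o(\log^{k+1}n)$, where $\int_{S^*M}$ denotes integration over the cosphere bundle (the $x$-integral over $M$ and the $\omega$-integral over $S^{d-1}$ together). Dividing by $\Phi_k(n+1)\sim\frac{\log^{k+1}n}{k+1}$ and using that $\omega$ is an extended limit (so it sees only the limit of a convergent sequence), we get
$$\mathrm{Tr}_\omega(A)=\frac{1}{d^{k+1}}\int_{S^*M}a_{-d,k}(x,\xi)\,\mathrm{d}\xi\,\mathrm{d}x.$$
Comparing with Lesch's formula~\eqref{Res_k}, namely $\mathrm{Res}_k(A)=\frac{(k+1)!}{(2\pi)^n}\int_{S^*M}a_{-d,k}(x,\xi)\,\mathrm{d}\xi\,\mathrm{d}x$, gives $\int_{S^*M}a_{-d,k}=\frac{(2\pi)^n}{(k+1)!}\mathrm{Res}_k(A)$ and hence ${\rm Tr}_\omega(A)=\frac{(2\pi)^n}{(k+1)!\,d^{k+1}}\mathrm{Res}_k(A)$, as claimed.

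The step I expect to be the main obstacle is the bookkeeping in the asymptotic evaluation of the phase-space integral: one must carefully justify that all pieces of the symbol other than the top homogeneous, top log-power term $a_{-d,k}\log^k|\xi|$ contribute only $o(\log^{k+1}n)$, including (a) the lower homogeneous terms $a_{-d-j,i}$ with $j\ge 1$, whose integrals converge as $n\to\infty$, (b) the terms $a_{-d,i}\log^i|\xi|$ with $i<k$, contributing $O(\log^{i+1}n)=o(\log^{k+1}n)$, and (c) the remainder in the polyhomogeneous expansion, which lies in $S^{-d-1}_{\rm cl}$ up to log factors and is thus integrable at infinity. A secondary but routine point is matching conventions: Lesch's residue formula involves the cosphere bundle $S^*M$ with respect to a chosen metric, and one should note that both $\mathrm{Res}_k(A)$ and the integral $\int_{S^*M}a_{-d,k}$ are metric-independent (the leading log-homogeneous term transforms as a density on $S^*M$), so no compatibility issue arises; the dimension is denoted $d$ in our statement and $n$ in Lesch's, which accounts for the $(2\pi)^n$ versus $d^{k+1}$ in the final formula.
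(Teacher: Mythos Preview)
Your proposal is correct and follows essentially the same approach as the paper: both identify $CL^{-d,k}(M)\subset L^0_{\varphi_k}(M)$, invoke the Connes trace formula on closed manifolds, and compute the phase-space integral in polar coordinates so that only the top term $a_{-d,k}(x,\xi)\log^k|\xi|$ survives after dividing by $\Phi_k$. The only cosmetic differences are that you cite Corollary~\ref{CTT_psipsido} (the $\varphi$-pseudo-differential version) rather than Theorem~\ref{CTT}, and you give a more explicit accounting of why the subleading terms contribute $o(\log^{k+1}n)$, which the paper leaves implicit.
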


\begin{proof}
(i) Since $CL^{-d,k}(M) \subset L^0_{\varphi_k}(M)$, it follows from Proposition~\ref{psi_mod} and Lemma~\ref{l_exp} that $A \in \mL_{\varphi_k}$.

(ii) The leading symbol of operator $A$ is $a_{-d,k}(x,\xi) \log^k |\xi|$ where $a_{-d,k}$ is a homogeneous function of degree $-d$ in the second argument. Hence,
\begin{align*}
\int_{M} \int_{|\xi|\le n^{1/d}} a_{-d,k}(x,\xi) \log^k |\xi|\,\mathrm{d}\xi \mathrm{d}x &=
\int_{M} \int_{1\le|\xi|\le n^{1/d}} a_{-d,k}(x,\xi) \log^k |\xi|\,\mathrm{d}\xi \mathrm{d}x +O(1).
\end{align*}

Setting $\xi=|\xi|\rho$ with $\rho \in \mathbb S^{d-1}$, we have $a_{-d,k}(x,\xi) = |\xi|^{-d}a_{-d,k}(x,\rho)$ and $\mathrm{d}\xi=|\xi|^{d-1}\mathrm{d}|\xi| \mathrm{d}\rho$. Thus,
\begin{align*}
\int_{M} \int_{|\xi|\le n^{1/d}} &a_{-d,k}(x,\xi) \log^k |\xi|\,\mathrm{d}\xi \mathrm{d}x \\
&=\int_{M} \int_{\mathbb S^{d-1}} \int_{1}^{n^{1/d}} |\xi|^{-d}a_{-d,k}(x,\rho) \log^k |\xi| |\xi|^{d-1}\,\mathrm{d}|\xi| \mathrm{d}\rho \mathrm{d}x +O(1)\\
&=\int_{M} \int_{\mathbb S^{d-1}} a_{-d,k}(x,\rho)\mathrm{d}\rho \mathrm{d}x \cdot \int_{1}^{n^{1/d}} |\xi|^{-1} \log^k |\xi| \,\mathrm{d}|\xi|  +O(1)\\
&=\frac{(2\pi)^n}{(k+1)! } \  \mathrm{Res}_k(A) \cdot \frac{\log^{k+1} n^{1/d}}{k+1}+O(1)\\
&=\frac{(2\pi)^n}{(k+1)! \cdot d^{k+1}} \  \mathrm{Res}_k(A) \cdot \Phi_k( n+1)+O(1),
\end{align*}
where $\Phi_k$ is a primitive function of $\varphi_k$.

Therefore, for the complete symbol $p_A$ of $A$ we have
$$\int_{M} \int_{|\xi|\le n^{1/d}} p_A(x,\xi) \,\mathrm{d}\xi \mathrm{d}x = \frac{(2\pi)^n}{(k+1)! \cdot d^{k+1}} \  \mathrm{Res}_k(A) \cdot \Phi_k( n+1)+o(\Phi_k( n+1))$$
and
$$\frac1{\Phi_k( n+1)}\int_{M} \int_{|\xi|\le n^{1/d}} p_A(x,\xi) \,\mathrm{d}\xi \mathrm{d}x = \frac{(2\pi)^n}{(k+1)! \cdot d^{k+1}} \  \mathrm{Res}_k(A) +o(1).$$

The assertion follows from Theorem~\ref{CTT}.
\end{proof}

\subsection{Non-commutative residues and zeta-functions}

The residue $\mathrm{Res}_k$ from~\eqref{Res_k} can also be expressed in terms of the residue of a $\zeta$-function. If $M$ is a $d$-dimensional manifold, $P\in CL^{m,0}(M)$ and $A\in CL^{-d,k}(M)$, then a $\zeta$-function ${\rm Tr} (AP^{-s})$ has a meromorphic continuation to $\C$ with poles in $\{-j/m : j\in \N\}$ of order $k+1$ (see the discussion after Theorem 3.7 in~\cite{Lesch}). Moreover,
\begin{equation}
\label{resdeforeq}
\mathrm{Res}_k(A) = m^{k+1} \mathrm{res}_{k+1} {\rm Tr} (AP^{-s})\vert_{s=0},
\end{equation}
independently of $P$~\cite[Theorem 4.4]{Lesch}. Here, $\mathrm{res}_{k+1}$ is the usual $(k+1)$-st residue of a meromorphic function. In fact, in \cite{Lesch} the formula \eqref{resdeforeq} was used as a definition and~\eqref{Res_k} was established as a theorem.

Let $\varphi$ have smooth regular variation and let $G\in \Lw(L_2(M))$ be a positive weakly $\varphi$-Laplacian modulated operator with an $L_2$-symbol $p_G$ of $\varphi$-reasonable decay. Following~\cite[Definition 11.3.19]{LSZ} we define the vector-valued residue of $G$ as follows:
$$\mathrm{Res}(G) :=\left[ \left\{\frac1{\Phi (n+1)} \int_{M} \int_{|\xi|\le n^{1/d}} p_G(x,\xi)\,\mathrm{d}\xi \mathrm{d}x\right\}_{n\in\N}\right],$$
where $[\cdot]$ denotes an equivalence class in $l_\infty/c_0$.

We shall show how $\mathrm{Res}(G)$ can be computed in terms of a $\zeta$-function of $G$.

It is proved in~\cite[Theorem 3.3]{GS} that for $\varphi(t) = \frac{\log^k(\e+t)}{\e+t}$, every positive $G\in \Lw$ and every exponentiation invariant extended limit $\omega$ the following formula holds:
$${\rm Tr}_\omega (G) = \frac1{(k+1)!} \omega \left( \frac{{\rm Tr} (G^{1+1/\log n})}{\Phi(n)} \right).$$
If $\mathrm{Res}(G)$ is scalar-valued (that is, if the sequence $\mathrm{Res}(G)$ converges), then it follows from Theorem~\ref{CTT}, properties of extended limits and the preceding formula that
$$\mathrm{Res}(G) = \frac1{(k+1)!} \lim_{n\to\infty} \frac{{\rm Tr} (G^{1+1/\log n})}{\Phi(n)}
=\frac1{(k+1)!} \lim_{s\to1+} (s-1)^{k+1}{\rm Tr} (G^s),$$
since $\Phi(t)=\log^{k+1}(\e+t).$

\begin{rem}
\label{remfromDima}
Although the residue can be evaluated via the asymptotic of the $\zeta$-function $\zeta_G(s):={\rm Tr} (G^s)$ for a positive $G\in \Lw$, it is not guaranteed that $\zeta_G$ is meromorphic at $s=1$. In particular, if $G$ satisfies that $\lambda_n(G)=\frac{\log^k(2+n)}{2+n}$, $n\ge0$, then $\zeta_G(s)$ is \emph{not} analytic in any punctured neighbourhood of $s=1$. We shall demonstrate this for $k=1$. So, the question is whether the function
$$s\to\sum_{n=1}^{\infty}\left(\frac{\log(n)}{n}\right)^s$$
admits an analytic continuation to a punctured neighbourhood of $s=1$ and has a pole of second order in $s=1$.

Consider the analytic functions $\alpha_n$ defined near $1$ by the formula
$$\alpha_n(s)=\left(\frac{\log(n)}{n}\right)^s-\int_n^{n+1}\left(\frac{\log(t)}{t}\right)^s\,\mathrm{d}t.$$
It is easy to see that
$$|\alpha_n(s)|\leq {\rm const}\cdot\frac{\log^{\Re(s)}(n)}{n^{1+\Re(s)}}.$$
Hence, the series $\sum_{n=1}^{\infty}\alpha_n(s)$ converges uniformly and is, therefore, analytic near $s=1.$

It, therefore, suffices to consider the behaviour of the function
$$s\to\sum_{n=1}^{\infty}\int_n^{n+1}\left(\frac{\log(t)}{t}\right)^s\,\mathrm{d}t=\int_1^{\infty}\left(\frac{\log(t)}{t}\right)^s\,\mathrm{d}t.$$
Making the substitution $t=e^u,$ we infer that
$$\int_1^{\infty}\left(\frac{\log(t)}{t}\right)^s\,\mathrm{d}t=\int_0^{\infty}u^se^{-u(s-1)}du=(s-1)^{-s-1}\Gamma(1+s).$$
The latter function is analytic at $\mathbb{C}\backslash(-\infty,1].$ It cannot be extended to an analytic function in a punctured neighbourhood of $s=1,$ so formally the residue is not defined. However, the limit
$$\lim_{s\to1+}(s-1)^2\sum_{n=1}^{\infty}\left(\frac{\log(n)}{n}\right)^s$$
exists and equals $1.$
\end{rem}

\section{Examples from noncommutative geometry.}
\label{sec:examples}

One of the motivations to consider more general weak ideals and to study rather general pseudo-differential operators as in Section \ref{psipseudos}, comes from recent constructions in noncommutative geometry. We review these constructions here, how they fit into the machinery of Section \ref{psipseudos} and some peculiar consequences of Corollary \ref{CTT_psipsido} in this context. 

The fundamental objects of study in noncommutative geometry are spectral triples. A spectral triple $(\mathcal{A},\mathcal{H},D)$ consists of a Hilbert space $\mathcal{H}$, a $*$-subalgebra $\mathcal{A}\subseteq \mathbb{B}(\mathcal{H})$ and a self-adjoint operator $D$, densely defined on $\mathcal{H}$, such that $a(i\pm D)^{-1}\in \mathbb{K}(\mathcal{H})$ for $a\in \mathcal{A}$ and $\mathcal{A}\subseteq \mathrm{Lip}(D)$. Here
$$\mathrm{Lip}(D):=\{T\in \mathbb{B}(\mathcal{H}): \; T\mathrm{Dom}(D)\subseteq \mathrm{Dom}(D)\;\mbox{ and }\; [D,T] \; \mbox{ is norm bounded}\}. $$
The condition $a(i\pm D)^{-1}\in \mathbb{K}(\mathcal{H})$ is an ellipticity type condition, guaranteeing that $D$ has ``locally compact" resolvent relative to $\mathcal{A}$. The fact that $[D,a]$ is bounded for all $a\in \mathcal{A}$ is a ``differentiability" condition, indeed by \cite[Theorem 2.4]{ChristCom}, $\mathrm{Lip}(D)$ consists of operators $T$ such that $t\mapsto \mathrm{e}^{itD}T\mathrm{e}^{-itD}$ is weakly differentiable at $t=0$.

The prototypical example of a spectral triple is on a complete Riemannian manifold $M$ for a choice of Dirac operator $\slashed{D}$ acting on a Clifford bundle $S\to M$, the triple $(C^\infty_c(M),L_2(M,S),\slashed{D})$ is a spectral triple. In several applications, it is of interest to understand the dynamics of a (semi)group action. This is done by incorporating the (semi)group action $\Gamma\curvearrowright M$ into the spectral triple by considering the crossed product $C(M)\rtimes \Gamma$. It is problematic to incorporate the (semi)group action yet retaining bounded commutators with $\slashed{D}$ unless the action is isometric. In \cite{CM95} this problem is solved by leaving $M$ and going to a frame bundle. Recent works \cite{DGMW16,GM16,MS16} show that upon applying functional calculus by a logarithm to $\slashed{D}$, a spectral triple pertaining several properties of $M$ can be constructed on $C(M)\rtimes \Gamma$. In this section we show how  the pseudo-differential techniques developed in this paper are relevant to the study of such noncommutative geometries. Some results of this section appeared in~\cite{DGMW16}. Here we use different techniques simplifying several proofs.

\subsection{Setup for the Dirac operator} 

We consider a Dirac operator $\slashed{D}$ acting on a Clifford bundle $S\to M$ over a complete Riemannian manifold $M$. The operator $\slashed{D}$ is essentially self-adjoint for the core $C^\infty_c(M,S)$ by \cite{CheSA}, and we use $\slashed{D}$ as to denote the self-adjoint operator obtained from the graph closure of $C^\infty_c(M,S)$. Consider the globally Lipschitz function 
\begin{equation}
\label{functionh}
h:\R\to \R, \quad h(s):=\mathrm{sign}(s)\log(1+s^2).
\end{equation}
We introduce the notation 
\begin{equation}
\label{varphinotation}
\varphi_{m,k}(t)=(\mathrm{e}+t)^m(\log(\mathrm{e}+t))^k, \quad m\in \mathbb{R}, \; k\in \mathbb{Z},
\end{equation}
and note that $L^0_{\varphi_{m/d,k}}(M)=L^m_{\varphi_{0,k}}(M)$ for any  $m\in \mathbb{R}$ and $k\in \mathbb{Z}$. For a vector bundle $S\to M$ we can define classes $L^m_\varphi(M,S)$ of $\varphi$-pseudo-differential operators acting $C^\infty_c(M,S)\to C^\infty(M,S)$, in the same way as in Section \ref{psipseudos}.

We use $c:T^*M\to \mathrm{End}(S)$ to denote Clifford multiplication, so $c_x(\xi)\in \mathrm{End}(S_x)$ is the endomorphism defined by Clifford multiplying by $\xi\in T^*_xM$.

\begin{prop}
\label{propforD}
The operator 
$$D:=h(\slashed{D}),$$ 
satisfies the following 
\begin{enumerate}
\item $D\in L^0_{\varphi_{0,1}}(M)$ and $D-Op(p)\in L^{-1}_{\varphi_{0,1}}(M,S)$ where 
$$p(x,\xi):=c_x(\xi)|\xi|_x^{-1}\log(1+|\xi|^2_x), \ |\xi|>1.$$
\item As an operator on $L_2(M,S)$, $D$ is self-adjoint, $(i\pm D)^{-1}\in L^0_{\varphi_{0,-1}}(M)$ and $(i\pm D)^{-1}-Op(p^{-1})\in L^{-1}_{\varphi_{0,-1}}(M,S)$ for $p$ as in (1).
\item $a(i\pm D)^{-1}\in \mathcal{L}_{\varphi_{0,-1}}(L_2(M,S))$ for any $a\in C^\infty_c(M)$.
\end{enumerate}
\end{prop}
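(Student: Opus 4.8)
\textbf{Proof strategy for Proposition \ref{propforD}.}

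The plan is to prove the three statements in order, since (2) and (3) will follow from (1) together with the general machinery of Section~\ref{psipseudos}. The starting point for (1) is the standard fact (see e.g. \cite[Chapter XII]{Taylorpsidobook}) that for an elliptic self-adjoint first-order pseudo-differential operator $\slashed D$ on a closed manifold, and a function $h$ of the form \eqref{functionh}, the operator $h(\slashed D)$ is a pseudo-differential operator whose symbol is, to leading order, $h$ composed with the principal symbol of $\slashed D$. On a complete (non-compact) manifold one localizes: for $\chi,\chi'\in C^\infty_c(M)$ one embeds the supports in a closed manifold and applies the closed case, controlling the remainders via the propagation/locality arguments used throughout Section~\ref{psipseudos}. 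Since the principal symbol of $\slashed D$ is $c_x(\xi)$, which satisfies $c_x(\xi)^2=|\xi|_x^2$, and $h(s)\sim \log(1+s^2)$ for $|s|$ large (note $h$ is odd), functional calculus gives $h(\slashed D)$ the leading symbol $c_x(\xi)|\xi|_x^{-1}\log(1+|\xi|_x^2)$ for $|\xi|>1$. One then checks that this symbol, call it $p$, lies in $S^0_{\varphi_{0,1}}(M,S)$: indeed $|\xi|^{-1}\log(1+|\xi|^2)$ is comparable to $\varphi_{0,1}(\langle\xi\rangle^d)^{1/?}$ up to symbols of order $0$ — more precisely $\log(1+|\xi|^2)=\varphi_{0,1}(\langle\xi\rangle^d)/\varphi_{0,0}$ modulo $S^0$, and $c_x(\xi)|\xi|^{-1}\in S^0(M,S)$ is homogeneous of degree $0$, so $p\in \varphi_g S^0(M,S)+S^{-\infty}$. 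By Theorem~\ref{factpsps} (or rather its evident bundle version) this places $D\in L^0_{\varphi_{0,1}}(M,S)$ and the difference $D-Op(p)$ in $L^{-1}_{\varphi_{0,1}}(M,S)$, which is statement (1). The main subtlety here is handling $h$ being merely Lipschitz and not smooth at $0$; but $0$ is in the interior of the spectrum of $\slashed D$ only in a way that affects the \emph{finite} part of the spectrum, hence only contributes a smoothing operator, so this is harmless for the symbol computation — this verification is the one genuinely delicate point and I would treat it by splitting $h$ near and away from the origin with a smooth cutoff.

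For statement (2), self-adjointness of $D=h(\slashed D)$ is immediate from the spectral theorem since $h$ is real-valued. To identify $(i\pm D)^{-1}$ as a $\varphi$-pseudo-differential operator, I would note that $g_\pm(s):=(i\pm h(s))^{-1}$ is again a (bounded, now smooth away from a compact set and with controlled decay) function, and the same functional-calculus-plus-localization argument shows $g_\pm(\slashed D)\in L^0_{\varphi_{0,-1}}(M,S)$ with leading symbol $(i\pm p)^{-1}$; since $p$ grows logarithmically, $(i\pm p)^{-1}$ behaves like $p^{-1}$ modulo lower order, giving $(i\pm D)^{-1}-Op(p^{-1})\in L^{-1}_{\varphi_{0,-1}}(M,S)$. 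Alternatively, and perhaps more cleanly, one uses that $D$ is elliptic in $L^0_{\varphi_{0,1}}(M,S)$ with invertible (for $|\xi|$ large) symbol $p$, constructs a parametrix $Q\in L^0_{\varphi_{0,-1}}(M,S)$ with $DQ-1, QD-1\in L^{-\infty}$, and then checks $(i\pm D)^{-1}-Q$ is smoothing by a resolvent identity — this leverages the product structure $L^m_{\varphi_1}L^{m'}_{\varphi_2}\subseteq L^{m+m'}_{\varphi_1\varphi_2}$ from Theorem~\ref{factpsps}(b), with $\varphi_{0,1}\varphi_{0,-1}=\varphi_{0,0}$ up to the relevant equivalence.

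Statement (3) is then a direct consequence: by (2), $(i\pm D)^{-1}\in L^0_{\varphi_{0,-1}}(M,S)$, and $\varphi_{0,-1}(t)=(\e+t)^{0}(\log(\e+t))^{-1}=1/\log(\e+t)$ has smooth regular variation of index $0$; multiplying on the left by $a\in C^\infty_c(M)\subseteq C^\infty(M)=L^0(M)$ keeps us in $L^0_{\varphi_{0,-1}}(M,S)$ by Theorem~\ref{factpsps}(b), and by the Corollary following Corollary~\ref{corofphidifflem} (the statement that $L^0_\varphi(M)$ represents in $\mathcal{L}_\varphi(L_2(M))$) — applied in the bundle setting — we get $a(i\pm D)^{-1}\in \mathcal{L}_{\varphi_{0,-1}}(L_2(M,S))$. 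The compact support of $a$ is what makes the representation land in the ideal even when $M$ is non-compact: one reduces to a closed manifold containing $\mathrm{supp}(a)$ and applies Corollary~\ref{corofphidifflem} together with the Weyl law there, exactly as in the proof of that corollary. The hardest part of the whole proposition is really the symbol computation for $h(\slashed D)$ in (1) — establishing that functional calculus by a Lipschitz, logarithmically-growing $h$ produces a $\varphi$-pseudo-differential operator with the asserted leading symbol — and once that is in hand, (2) and (3) are bookkeeping with the calculus developed in Section~\ref{psipseudos}.
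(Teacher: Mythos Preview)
Your proposal is correct and the overall architecture---prove (1) by a symbol computation for $h(\slashed D)$, then deduce (2) via ellipticity/parametrix or functional calculus, then (3) from the inclusion $L^0_\varphi\subseteq\mathcal L_\varphi$---matches the paper's. The one genuine difference is in how (1) is obtained. You argue by invoking the general functional-calculus theorem (\cite[Chapter XII]{Taylorpsidobook}) for $h(\slashed D)$ directly, then verify that the resulting leading symbol lies in $S^0_{\varphi_{0,1}}$ and handle the non-smoothness of $h$ at $0$ by a cutoff. The paper instead exploits the specific shape of $h$: since $h(s)=\mathrm{sign}(s)\log(1+s^2)$, one has the exact factorization
\[
D=h(\slashed D)=F\,\log(1-\Delta),\qquad F:=\slashed D|\slashed D|^{-1}\in L^0(M,S),\quad \Delta:=-\slashed D^2,
\]
so that $D$ is an order-zero operator times $\log(1-\Delta)$; Lemma~\ref{philapuptolot} then immediately gives $\log(1-\Delta)\in L^0_{\varphi_{0,1}}$ with the correct leading symbol, and the product formula finishes. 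This route is shorter and sidesteps both the general functional-calculus machinery and the Lipschitz-at-zero issue (the sign function is continuous on the discrete spectrum of $\slashed D$, so $F$ is a genuine order-zero pseudo-differential operator). Your approach has the merit of being more robust---it would apply to other choices of $h$ that do not factor so neatly---but for this particular $h$ the paper's factorization is the cleaner move. For (2) the paper simply cites the composition formula~\eqref{productform}, which is closer to your parametrix alternative than to your $g_\pm(\slashed D)$ alternative; for (3) the two arguments are essentially identical.
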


\begin{proof}
For the Laplacian $\Delta=-\slashed{D}^2$ and the operator $F:=\slashed{D}|\slashed{D}|^{-1}\in L^0(M,S)$, we have that $D=F\log(1-\Delta)$. Here $\slashed{D}|\slashed{D}|^{-1}$ is defined from functional calculus using the sign function defined as $\mathrm{sign}(x):=x|x|^{-1}$ for $x\neq 0$ and $\mathrm{sign}(0):=0$, the sign function is continuous on the spectrum of $\slashed{D}$ because the spectrum is discrete. It follows from Lemma \ref{philapuptolot} (see page \pageref{philapuptolot}) that $D\in L^0_{\varphi_{0,1}}(M)$ is of the form $Op(p)+L^{-1}_{\varphi_{0,1}}(M,S)$. Therefore (1) follows. Since $D$ is constructed by functional calculus applied to the self-adjoint $\slashed{D}$, (2) follows from the composition formula \eqref{productform} (see page \pageref{productform}). From (2) and Corollary \ref{corofphidifflem} (see page \pageref{corofphidifflem}) we deduce (3).
\end{proof}

\begin{rem}
The function $h$ in Equation \eqref{functionh} is not smooth at $s= 0$ but satisfies the estimates 
$$|\partial_s^kh(s)|\le c_k \langle s\rangle^{-k},$$
for some constant $c_k$, any $k>0$ and $|s|>1$. Therefore, $h$ differs from a H\"ormander symbol $h_0$ of any order $m>0$ by a compactly supported Lipschitz function. The difference $D-h_0(\slashed{D})$ is therefore smoothing.
\end{rem}

\begin{prop}
\label{thepresepc}
Assume that $M$ is a closed manifold and that $D$ is as above. Then $(C^\infty(M),L_2(M,S),D)$ is a spectral triple satisfying the following:
\begin{enumerate}
\item  $(C^\infty(M),L_2(M,S),D)$ is $\theta$-summable, i.e. $\mathrm{e}^{-tD^2}\in \mathcal{L}_1(L_2(M,S))$ for any $t>0$;
\item $[D,a]\in L_{\varphi_{-1/d,1}}^0(M,S)\subseteq \mathcal{L}_{\varphi_{-1/d,1}}(L_2(M,S))$ for any $a\in C^\infty(M)$;
\item It holds that 
$$[(C^\infty(M),L_2(M,S),D)]=[(C^\infty(M),L_2(M,S),\slashed{D})],$$
in the $K$-homology group $K^*(C(M))$.
\end{enumerate}
\end{prop}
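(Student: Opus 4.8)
The three assertions are independent and should be treated separately. Parts (1) and (2) are routine consequences of the symbol calculus established in the preceding sections, while part (3) is a $K$-homology statement for which the natural tool is a homotopy through unbounded Fredholm modules.

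For part (1), $\theta$-summability, I would use Proposition \ref{propforD}(1): since $D\in L^0_{\varphi_{0,1}}(M)$ with leading symbol $p(x,\xi)=c_x(\xi)|\xi|_x^{-1}\log(1+|\xi|^2_x)$, the operator $D^2$ is a positive elliptic $\varphi$-pseudo-differential operator of order zero whose symbol grows like $\log^2(1+|\xi|^2)$, so $D^2\sim (\log(1-\Delta_g))^2$ modulo lower order terms. By the Weyl law on the closed manifold $M$, the eigenvalues $\lambda_n$ of $D^2$ satisfy $\lambda_n\sim c(\log n)^2$, hence $\mathrm{e}^{-t\lambda_n}=n^{-tc\log n(1+o(1))}$ decays faster than any polynomial, giving $\mathrm{e}^{-tD^2}\in \mathcal{L}_1$ for every $t>0$. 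One has to be a little careful about the finite-dimensional kernel of $\slashed{D}$ where $h$ vanishes, but this only contributes a finite-rank summand and does not affect summability. For part (2), I would compute $[D,a]$ using the product formula \eqref{productform}: writing $D=Op(p)+L^{-1}_{\varphi_{0,1}}(M,S)$, the commutator $[Op(p),a]$ has symbol $\sim -i\,\partial_\xi p\cdot \partial_x a+\cdots$, and since $\partial_\xi p\in S^{-1}_{\varphi_{0,1}}(M,S)$ by Proposition \ref{closedunderder} (or Remark \ref{higherdsecond}), we get $[D,a]\in L^{-1}_{\varphi_{0,1}}(M,S)=L^0_{\varphi_{-1/d,1}}(M,S)$, which by the last corollary of Section \ref{psipseudos} is contained in $\mathcal{L}_{\varphi_{-1/d,1}}(L_2(M,S))$. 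In particular $[D,a]$ is bounded, so $C^\infty(M)\subseteq \mathrm{Lip}(D)$, and combined with Proposition \ref{propforD}(3) (which gives $a(i\pm D)^{-1}$ compact, indeed in $\mathcal{L}_{\varphi_{0,-1}}$) this shows $(C^\infty(M),L_2(M,S),D)$ is a spectral triple.

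For part (3), the $K$-homology class, the natural approach is to exhibit an operator homotopy. Consider the family $\slashed{D}_r := h_r(\slashed{D})$ for $r\in[0,1]$, where $h_r$ interpolates between $h_0(s)=s\langle s\rangle^{-1}$ (giving the bounded transform of $\slashed{D}$, representing $[(C^\infty(M),L_2(M,S),\slashed{D})]$) and a rescaling of $h_1=h$; more precisely one takes $h_r(s)=\mathrm{sign}(s)\log(1+s^2)/(1+r\log(1+s^2))$ or a similar family of odd, strictly increasing, globally Lipschitz functions with $h_r(s)\to \pm\infty$ as $s\to\pm\infty$, all of which are, by the symbol calculus above, log-classical (for $r>0$) or classical of order $0$ (for $r=0$). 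Each $h_r(\slashed{D})$ has compact resolvent localised by $C^\infty(M)$ and bounded commutators with $C^\infty(M)$ — the commutator estimates are uniform in $r$ because $h_r$ and its derivatives obey symbol estimates uniform in $r$ — so this is a continuous path of spectral triples. Passing to bounded transforms $F_r:=\slashed{D}_r(1+\slashed{D}_r^2)^{-1/2}$, one gets a norm-continuous (or at least $*$-strongly continuous with the relevant commutators norm-continuous) path of Fredholm modules over $C(M)$, all with the same underlying $\mathbb{Z}/2$-graded (or ungraded, in the odd case) Hilbert space and representation; by the homotopy invariance of $K$-homology (the operator homotopy relation in the Kasparov picture), all $F_r$ represent the same class. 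Alternatively, and perhaps more cleanly, I would invoke the fact that for any odd function $f$ that is a homeomorphism of $\R$, $f(\slashed{D})$ and $\slashed{D}$ define the same unbounded Kasparov module up to unitary equivalence and homotopy — this is essentially because $f(\slashed{D})$ and $\slashed{D}$ have the same sign and the same kernel — so that $D=h(\slashed{D})$ carries the fundamental class of the spin$^c$ Dirac operator.

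\textbf{Main obstacle.} The delicate point is part (3): one must make sure the interpolating family is genuinely a \emph{continuous} path in the relevant topology on (unbounded or bounded) Fredholm modules, which requires uniform control of the operators $a(i\pm \slashed{D}_r)^{-1}$ and $[\slashed{D}_r,a]$ — equivalently, uniform symbol estimates for $h_r$ — and, at the endpoint $r=0$, continuity of $r\mapsto F_r$ as $r\to 0^+$ despite the symbol order jumping from ``logarithmic'' to ``$0$''. This is where the precise choice of the family $h_r$ matters, and where the pseudo-differential machinery of Section \ref{psipseudos} (in particular the stability of the calculus under the composition and coordinate-change formulas, and Theorem \ref{factpsps}) does the real work of guaranteeing that each $\slashed{D}_r$ stays within the log-classical calculus with bounds controlled by finitely many seminorms of $h_r$. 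Once continuity is in hand, homotopy invariance of $K$-homology closes the argument immediately.
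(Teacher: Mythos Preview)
Your arguments for parts (1) and (2) match the paper's: Weyl law gives $|\lambda_k(D)|\gtrsim \log(1+k^2)$ so $\sum_k \mathrm{e}^{-tC^2\log^2(1+k^2)}<\infty$, and the commutator $[D,a]$ lands in $L^{-1}_{\varphi_{0,1}}(M,S)=L^0_{\varphi_{-1/d,1}}(M,S)$ by the product formula applied to the symbol $p$ of Proposition~\ref{propforD}.

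For part (3), however, the paper takes the much shorter route that you list only as an afterthought. Since $h$ is odd and strictly increasing, $\mathrm{sign}(h(s))=\mathrm{sign}(s)$, so by functional calculus
\[
D|D|^{-1}=\mathrm{sign}(h(\slashed{D}))=\mathrm{sign}(\slashed{D})=\slashed{D}|\slashed{D}|^{-1}.
\]
The bounded Fredholm modules associated with the two spectral triples are therefore \emph{identical}, not merely homotopic, and the equality of $K$-homology classes is immediate. Your homotopy through $h_r(\slashed{D})$ is correct in principle, but it manufactures exactly the ``main obstacle'' you then worry about (uniform control as $r\to 0^+$, continuity of $r\mapsto F_r$ across a jump in symbol order); the paper's argument sidesteps all of this. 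The lesson is that when the operator is obtained from $\slashed{D}$ by an odd monotone reparametrisation of the spectrum, the phase---and hence the $K$-homology class---is untouched, and no path of Fredholm modules needs to be constructed.
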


\begin{proof}
If $(C^\infty(M),L_2(M,S),D)$ is a well defined spectral triple, (3) is immediate from the fact that $D|D|^{-1}=\slashed{D}|\slashed{D}|^{-1}$. By Proposition \ref{propforD}, $D\in L^0_{\varphi_{0,1}}(M,S)\subseteq \cap_{m>0}L^m(M,S)$ so $[D,a]\in L_{\varphi_{-1/d,1}}^0(M,S)\subseteq \mathcal{L}_{\varphi_{-1/d,1}}(L_2(M,S))$ for any $a\in C^\infty(M)$ by the composition formula \eqref{productform} (see page \pageref{productform}). Therefore, $(C^\infty(M),L_2(M,S),D)$ is a well defined spectral triple as soon as $\mathrm{e}^{-tD^2}\in \mathcal{L}_1(L_2(M,S))$ which implies that $D$ has compact resolvent.

The Weyl law guarantees that for a constant $C$, the eigenvalues of $D$ satisfies $|\lambda_k(D)|\geq C\log(1+k^2)$. We therefore have that $\mathrm{Tr}\,(\mathrm{e}^{-tD^2})\leq \sum_{k=0}^\infty \mathrm{e}^{-tC^2\log^2(1+k^2)}$ which is finite for all $t>0$ by an elementary integral estimate.
\end{proof}

This proposition shows that $D$ defines a spectral triple on $C^\infty(M)$ whose topological features are the same as $\slashed{D}$. The spectral features of $D$ sees $(C^\infty(M),L_2(M,S),D)$ as infinite-dimensional, through its $\theta$-summability, but the finite-dimensionality is remembered in the commutators $[D,a]\in \mathcal{L}_{\varphi_{-1/d,1}}(L_2(M,S))$ for $a\in C^\infty(M)$. We call $D$ the \emph{logarithmic dampening} of $\slashed{D}$.

\subsection{Setup for the group action}

Interesting features arise when we consider an action on $M$. Let $f:M\to M$ be a smooth mapping. For now, we do {\bf not} assume $f$ to be a diffeomorphism, or even a homeomorphism. We shall however assume the following properties.

\begin{dfn}
Let $(M,g)$ be a Riemannian manifold, $S\to M$ a Clifford bundle and $f:M\to M$ smooth.
\begin{enumerate}
\item We say that $f$ acts conformally if there is a $c_f\in C^\infty(M,\R_{>0})$ such that 
$$f^*g=c_fg.$$
\item If $f$ acts conformally, we say that $f$ lifts to $S$ if there is a unitary isomorphism of Clifford bundles
$$u_f:f^*S\to S.$$
\end{enumerate}
\end{dfn}

We remark at this point that if $f$ acts conformally, the inverse mapping theorem guarantees that $f$ is a local diffeomorphism. We could possibly relax the assumption for $f$ to be conformal in defining the notion of $f$ lifting to $S$, but we will only use them in combination. 

For a closed manifold $M$ and a mapping $f:M\to M$ that acts conformally and lifts to $S$, we define the operator
$$V_f:L_2(M,S)\to L_2(M,S), \quad V_f\xi:=c_f^{d/4}n^{-1/2}u_f(\xi\circ f).$$
Here $n\in C^\infty(M,\N)$ is the function defined by $n(x):=\#f^{-1}(\{x\})$; this number is well defined since $M$ is closed and is locally constant since $f$ is a local diffeomorphism.

\begin{prop}
\label{theisometryandd}
The operator $V_f$ is an isometry preserving $\mathrm{Dom}(D)$ such that 
$$V_fa=(a\circ f) V_f, \ a \in C(M).$$
Moreover, $V_f^*DV_f-D\in L^0(M,S)$ so $[D,V_f]$ is bounded in the $L_2$-norm.
\end{prop}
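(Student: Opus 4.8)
The plan is to verify each claimed property of $V_f$ in turn, treating the isometry property and the intertwining relation as elementary pointwise computations, and reserving the bulk of the effort for the statement $V_f^*DV_f - D \in L^0(M,S)$.

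First I would check that $V_f$ is an isometry. Since $f$ acts conformally with $f^*g = c_f g$, the pullback of the volume form satisfies $f^*(\mathrm{d}V_g) = c_f^{d/2}\,\mathrm{d}V_g$ (the conformal factor enters to the power $d/2$ because the metric determinant scales by $c_f^d$). For $\xi \in L_2(M,S)$, using that $f$ is a local diffeomorphism that is an $n(x)$-fold covering onto its image where $n$ is locally constant, the change-of-variables formula gives
$$\|V_f\xi\|^2 = \int_M c_f^{d/2} n^{-1} |u_f(\xi\circ f)|^2\,\mathrm{d}V_g = \int_M n^{-1} |\xi\circ f|^2 c_f^{d/2}\,\mathrm{d}V_g = \int_M |\xi|^2\,\mathrm{d}V_g,$$
since summing the pulled-back integrand over the $n(x)$ preimages of each point cancels the factor $n^{-1}$, and $u_f$ is a unitary bundle isomorphism so it preserves fiberwise norms. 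The intertwining relation $V_f a = (a\circ f)V_f$ for $a\in C(M)$ is immediate from the definition, since multiplication by $a$ followed by composition with $f$ equals composition with $f$ followed by multiplication by $a\circ f$, and the scalar factors $c_f^{d/4}n^{-1/2}$ and the bundle map $u_f$ commute with scalar multiplication. That $V_f$ preserves $\mathrm{Dom}(D)$ will follow once we know $V_f^*DV_f - D$ is a bounded (in fact order-$0$) operator, since then $DV_f = V_f(D + (\text{bounded}))$ maps $\mathrm{Dom}(D)$ into $L_2$.

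The main work, and the hard part, is showing $V_f^*DV_f - D \in L^0(M,S)$. Here the strategy is to use the pseudo-differential description of $D$ from Proposition \ref{propforD}: $D \in L^0_{\varphi_{0,1}}(M,S)$ with principal symbol $p(x,\xi) = c_x(\xi)|\xi|_x^{-1}\log(1+|\xi|_x^2)$ modulo $L^{-1}_{\varphi_{0,1}}(M,S)$. The operator $V_f$ is, locally where $f$ restricts to a diffeomorphism, a weighted composition operator, hence (by the coordinate-change formula, Proposition \ref{coordinatecha}, applied to the $\varphi$-calculus) conjugation by $V_f$ maps $\varphi$-pseudo-differential operators to $\varphi$-pseudo-differential operators of the same order. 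Thus $V_f^*DV_f \in L^0_{\varphi_{0,1}}(M,S)$, and by Theorem \ref{factpsps}(b) together with the equivalence (i)$\Leftrightarrow$(iii) there, the difference $V_f^*DV_f - D$ lies in $L^0(M,S)$ as soon as its principal $\varphi$-symbols agree, i.e. as soon as the $\log$-leading parts cancel. So the computation reduces to: transform the symbol $p$ under the canonical transformation induced by the conformal local diffeomorphism $f$, and check that the leading term $c_x(\xi)|\xi|_x^{-1}\log(1+|\xi|_x^2)$ is preserved modulo symbols of order $-1$ relative to the $\varphi_{0,1}$-class (equivalently, modulo $\varphi_{0,0}$-symbols, i.e. genuine $S^0$). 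Concretely, under $x' = f(x)$ the cotangent variable transforms by the inverse transpose of $Df$, and conformality $f^*g = c_f g$ means $|(Df)^T\eta|_{g(x)}^2 = c_f(x)^{-1}|\eta|^2_{g(f(x))}$; the Clifford multiplication transforms compatibly via $u_f$ (this is precisely the content of $u_f$ being a Clifford-bundle isomorphism covering the conformal map). One then observes $\log(1 + c_f^{\pm 1}|\xi|^2) = \log(1+|\xi|^2) + O(1)$ for large $|\xi|$, so the conformal distortion of the logarithm's argument only contributes a bounded error, i.e. a symbol of order $0$ relative to $L^0_{\varphi_{0,1}}$. Hence the leading symbols match and $V_f^*DV_f - D\in L^0(M,S)$; in particular $[D,V_f] = D V_f - V_f D = V_f(V_f^*DV_f - D)$ is bounded on $L_2(M,S)$.

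I expect the main obstacle to be the bookkeeping in the symbol transformation: one must handle the fact that $V_f$ is only \emph{locally} a diffeomorphism (using a partition of unity subordinate to sets on which $f$ is injective, and the factor $n^{-1/2}$ to reassemble), and one must track precisely which error terms are $O(1)$ versus genuinely lower order in the $\varphi$-filtration — the key point being that the crucial cancellation happens at the level of the $\log$-coefficient $a_{-d,1}$-type term, which is what the vector-valued/higher Wodzicki residue machinery (and ultimately Corollary \ref{CTT_psipsido}) is sensitive to. The verification that $u_f$ intertwines Clifford multiplications correctly — i.e. that $u_f^{-1}\, c_{f(x)}(\eta)\, u_f = c_x((Df)^T\eta)$ up to the conformal rescaling — is a structural fact about conformal maps of Clifford bundles that I would invoke, citing the hypothesis that $f$ lifts to $S$, rather than reprove.
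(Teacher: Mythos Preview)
Your proposal is correct and takes essentially the same approach as the paper. In fact the paper omits the proof, referring to \cite{DGMW16}, and only records the ``gist'': that the symbol $p$ from Proposition~\ref{propforD} satisfies $p(x,\xi)-p(f(x),(Df)^T\xi)\in S^0(M)$, which is exactly the cancellation you identify via $\log(1+c_f^{\pm 1}|\xi|^2)=\log(1+|\xi|^2)+O(1)$. One minor point: your deduction that $V_f$ preserves $\mathrm{Dom}(D)$ from the boundedness of $V_f^*DV_f-D$ is slightly circular as stated; the paper instead notes that $V_f$ preserves the core $C^\infty(M,S)$, on which the commutator computation is carried out first, and then extends by boundedness --- you should phrase it this way to avoid the apparent circularity.
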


We omit the proof of this result here and refer the reader to \cite[Proposition 8.3 and 8.12]{DGMW16}. The gist of the proof is that $D$ and $V_f$ have bounded commutators because the symbol $p$ from Proposition \ref{propforD} satisfies the property 
$$p(x,\xi)-p(f(x),(Df)^T\xi)\in S^0(M).$$
The fact that $V_f$ preserves $\mathrm{Dom}(D)$ follows from the fact that $V_f$ preserves the core $C^\infty(M,S)$ for $D$ and the boundedness of the commutator $[D,V_f]|_{C^\infty(M,S)}$.

\subsection{The spectral triple}

We can now construct the relevant spectral triple on an algebra generated by functions on the closed manifold $M$ and by translation operators such as $V_f$. 

\begin{thm}
\label{thmforspec}
Let $M$ be closed manifold and $S\to M$ a Clifford bundle with $D$ as above. Assume that $\mathcal{A}\subseteq \mathbb{B}(L_2(M,S))$ is the unital $*$-algebra generated by $C^\infty(M)$ and a collection $\{V_{f_i}\}_{i\in I}$ for a collection of smooth mappings $f_i:M\to M$, $i\in I$, that act conformally and lift to $S$. Let $A$ denote the norm closure of $\mathcal{A}$. The data $(\mathcal{A},L_2(M,S),D)$ is a spectral triple such that 
\begin{enumerate}
\item  $(\mathcal{A},L_2(M,S),D)$ is $\theta$-summable, i.e. $\mathrm{e}^{-tD^2}\in \mathcal{L}_1(L_2(M,S))$ for any $t>0$;
\item $[D,a]\in L_{\varphi_{-1/d,1}}^0(M,S)\subseteq \mathcal{L}_{\varphi_{-1/d,1}}(L_2(M,S))$ in the subalgebra $a\in C^\infty(M)\subseteq \mathcal{A}$, and $V_{f_i}^*DV_{f_i}-D\in L^0(M,S)$ for any $i\in I$;
\item If $\iota:C(M)\to A$ denotes the inclusion, then 
$$\iota^*[(\mathcal{A},L_2(M,S),D)]=[(C^\infty(M),L_2(M,S),\slashed{D})],$$
in the $K$-homology group $K^*(C(M))$.
\end{enumerate}
\end{thm}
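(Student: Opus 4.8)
The statement is \textbf{Theorem~\ref{thmforspec}}, and the three properties (1)--(3) must be established for the spectral triple $(\mathcal{A},L_2(M,S),D)$. The key observation is that all three properties have essentially been verified already for the smaller algebra $C^\infty(M)$ in Proposition~\ref{thepresepc}, so the work is to propagate these facts across the larger algebra $\mathcal{A}$ generated by $C^\infty(M)$ together with the isometries $\{V_{f_i}\}_{i\in I}$, using the boundedness statements from Proposition~\ref{theisometryandd}. First I would verify that $(\mathcal{A},L_2(M,S),D)$ is a well-defined spectral triple: $D$ is self-adjoint with compact resolvent by Proposition~\ref{propforD}(2) and the $\theta$-summability already recorded in Proposition~\ref{thepresepc}(1), which is purely a statement about $D$ and hence independent of the choice of algebra; thus property (1) is immediate. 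It remains to check that $\mathcal{A}\subseteq \mathrm{Lip}(D)$, i.e. that $[D,T]$ is bounded for every $T\in \mathcal{A}$. Since $\mathcal{A}$ is generated as a $*$-algebra by $C^\infty(M)$ and the $V_{f_i}$, and $\mathrm{Lip}(D)$ is a $*$-algebra (the Leibniz rule $[D,ST]=[D,S]T+S[D,T]$ together with $[D,S^*]=-[D,S]^*$ on the common core keeps us inside the bounded operators), it suffices to check the generators: $[D,a]$ is bounded for $a\in C^\infty(M)$ by Proposition~\ref{thepresepc}(2) (in fact it lies in $L^0_{\varphi_{-1/d,1}}(M,S)$), and $[D,V_{f_i}]$ is bounded by the last sentence of Proposition~\ref{theisometryandd}. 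One should also note that each $V_{f_i}$ preserves $\mathrm{Dom}(D)$, again by Proposition~\ref{theisometryandd}, so the commutators make sense as densely defined operators admitting bounded extensions. This proves property (2), which is simply the collection of these generator-level facts restated.

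For property (3), the point is that the $K$-homology class of a spectral triple only depends on the phase $D|D|^{-1}$ (the sign of the operator) together with the representation of the algebra, up to the standard operator-homotopy and compact-perturbation equivalences in $KK$-theory. Since $D=h(\slashed{D})$ with $h(s)=\mathrm{sign}(s)\log(1+s^2)$ as in Equation~\eqref{functionh}, and $h$ is an odd function which is strictly positive on $(0,\infty)$ and strictly negative on $(-\infty,0)$, we have $D|D|^{-1}=\slashed{D}|\slashed{D}|^{-1}$ exactly (this identity was already used in the proof of Proposition~\ref{thepresepc}(3)). Thus the bounded transforms $D(1+D^2)^{-1/2}$ and $\slashed{D}(1+\slashed{D}^2)^{-1/2}$ have the same sign, and a straight-line homotopy through the family $t\mapsto \mathrm{sign}(\slashed{D})\,(1-t+t\cdot\text{(functional calculus interpolant)})$ — or more simply, the observation that both bounded transforms are $C^\infty(M)$-compact perturbations of the common self-adjoint contraction $\slashed{D}|\slashed{D}|^{-1}$ — shows they define the same class in $KK^*(C(M),\mathbb{C})=K^*(C(M))$ after restricting the representation of $\mathcal{A}$ to $C(M)$ via $\iota$. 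I would phrase this as: $\iota^*[(\mathcal{A},L_2(M,S),D)]$ is represented by the Fredholm module $(C(M), L_2(M,S), D|D|^{-1})$, which equals $(C(M),L_2(M,S),\slashed{D}|\slashed{D}|^{-1})$, i.e. the class $[(C^\infty(M),L_2(M,S),\slashed{D})]$, since the coarse index class of a Dirac operator depends only on its sign.

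\textbf{Main obstacle.} The analytic content (properties (1) and (2)) is essentially routine given Propositions~\ref{propforD}, \ref{thepresepc} and \ref{theisometryandd}; the only care needed is the bookkeeping that $\mathrm{Lip}(D)$ is a $*$-algebra closed under the operations generating $\mathcal{A}$ and that the isometries genuinely preserve $\mathrm{Dom}(D)$ rather than merely a core. The subtlest point is property (3): one must be careful that the equivalence relation in $K$-homology (unbounded Kasparov modules up to operator homotopy and addition of degenerate modules) indeed identifies $D$ and $\slashed{D}$ once their phases agree. The cleanest route is to pass to bounded transforms and invoke that two Fredholm modules over $C(M)$ with the same underlying representation and the same phase operator are equal, not merely homotopic, as cycles; the identity $D|D|^{-1}=\slashed{D}|\slashed{D}|^{-1}$ makes this a triviality once stated correctly. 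I would therefore spend most of the write-up making the phase identity precise (including the behaviour at $0$, where both operators have trivial kernel on a closed manifold of appropriate dimension, or where $\mathrm{sign}(0):=0$ is handled consistently as in Proposition~\ref{propforD}) and citing the appropriate invariance of the $K$-homology class under this replacement.
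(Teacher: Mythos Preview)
Your proposal is correct and follows essentially the same approach as the paper: the paper's proof is the single sentence ``The theorem follows directly from the Propositions~\ref{thepresepc} and~\ref{theisometryandd},'' and your write-up simply unpacks how these two propositions combine (the $*$-algebra structure of $\mathrm{Lip}(D)$ to pass from generators to all of $\mathcal{A}$, and the phase identity $D|D|^{-1}=\slashed{D}|\slashed{D}|^{-1}$ for the $K$-homology class). Your additional care about domain preservation and the phase at zero is reasonable but not something the paper elaborates on.
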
 

The theorem follows directly from the Propositions \ref{thepresepc} and \ref{theisometryandd}.

\begin{rem}
We remark here that the $C^*$-algebra $A$ can be rather exotic. For example, many (iterated) Cuntz-Pimsner algebras have the form that the $C^*$-algebras in Theorem \ref{thmforspec} have. For more examples, see \cite{MS16, DGMW16}. An important property of these $C^*$-algebras is that they can be purely infinite, and would not allow {\bf any} finitely summable spectral triple by \cite{C89}. In particular, the lack of tracial states on $A$ obstructs $\slashed{D}$ having bounded commutators with $V_f$ (under suitable assumptions on $f$).
\end{rem}

\subsection{Computations with singular traces}

For a closed manifold $M$, Connes' trace formula for $\varphi$-pseudo-differential operators, from Corollary \ref{CTT_psipsido}, has several interesting consequences for the operator $D:=h(\slashed{D})\in L^0_{\varphi_{0,1}}(M,S)$, where $\varphi_{0,1}(t)=\log(\e+t)$ as above. The reader is encouraged to recall the notation from Equation \eqref{varphinotation} (see page \pageref{varphinotation}). 

In theorem below the notation ${\rm Tr}_{\omega, \varphi}$ stands for the Dixmier trace on $\Lw$ constructed via the extended limit $\omega$.

\begin{thm}
\label{compsngtrace}
Let $M$ be a closed $d$-dimensional manifold and $D$ as above acting on sections of the Clifford bundle $S\to M$. For a classical $T\in CL^0(M,S)$ and $a_1, a_2,\ldots, a_d\in C^\infty(M)$, 
\begin{align*}
T[\slashed{D},a_1]\cdots [\slashed{D},a_d](1+\slashed{D}^2)^{-d/2}&\in L^{-d}(M,S)=L^{0}_{\varphi_{-1,0}}(M,S)\subseteq \mathcal{L}_{1,\infty}(L_2(M,S))\quad\mbox{and}\\ 
T[D,a_1]\cdots [D,a_d]&\in L^{0}_{\varphi_{-1,d}}(M,S)\subseteq \mathcal{L}_{\varphi_{-1,d}}(L_2(M,S)).
\end{align*}
The corresponding Dixmier traces of these operators are proportional and can be computed using the principal symbol $\sigma_T\in C^\infty(S^*M,\mathrm{End}(S))$ of $T$ by
\begin{align*}
{\rm Tr}_{\omega,\varphi_{-1,0}}&(T[\slashed{D},a_1]\cdots [\slashed{D},a_d](1+\slashed{D}^2)^{-d/2})=d^{d}\cdot {\rm Tr}_{\omega,\varphi_{-1,d}}(T[D,a_1]\cdots [D,a_d])=\\
&=\frac1d \int_{S^*M} {\rm Tr}_S\left(\sigma_T(x,\xi)c_x(\mathrm{d} a_1)\cdots c_x(\mathrm{d} a_d)\right)\mathrm{d}\xi \mathrm{d}x.
\end{align*}
\end{thm}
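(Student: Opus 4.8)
The strategy is to reduce everything to an application of Corollary~\ref{CTT_psipsido}, after establishing the membership statements by pseudo-differential calculus. First I would identify the two operators as $\varphi$-pseudo-differential operators of the claimed orders. For the classical side, $\slashed{D}\in CL^1(M,S)$ is a classical first-order elliptic operator, so each $[\slashed{D},a_j]$ is a classical operator of order $0$ whose principal symbol is $ic_x(\mathrm{d}a_j)$ (a standard computation: the subprincipal parts cancel and the leading term is the Poisson bracket of the symbols, which reduces to Clifford multiplication by the differential); hence $T[\slashed{D},a_1]\cdots[\slashed{D},a_d](1+\slashed{D}^2)^{-d/2}\in CL^{-d}(M,S)=L^0_{\varphi_{-1,0}}(M,S)\subseteq \mathcal{L}_{1,\infty}(L_2(M,S))$, with principal symbol (restricted to $S^*M$)
\begin{equation*}
\sigma_T(x,\xi)\cdot i^d c_x(\mathrm{d}a_1)\cdots c_x(\mathrm{d}a_d).
\end{equation*}
For the logarithmic side, Proposition~\ref{propforD}(1) says $D\in L^0_{\varphi_{0,1}}(M,S)$ with symbol $p(x,\xi)=c_x(\xi)|\xi|_x^{-1}\log(1+|\xi|^2_x)$ modulo $L^{-1}_{\varphi_{0,1}}$, so by the product formula \eqref{productform}/\eqref{prodformpsi} and Theorem~\ref{factpsps}(b), each $[D,a_j]\in L^0_{\varphi_{-1/d,1}}(M,S)$ (the order drops because differentiating $a_j$ in the $\partial_x a_2$-term of \eqref{productform} contributes one factor of $\langle\xi\rangle^{-1}$), with leading symbol $i\,c_x(\mathrm{d}a_j)\,|\xi|_x^{-1}\log(1+|\xi|_x^2)$; multiplying $d$ of these together and using $L^0_{\varphi_{m_1/d,k_1}}L^0_{\varphi_{m_2/d,k_2}}\subseteq L^0_{\varphi_{(m_1+m_2)/d,k_1+k_2}}$ gives $T[D,a_1]\cdots[D,a_d]\in L^0_{\varphi_{-1,d}}(M,S)\subseteq \mathcal{L}_{\varphi_{-1,d}}(L_2(M,S))$ by the last corollary of Section~\ref{psipseudos}, with $\varphi_{-1,d}$-symbol on the region $|\xi|>1$ equal to
\begin{equation*}
q(x,\xi):=i^d\,\sigma_T(x,\xi)\,c_x(\mathrm{d}a_1)\cdots c_x(\mathrm{d}a_d)\,|\xi|_x^{-d}\log^d(1+|\xi|_x^2).
\end{equation*}

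Next I would apply Corollary~\ref{CTT_psipsido} to each of the two operators, with the respective functions $\varphi_{-1,0}(t)=(\e+t)^{-1}$ (for which $\Phi(t)=\log(\e+t)$) and $\varphi_{-1,d}(t)=(\e+t)^{-1}\log^d(\e+t)$ (for which $\Phi(t)=\frac{1}{d+1}\log^{d+1}(\e+t)$). Both $\varphi$'s are in $SR_{-1}$, so the corollary gives ${\rm Tr}_\omega$ of each operator as $\omega$ of the normalised phase-space integral of its $\varphi$-symbol. It then remains to evaluate those integrals asymptotically. This is a computation entirely parallel to the proof of Theorem~\ref{cor_logcl}(ii): passing to polar coordinates $\xi=|\xi|\rho$, $\rho\in S^*_xM$, the homogeneous part of the symbol integrates over the sphere to produce $\int_{S^*M}{\rm Tr}_S(\sigma_T(x,\xi)c_x(\mathrm{d}a_1)\cdots c_x(\mathrm{d}a_d))\,\mathrm{d}\xi\mathrm{d}x$ (here one takes the fibrewise trace ${\rm Tr}_S$ since the abstract symbol of a matrix-valued operator contributes its pointwise trace, and the factors $i^d$ are absorbed — more precisely one uses that $c_x(\mathrm{d}a_1)\cdots c_x(\mathrm{d}a_d)$ appears with the same power of $i$ on both sides), while the radial integral contributes $\int_1^{R}r^{-1}\,\mathrm{d}r=\log R$ in the classical case and $\int_1^{R}r^{-1}\log^d(r^2)\,\mathrm{d}r\sim \frac{2^d}{d+1}\log^{d+1}R$ in the logarithmic case, with $R=n^{1/d}$. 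Dividing by $\Phi(n+1)$ and sending $n\to\infty$, the two limits exist (are scalars, not just $\omega$-dependent), and tracking the constants $1/d$ versus $d^d$ yields the stated proportionality $d^d\cdot{\rm Tr}_{\omega,\varphi_{-1,d}}=\frac1d\int_{S^*M}(\cdots)$ and ${\rm Tr}_{\omega,\varphi_{-1,0}}=d^d\cdot{\rm Tr}_{\omega,\varphi_{-1,d}}$.

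\textbf{Main obstacle.} The membership statements and the final radial/angular integrals are routine; the genuine work is bookkeeping the principal symbols through the iterated commutators, in particular verifying that $[D,a_j]$ really gains one order in $\langle\xi\rangle$ relative to $D$ and that its leading symbol is precisely $i c_x(\mathrm{d}a_j)$ times the scalar $|\xi|_x^{-1}\log(1+|\xi|_x^2)$ — i.e. that the lower-order and logarithmic-derivative contributions from the product formula \eqref{productform} land in $L^{-1}_{\varphi_{-1/d,1}}$ and do not disturb the leading term. This is where Lemma~\ref{A14} (the estimates on $\partial_t^k\varphi/\varphi$) and Proposition~\ref{closedunderder} are essential: they guarantee that $\partial_\xi$ applied to $\varphi_0$ genuinely improves the symbol order within the $\varphi$-calculus, so that the formal order-counting is legitimate. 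Equally, one must be careful that the product of $d$ matrix-valued symbols is taken in the correct (non-commutative) order and that only the top-order term survives after normalising by $\Phi(n+1)$; all lower-order contributions are $o(\Phi(n))$ and hence killed by the extended limit, exactly as the $O(1)$ terms are discarded in the proof of Theorem~\ref{cor_logcl}.
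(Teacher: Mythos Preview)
Your approach is essentially the same as the paper's: compute the principal symbols of the two operators via the product formula (using Proposition~\ref{propforD} for $D$), establish membership in the appropriate $L^0_\varphi$-classes, and then invoke the Connes trace formula for $\varphi$-pseudo-differential operators to reduce to a radial integral. The only cosmetic difference is that the paper packages the final step by citing Theorem~\ref{cor_logcl} directly (since the resulting operators are log-classical), whereas you unpack that result and redo the polar-coordinate computation by hand via Corollary~\ref{CTT_psipsido}; this is the same computation in any case. One small point worth tightening: your leading symbols carry explicit factors of $i^d$ while the paper's formula for $i\{p,a\}$ does not, and you write $\log^d(r^2)$ rather than $\log^d(1+r^2)$ in the radial integral, which introduces a stray $2^d$; these are convention/normalisation issues that need to be reconciled when you actually track the constant $d^d$, but they do not affect the soundness of the argument.
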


\begin{proof}
We first compute the symbol of $[D,a]$ for $a\in C^\infty(M)$ modulo lower order terms. By Proposition \ref{propforD}, we can replace $D$ by $Op(p)$ where $p(x,\xi):=c_x(\xi)|\xi|_x^{-1}\log(1+|\xi|^2_x)$. Using the product formula \eqref{productform} (see page \pageref{productform}), we can write $[Op(p),a]\sim Op(b)$ where 
$$b=i\{p,a\} \mod  S^{-1}_{\varphi_{0,1}}(M,S),$$
and $\{\cdot,\cdot\}$ denotes the Poisson bracket on $C^\infty(T^*M)$. One computes that 
$$i\{p,a\} =c_x(\mathrm{d} a)|\xi|^{-1}_x\log(1+|\xi|_x^2)\mod  S^{-1}_{\varphi_{0,1}}(M,S).$$
Therefore, the symbol of $T[D,a_1]\cdots [D,a_d]$ is 
$$\sigma_T(x,\xi)c_x(\mathrm{d} a_1)\cdots c_x(\mathrm{d} a_d)\frac{\log^d(1+|\xi|^2)}{(1+|\xi|^2)^{d/2}} \mod S^{-1}_{\varphi_{-1,d}}(M,S).$$
The theorem now follows from Theorem \ref{cor_logcl} using that the symbol of $T[\slashed{D},a_1]\cdots [\slashed{D},a_d](1+\slashed{D}^2)^{-d/2}$ is 
$$\sigma_T(x,\xi)c_x(\mathrm{d} a_1)\cdots c_x(\mathrm{d} a_d)(1+|\xi|^2)^{-d/2} \mod S^{-d-1}(M,S)=S^{-1}_{\varphi_{-1,0}}(M,S).$$
\end{proof}

It is not clear to the authors if the identities of Theorem \ref{compsngtrace} have any deeper meaning and how they extend to the algebra $\mathcal{A}$ of Theorem \ref{thmforspec}. In this context it would be interesting to ask for a reasonable ``local" approach to index theory for logarithmically dampended operators. The $\varphi$-pseudo-differential operators are pseudo-differential operators and their index theory is clear from the Atiyah-Singer index theorem, but in the context of the algebra $\mathcal{A}$ from Theorem \ref{thmforspec}, or even more exotic situation, it is not clear how to proceed. Here we mean ``local" in the sense of \cite{CM95}, i.e. computable by residues and singular traces.


\begin{thebibliography}{10}

\bibitem{bacorsg}
{\sc Battisti, U., and Coriasco, S.}
\newblock Wodzicki residue for operators on manifolds with cylindrical ends.
\newblock {\em Ann. Global Anal. Geom. 40}, 2 (2011), 223--249.

\bibitem{RegVar}
{\sc Bingham, N.~H., Goldie, C.~M., and Teugels, J.~L.}
\newblock {\em Regular variation}, vol.~27 of {\em Encyclopedia of Mathematics
  and its Applications}.
\newblock Cambridge University Press, Cambridge, 1989.

\bibitem{BN}
{\sc Boggiatto, P., and Nicola, F.}
\newblock Non-commutative residues for anisotropic pseudo-differential
  operators in {$\Bbb R^n$}.
\newblock {\em J. Funct. Anal. 203}, 2 (2003), 305--320.

\bibitem{Bojanic_Seneta}
{\sc Bojanic, R., and Seneta, E.}
\newblock A unified theory of regularly varying sequences.
\newblock {\em Math. Z. 134\/} (1973), 91--106.

\bibitem{CS12}
{\sc Carey, A., and Sukochev, F.}
\newblock Measurable operators and the asymptotics of heat kernels and zeta
  functions.
\newblock {\em J. Funct. Anal. 262}, 10 (2012), 4582--4599.

\bibitem{CRSS}
{\sc Carey, A.~L., Rennie, A., Sedaev, A., and Sukochev, F.}
\newblock The {D}ixmier trace and asymptotics of zeta functions.
\newblock {\em J. Funct. Anal. 249}, 2 (2007), 253--283.

\bibitem{CheSA}
{\sc Chernoff, P.~R.}
\newblock Essential self-adjointness of powers of generators of hyperbolic
  equations.
\newblock {\em J. Functional Analysis 12\/} (1973), 401--414.

\bibitem{ChristCom}
{\sc Christensen, E.}
\newblock Higher weak derivatives and reflexive algebras of operators.

\bibitem{C_AF}
{\sc Connes, A.}
\newblock The action functional in noncommutative geometry.
\newblock {\em Comm. Math. Phys. 117}, 4 (1988), 673--683.

\bibitem{C89}
{\sc Connes, A.}
\newblock Compact metric spaces, {F}redholm modules, and hyperfiniteness.
\newblock {\em Ergodic Theory Dynam. Systems 9}, 2 (1989), 207--220.

\bibitem{C_book}
{\sc Connes, A.}
\newblock {\em Noncommutative geometry}.
\newblock Academic Press Inc., San Diego, CA, 1994.

\bibitem{CM95}
{\sc Connes, A., and Moscovici, H.}
\newblock The local index formula in noncommutative geometry.
\newblock {\em Geom. Funct. Anal. 5}, 2 (1995), 174--243.

\bibitem{DGMW16}
{\sc Deeley, R.~J., Goffeng, M., Mesland, B., and Whittaker, M.~F.}
\newblock Wieler solenoids, {C}untz-{P}imsner algebras and ${K}$-theory.
\newblock {\em to appear in Ergodic Theory and Dynamical Systems\/}
  (\texttt{arXiv:1606.05449}).

\bibitem{D}
{\sc Dixmier, J.}
\newblock Existence de traces non normales.
\newblock {\em C. R. Acad. Sci. Paris S\'er. A-B 262\/} (1966), A1107--A1108.

\bibitem{Fathizadeh}
{\sc Fathizadeh, F., and Khalkhali, M.}
\newblock Weyl's law and {C}onnes' trace theorem for noncommutative two tori.
\newblock {\em Lett. Math. Phys. 103}, 1 (2013), 1--18.

\bibitem{GS}
{\sc Gayral, V., and Sukochev, F.}
\newblock Dixmier traces and extrapolation description of noncommutative
  {L}orentz spaces.
\newblock {\em J. Funct. Anal. 266}, 10 (2014), 6256--6317.

\bibitem{GG}
{\sc Gimperlein, H., and Goffeng, M.}
\newblock Nonclassical spectral asymptotics and {D}ixmier traces: from circles
  to contact manifolds.
\newblock {\em Forum Math. Sigma 5\/} (2017), e3, 57.

\bibitem{GM16}
{\sc Goffeng, M., and Mesland, B.}
\newblock Spectral triples on ${O}_{N}$.
\newblock {\em Proceedings for the MATRIX-program ``Refining $C^*$-algebraic
  invariants for dynamics using $KK$-theory", in Creswick, Australia\/}
  (\texttt{arXiv:1610.01356}).
  
\bibitem{grubshd} 
{\sc Grubb, G., and Schrohe, E.}
\newblock Traces and quasi-traces on the Boutet de Monvel algebra. 
\newblock {\em Ann. Inst. Fourier (Grenoble) 54 (2004), no. 5, 1641--1696.}

\bibitem{HartLeschVert}
{\sc Hartmann, L., Lesch, M., and Vertman, B.}
\newblock Zeta-determinants of {S}turm-{L}iouville operators with quadratic
  potentials at infinity.
\newblock {\em J. Differential Equations 262}, 5 (2017), 3431--3465.

\bibitem{hormanderIII}
{\sc H{\"o}rmander, L.}
\newblock {\em The analysis of linear partial differential operators. {III}},
  vol.~274 of {\em Grundlehren der Mathematischen Wissenschaften [Fundamental
  Principles of Mathematical Sciences]}.
\newblock Springer-Verlag, Berlin, 1994.
\newblock Pseudo-differential operators, Corrected reprint of the 1985
  original.

\bibitem{hormaIV}
{\sc H\"ormander, L.}
\newblock {\em The analysis of linear partial differential operators. {IV}}.
\newblock Classics in Mathematics. Springer-Verlag, Berlin, 2009.
\newblock Fourier integral operators, Reprint of the 1994 edition.

\bibitem{KLPS}
{\sc Kalton, N., Lord, S., Potapov, D., and Sukochev, F.}
\newblock Traces of compact operators and the noncommutative residue.
\newblock {\em Adv. Math. 235\/} (2013), 1--55.

\bibitem{Korevaar}
{\sc Korevaar, J.}
\newblock {\em Tauberian theory}, vol.~329 of {\em Grundlehren der
  Mathematischen Wissenschaften [Fundamental Principles of Mathematical
  Sciences]}.
\newblock Springer-Verlag, Berlin, 2004.
\newblock A century of developments.

\bibitem{Lesch}
{\sc Lesch, M.}
\newblock On the noncommutative residue for pseudodifferential operators with
  log-polyhomogeneous symbols.
\newblock {\em Ann. Global Anal. Geom. 17}, 2 (1999), 151--187.

\bibitem{LSZ}
{\sc Lord, S., Sukochev, F., and Zanin, D.}
\newblock {\em Singular Traces: Theory and Applications}, vol.~46 of {\em
  Studies in Mathematics}.
\newblock De Gruyter, 2012.

\bibitem{MelScatt}
{\sc Melrose, R.~B.}
\newblock {\em Geometric scattering theory}.
\newblock Stanford Lectures. Cambridge University Press, Cambridge, 1995.

\bibitem{MS16}
{\sc Mesland, B., and Sengun, M.~H.}
\newblock Hecke operators in ${KK}$-theory and the ${K}$-homology of {B}ianchi
  groups.

\bibitem{Sed_Suk}
{\sc Sedaev, A.~A., and Sukochev, F.~A.}
\newblock Dixmier measurability in {M}arcinkiewicz spaces and applications.
\newblock {\em J. Funct. Anal. 265}, 12 (2013), 3053--3066.

\bibitem{seeley71}
{\sc Seeley, R.}
\newblock Norms and domains of the complex powers {$(A_{B})^z$}.
\newblock {\em Amer. J. Math. 93\/} (1971), 299--309.

\bibitem{SSUZ}
{\sc Semenov, E.~M., Sukochev, F.~A., Usachev, A.~S., and Zanin, D.~V.}
\newblock Banach limits and traces on $\mathcal{L}_{1,\infty}$.
\newblock {\em Adv. Math. 285\/} (2015), 568--628.

\bibitem{Simon_NonCl}
{\sc Simon, B.}
\newblock Nonclassical eigenvalue asymptotics.
\newblock {\em J. Funct. Anal. 53}, 1 (1983), 84--98.

\bibitem{Simon}
{\sc Simon, B.}
\newblock {\em Trace ideals and their applications}, second~ed., vol.~120 of
  {\em Mathematical Surveys and Monographs}.
\newblock American Mathematical Society, Providence, RI, 2005.

\bibitem{SUZ2}
{\sc Sukochev, F., Usachev, A., and Zanin, D.}
\newblock On the distinction between the classes of {D}ixmier and
  {C}onnes-{D}ixmier traces.
\newblock {\em Proc. Amer. Math. Soc. 141}, 6 (2013), 2169--2179.

\bibitem{SUZ3}
{\sc Sukochev, F., Usachev, A., and Zanin, D.}
\newblock Dixmier traces generated by exponentiation invariant generalised
  limits.
\newblock {\em J. Noncommut. Geom. 8}, 2 (2014), 321--336.

\bibitem{SUZ4}
{\sc Sukochev, F., Usachev, A., and Zanin, D.}
\newblock Singular traces and residues of the $\zeta$-function.
\newblock {\em Indiana J. Math. 66}, 4 (2017), 1107--1144.

\bibitem{SZ}
{\sc Sukochev, F., and Zanin, D.}
\newblock {$\zeta$}-function and heat kernel formulae.
\newblock {\em J. Funct. Anal. 260}, 8 (2011), 2451--2482.

\bibitem{Taylorpsidobook}
{\sc Taylor, M.~E.}
\newblock {\em Pseudodifferential operators}, vol.~34 of {\em Princeton
  Mathematical Series}.
\newblock Princeton University Press, Princeton, N.J., 1981.

\bibitem{toolspde}
{\sc Taylor, M.~E.}
\newblock {\em Tools for {PDE}}, vol.~81 of {\em Mathematical Surveys and
  Monographs}.
\newblock American Mathematical Society, Providence, RI, 2000.
\newblock Pseudodifferential operators, paradifferential operators, and layer
  potentials.

\bibitem{Vuilleumier}
{\sc Vuilleumier, M.}
\newblock Sur le comportement asymptotique des transformations lin\'eaires des
  suites.
\newblock {\em Math. Z. 98\/} (1967), 126--139.

\end{thebibliography}
\end{document}